\newcommand{\BA}{{\mathbb {A}}}
\newcommand{\BC}{{\mathbb {C}}}
\newcommand{\RO}{{\mathrm {O}}}
\newcommand{\diag}{{\mathrm{diag}}}
\newcommand{\GL}{{\mathrm{GL}}}
\newcommand{\Ind}{{\mathrm{Ind}}}
\newcommand{\I}{{\mathrm{I}}}
\renewcommand{\Re}{{\mathrm{Re}}}
\newcommand{\SL}{{\mathrm{SL}}}
\newcommand{\SO}{{\mathrm{SO}}}
\newcommand{\Sp}{{\mathrm{Sp}}}
\newtheorem{thm}{Theorem}[section]
\newtheorem{cor}[thm]{Corollary}
\newtheorem{lem}[thm]{Lemma}
\newtheorem{prop}[thm]{Proposition}
\newtheorem{rmk}[thm]{Remark}
\begin{document}
\renewcommand{\theequation}{\arabic{equation}}
\numberwithin{equation}{section}

\title{Two identities relating Eisenstein series on classical groups}

\author{David Ginzburg}

\address{School of Mathematical Sciences, Sackler Faculty of Exact Sciences, Tel-Aviv University, Israel
69978} \email{ginzburg@post.tau.ac.il}

\thanks{This research was supported by the ISRAEL SCIENCE FOUNDATION
	(grant No. 461/18).}

\author{David Soudry}
\address{School of Mathematical Sciences, Sackler Faculty of Exact Sciences, Tel-Aviv University, Israel
69978} \email{soudry@post.tau.ac.il}

\subjclass{Primary 11F70 ; Secondary 22E55}



\keywords{Eisenstein series, Speh representations, Cuspidal
automorphic representations, Fourier coefficients}

\begin{abstract}
	In this paper we introduce two general identities relating Eisenstein series on split classical groups, as well as double covers of symplectic groups. The first identity can be viewed as an extension of the doubling construction introduced in \cite{CFGK17}. The second identity is a generalization of the descent construction studied in 
	\cite{GRS11}.

\end{abstract}
\maketitle
\setcounter{section}{-1}

\section{Introduction}

For simplicity, in order to avoid general notation in this introduction, we describe the two identities in the case of symplectic groups.

Let $F$ be a number field, and let $\BA$ be its ring of adeles. Let $\tau$ be an irreducible, automorphic, cuspidal representation of $\GL_n(\BA)$. Consider, for a positive integer $\ell$, the Speh representation $\Delta(\tau,\ell)$ of $\GL_{n\ell}(\BA)$. See Sec. 1.2 for the precise definition. Let $m,i$ be positive integers, and assume that $m$ is even. Consider, for a complex number $s$, the parabolic (normalized) induction
$$
\rho_{\Delta(\tau,m+i),s}=\Ind_{Q_{n(m+i)}(\BA)}^{\Sp_{2n(m+i)}(\BA)}\Delta(\tau,m+i)|\det\cdot|^s,
$$
where $Q_{n(m+i)}$ is the Siegel parabolic subgroup of $\Sp_{2n(m+i)}$ (viewed as an algebraic group over $F$). Let $E(f_{\Delta(\tau,m+i),s})$ be an Eisenstein series on $\Sp_{2n(m+i)}(\BA)$, corresponding to a smooth, holomorphic section $f_{\Delta(\tau,m+i),s}$ of $\rho_{\Delta(\tau,m+i),s}$. 

For our first identity, we consider a Fourier coefficient $\mathcal{F}_\psi(E(f_{\Delta(\tau,m+i),s}))$ of our Eisenstein series, along the unipotent radical $U_{m^{n-1}}$ of the standard parabolic subgroup $Q_{m^{n-1}}$, whose Levi part is isomorphic to $\GL_m^{n-1}\times \Sp_{2(m+ni)}$. The corresponding character of $U_{m^{n-1}}(\BA)$ is written explicitly in \eqref{1.3}, and denoted there by $\psi_H$, with $H=\Sp_{2n(m+i)}$. This character is attached to the symplectic partition $((2n-1)^m,1^{2ni+m})$.  It is stabilized by a subgroup $D(\BA)$, where $D$ is an $F$-subgroup of $\Sp_{2n(m+i)}$, which is isomorphic over $F$ to $\Sp_m\times \Sp_{2ni+m}$. See \cite{CM93}, Theorem 6.1.3. For $g\in \Sp_m(\BA)$, $h\in \Sp_{2ni+m}(\BA)$, denote by $t(g,h)$ the corresponding element in $D(\BA)$. We consider the Fourier coefficient above as a function on $\Sp_m(\BA)\times \Sp_{2ni+m}(\BA)$ (it is automorphic),
\begin{equation}\label{0.001}
\mathcal{F}_\psi(E(f_{\Delta(\tau,m+i),s}))(g,h)=
\int\limits_{U_{m^{n-1}}(F)\backslash
	U_{m^{n-1}}(\BA)}E(f_{\Delta(\tau,
	m+i),s},ut(g,h))\psi^{-1}_H(u)du.
\end{equation}
Now, we use $\mathcal{F}_\psi(E(f_{\Delta(\tau,m+i),s}))(g,h)$ as a kernel function and integrate it against cusp forms $\varphi_\sigma$ in a space of an irreducible, automorphic, cuspidal representation $\sigma$ of $\Sp_m(\BA)$, thus obtaining automorphic functions on $\Sp_{2ni+m}(\BA)$,
\begin{equation}\label{0.01}
\mathcal{E}(f_{\Delta(\tau,m+i),s},\varphi_\sigma)(h)=
\int\limits_{\Sp_m(F)\backslash
	\Sp_m(\BA)}\mathcal{F}_\psi(E(f_{\Delta(\tau,
	m+i),s}))(g,h)\varphi_\sigma(g)dg.
\end{equation}
Let $K_{\Sp_{2n(m+i)}(\BA)}=\prod_v K_{\Sp_{2n(m+i)}(F_v)}$, where, for each place $v$, $K_{\Sp_{2n(m+i)}(F_v)}$ is the standard maximal compact subgroup of $\Sp_{2n(m+i)}(F_v)$.\\
\\
{\bf Theorem A.}\quad 
\textit{Assume that $f_{\Delta(\tau,m+i),s}$ is $K_{\Sp_{2n(m+i)}(\BA)}$-finite. Then\\
 $\mathcal{E}(f_{\Delta(\tau,m+i),s},\varphi_\sigma)$ is an Eisenstein series on $\Sp_{2ni+m}(\BA)$, corresponding to the parabolic induction from $\Delta(\tau,i)|\det\cdot|^s\times \sigma^\iota$, where $\sigma^\iota$ is a certain outer conjugation of $\sigma$ by an element of order 2. There is an explicit meromorphic section $\Lambda(f_{\Delta(\tau, m+i),s},\varphi_\sigma)$ of the last parabolic induction, such that, for $Re(s)$ sufficiently large,}
\begin{equation}\label{0.1}
\mathcal{E}(f_{\Delta(\tau,
	m+i),s},\varphi_\sigma)(h)=\sum_{\gamma\in Q_{ni}(F)\backslash
	\Sp_{2ni+m}(F)}\Lambda(f_{\Delta(\tau,
	m+i),s},\varphi_\sigma)(\gamma h).
\end{equation}
\textit{The right hand side of \eqref{0.1} continues to a meromorphic function in the whole plane. Denote it by $E(\Lambda(f_{\Delta(\tau, m+i),s},\varphi_\sigma))$. Then, as meromorphic functions on $\Sp_{2ni+m}(\BA)$,}
\begin{equation}\label{0.1'}
\mathcal{E}(f_{\Delta(\tau,
	m+i),s},\varphi_\sigma)=E(\Lambda(f_{\Delta(\tau, m+i),s},\varphi_\sigma)).
\end{equation}

The proof of Theorem A is carried out in Sec. 2 - 4. In Sec. 4, we also prove a similar identity with normalized Eisenstein series, outside a finite set of places of $F$, containing the Archimedean ones, outside which $\tau$ and $\sigma$ are unramified, as well as $f_{\Delta(\tau,m+i),s}$ and $\varphi_\sigma$ in \eqref{0.1'}. We also prove analogous identities for metaplectic groups and split orthogonal groups. 
We remark here, that we expect the identity \eqref{0.1'} to hold with normalized Eisenstein series at all places, and all $f_{\Delta(\tau,m+i),s}$ and $\varphi_\sigma$, as Shahidi does for generic representations in \cite{S10}, Theorem 6.3.1, and as is done in \cite{JLZ13}, p. 86. In our case, we do not assume that $\sigma$ is a generic representation, and $\Delta(\tau, m+i)$, $\Delta(\tau,i)$ are not generic. The normalization in this case should be carried out by using the local $L$-functions defined in \cite{CFK18}, Sec. 7. In any case, also when $\sigma$ is generic, this will require more work.

The  nice thing about the identity \eqref{0.1'} is that it expresses an Eisenstein series on $\Sp_{2ni+m}(\BA)$, induced from $\Delta(\tau,i)|\det\cdot|^s\times \sigma^\iota$, in a uniform way, for all $\sigma$, in terms of an Eisenstein series on $\Sp_{2n(m+i)}(\BA)$, induced from $\Delta(\tau,m+i)|\det\cdot|^s$. For example, when $i=1$, we get an expression of an Eisenstein series on $\Sp_{2n+m}(\BA)$, induced from the cuspidal representation $\tau|\det\cdot|^s\times \sigma^\iota$, in terms of an Eisenstein series on $\Sp_{2n(m+1)}(\BA)$, induced from $\Delta(\tau,m+1)|\det\cdot|^s$. This shows that Eisenstein series induced from Speh representations play a prominent role. We can use them to produce Eisenstein series induced from cuspidal representations of Levi parts of maximal parabolic subgroups.

We note that the section $\Lambda(f_{\Delta(\tau, m+i),s},\varphi_\sigma)$ has a nice explicit integral expression, and that it is related to the integrals of the generalized doubling method of \cite{CFGK17} for $\Sp_m\times \GL_n$. It is interesting that the identity above exhibits a tower, as in towers of theta lifts, with the tower parametrized by $i$. It will be easy to note that our proofs are valid for $i=0$, as well. In this case, the proof amounts to the Euler product expansion of the generalized doubling integrals for $\Sp_m\times \GL_n$, representing the partial $L$-function $L^S(\sigma\times \tau,s)$, and then $\mathcal{E}(f_{\Delta(\tau, m+i),s},\varphi_\sigma)$ (with $i=0$) defines an element in $\sigma^\iota$. Thus, the tower starts with the cuspidal representation $\sigma^\iota$ on $\Sp_m(\BA)$, and moves up to the Eisenstein series representations on $\Sp_{2ni+m}(\BA)$, induced from $\Delta(\tau,i)|\det\cdot|^s\times \sigma^\iota$, for $i=1,2,...$.\\
We also allow the case $n=1$, where $\tau$ is simply a character of
$F^*\backslash \BA^*$, and then $\Delta(\tau, m+i)=\tau\circ
\det_{\GL_{m+i}}$,  $\rho_{\Delta(\tau,
	m+i),s}=\Ind_{Q_{m+i}(\BA)}^{\Sp_{2(m+i)}(\BA)}\tau(\det\cdot)
|\det\cdot|^s$. In this case, $U_{m^{n-1}}$ is trivial, and the
function $\mathcal{F}_\psi(E(f_{\Delta(\tau,m+i),s}))(g,h)$ is the restriction of the Eisenstein series
$E(f_{\tau\circ \det_{\GL_{m+i}},s})$ to $\Sp_m(\BA)\times
\Sp_{2i+m}(\BA)$. In this case, the Eisenstein
series $\mathcal{E}(f_{\tau\circ\det_{\GL_{m+i}},s},\varphi_\sigma)$ of Theorem A, corresponds to the section $\Lambda(f_{\tau\circ \det_{\GL_{m+i}},s},\varphi_\sigma)$ of the
parabolic induction from $(\tau\circ \det_{\GL_i})|\det\cdot |^s\times \sigma^\iota$ to
$\Sp_{2i+m}(\BA)$. This was considered in
\cite{GPSR97}, Sec. 1, for (arbitrary) orthogonal groups and $i=1$, and generalized by Moeglin in \cite{M97}, for any $i$ and $H$ symplectic, or even orthogonal.

In our second identity, we consider, for $i\geq 1$, the descent of $E(f_{\Delta(\tau,i+1),s})$ from $\Sp_{2n(i+1)}(\BA)$ to the metaplectic group $\Sp^{(2)}_{2ni}(\BA)$, that is we apply to $E(f_{\Delta(\tau,i+1),s})$ a Fourier-Jacobi coefficient corresponding to the partition $(2n,1^{2ni})$. Denote this coefficient by $\mathcal{D}^{(2),\phi}_{\psi,ni}(E(f_{\Delta(\tau,i+1),s}))$, and view it as an automorphic function of $\tilde{h}\in \Sp^{(2)}_{2ni}(\BA)$. Here $\phi\in \mathcal{S}(\BA^{ni})$ is a Schwartz function entering in the theta series on the semi-direct product of the Heisenberg group in $2ni+1$ variables and $\Sp^{(2)}_{2ni}(\BA)$. We consider in a similar way the analogous descent of $E(f_{\Delta(\tau,i+1)\gamma_\psi,s})$ from $\Sp^{(2)}_{2n(i+1)}(\BA)$ to  $\Sp_{2ni}(\BA)$. Here, $\gamma_\psi$ is the Weil factor attached to $\psi$. Denote the corresponding Fourier-Jacobi coefficient by $\mathcal{D}_{\psi,ni}^\phi(E(f_{\Delta(\tau,i+1)\gamma_\psi,s}))$, and view 
it as an automorphic function of $h\in \Sp_{2ni}(\BA)$. We note that the case $i=0$ makes sense. Here, the Fourier-Jacobi coefficients above are simply $\psi$-Whittaker coefficients of our Eisenstein series, which are well known. The second main theorem in this paper is\\
\\
{\bf Theorem B.}\quad 
\textit{$\mathcal{D}^{(2),\phi}_{\psi,ni}(E(f_{\Delta(\tau,i+1),s}))$ is an Eisenstein series on $\Sp^{(2)}_{2ni}(\BA)$, corresponding to the parabolic induction from $\Delta(\tau,i)\gamma_{\psi^{-1}}|\det\cdot|^s$. There is an explicit meromorphic section $\Lambda(f_{\Delta(\tau, i+1)\gamma_\psi,s},\phi)$ of the last parabolic induction, such that for $Re(s)$ sufficiently large,}
\begin{equation}\label{0.2}
\mathcal{D}^{(2),\phi}_{\psi,ni}(E(f_{\Delta(\tau,i+1),s}))(\tilde{h})=\sum_{\gamma\in Q_{ni}(F)\backslash
	\Sp_{2ni}(F)}\Lambda(f_{\Delta(\tau,
	i+1),s},\phi)(\gamma \tilde{h}).
\end{equation} 
\textit{The right hand side of \eqref{0.2} continues to a meromorphic function in the whole plane. Denote it by $E(\Lambda(f_{\Delta(\tau, i+1),s},\phi))(\tilde{h})$. Then, as meromorphic functions on $\Sp^{(2)}_{2ni}(\BA)$,}
\begin{equation}\label{0.3}
\mathcal{D}^{(2),\phi}_{\psi,ni}(E(f_{\Delta(\tau,i+1),s}))=E(\Lambda(f_{\Delta(\tau, i+1),s},\phi)).
\end{equation}
Theorem B is proved in Sec. 7. We can prove a similar identity with normalized Eisenstein series, outside a finite set of places of $F$, containing the Archimedean places, outside which $\tau$ is unramified, as well as $f_{\Delta(\tau,i+1),s}$ and $\phi$. We prove, in Sec. 8, a similar theorem for the descent $\mathcal{D}_{\psi,ni}^\phi(E(f_{\Delta(\tau,i+1)\gamma_\psi,s}))$ from $\Sp^{(2)}_{2n(i+1)}(\BA)$ to  $\Sp_{2ni}(\BA)$, expressing it as an Eisenstein series on $\Sp_{2ni}(\BA)$, corresponding to the parabolic induction from $\Delta(\tau,i)|\det\cdot|^s$. We also prove similar identities for split orthogonal groups. We remark that Theorem B and its analog for the descent $\mathcal{D}_{\psi,ni}^\phi(E(f_{\Delta(\tau,i+1)\gamma_\psi,s}))$, from $\Sp^{(2)}_{2n(i+1)}(\BA)$ to  $\Sp_{2ni}(\BA)$, do not follow from Theorem A. There is one case of Theorem B, for orthogonal groups, which is Theorem A with $m=1$, namely, when we take  the Eisenstein series corresponding to the parabolic induction from $\Delta(\tau,i+1)|\det\cdot|^s$ on $\SO_{2n(i+1)})(\BA)$, and then carry the descent to $\SO_{2ni+1}(\BA)$. See Sec. 5. The descent from the Eisenstein series on $\SO_{2n(i+1)+1}(\BA)$, corresponding the parabolic induction from $\Delta(\tau,i+1)|\det\cdot|^s$, to $\SO_{2ni}(\BA)$, is carried out in Sec. 6.

As in Theorem A, we allow the case $n=1$, where $\tau$ is a character of $F^*\backslash \BA^*$. In this case, Theorem B, for $H(\BA)=\Sp_{2i+2}(\BA)$, is a special case of a theorem of Ikeda \cite{I94}. 

There are several extensions and applications of the above two identities. First, we intend to extend these identities to metaplectic covering groups of split classical groups. In recent years, there has been a growing interest in the structure of automorphic representations of these groups. There is also a growing realization that many  constructions  which work for  linear groups can be extended to metaplectic covering groups. For example, as described in \cite{CFGK16}, the doubling construction for linear groups, given in \cite{CFGK17}, extends in a natural way to metaplectic covering groups. Thus, as in the above case when $i=1$,  we expect to express every Eisenstein series on a metaplectic covering group, parabolically induced from cuspidal data $\tau$ and $\sigma$, in terms of an Eisenstein series involving the representation $\tau$ only. In other words, we expect that an identity, similar to identity \eqref{0.1'} , 
will hold also for metaplectic covering groups. Here, $\tau$ and $\sigma$ are irreducible cuspidal representations of  certain metaplectic coverings of $\GL_n(\BA)$ and $\Sp_m(\BA)$, respectively. In general, we expect that Theorems A and B extend to metaplectic covering groups of split classical groups. 

A second extension, which we intend to study, is a variation  of the first identity. This new identity, which we refer to as the dual identity, uses the fact that the embedding of $\Sp_m(\BA)\times \Sp_{2ni+m}(\BA)$, inside $Sp_{2n(m+i)}(\BA)$, given by $t(g,h)$, is not symmetric with respect to $g$ and $h$. We give some details in the case when $i=1$. Instead of starting with a Fourier coefficient corresponding to the unipotent orbit $((2n-1)^m,1^{2n+m})$ of the group $\Sp_{2n(m+1)}$, we start with a Fourier coefficient corresponding to the orbit $((2n-1)^{2n+m},1^m)$ of the group $\Sp_{2n(2n+m-1)}$. Recall that $m$ is an even number. In both cases, the stabilizer contains the same group  $\Sp_m(\BA)\times \Sp_{2n+m}(\BA)$. However, for $g\in \Sp_m(\BA)$, and  $h\in \Sp_{2n+m}(\BA)$, the embedding now is given by 
$t(h,g)$. As in equation \eqref{0.001}, we can form the Fourier coefficient
\begin{equation}\label{0.03}
\mathcal{F}_\psi(E(f_{\Delta(\tau,2n+m-1),s}))(h,g)=\\ 
\int\limits_{U(F)\backslash
	U(\BA)}E(f_{\Delta(\tau,
	2n+m-1),s},ut(h,g))\psi^{-1}_H(u)du.
\end{equation}
Here, $U$ is a certain unipotent subgroup of $\Sp_{2n(2n+m-i)}$, and $\psi_H$ is a character of $U({\BA})$. Pairing this Fourier coefficient against a function $\varphi_\sigma(g)$, as above,  we obtain the identity
\begin{equation}\label{0.04}
\mathcal{E}(f_{\Delta(\tau,m+1),s},\varphi_\sigma)(h)=
\int\limits_{\Sp_m(F)\backslash
	\Sp_m(\BA)}\mathcal{F}_\psi(E(f_{\Delta(\tau,
	2n+m-1),s}))(h,g)\varphi_\sigma(g)dg.
\end{equation}
In other words, we expect two different ways to obtain the {\sl same} Eisenstein series 
$\mathcal{E}(f_{\Delta(\tau,m+1),s},\varphi_\sigma)$. The first is by using Theorem A, and the second is by interchanging the roles of $g$ and $h$, and then forming a similar construction. 

There are certain applications of the identities given by Theorems A and B, which we intend to study. The first is the study of poles of the various Eisenstein series involved in these identities. Thus, if $E(\Lambda(f_{\Delta(\tau, m+i),s},\varphi_\sigma))$ has a pole at $s=s_0$, then $E(f_{\Delta(\tau,m+i),s})$ must have a pole at $s=s_0$, and then we can relate the leading terms in the Laurent expansions via the identity. We note that the poles of $E(f_{\Delta(\tau,m+i),s})$ are well studied in \cite {JLZ13}. Of course, the pole $s=s_0$ of $E(\Lambda(f_{\Delta(\tau, m+i),s},\varphi_\sigma))$ might come from a pole of the section $\Lambda(f_{\Delta(\tau, m+i),s},\varphi_\sigma)$, and then we have to separate these poles out. Moreover, we hope to use the above two identities to study the  Fourier coefficients which the residue representations support. We believe that  the additional way, as in  \eqref{0.04}, of obtaining the same Eisenstein series  $E(\Lambda(f_{\Delta(\tau, m+i),s},\varphi_\sigma))$, will play a role in the study of these problems.

In many constructions which use certain small representations, such as residues of Eisenstein series, a crucial ingredient is the knowledge of the top unipotent orbit which these representations have. Even for an 
Eisenstein series induced from cuspidal data, attached to a maximal parabolic subgroup, this information is not known in general. For example, consider the Eisenstein series $\mathcal{E}(f_{\Delta(\tau,m+1),s},\varphi_\sigma)$. This Eisenstein series, on  $Sp_{4n+m}({\BA})$, is associated with the parabolic induction from $\tau|\text{det}\cdot|^s\times\sigma^\iota$.
It is well known that this Eisenstein series has a simple pole if the product of partial $L$ - functions $L^S(\tau\times\sigma,s)L^S(\tau,\wedge^2,2s)$ has a simple pole at $s=1/2$. It is conjectured, that the unipotent orbit associated with this residue is
$((2n+m),2n)$. See Conjecture (FC) in \cite{GJR04}. We hope to use Theorem A to study this conjecture.

In a forthcoming work of ours, we will use the identities as in Theorem B to analyze the maximal unipotent orbits supported by the two Eisenstein series $E(f_{\Delta(\tau,i+1)\gamma_\psi,s})$, and $E(f_{\Delta(\tau,i+1),s})$ at their various poles. For example, given a Fourier coefficient, which is nontrivial on the residue at a given pole $s_0$ of the Eisenstein series on the r.h.s. of \eqref{0.3}, and corresponds to a unipotent orbit for $\Sp_{2ni}$, we get a corresponding unipotent orbit for $\Sp_{2n(i+1)}$ supported by the residue at $s_0$ of $E(f_{\Delta(\tau,i+1),s})$. We will also combine both identities of Theorems A, B to find explicit relations of various Rankin-Selberg integrals to those of the doubling method. See \cite{GS19}, for the general philosophy and some concrete examples.

In the next section, we will state precisely the main theorems of this paper. Sections 2-4 deal with Theorem A (and its analogues for metaplectic groups and split orthogonal groups), and Sections 5-8 deal with Theorem B (and its analogues). 

We thank the referee, and appreciate his effort, for his very careful reading throughout the whole paper. He made many important, useful suggestions, comments and queries, regarding the mathematical content, as well as the presentation of the paper.

\section{Statement of the main theorems}

Let $H$ be a symplectic group, viewed as an algebraic group over $F$, or a special orthogonal group, split over $F$. In this section, we consider Eisenstein series on $H(\BA)$, or on the metaplectic double cover of a symplectic group, induced from a Speh representation. We will introduce an explicit kernel function
obtained by considering a certain Fourier coefficient of such an Eisenstein series. In the linear case, it turns out to be an automorphic function on a product $L(\BA)\times R(\BA)$, where $L$ and $R$ are  two split classical groups over $F$, and $L$ is a direct
factor of the Levi part of a maximal parabolic subgroup of $R$. In case of metaplectic groups, $R(\BA)$ and $L(\BA)$ should be replaced by their double covers. Then we pair this kernel function against cusp forms on $L(\BA)$. Our first main theorem will identify such a kernel integral as an Eisenstein series on $R(\BA)$. Later on, we will consider  Bessel coefficients, or  Fourier-Jacobi coefficients on the Eisenstein series above (on $H(\BA)$). They are of the same form as the ones studied in \cite{GRS11} (descent). Viewed as automorphic functions on the adele points of the group $H'$,  stabilizing the character defining the Fourier coefficient, our second main theorem will identify this descent, as a similar type Eisenstein series on $H'(\BA)$, induced from a similar Speh representation, but of length shorter by 1. \\
\\
{\bf 1. The groups}\\

Let $F$ be a number field and $\BA$ its ring of adeles. We will
consider symplectic groups $\Sp_{2k}$ and split orthogonal groups
$\SO_k$ as algebraic groups defined over $F$. Let $v$ be a place of $F$, and $F_v$ the completion of $F$ at $v$. We will realize these groups over $F_v$ as matrix groups in the following standard way. Let $w_k$ denote the $k\times k$
permutation matrix which has $1$ along the main anti-diagonal. Denote by $\bar{F_v}$ an algebraic closure of $F_v$. Then
the corresponding matrix algebraic groups are
\begin{equation}\label{1.1}
 \Sp_{2k}(\bar{F_v})=\{g\in \GL_{2k} (\bar{F_v})\ |\
{}^tg\begin{pmatrix}&w_k\\-w_k\end{pmatrix}g=\begin{pmatrix}&w_k\\-w_k\end{pmatrix}
\}
\end{equation}

$$
 \SO_k(\bar{F_v})=\{g\in \SL_k (\bar{F_v})\ |\ {}^tgw_kg=w_k\}.
$$
Similarly, for an algebraic closure of $F$, we have the groups $\GL_{2k}(\bar{F})$, $\Sp_{2k}(\bar{F})$, $\SO_k(\bar{F})$, etc.  We will oftentimes denote $\Sp_{2k}$, or $\SO_k$ by $H_\ell$, where $\ell$ is the
number of variables of the corresponding anti-symmetric, or
symmetric form. When we define subgroups of one the groups above, which are defined over $F$, e.g. parabolic subgroups, we will use, for short, the notation  $H_\ell$ for anyone of the groups $H_\ell(\bar{F_v})$, or $H_\ell(\bar{F})$, and similarly for  $\GL_k$ etc. For example, here is such a definition of a subgroup of $H_\ell$. Let $j\leq [\frac{\ell}{2}]$. Then we denote by $\widehat{\GL_j}$ the subgroup of $H_\ell$ consisting of the elements
\begin{equation}\label{1.1'}
\hat{a}=\begin{pmatrix}a\\&I_{\ell-2j}\\&&a^*\end{pmatrix},\quad a\in \GL_j,
\end{equation}
where, for $a\in \GL_j$, 
\begin{equation}\label{1.1''}
a^*=w_j{}^ta^{-1}w_j.
\end{equation}
We don't mention $\ell$ in this definition. This will always be clear from the context. As usual, we denote by $H_\ell(F_v)$  the subgroup of matrices in $H_\ell(\bar{F_v})$ with coordinates in $F_v$, and similarly for $H_\ell(F)$. Now, we can form the adele groups $H_\ell(\BA)$.\\
For each place $v$, we fix a maximal compact subgroup $K_{H_\ell(F_v)}$ of $H_\ell(F_v)$. When $v$ is finite, we will take it to be $H_\ell(\mathcal{O}_v)$, where $\mathcal{O}_v$ is the ring of integers inside $F_v$. We denote
\begin{equation}\label{1.1*}
K_{H_\ell(\BA)}=\prod_v K_{H_\ell(F_v)}\subset H_\ell(\BA).
\end{equation}

We will consider also double covers of symplectic groups over the local fields $F_v$. We will denote these groups by $\Sp^{(2)}_{2k}(F_v)$. In case $F_v=\BC$, $\Sp_{2k}^{(2)}(\BC)=\Sp_{2k}(\BC)\times 1$. Otherwise, we realize these double covers as $\Sp^{(2)}_{2k}(F_v)\times \{\pm 1\}$, using the normalized Ranga Rao cocycle, corresponding to the standard Siegel parabolic subgroup (\cite{Rao93}). See, for example, \cite{JS07}, Sec. 2.1. Although $\Sp^{(2)}_{2k}(F_v)$ is not the group of $F_v$ - points of an algebraic group defined over $F_v$, we will apply the language of algebraic groups by saying that the $F_v$ - points of $\Sp_{2k}^{(2)}$ is the group $\Sp_{2k}^{(2)}(F_v)$. 
We know that unipotent subgroups $U_v$ of $\Sp_{2k}(F_v)$ split in $\Sp^{(2)}_{2k}(F_v)$, and if $U_v$ consists of upper triangular unipotent matrices, then the Ranga Rao cocycle is trivial on $U_v\times U_v$. Thus, $U_v\times 1$ is a subgroup of $\Sp^{(2)}_{2k}(F_v)$.  We will identify $U_v$ and $U_v\times 1$. Also, we know that there is a finite set of places $S_0=S_{0,F}$ containing the Archimedean places, such that, for $v\notin S_0$,   there is a unique embedding 
$\zeta_v: K_{\Sp_{2k}(F_v)}\rightarrow \Sp^{(2)}_{2k}(F_v)$ of the form 
\begin{equation}\label{1.1.0}
\zeta_v(r)=(r,\lambda_v(r)). 
\end{equation}
See \cite{MVW87}, p. 43. See also \cite{Sw90}, Prop. 1.6.5, where the function $\lambda_v$ is described. We will denote  $K_{\Sp^{(2)}_{2k}(F_v)}=\zeta_v(K_{\Sp_{2k}(F_v)})$. In \cite{Sw90}, Prop. 1.6.6, Sweet extends the function $\lambda_v$ to $\Sp_{2k}(F_v)$. In particular, he shows that $\lambda_v$ is trivial on $Q_k(F_v)$ ($Q_k$ is the Siegel parabolic subgroup of $\Sp_{2k}$). In the places $v\in S_0$, we will take $K_{\Sp^{(2)}_{2k}(F_v)}$ to be the inverse image of $K_{\Sp_{2k}(F_v)}$ inside $\Sp^{(2)}_{2k}(F_v)$. Also, for $v\in S_0$, we define (as Sweet does in \cite{Sw90}) $\lambda_v=1$, as a function on $\Sp_{2k}(F_v)$.  
Consider the restricted direct product 
$\widetilde{\Sp}_{2k}(\BA)=\prod'_v Sp^{(2)}_{2k}(F_v)$, with respect to the groups $\{K_{\Sp^{(2)}_{2k}(F_v)}\}_{v\notin S_0}$. Then the double cover $\Sp^{(2)}_{2k}(\BA)$ is the quotient of $\widetilde{\Sp}_{2k}(\BA)$ by the subgroup 
\begin{equation}\label{1.x}
C'=\{\Pi'_v(I_{2k},\mu_v)\in \widetilde{\Sp}_{2k}(\BA) \ |\ \Pi_v\mu_v=1\}.
\end{equation}
See \cite{JS07}, Sec. 2.2. Let 
$$
p=p_{2k}:\widetilde{\Sp}_{2k}(\BA)\rightarrow \Sp^{(2)}_{2k}(\BA),
$$ 
denote the quotient map. 
\begin{equation}\label{1.x.1}
p(\Pi'_v(g_v,\mu_v))=C'\Pi'_v(g_v,\mu_v).
\end{equation}
The kernel of $p$ is
\begin{equation}\label{1.1.0.a}
C_2=\{C'\Pi_v'(I_{2k},\mu_v)\in \Sp^{(2)}_{2k}(\BA)\}\cong \{\pm 1\}.
\end{equation}
Denote 
\begin{equation}\label{1.1.a}
K_{\Sp^{(2)}_{2k}(\BA)}=p(\prod_v K_{\Sp^{(2)}_{2k}(F_v)}).
\end{equation}
The group $\Sp_{2k}(F)$ embeds "diagonally" in $\Sp^{(2)}_{2k}(\BA)$ by
\begin{equation}\label{1.1.0.b}
\gamma\mapsto C'\Pi'_v(\gamma,1).
\end{equation}
We will identify $\Sp_{2k}(F)$ as a subgroup of $\Sp^{(2)}_{2k}(\BA)$, and oftentimes, we will denote the r.h.s. of \eqref{1.1.0.b} simply by $(\gamma,1)$ or even by $\gamma$. It will be convenient to denote by $C^{(1)}_2$ the trivial subgroup of $H_m(\BA)=H_m^{(1)}(\BA)$, and
\begin{equation}\label{1.1.0.c}
C_2^{(2)}=C_2\subset \Sp^{(2)}_{2k}(\BA).
\end{equation}
We don't mention $m$ or $k$ in our notation, and there won't be any confusion. Thus, in both cases we may considet the quotient $C^{(\epsilon)}H_m(F)\backslash H_m^{(\epsilon)}(\BA)$.

When we consider an irreducible, automorphic, cuspidal representation $\sigma$ of $\Sp^{(2)}_{2k}(\BA)$, we always assume that it is genuine. This means that an element $C'\cdot \Pi'_v(I_{2k},\mu_v)$ acts by multiplication by $\Pi_v\mu_v$. The represntation $\sigma$ decomposes as a restricted tensor product of local irreducible genuine representations $\sigma_v$ of $\Sp^{(2)}(F_v)$, $\otimes'_v\sigma_v$, in the sense that 
\begin{equation}\label{1.1.b}
\sigma\circ p\cong \otimes'_v\sigma_v.
\end{equation}
We will use sometimes the following  notation. Let $\Pi'_v(g_v,\mu_v)\in  \widetilde{\Sp}_{2k}(\BA)$. Let $g=\Pi'_vg_v\in \Sp_{2k}(\BA)$, and let $\bar{\mu}$ denote the sequence $(\mu_v)_v$. Note, that for almost all $v\notin S_0$, $g_v\in K_{\Sp_{2k}(F_v)}$ and $\mu_v=\lambda_v(g_v)$. We will denote
\begin{equation}\label{1.1.c}
(g,\bar{\mu})=\Pi'_v(g_v,\mu_v).
\end{equation}
Sometimes we will denote, for short,
\begin{equation}\label{1.1.d}
\sigma(p((g,\bar{\mu}))):=\sigma((g,\bar{\mu})).
\end{equation}
For an $F$- subgroup $U$ of $\Sp_{2k}$, consisting of upper unipotent matrices, we will identify $U(\BA)$ with the subgroup of elements $p((u,\bar{1}))$, where $u\in U(\BA)$, and $\bar{1}$ is the sequence with coordinate $1$, at all places $v$. We will usually denote $p((u,\bar{1}))$ by $(u,1)$, or by $u$.

In order to unify notation, let $\epsilon=1,2$. We will consider the groups $H_\ell^{(1)}=H_\ell$, and the groups $H_\ell^{(2)}$, only when $H_\ell$ is symplectic. In both cases, we will use the notation $H^{(\epsilon)}_\ell$ with the agreement that when $H_\ell$ is orthogonal then $\epsilon=1$. We will refer to the case where $\epsilon=1$ as the linear case. We will refer to the case where $\epsilon=2$ (and then $H_\ell$ is symplectic) as the metaplectic case.

We will also consider, when $F_v\neq \BC$, the double cover $\GL_n^{(2)}(F_v)$ of $\GL_n(F_v)$, obtained as the set of pairs $(g,\mu)$, $g\in \GL_n(F_v)$, $\mu=\pm 1$, with multiplication given by
\begin{equation}\label{1.1**}
(g_1,\mu_1)\cdot (g_2,\mu_2)=(g_1g_2,\mu_1\mu_2(\det(g_1)\det(g_2))),
\end{equation}
where $(\det(g_1),\det(g_2))=(\det(g_1),\det(g_2))_v$ is the local Hilbert symbol of $F_v$ of the elements $\det(g_1)$ and $\det(g_2)$. When $F_v=\BC$, we take $\GL_n^{(2)}(\BC)=\GL_n(\BC)\times 1$. Fix, for each place $v$, a maximal compact subgroup $K_{\GL_n(F_v)}$ of $\GL_n(F_v)$. When $v$ is finite, take it to be $\GL_n(\mathcal{O}_v)$. When $v$ is odd, $K_{\GL_n(F_v)}\times 1$ is a subgroup of $\GL_n^{(2)}(F_v)$. As before, we denote in this case $K_{\GL^{(2)}_n(F_v)}=K_{\GL_n(F_v)}\times 1$. We will identify this subgroup with $K_{\GL_n(F_v)}$. For the remaining even places $v$, let $K_{\GL^{(2)}_n(F_v)} $ be the inverse image of $K_{\GL_n(F_v)}$ inside of $\GL_n^{(2)}(F_v)$. 

We define the group $\GL^{(2)}_n(\BA)$ similarly. Its elements are pairs $(g,\mu)$, where $g\in \GL_n(\BA)$, and $\mu=\pm 1$, and the multiplication rule is similar to \eqref{1.1**}. 
\begin{equation}\label{1.1***}
(g_1,\mu_1)\cdot (g_2,\mu_2)=(g_1g_2,\mu_1\mu_2(\det(g_1)\det(g_2))),
\end{equation}
where $(\det(g_1),\det(g_2)$ is the global Hilbert symbol. Consider the restricted direct product $\widetilde{\GL}_n(\BA)=\prod'_v\GL_n^{(2)}(F_v)$, with respect to the groups $K_{\GL^{(2)}_n(F_v)}$, $v$ odd. Then $\GL_n^{(2)}(\BA)$ is the image of the homomorphism from $\widetilde{\GL}_n(\BA)$ given by 
\begin{equation}\label{1.y}
p'=p'_n:\Pi'_v(g_v,\mu_v)\mapsto (\Pi'_vg_v,\Pi_v\mu_v). 
\end{equation}
Note that the inverse image, $M_n^{(2)}(\BA)$, of the Levi subgroup $M_n(\BA)=\widehat{\GL}_n(\BA)$ of the Siegel parabolic subgroup $Q_n(\BA)$ of $\Sp_{2n}(\BA)$, is isomorphic to $\GL^{(2)}_n(\BA)$. Indeed, let $j=1,2$, $a_j\in \GL_n(\BA)$, and $\bar{\mu}_j=(\mu_{j,v})_v$, $\mu_{j,v}=\pm 1$, such that $(\hat{a}_j,\bar{\mu}_j)\in \widetilde{\Sp}_{2n}(\BA)$. Note that this implies that $\mu_{j,v}=1$, for almost all $v$ (since $\lambda_v$ is trivial on $\widehat{\GL}_n(\mathcal{O}_v)$). We have in $\widetilde{\Sp}_{2n}(\BA)$,
\begin{equation}\label{1.1.1}
(\hat{a}_1,\bar{\mu}_1)(\hat{a}_2,\bar{\mu}_2)=
(\widehat{a_1a_2},\bar{\mu}_1\bar{\mu}_2\Pi'_v(\det(a_{1,v}),\det(a_{2,v}))_v).
\end{equation}
Here, $\Pi'_v(\det(a_{1,v}),\det(a_{2,v}))_v$ denotes the sequence of local Hilbert symbols\\
$(\det(a_{1,v}),\det(a_{2,v}))_v$. Thus, the map $p((\hat{a},\bar{\mu}))\mapsto(a,\Pi_v\mu_v)$ defines an isomorphism from $M_n^{(2)}(\BA)$ onto $\GL_n^{(2)}(\BA)$. 
Similarly, $M_n^{(2)}(F_v)$ is isomorphic to $\GL^{(2)}_n(F_v)$. We will abuse notation a little and write in \eqref{1.1.1} $(\det(a_1),\det(a_2))$, instead of 
the $\Pi'_v(\det(a_{1,v}),\det(a_{2,v}))_v$, writing
\begin{equation}\label{1.1.1'}
(\hat{a}_1,\bar{\mu}_1)(\hat{a}_2,\bar{\mu}_2)=
(\widehat{a_1a_2},\bar{\mu}_1\bar{\mu}_2(\det(a_1),\det(a_2))).
\end{equation}
There will not be any confusion. We will identify
$M^{(2)}_n(F_v)$ with $\GL_n^{(2)}(F_v)$, and $M_n^{(2)}(\BA)$ with $\GL^{(2)}_n(\BA)$. For $a\in \GL_n(\BA)$, we will denote $p((\hat{a},\bar{\mu}))$ also by $(\hat{a},\Pi_v\mu_v)$, and, sometimes, also, by $\widehat{(a, \Pi_v\mu_v)}$. In particular, if $\mu_v=1$, for all $v$, we will denote this element of $M_n^{(2)}(\BA)$ by $(\hat{a},1)$.
Again, we will unify notation and denote by $\GL_n^{(\epsilon)}(F_v)$ the group $\GL_n(F_v)$, when $\epsilon=1$, and the group $\GL_n^{(2)}(\BA)$, when $\epsilon=2$. We will similarly use the notation $\GL_n^{(\epsilon)}(\BA)$.

We fix positive integers $n, m$, and a nonnegative integer $i$. We are mainly interested in the case where $i$ is positive, but the definitions and results make sense also for $i=0$. As will be seen, the case $i=0$ corresponds to the generalized doubling method of \cite{CFGK17}. We will consider soon certain Eisenstein series on $H^{(\epsilon)}_{2n(m+i)}(\BA)$. In case this group is symplectic, or metaplectic, we assume that $m$ is even. When convenient,
we will shorten our notation by putting $H=H^{(\epsilon)}_{2n(m+i)}$, and
$r_H=2n(m+i)$. We let $\delta_H=1$ when $H$ is orthogonal, and
$\delta_H=-1$, when $H$ is symplectic, or metaplectic. When $H$ is orthogonal, and $m$ is even, we will assume that $m\geq 4$. (The reason is that we will soon consider cuspidal representations of $H_m(\BA)$. The case of split $\SO_2$ is problematic, since it doesn't have nontrivial unipotent radicals.) When $H$ is orthogonal and $m$ is odd, we will allow $m=1$. As will be clear in the sequel, all arguments work here nicely. 

We denote the standard basis
of $F^{2n(m+i)}$
by
$$
 \{e_1,...,e_{n(m+i)}, e_{-n(m+i)},...,e_{-1}\}.
$$
Assume that $H$ is linear. For positive integers, $k_1,...,k_\ell$, such that
$k_1+\cdots+k_\ell\leq \frac{r_H}{2}$, let
$Q_{k_1,...,k_\ell}=Q^{(1)}_{k_1,...,k_\ell}=Q_{k_1,...,k_\ell}^H$ denote the standard
parabolic subgroup of $H$, whose Levi part is isomorphic to
$\GL_{k_1}\times\cdots\times \GL_{k_\ell}\times H'$, where $H'$ is a
classical group of the same type as $H$. Denote the corresponding
Levi part by $M_{k_1,...,k_\ell}=M_{k_1,...,k_\ell}^H$, and
unipotent radical by $U_{k_1,...,k_\ell}=U_{k_1,...,k_\ell}^H$. When
$k_1=\cdots=k_\ell=k$, we will simply denote $Q_{k^\ell}$,
$M_{k^\ell}$, $U_{k^\ell}$. In the metaplectic case, we consider the analogous subgroups $Q^{(2)}_{k_1,...,k_\ell}(F_v)$, $Q^{(2)}_{k_1,...,k_\ell}(\BA)$, obtained as the inverse image in $H(F_v)$, $H(\BA)$, respectively, of the similar parabolic subgroup of the corresponding symplectic group.

Given positive integers $k_1,...,k_\ell$, such that
$k_1+\cdots+k_\ell=k$, we denote by $P_{k_1,...,k_\ell}$ the
standard parabolic subgroup of $\GL_k$, consisting of upper
triangular block matrices, with diagonal of the form
$diag(g_1,...,g_\ell)$, where $g_i\in\GL_{k_i}$, for $1\leq i\leq
\ell$. We denote the corresponding unipotent radical by
$V_{k_1,...,k_\ell}$. We will denote $Z_n=V_{1^n}$. This is the standard maximal unipotent subgroup of $\GL_n$.

Consider the parabolic subgroup $Q_{m^{n-1}}\subset H$. The elements of $U_{m^{n-1}}$ have the form
\\
\begin{equation}\label{1.2}
u=\begin{pmatrix}I_m&x_1&&&&\star&&&&\star\\&\ddots&&&&\cdots&&&&\cdots\\&&I_m&x_{n-2}&&\star&&&&\star\\
&&&I_m&y_1&y_2&y_3&\star&&\star\\&&&&I_{[\frac{m}{2}]}&&&y'_3&&\star\\&&&&&I_{2(ni+[\frac{m+1}{2}])}&&y'_2&&\star\\
&&&&&&I_{[\frac{m}{2}]}&y'_1&&\star\\&&&&&&&I_m&x'_{n-2}\
\cdots&\star\\&&&&&&&&\ddots&\star\\&&&&&&&&&x'_1\\&&&&&&&&&I_m\end{pmatrix},
\end{equation}
and, of course, $u$ should lie in $H$. In particular, $x_i$ and $y_j$ determine $x'_i$ and $y'_j$, respectively ($1\leq i\leq n-2$, $j=1,2,3$.) In case $H$ is metaplectic, we identify $U_{m^{n-1}}$ as a subgroup of $H$, over local fields, or adeles, by the embedding $u\mapsto (u,1)$. We fix a nontrivial character
$\psi$ of $F\backslash \BA$. It defines the following character
$\psi_H=\psi_{U_{m^{n-1}}}$ of $U_{m^{n-1}}(\BA)$, trivial on
$U_{m^{n-1}}(F)$. Its value on the element $u$ of the form
\eqref{1.2}, with adele coordinates, is
\begin{equation}\label{1.3}
\psi_H(u)=\psi(tr(x_1+\cdots+x_{n-2}))\psi(tr((y_1,y_2,y_3)A_H)),
\end{equation}
where $A_H$ is the following matrix. When $m$ is even,
\begin{equation}\label{1.4}
A_H=\begin{pmatrix}I_{\frac{m}{2}}&0\\0_{(2ni+m)\times \frac{m}{2}}&0_{(2ni+m)\times \frac{m}{2}}\\0&I_{\frac{m}{2}}\end{pmatrix}.
\end{equation}
When $m$ is odd (and hence $H$ is orthogonal),
\begin{equation}\label{1.5}
A_H=\begin{pmatrix}I_{[\frac{m}{2}]}&0&0\\0&0&0\\0&1&0\\0&\frac{1}{2}&0\\0&0&0\\0&0&I_{[\frac{m}{2}]}\end{pmatrix},
\end{equation}
where the second and fifth block rows of zeroes contain each $ni+[\frac{m}{2}]$
rows. We note that $\psi_H$ corresponds to the nilpotent orbit in
the Lie algebra of $H$ corresponding to the partition
$((2n-1)^m, 1^{2ni+m})$. See \cite{MW87}, \cite{GRS03}.\\
Assume that $H$ is linear. The stabilizer of $\psi_H$ in
$M_{m^{n-1}}(\BA)$ is the group of adelic points of an algebraic group over $F$, which we denote by $D=D_{\psi_H}$. It is isomorphic to $H_m\times
H_{2ni+m}$. Note that when $m=2m'$ is even and $H$ is symplectic, this is
$\Sp_{2m'}\times \Sp_{2(ni+m')}$. When $H$ is orthogonal,
this is the group $\SO_{2m'}\times \SO_{2(ni+m')}$. Assume that $m=2m'-1$ is odd, so that $H$ is orthogonal. Then $D$ is isomorphic to
$\SO_{2m'-1}\times \SO_{2ni+2m'-1}$. See \cite{CM93}, Theorem 6.1.3. The elements of $D$ are realized
as
\begin{equation}\label{1.6}
t(g,h)=diag(g^{\Delta_{n-1}},j(g,h),(g^*)^{\Delta_{n-1}}),\ g\in
H_m, \quad h\in H_{2ni+m},
\end{equation}
where $g^{\Delta_{n-1}}=diag(g,...,g)$ ($n-1$ times), and $j(g,h)$
is as follows.\\
Assume that $m=2m'$ is even. Then $g\in H_{2m'}$ and $h\in H_{2(ni+m')}$, and
$H$ is either symplectic, or orthogonal. Write
$g=\begin{pmatrix}a&b\\c&d\end{pmatrix}$, where $a,...,d$ are
$m'\times m'$ matrices. Then
\begin{equation}\label{1.7}
j(g,h)=\begin{pmatrix}a&&b\\&h\\c&&d\end{pmatrix},\ g\in H_{2m'},
h\in H_{2(ni+m')}.
\end{equation}
Let $X^\pm_{m'}$ be the subspaces spanned by
$$
e_{\pm(2m'(n-1)+1)},e_{\pm(2m'(n-1)+2)},...,e_{\pm(2m'(n-1)+m')},
$$ 
and let $Y^\pm_{ni+m'}$ be the subspace spanned by
$$
e_{\pm(2m'(n-1)+m'+1},
e_{\pm(2m'(n-1)+m'+2},...,e_{\pm(2m'(n-1)+2m'+ni)}. 
$$
Denote
$$
X_m=X^+_{m'}+X^-_{m'}, \quad Y_{2ni+m}=Y^+_{ni+m'}+Y^-_{ni+m'}. 
$$
Then
$j(g,h)$ acts on $X_m$ according to $g$, and acts on $Y_{2ni+m}$
according to $h$.\\
Assume that $m=2m'-1$ is odd. Then $H=\SO_{2n(2m'-1+i)}$, $g\in \SO_{2m'-1}=H_m$, $h\in
H_{2ni+2m'-1}\cong\SO_{2ni+2m'-1}$. In this case, we will write the elements
of $H_{2ni+2m'-1}$ with respect to the symmetric matrix
$w'_{2ni+2m'-1}$, where
\begin{equation}\label{1.7.1}
w'_{2ni+2m'-1}=\begin{pmatrix}&&w_{ni+m'-1}\\&-1\\w_{ni+m'-1}\end{pmatrix},
\end{equation}
so that 
$$
H_{2ni+2m'-1}=\{g\in \SL_{2ni+2m'-1}\ | \
{}^tgw'_{2ni+2m'-1}g=w'_{2ni+2m'-1}\}.
$$
Write
$$
g=\begin{pmatrix}a_1&b_1&c_1\\a_2&b_2&c_2\\a_3&b_3&c_3\end{pmatrix},\
h=\begin{pmatrix}A_1&B_1&C_1\\A_2&B_2&C_2\\A_3&B_3&C_3\end{pmatrix},
$$
where the first and third block rows (resp. columns) of $g$ contain
each $m'-1$ rows (resp. columns), and similarly, the first and third
block rows (resp. columns) of $h$ contain each $ni+m'-1$ rows (resp.
columns). Then
\begin{equation}\label{1.8}
j(g,h)=\begin{pmatrix}a_1&0&\frac{1}{2}b_1&b_1&0&c_1\\0&A_1&\frac{1}{2}B_1&-B_1&C_1&0\\
a_2&A_2&\frac{1}{2}(b_2+B_2)&b_2-B_2&C_2&c_2\\\frac{1}{2}a_2&-\frac{1}{2}A_2&\frac{1}{4}(b_2-B_2)&\frac{1}{2}(b_2+B_2)&-\frac{1}{2}C_2&\frac{1}{2}c_2\\
0&A_3&\frac{1}{2}B_3&-B_3&C_3&0\\a_3&0&\frac{1}{2}b_3&b_3&0&c_3\end{pmatrix}.
\end{equation}
Let $X^\pm_{m'-1}$ be the subspaces spanned by
$$
e_{\pm((2m'-1)(n-1)+1)},e_{\pm((2m'-1)(n-1)+2)},...,e_{\pm((2m'-1)(n-1)+m'-1)},
$$
and let $Y^\pm_{ni+m'-1}$ be the subspaces spanned by
$$
e_{\pm((2m'-1)(n-1)+m')},
e_{\pm((2m'-1)(n-1)+m'+1)},...,e_{\pm((2m'-1)(n-1)+ni+2m'-2)}. 
$$
Denote
$$
X_m=X^+_{m'-1}+F(e_{n(m+i)}+\frac{1}{2}e_{-n(m+i)})+X^-_{m'-1},
$$
$$
Y_{2ni+m}=Y^+_{ni+m'-1}+F(e_{n(m+i)}-\frac{1}{2}e_{-n(m+i)})+Y^-_{ni+m'-1}.
$$
Then $j(g,h)$ acts on $X_m$ according to $g$, and it acts on
$Y_{2ni+m}$ according to $h$. More precisely, for $x^\pm\in
X^\pm_{m'-1}$, and $z\in F$,
$$
j(g,h)(x^++z(e_{n(m+i)}+\frac{1}{2}e_{-n(m+i)})+x^-)=(x')^++z'(e_{n(m+i)}+\frac{1}{2}e_{-n(m+i)})+(x')^-,
$$
where $(x')^\pm \in X^\pm_{m'-1}$, $z'\in F$, such that
$$
g\begin{pmatrix}x^+\\z\\x^-\end{pmatrix}=\begin{pmatrix}(x')^+\\z'\\(x')^-\end{pmatrix},
$$
where we identify $x^\pm$ with its column vector of $m'-1$
coordinates according to the basis above of $X^\pm_{m'-1}$.
Similarly, for $y^\pm \in Y^\pm_{ni+m'-1}$, and $z\in F$,
$$
j(g,h)(y^++z(e_{n(m+i)}-\frac{1}{2}e_{-n(m+i)})+y^-)=(y')^++z'(e_{n(m+i)}-\frac{1}{2}e_{-n(m+i)})+(y')^-,
$$
where $(y')^\pm \in Y^\pm_{ni+m'-1}$, $z'\in F$, such that
$$
h\begin{pmatrix}y^+\\z\\y^-\end{pmatrix}=\begin{pmatrix}(y')^+\\z'\\(y')^-\end{pmatrix},
$$
where we identify $y^\pm$ with its column vector of $ni+m'-1$
coordinates according to the basis above of $Y^\pm_{ni+m'-1}$.\\
Assume that $H$ is metaplectic. Then the analogue of \eqref{1.6} is given by the homomorphism (over $F_v$)
$$
t_v^{(2)}:\Sp_{2m'}^{(2)}(F_v)\times \Sp_{2ni+2m'}^{(2)}(F_v)\rightarrow \Sp_{2n(2m'+i)}^{(2)}(F_v)
$$
given by
\begin{equation}\label{1.9}
t_v^{(2)}((g,\alpha),(h,\beta))=(t(g,h),\alpha\beta (x_1(g),x_2(h))),
\end{equation}
where $t(g,h)$ is given by \eqref{1.6}, $\alpha,\beta=\pm 1$, and $x_1=x_{1,v},x_2=x_{2,v}$ are the Ranga Rao $x$-functions on $\Sp_{2m'}(F_v)$, $\Sp_{2ni+2m'}(F_v)$, respectively. See \cite{Rao93}, Lemma 5.1. They take values in $F_v^*/(F_v^*)^2$; $(x_1(g),x_2(h))=(x_{1,v}(g),x_{2,v}(h))_v$ is the Hilbert symbol. The kernel of the homorphism \eqref{1.9} is 
$$
\{((I_{2m'},\epsilon),(I_{2ni+2m'},\epsilon))\ |\epsilon=\pm 1 \}.
$$
See, also, \cite{JS07}, Sec. 2.1. The adele version of \eqref{1.9} is similar.
Then the collection $\{t_v^{(2)}\}$ defines the homomorphism
$$
\tilde{t}:\widetilde{\Sp}_{2m'}(\BA)\times \widetilde{\Sp}_{2ni+2m'}(\BA)\rightarrow \widetilde{\Sp}_{2n(2m'+i)}(\BA),
$$
such that, for each place $v$, $\tilde{t}$ on $\Sp_{2m'}^{(2)}(F_v)\times \Sp_{2ni+2m'}^{(2)}(F_v)$ is $t^{(2)}_v$. See \cite{JS07}, Sec. 2.2.
 Thus, we have the homomorphism
$$
t^{(2)}:\Sp^{(2)}_{2m'}(\BA)\times \Sp^{(2)}_{2ni+2m'}(\BA)\rightarrow \Sp^{(2)}_{2n(2m'+i)}(\BA),
$$
\begin{equation}\label{1.9.1''}
t^{(2)}(p((g,\bar{\alpha})),p((h,\bar{\beta})))=p((t(g,h),\bar{\alpha}\bar{\beta} (x_1(g),x_2(h)))).
\end{equation}
We used the notation \eqref{1.1.c}. Also, $t(g,h)$ is the element whose coordinate at $v$ is $t(g_v,h_v)$, and $(x_1(g),x_2(h))$ is the element in $\BA^*$, whose coordinate at $v$ is $(x_{1,v}(g_v),x_{2,v}(h_v))_v$.

In order to unify and ease our notations, we will re-denote, when convenient, $t(g,h)$, $j(g,h)$, in the linear case, by $t^{(1)}(g,h)$, $j^{(1)}(g,h)$. Similarly, we will re-denote, in the linear case, the stabilizer of $\psi_H$ by $D^{(1)}$, and in the metaplectic case, we will denote by $D^{(2)}(F_v)$ the image of the homorphism \eqref{1.9}. When there is no risk of confusion, we will simply denote $t(g,h)=t^{(\epsilon)}(g,h)$, $D=D^{(\epsilon)}$, $\epsilon=1,2$.\\  
\\
{\bf 2. Eisenstein series}\\

Let $\tau$ be an irreducible, automorphic, cuspidal representation
of $\GL_n(\BA)$. Assume that $\tau$ has a unitary central character $\omega_\tau$.
Let $\Delta(\tau, m+i)$ be the Speh representation of
$\GL_{n(m+i)}(\BA)$. This representation is spanned by the (multi-)
residues of Eisenstein series corresponding to the parabolic induction
from
$$
\tau|\det\cdot|^{s_1}\times
\tau|\det\cdot|^{s_2}\times\cdots\times
\tau|\det\cdot|^{s_{m+i}},
$$
at the point
$$
(\frac{m+i-1}{2},\frac{m+i-3}{2},...,-\frac{m+i-1}{2}).
$$
In general, for a positive integer $a$, divisible by $n$, $a=n\ell$, with $\ell>1$, $\Delta(\tau,\ell)$ is an irreducible, unitary, automorphic representation of $\GL_a(\BA)$, which appears with multiplicity one in the discrete, non-cuspidal part of $L^2(\GL_a(F)\backslash \GL_a(\BA))_\chi$, where $\chi=\omega_\tau^\ell$. As we run over the positive integers $n$ and $\ell>1$, such that $a=n\ell$, and $\tau$, as above, with $\chi=\omega_\tau^\ell$, the sum of all $\Delta(\tau,\ell)$ exhausts the non-cuspidal part of $L^2(\GL_a(F)\backslash \GL_a(\BA))_\chi$. This is proved in \cite{MW89}.

Consider the Eisenstein series on $H(\BA)$, $E(f_{\Delta(\tau,
m+i)\gamma^{(\epsilon)}_\psi,s})$, corresponding to a smooth, holomorphic section
$f_{\Delta(\tau,m+i)\gamma^{(\epsilon)}_\psi,s}$ of the parabolic induction
$$
\rho_{\Delta(\tau,m+i)\gamma^{(\epsilon)}_\psi,s}=\Ind_{Q^{(\epsilon)}_{n(m+i)}(\BA)}^{H(\BA)}\Delta(\tau,
m+i)\gamma^{(\epsilon)}_\psi|\det\cdot|^s.
$$
Here, $\epsilon=1,2$, according to whether $H$ is linear, or metaplectic. When $\epsilon=1$, $H$ is linear and $\gamma^{(1)}_\psi=1$. When $\epsilon=2$, $H(\BA)=\Sp^{(2)}_{2n(2m'+i)}(\BA)$, and $\gamma^{(2)}_\psi=\gamma_\psi\circ \det$ is the Weil factor attached to $\psi$, composed with the determinant. This is a character of the double cover of $\GL_{n(2m'+i)}(\BA)$.
In this case, $\Delta(\tau,m+i)\gamma^{(2)}_\psi$ is the representation of $\GL_{n(m+i)}^{(2)}(\BA)$, acting by right translations on the space of automorphic functions functions 
$(g,\mu)\mapsto \mu \gamma_\psi(\det(g))\xi(g)$, where $\xi$ is an automorphic form in the space of $\Delta(\tau,m+i)$. Recall that $M_{n(m+i)}^{(2)}(\BA)$ (the inverse image of the Levi part of $Q_{n(m+i)}(\BA)$) is isomorphic to $\GL^{(2)}_{n(m+i)}(\BA)$. See \eqref{1.1.1}. When $\epsilon=2$, we will re- denote $\Delta(\tau,m+i)\gamma^{(2)}_\psi$ by $\Delta(\tau,m+i)\gamma_\psi$.
We will denote the value at $h$ of our Eisenstein series by $E(f_{\Delta(\tau, m+i)\gamma^{(\epsilon)}_\psi,s},h)$.
Consider the Fourier coefficient of $E(f_{\Delta(\tau, m+i)\gamma^{(\epsilon)}_\psi,s})$
along $U_{m^{n-1}}$ with respect to the character $\psi_H$, and
view it as a function on $D(\BA)=D_{\psi_H}(\BA)$,\\
\\
$\mathcal{F}_\psi(E(f_{\Delta(\tau, m+i)\gamma^{(\epsilon)}_\psi,s}))(g,h)=$
\begin{equation}\label{1.9.1}
\int_{U_{m^{n-1}}(F)\backslash
U_{m^{n-1}}(\BA)}E(f_{\Delta(\tau,
m+i)\gamma^{(\epsilon)}_\psi,s},ut(g,h))\psi^{-1}_H(u)du,
\end{equation}
where $g\in H^{(\epsilon)}_m(\BA)$, $h\in H^{(\epsilon)}_{2ni+m}(\BA)$. Recall that in the metaplectic case, we identify $U_{m^{n-1}}(\BA)$ as a subgroup of $H(\BA)$. Since $D(\BA)$
stabilizes $\psi_H$, the function \eqref{1.9.1} is automorphic on
$D(\BA)$.

Let $\sigma$ be an irreducible, automorphic, cuspidal representation
of $H^{(\epsilon)}_m(\BA)$. In the metaplectic case, we always assume that $\sigma$ is genuine. One of the main objects of study in this paper is the
following kernel integral,
\begin{equation}\label{1.10}
\mathcal{E}(f_{\Delta(\tau,m+i)\gamma^{(\epsilon)}_\psi,s},\varphi_\sigma)(h)=
\int_{C_2^{(\epsilon)}H_m(F)\backslash
H^{(\epsilon)}_m(\BA)}\mathcal{F}_\psi(E(f_{\Delta(\tau,
m+i)\gamma^{(\epsilon)}_\psi,s}))(g,h)\varphi_\sigma(g)dg,
\end{equation}
where $\varphi_\sigma$ is in the space of $\sigma$, and $h\in
H^{(\epsilon)}_{2ni+m}(\BA)$. This is an automorphic function on
$H^{(\epsilon)}_{2ni+m}(\BA)$. Note that in the metaplectic case, \eqref{1.10} is equal to\\
\\
$\mathcal{E}(f_{\Delta(\tau,m+i)\gamma_\psi,s},\varphi_\sigma)(h)=$
\begin{equation}\label{1.10.1}
\int_{\Sp_{2m'}(F)\backslash \Sp_{2m'}(\BA)}\mathcal{F}_\psi(E(f_{\Delta(\tau,
	m+i)\gamma_\psi,s}))(\tilde{g},h)\varphi_\sigma(\tilde{g})dg.
\end{equation} 
Here, $\tilde{g}$ is any element in $\Sp_{2m'}^{(2)}(\BA)$, which projects to $g$ in $\Sp_{2m'}(\BA)$, and $h\in \Sp^{(2)}_{2(m'+ni)}(\BA)$. Note that the integration in \eqref{1.10.1} is over the linear group due to the cancellation of the cocycles in the integrand.
 
Our first main goal is to identify
$\mathcal{E}(f_{\Delta(\tau, m+i)\gamma^{(\epsilon)}_\psi,s}, \varphi_\sigma)$ as an
Eisenstein series on $H_{2ni+m}^{(\epsilon)}(\BA)$, parabolically induced from
$\Delta(\tau, i)\gamma^{(\epsilon)}_\psi|\det\cdot|^s\times \sigma^\iota$, where $\sigma^\iota$ is the following representation obtained from $\sigma$. Assume, first, that $H$ is linear. Then $\sigma^\iota$ is the following outer conjugation. Let $J_0=J_{0;m,n}$ be the following matrix. 
Assume that $m(n-1)$ is even. Then
\begin{equation}\label{1.10.1'}
J_0=\begin{pmatrix}&&I_{[\frac{m}{2}]}\\&I_{m-2[\frac{m}{2}]}\\-\delta_HI_{[\frac{m}{2}]}\end{pmatrix}.
\end{equation}
Assume that $m(n-1)$ is odd, i.e. $m=2m'-1$ is odd and $n$ is even. Then
\begin{equation}\label{1.10.1''}
J_0=\begin{pmatrix}&&-I_{m'-1}\\&1\\I_{m'-1}\end{pmatrix}.
\end{equation}
Define, for $b\in H_m(\BA)$,
\begin{equation}\label{1.10.1.1}
b^\iota=J_0^{-1}bJ_0,\quad \sigma^\iota(b)=\sigma(b^\iota).
\end{equation} 
In the metaplectic case, let $m=2m'$, and consider the matrix $J_0$ as above. Note that now,
$$
J_0=\begin{pmatrix}&I_{m'}\\I_{m'}\end{pmatrix},
$$
and consider, as in \eqref{1.10.1.1}, the automorphism of $H_m=\Sp_{2m'}$ given by $b\mapsto b^\iota=J_0^{-1}bJ_0$. We will soon lift this automorphism, over each local field, to $\Sp^{(2)}_{2m'}(F_v)$. We use the same notation, as in \eqref{1.10.1.1}, and similarly define $\sigma^\iota$. We will show in Sec. 3
that, for $Re(s)$ sufficiently large,
\begin{equation}\label{1.10.1.2}
\mathcal{E}(f_{\Delta(\tau,
m+i)\gamma^{(\epsilon)}_\psi,s},\varphi_\sigma)(h)=\sum_{\gamma\in Q_{ni}(F)\backslash
H_{2ni+m}(F)}\Lambda(f_{\Delta(\tau,
m+i)\gamma^{(\epsilon)}_\psi,s},\varphi_\sigma)(\gamma h),
\end{equation}
where\\
\\
$\Lambda(f_{\Delta(\tau, m+i)\gamma^{(\epsilon)}_\psi,s},\varphi_\sigma)(h)=$
\begin{equation}\label{1.10.1.3}
\int_{C_2^{(\epsilon)}\backslash H^{(\epsilon)}_m(\BA)}\varphi_\sigma(g)\int_{U'_{m(n-1)}(\BA)}
f^\psi_{\Delta(\tau, m+i)\gamma^{(\epsilon)}_\psi,s}(\delta_0ut(g,h))\psi^{-1}_H(u)dudg.
\end{equation}
Here, $U'_{m(n-1)}$ is a certain subgroup of $U_{m(n-1)}$,
$\delta_0$ is a certain element in $H(F)$ (written explicitly in \eqref{3.21.1} - \eqref{3.21.2}); the upper $\psi$ on the section denotes a composition of the section with a Fourier
coefficient on $\Delta(\tau,m+i)$. This Fourier coefficient is along $V_{ni,m^n}$, with respect to the character
$$
\begin{pmatrix}I_{ni}&\star&\star&\star&\cdots&\star\\&I_m&x_1&\star&\cdots&\star\\&&I_m&x_2&\cdots&\star\\&&&\ddots\\&&&&I_m&x_{n-1}\\
&&&&&I_m\end{pmatrix}\mapsto\psi(tr(x_1+x_2+\cdots+x_{n-1})).
$$
We prove in Sec. 3, 4 that if we assume further that $f_{\Delta(\tau,
	m+i)\gamma^{(\epsilon)}_\psi,s}$ is $K_{H(\BA)}$ - finite, then $\Lambda(f_{\Delta(\tau,
m+i)\gamma^{(\epsilon)}_\psi,s},\varphi_\sigma)$ is a smooth, meromorphic section of
\begin{equation}\label{1.10.2}
\Ind_{Q^{(\epsilon)}_n(\BA)}^{H^{(\epsilon)}_{2ni+m}(\BA)}\Delta(\tau, i)\gamma^{(\epsilon)}_\psi|\det\cdot|^s\times
\sigma^\iota. 
\end{equation}
Note the case $i=0$. Here, in \eqref{1.10.1.2} there is no summation. It reads 
$$
\mathcal{E}(f_{\Delta(\tau,
	m)\gamma^{(\epsilon)}_\psi,s},\varphi_\sigma)(h)=\Lambda(f_{\Delta(\tau,
	m)\gamma^{(\epsilon)}_\psi,s},\varphi_\sigma)(h)=
$$
$$
\int_{C_2^{(\epsilon)}\backslash H^{(\epsilon)}_m(\BA)}\varphi_\sigma(g)\int_{U'_{m(n-1)}(\BA)}
f^\psi_{\Delta(\tau, m)\gamma^{(\epsilon)}_\psi,s}(\delta_0ut(g,h))\psi^{-1}_H(u)dudg,
$$
and \eqref{1.10.2} says that $\Lambda(f_{\Delta(\tau,
	m)\gamma^{(\epsilon)}_\psi,s},\varphi_\sigma)$ is smooth, meromorphic and takes values in $\sigma^\iota$. 

Let us write the precise form of $\sigma^\iota$ in \eqref{1.10.2}, in the metaplectic case. For this, we first write the lift to $\Sp_m^{(2)}(\BA)$ of the automorphism $b\mapsto b^\iota$ of $\Sp_m(\BA)$, $m=2m'$. Let
$$
\bar{u}_0=\begin{pmatrix}I_m\\I_m&I_m\end{pmatrix}\in \Sp_{4m'}(F).
$$
We denote by $c=c_v$ the normalized Ranga Rao cocycle of $\Sp^{(2)}_{2m'}(F_v)$, without mentioning the rank of the corresponding symplectic group. This will always be clear from the context. For the definition of $c$, see \cite{Rao93}, Theorem 5.3. Define, for each local field $F_v$, for $(b,\epsilon)\in \Sp_m^{(2)}(F_v)$,
\begin{equation}\label{1.10.2.1}
(b,\epsilon)^\iota=(b^\iota,\epsilon c(\bar{u}_0,j(b^\iota,b))(x(b^\iota),x(b)).
\end{equation}
Here, $x$ is the Ranga Rao $x$-function on $\Sp_m(F_v)$. Note that when we write 
$$
b=\begin{pmatrix}b_1&b_2\\b_3&b_4\end{pmatrix},
$$
with $b_i$ being $m'\times m'$ blocks, then
$$
j(b^\iota,b)=\begin{pmatrix}b_4&&&b_3\\&b_1&b_2\\&b_3&b_4\\b_2&&&b_1\end{pmatrix}.
$$
It is an exercise to check that \eqref{1.10.2.1} defines an automorphism (of order two), and that, for $v\notin S_0$, we have, for each $r\in \Sp_{2m'}(\mathcal{O}_v)$,
\begin{equation}\label{1.10.2.2}
\zeta_v(r)^\iota=\zeta_v(r^\iota).
\end{equation}
See \eqref{1.1.0}. This defines the automorphism of $\widetilde{\Sp}_{2m'}(\BA)$
\begin{equation}\label{1.10.2.3}
(\Pi'_v(g_v,\epsilon_v))^\iota=\Pi'_v((g_v,\epsilon_v)^\iota).
\end{equation}
Since it is trivial on $C'$, it defines an automorphisn of $\Sp^{(2)}(\BA)$ by
\begin{equation}\label{1.10.2.4}
(C'\Pi'_v(g_v,\epsilon_v))^\iota=C'\Pi'_v((g_v,\epsilon_v)^\iota).
\end{equation}
We leave it to the reader to show that, for $\gamma\in \Sp_{2m'}(F)$, embedded in $\Sp^{(2)}_{2m'}(\BA)$, as in \eqref{1.1.0.b},
\begin{equation}\label{1.10.2.5}
(C'\Pi'_v(\gamma,1))^\iota=C'\Pi'_v(\gamma^\iota,1),
\end{equation}
which in our shorthand notation, we also rewrite as
\begin{equation}\label{1.10.2.6}
(\gamma,1)^\iota=(\gamma^\iota,1).
\end{equation}
 We define, for $g\in \Sp^{(2)}_{2m'}(\BA)$
\begin{equation}\label{1.10.2.7}
\sigma^\iota(g))=\sigma(g^\iota). 
\end{equation}
The inducing representation in \eqref{1.10.2} in the metaplectic case is 
\begin{equation}\label{1.10.3}
p((\begin{pmatrix}a\\&b\\&&a^*\end{pmatrix},\bar{\alpha}))\mapsto 
\gamma_\psi(\det(a))(\det(a),x(b))|\det(a)|^s\Delta(\tau,i)(a)\otimes \sigma^\iota(p(b,\bar{\alpha})),
\end{equation}
for $a\in \GL_{ni}(\BA)$, $b\in \Sp_{2m'}(\BA)$. Recall that $\bar{\alpha}$ denotes a sequence indexed by the places $v$ with coordinates $\pm 1$.

Our identity can be formulated in terms of normalized
Eisenstein series. Let $S$ be a finite set of places of $F$,
containing the infinite ones, outside which $\sigma$, $\tau$
and $\psi$ are unramified. Assume that our section is decomposable,
unramified outside $S$, and normalized in a way which we don't
specify now (see \eqref{4.2.1*}). Let us multiply our given Eisenstein series on $H(\BA)$
by its normalizing factor outside $S$, $d_\tau^{H,S}(s)$. We list them for convenience.
\begin{equation}\label{1.10.4}
d_\tau^{\Sp_{2n(2j+1)},S}(s)=L^S(\tau,s+j+1)\prod_{k=1}^{j+1}L^S(\tau,\wedge^2,2s+2k-1)\prod_{k=1}^jL^S(\tau,sym^2,2s+2k);
\end{equation}
\begin{equation}\label{1.10.5}
d_\tau^{\Sp_{4nj},S}(s)=L^S(\tau,s+j+\frac{1}{2})\prod_{k=1}^{j}L^S(\tau,\wedge^2,2s+2k)L^S(\tau,sym^2,2s+2k-1);
\end{equation}
\begin{equation}\label{1.10.6}
d_\tau^{\Sp^{(2)}_{2n(2j+1)},S}(s)=\prod_{k=1}^{j}L^S(\tau,\wedge^2,2s+2k)\prod_{k=1}^{j+1}L^S(\tau,sym^2,2s+2k-1);
\end{equation}
\begin{equation}\label{1.10.7}
d_\tau^{\Sp^{(2)}_{4nj},S}(s)=\prod_{k=1}^{j}L^S(\tau,\wedge^2,2s+2k-1)L^S(\tau,sym^2,2s+2k);
\end{equation}
\begin{equation}\label{1.10.8}
d_\tau^{\SO_{2n(2j+1)},S}(s)=\prod_{k=1}^{j+1}L^S(\tau,\wedge^2,2s+2k-1)\prod_{k=1}^{j}L^S(\tau,sym^2,2s+2k);
\end{equation}
\begin{equation}\label{1.10.9}
d_\tau^{\SO_{4nj},S}(s)=\prod_{k=1}^jL^S(\tau,\wedge^2,2s+2k)L^S(\tau,sym^2,2s+2k-1).
\end{equation}
Denote
\begin{equation}\label{1.10.9*}
E_S^*(f_{\Delta(\tau,
m+i)\gamma_\psi^{(\epsilon)},s},h)=d_\tau^{H,S}(s)E(f_{\Delta(\tau, m+i)\gamma_\psi^{(\epsilon)},s},h).
\end{equation}
This is a normalized (outside $S$) Eisenstein series.\\ 
Let $d_{\sigma,\tau}^{H_{2ni+m}^{(\epsilon),S}}(s)$ be the normalizing factor, outside $S$, corresponding to an Eisenstein series attached to the global induced representation \eqref{1.10.2}. Explicitly,
\begin{equation}\label{1.10.9.1}
d_{\sigma,\tau}^{\Sp_{2n(2j+1)+m},S}(s)=L^S(\sigma\times\tau,s+j+1)\prod_{k=1}^{j+1}L^S(\tau,\wedge^2,2s+2k-1)\prod_{k=1}^jL^S(\tau,sym^2,2s+2k);
\end{equation} 
\begin{equation}\label{1.10.9.2}
d_{\sigma,\tau}^{\Sp_{4nj+m},S}(s)=L^S(\sigma\times\tau,s+j+\frac{1}{2})\prod_{k=1}^jL^S(\tau,\wedge^2,2s+2k)L^S(\tau,sym^2,2s+2k-1);
\end{equation}
\begin{equation}\label{1.10.9.3}
d_{\sigma,\tau}^{\Sp^{(2)}_{2n(2j+1)+m},S}(s)=L_\psi^S(\sigma\times\tau,s+j+1)\prod_{k=1}^jL^S(\tau,\wedge^2,2s+2k)\prod_{k=1}^{j+1}L^S(\tau,sym^2,2s+2k-1);
\end{equation}
\begin{equation}\label{1.10.9.4}
d_{\sigma,\tau}^{\Sp^{(2)}_{4nj+m},S}(s)=L_\psi^S(\sigma\times\tau,s+j+\frac{1}{2})\prod_{k=1}^jL^S(\tau,\wedge^2,2s+2k-1)L^S(\tau,sym^2,2s+2k);
\end{equation}
\begin{equation}\label{1.10.9.5}
d_{\sigma,\tau}^{\SO_{2n(2j+1)+2m'},S}(s)=L^S(\sigma\times\tau,s+j+1)\prod_{k=1}^{j+1}L^S(\tau,\wedge^2,2s+2k-1)\prod_{k=1}^jL^S(\tau,sym^2,2s+2k);
\end{equation}
\begin{equation}\label{1.10.9.6}
d_{\sigma,\tau}^{\SO_{4nj+2m'},S}(s)=L^S(\sigma\times\tau,s+j+\frac{1}{2})\prod_{k=1}^jL^S(\tau,\wedge^2,2s+2k)L^S(\tau,sym^2,2s+2k-1);
\end{equation}
\begin{equation}\label{1.10.9.7}
d_{\sigma,\tau}^{\SO_{2n(2j+1)+2m'-1},S}(s)=L^S(\sigma\times\tau,s+j+1)\prod_{k=1}^jL^S(\tau,\wedge^2,2s+2k)\prod_{k=1}^{j+1}L^S(\tau,sym^2,2s+2k-1);
\end{equation}
\begin{equation}\label{1.10.9.8}
d_{\sigma,\tau}^{\SO_{4nj+2m'-1},S}(s)=L^S(\sigma\times\tau,s+j+\frac{1}{2})\prod_{k=1}^jL^S(\tau,\wedge^2,2s+2k-1)L^S(\tau,sym^2,2s+2k).
\end{equation}
Recall that in the symplectic, or metaplectic cases $m$ is even. Also, in the metaplectic case, $\sigma$ is a genuine representation of $\Sp^{(2)}_m(\BA)$, and the $L$-function of $\sigma$ twisted by $\tau$ depends on $\psi$. This why we denote it  $L_\psi^S(\sigma\times \tau,s)$. We will denote in general  $L_{\epsilon,\psi}^S(\sigma\times\tau,s)$, so that when $\epsilon=1$, this is just  $L^S(\sigma\times\tau,s)$, and when $\epsilon=2$, this is  $L_\psi^S(\sigma\times \tau,s)$. Let
$$
\lambda_S(f_{\Delta(\tau,
m+i)\gamma_\psi^{(\epsilon)},s},\varphi_\sigma)=\frac{d_\tau^{H,S}(s)}{d_{\sigma,\tau}^{H_{2ni+m}^{(\epsilon),S}}(s)}\Lambda(f_{\Delta(\tau,
m+i)\gamma_\psi^{(\epsilon)},s},\varphi_\sigma).
$$
We prove in Sec. 4 that, for $f_{\Delta(\tau,
	m+i)\gamma_\psi^{(\epsilon)},s}$ $K_{H(\BA)}$-finite (decomposable and normalized) and $\varphi_\sigma$ decomposable, the section  $\lambda_S(f_{\Delta(\tau,
m+i)\gamma_\psi^{(\epsilon)},s},\varphi_\sigma)$ is decomposable, and it is normalized,
so that its component at $v$ outside $S$ is such that its value at 1
is a pre-chosen unramified vector in $\Delta(\tau_v,i)\gamma_\psi^{(\epsilon)}\otimes\sigma_v$.
Consider then \eqref{1.10} in a normalized form
\begin{equation}\label{1.10.10}
\mathcal{E}^*_S(f_{\Delta(\tau, m+i)\gamma_\psi^{(\epsilon)},s},\varphi_\sigma)(h)=
\int_{C_2^{(\epsilon)}H_m(F)\backslash
H^{(\epsilon)}_m(\BA)}\mathcal{F}_\psi(E^*_S(f_{\Delta(\tau,
m+i)\gamma_\psi^{(\epsilon)},s}))(g,h)\varphi_\sigma(g)dg.
\end{equation}
Then \eqref{1.10.1.2} says that for $Re(s)$ sufficiently large,\\
\\
$\mathcal{E}^*_S(f_{\Delta(\tau, m+i)\gamma_\psi^{(\epsilon)},s},\varphi_\sigma)(h)=$
\begin{equation}\label{1.10.11}
d_{\sigma,\tau}^{H_{2ni+m}^{(\epsilon),S}}(s)\sum_{h'\in
Q_n(F)\backslash H_{2ni+m}(F)}\lambda_S(f_{\Delta(\tau,
m+i)\gamma_\psi^{(\epsilon)},s},\varphi_\sigma)(h'h).
\end{equation}
The right hand side of \eqref{1.10.11} is the normalized Eisenstein
series (outside $S$)\\
$E^*_S(\lambda_S(f_{\Delta(\tau, m+i)\gamma_\psi^{(\epsilon)},s},\varphi_\sigma))$ on
$H^{(\epsilon)}_{2ni+m}(\BA)$, corresponding to the normalized section
$\lambda_S(f_{\Delta(\tau, m+i)\gamma_\psi^{(\epsilon)},s},\varphi_\sigma)$ of
$\Ind_{Q^{(\epsilon)}_n(\BA)}^{H^{(\epsilon)}_{2ni+m}(\BA)}\Delta(\tau,i)\gamma_\psi^{(\epsilon)}|\det\cdot|^s\times
\sigma^\iota$. Our first main theorem then is the identity
\begin{thm}\label{thm 1.1}
Let $f_{\Delta(\tau,m+i)\gamma_\psi^{(\epsilon)},s}$ be a smooth, holomorphic, $K_{H(\BA)}$-finite section of $\rho_{\Delta(\tau,
	m+i)\gamma_\psi^{(\epsilon)},s}$, and let $\varphi_\sigma$ be a cusp form in the space of $\sigma$. Then we have the identity (of meromorphic functions)
	
$$
\mathcal{E}^*_S(f_{\Delta(\tau,
m+i)\gamma_\psi^{(\epsilon)},s},\varphi_\sigma)=E^*_S(\lambda_S(f_{\Delta(\tau,
m+i)\gamma_\psi^{(\epsilon)},s},\varphi_\sigma)).
$$
The left hand side is given by the kernel integral \eqref{1.10},
normalized by $d_\tau^{H,S}(s)$. The right hand side is the
normalized Eisenstein series on $H^{(\epsilon)}_{2ni+m}(\BA)$ corresponding to
the section $\lambda_S(f_{\Delta(\tau, m+i)\gamma_\psi^{(\epsilon)},s},\varphi_\sigma)$ of
$\Ind_{Q^{(\epsilon)}_n(\BA)}^{H^{(\epsilon)}_{2ni+m}(\BA)}\Delta(\tau,i)\gamma_\psi^{(\epsilon)}|\det\cdot|^s\times
\sigma^\iota$. This section is spherical and normalized outside $S$.
\end{thm}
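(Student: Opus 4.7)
The plan is to first establish the unnormalized identity \eqref{1.10.1.2} for $\Re(s)$ sufficiently large, and then deduce Theorem \ref{thm 1.1} by multiplying through by $d_\tau^{H,S}(s)$ and dividing by $d_{\sigma,\tau}^{H_{2ni+m}^{(\epsilon),S}}(s)$, using the local unramified computation to identify the normalizing factors. The meromorphic continuation and equality as meromorphic functions then follow from Langlands' general theory applied to the Eisenstein series on $H^{(\epsilon)}_{2ni+m}(\BA)$ appearing on the right hand side.

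I would begin by unfolding the Eisenstein series inside the Fourier coefficient \eqref{1.9.1}. Writing $E(f_{\Delta(\tau,m+i)\gamma_\psi^{(\epsilon)},s})$ as a sum over $Q^{(\epsilon)}_{n(m+i)}(F)\backslash H(F)$ and interchanging the order (justified for $\Re(s)$ sufficiently large), the integral decomposes along double cosets $Q^{(\epsilon)}_{n(m+i)}(F)\backslash H(F)/U_{m^{n-1}}(F)D(F)$. The main technical step is to analyze this double coset space and show that the contributions of all but one orbit vanish. For each irrelevant double coset with representative $\gamma$, I would examine the stabilizer of $\psi_H$ inside the conjugated unipotent $\gamma^{-1}Q^{(\epsilon)}_{n(m+i)}\gamma \cap U_{m^{n-1}}$; either the restriction of $\psi_H$ to a unipotent subgroup that is normalized by the inducing data is nontrivial (giving vanishing after inner integration), or the integral reduces to a Fourier coefficient of $\Delta(\tau,m+i)$ along a unipotent radical $V$ of $\GL_{n(m+i)}$ that does not belong to the nilpotent orbit $((m+i)^n)$ supporting $\Delta(\tau,m+i)$, so it vanishes by cuspidality of $\tau$ and the known Fourier-coefficient structure of Speh representations. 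Additionally, cuspidality of $\sigma$ kills those orbits whose analysis reduces to a constant term along a proper parabolic of $H_m$.

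The orbit analysis will single out a unique relevant representative $\delta_0\in H(F)$, on which the resulting integral collapses, after standard manipulations, to a sum over $Q_{ni}(F)\backslash H_{2ni+m}(F)$ in the variable $h$, with inner function exactly the $\Lambda$ defined in \eqref{1.10.1.3}. Here the outer conjugation $\sigma^\iota$ arises naturally from the way the embedding $t(g,h)$ interacts with the filtration, and in the metaplectic case the cocycle calculation \eqref{1.10.2.1} has to be checked locally using the Rao cocycle. Verifying that $\Lambda(f_{\Delta(\tau,m+i)\gamma_\psi^{(\epsilon)},s},\varphi_\sigma)$ transforms correctly under $Q^{(\epsilon)}_{ni}(\BA)$, and hence is a section of \eqref{1.10.2}, amounts to checking left-equivariance under the Levi and unipotent parts; the unipotent piece is absorbed by the inner $U'_{m(n-1)}$ integral, while the Levi piece reads off from the Fourier coefficient $f^\psi$ attached to the partition $((m+i)^n)$ of $\Delta(\tau,m+i)$, where the top $i$ entries contribute $\Delta(\tau,i)$ and the bottom $m$ entries pair with $\sigma$.

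Finally, for the normalization, at each place $v\notin S$ I would choose a spherical vector in $\Delta(\tau_v,m+i)$ and compute the unramified local integral giving $\Lambda_v$ at the identity. This local integral is precisely the one computed in the generalized doubling method of \cite{CFGK17} for $H_m\times \GL_n$, representing $L^S(\sigma\times\tau,s+\cdots)$, divided by the appropriate factors encoded in $d_\tau^{H,S}(s)$. Matching this against the explicit formulas \eqref{1.10.4}--\eqref{1.10.9.8} yields the ratio $d_\tau^{H,S}(s)/d_{\sigma,\tau}^{H_{2ni+m}^{(\epsilon),S}}(s)$, so that $\lambda_S$ is decomposable and unramified outside $S$ with value $1$ at the identity on a pre-chosen unramified vector in $\Delta(\tau_v,i)\otimes \sigma_v$. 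The main obstacle will be the orbit analysis in the first step: enumerating the double cosets, reducing each non-contributing orbit to a Fourier coefficient that vanishes by cuspidality, and in the metaplectic case keeping careful track of the Rao cocycle through all conjugations and changes of variables. Once these vanishing statements and the unramified computation are in place, the identity and its normalized form are immediate.
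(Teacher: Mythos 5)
Your proposal tracks the paper's own proof almost step for step: unfold the Eisenstein series via double cosets, kill non-open cosets by (i) nontrivial character on a normalized unipotent subgroup, (ii) the Fourier-coefficient structure of the Speh representation $\Delta(\tau,m+i)$ (Prop.~\ref{prop 1.2}), and (iii) cuspidality of $\sigma$ killing constant terms; then identify the surviving orbit's contribution as the Eisenstein sum with section $\Lambda$; then do the unramified computation by reducing to the generalized doubling integral of \cite{CFGK17}. This is exactly the paper's route (Sections 2--4, with the double cosets parametrized first by $r$ and then, within the open $r=0$ cell, by $e$).

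One small inaccuracy: the maximal nilpotent orbit supporting $\Delta(\tau,m+i)$ is $(n^{m+i})$, not $((m+i)^n)$ as you wrote; the vanishing argument in Prop.~\ref{prop 2.4} relies on the fact that the conjugated Fourier coefficients produce partitions with a part $2n-k > n$, which exceeds the bound $\underline{P}\le (n^{m+i})$ from Prop.~\ref{prop 1.2}. Also worth being precise that the paper carries out the orbit analysis in two layers (first modulo $Q_{m(n-1)}$ giving the parameter $r$, then inside $r=0$ giving the parameter $e$), with the Speh constraint killing $r>0$ and cuspidality of $\sigma$ killing $e>0$; your sketch runs these together but the mechanisms you list match.
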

The proof of this theorem will occupy the next three sections. 
As we remarked in the introduction, we allow the case $n=1$, where $\tau$ is simply a character of
$F^*\backslash \BA^*$, $\Delta(\tau, m+i)=\tau\circ
\det_{\GL_{m+i}}$ and $\rho_{\Delta(\tau,
m+i)\gamma_\psi^{(\epsilon)},s}=\Ind_{Q^{(\epsilon)}_{(m+i)}(\BA)}^{H^{(\epsilon)}_{2(m+i)}(\BA)}\tau(\det\cdot)\gamma_\psi^{(\epsilon)}
|\det\cdot|^s$. Since now $U_{m^{n-1}}$ is trivial, the
function \eqref{1.9.1} is the restriction of the Eisenstein series
$E(f_{(\tau\circ \det)\gamma_\psi^{(\epsilon)},s},\cdot)$ to $H^{(\epsilon)}_m(\BA)\times
H^{(\epsilon)}_{2i+m}(\BA)$. The Eisenstein
series obtained from \eqref{1.10} corresponds to a section of the
parabolic induction from $(\tau\circ \det_{\GL_i})\gamma_\psi^{(\epsilon)} |\det|^s\times \sigma$ to
$H^{(\epsilon)}_{2i+m}(\BA)$. This section is the analytic continuation of the
following integral, which converges absolutely for $\Re(s)$
sufficiently large,
\begin{equation}\label{1.11}
\Lambda(f_{(\tau\circ\det)\gamma_\psi^{(\epsilon)},s},\varphi_\sigma)(h)=\int_{H_m(\BA)}f_{(\tau\circ\det)\gamma_\psi^{(\epsilon)},s}(t(g,h))\varphi_\sigma(g)dg.
\end{equation}
Note that when $n=1$, $t(g,h)=j(g,h)$. The integral \eqref{1.11} was considered in
\cite{GPSR97}, Sec. 1, for (arbitrary) orthogonal groups and $i=1$ and generalized by Moeglin in \cite{M97} for any $i$ and $H$ linear.

Our second main theorem states roughly that the "descent" of the Eisenstein series parabolically induced from $\Delta(\tau,i+1)|\det\cdot |^s$ is an Eisenstein series parabolically induced from $\Delta(\tau,i)|\det\cdot |^s$. More precisely, let $H$ be one of the groups $\SO_{2n(i+1)}$, $\SO_{2n(i+1)+1}$, $\Sp_{2n(i+1)}$, $\Sp^{(2)}_{2n(i+1)}$. Consider the Eisenstein series on $H(\BA)$, $E(f_{\Delta(\tau,i+1)\gamma_\psi^{(\epsilon)},s})$. Let $j_0=0$, when $H$ is odd orthogonal, and $j_0=1$, when $H$ is even orthogonal, symplectic, or metaplectic. Consider the Fourier coefficient along $U_{1^{n-j_0}}$ with respect to the character (on adele points) 
\begin{equation}\label{1.12}
\psi_{n-j_0}(\begin{pmatrix}z&x&y\\&I_{2ni+1+j_0}&x'\\&&z^*\end{pmatrix})=\psi(\sum_{r=1}^{n-j_0-1}z_{r,r+1})\psi(x_{n-j_0}\cdot e_0).
\end{equation}
Here, $z\in Z_{n-j_0}(\BA)$. (Recall that $Z_k=V_{1^k}$ denotes the standard maximal unipotent subgroup of $\GL_k$); $x_{n-j_0}$ is the last row of $x$ and $e_0$ is the following column vector in $F^{2ni+1+j_0}$.
When $H$ is even orthogonal,
$$
e_0=\begin{pmatrix}0_{ni}\\1\\ \frac{1}{2}\\0_{ni}\end{pmatrix}.
$$
When $H$ is odd orthogonal,
$$
e_0=\begin{pmatrix}0_{ni}\\1\\0_{ni}\end{pmatrix}.
$$
When $H$ is symplectic, or metaplectic,
$$
e_0=\begin{pmatrix}1\\0_{2ni+1}\end{pmatrix}.
$$
Assume that $H$ is linear. The character $\psi_{n-j_0}$ is stabilized by the adele points of the following subgroup of elements:
\begin{equation}\label{1.12.1}
t^{(1)}(h)=\begin{pmatrix}I_{n-j_0}\\&h\\&&I_{n-j_0}\end{pmatrix}\in H_{2n(i+1)+1-j_0},\ he_0=e_0.
\end{equation}
In case $H$ is orthogonal, this is isomorphic to $H_{2ni+j_0}$. In case $H$ is symplectic, we get the semi-direct product of $H_{2ni}=\Sp_{2ni}$ and the Heisenberg group in $2ni+1$ variables $\mathcal{H}_{2ni+1}$. It is realized as the subgroup pf the following elements,
\begin{equation}\label{1.12.2}
t^{(1)}(\begin{pmatrix}1\\&g\\&&1\end{pmatrix})t^{(1)}(\begin{pmatrix}1&x&c\\&I_{2ni}&x'\\&&1\end{pmatrix})\in \Sp_{2n(i+1)}.
\end{equation}
It will be convenient to abuse the notation a little, and re-denote the first factor in \eqref{1.12.2} by $t^{(1)}(g)$. 
The isomorphism with $\mathcal{H}_{2ni+1}$ is given by 
\begin{equation}\label{1.12.2.1}
(x;c)\mapsto t^{(1)}(\begin{pmatrix}1&x&c\\&I_{2ni}&x'\\&&1\end{pmatrix}):=t^{(1)}((x,c)).
\end{equation}
We take $\mathcal{H}_{2ni+1}$ as the group of pairs $(x,c)$, as in \eqref{1.12.2.1} with multiplication defined by
$$
(x,e)\cdot (y,z)=(x+y,e+z+x\begin{pmatrix}&w_{ni}\\-w_{ni}\end{pmatrix}{}^ty).
$$ 
When $H$ is metaplectic, 
in place of \eqref{1.12.2}, we will consider the following subgroup, which is isomorphic to the semi-direct product of $H^{(2)}_{2ni}=\Sp^{(2)}_{2ni}$ and the Heisenberg group $\mathcal{H}_{2ni+1}$. It is realized as follows. Over a local field $F_v$, with $g\in \Sp_{2ni}(F_v)$, $\mu=\pm 1$, $(x,c)\in \mathcal{H}_{2ni+1}(F_v)$,
\begin{equation}\label{1.12.3}
t^{(2)}((g,\mu))t^{(2)}(\begin{pmatrix}1&x&c\\&I_{2ni}&x'\\&&1\end{pmatrix},1):=(t^{(1)}(\begin{pmatrix}1&x&c\\&g&gx'\\&&1\end{pmatrix}),\mu)\in \Sp^{(2)}_{2n(i+1)}(F_v).
\end{equation} 
Over $\BA$, for $(g,\bar{\mu}\in \widetilde{\Sp}_{2ni}(\BA)$, $(x,c)\in \mathcal{H}_{2ni+1}(\BA)$,
\begin{equation}\label{1.12.3'}
t^{(2)}(p((g,\bar{\mu})))t^{(2)}(\begin{pmatrix}1&x&c\\&I_{2ni}&x'\\&&1\end{pmatrix},1):=p((t^{(1)}(\begin{pmatrix}1&x&c\\&g&gx'\\&&1\end{pmatrix}),\bar{\mu}))\in \Sp^{(2)}_{2n(i+1)}(\BA).
\end{equation} 

Assume that $H$ is symplectic. We have the following projection $\beta$ from $U_{1^n}$ onto the Heisenberg group $\mathcal{H}_{2ni+1}$. It is given by
\begin{equation}\label{1.12.4}
\beta(\begin{pmatrix}z&x&y\\&I_{2ni}&x'\\&&z^*\end{pmatrix})=(x_n,;y_{n,1}),
\end{equation}
where $z\in Z_n$, and $x_n$ is the $n$-th row of $x$. Note that  $U_{1^n}=U_{1^{n-1}}\rtimes t^{(1)}(\mathcal{H}_{2ni+1})$. In this case, we extend the character $\psi_{n-1}$ from $U_{1^{n-1}}(\BA)$ to $U_{1^n}(\BA)$ by making it trivial on $t^{(1)}((\mathcal{H}_{2ni+1}(\BA))$. We keep denoting the extension by $\psi_{n-1}$.
As before, it will be convenient to use the notation $t$ for either one of the embeddings $t^{(1)}$, or $t^{(2)}$ when there is no confusion.
We remark that the last embedding $t$ coincides with the previous embedding (\eqref{1.6}, or \eqref{1.9}) only in the case of \eqref{1.6} for orthogonal groups, with $m=1$, so that there, we have $H_1=\SO_1$ is the trivial group, and we get the embedding $t(1,h)$ of $\SO_1\times \SO_{2ni+1}$ inside $\SO_{2n(i+1)}$. 

 Let $H$ be orthogonal.  Define, for $h\in H_{2ni+j_0}(\BA)$,
\begin{equation}\label{1.13}
\mathcal{D}_{\psi,ni}(E(f_{\Delta(\tau,i+1),s}))(h)=\int_{U_{1^{n-j_0}}(F)\backslash U_{1^{n-j_0}}(\BA)}E(f_{\Delta(\tau,i+1),s},ut(h))\psi_{n-j_0}^{-1}(u)du.
\end{equation}
This is an automorphic function on $H_{2ni+j_0}(\BA)$. It is defined by applying a Bessel coefficient to $E(f_{\Delta(\tau,i+1),s})$. The nilpotent orbit corresponding to this Fourier coefficient is associated to the partition $(2n-1,1^{2ni+1})$, when $H$ is even orthogonal, and to the partition $(2n+1,1^{2ni})$, when $H$ is odd orthogonal. For the definition of these Bessel coefficients, see, for example, \cite{GRS11}, Sec. 3.1, where they are called Gelfand-Graev coefficients (see \cite{K85}). These coefficients are also referred to as Bessel coefficients (see \cite{NPS73}). We have chosen the second option, as it seems that this is commonly used nowadays, for example in the Gan-Gross-Prasad conjectures.

Assume that $H$ is symplectic, or metaplectic. In this case, we consider the Fourier-Jacobi coefficient on $E(f_{\Delta(\tau,i+1)\gamma_\psi^{(\epsilon)},s})$, corresponding to the partition\\ 
$(2n,1^{2ni})$. See \cite{GRS11}, Sec. 3.2, for the definition of Fourier-jacobi coefficients. Thus, for $H=\Sp_{2n(i+1)}$, we define, for $\tilde{h}\in \Sp_{2ni}^{(2)}(\BA)$, projecting to $h\in Sp_{2ni}(\BA)$,
\\
\\
$\mathcal{D}^\phi_{\psi,ni}(E(f_{\Delta(\tau,i+1),s}))(\tilde{h})$
\begin{equation}\label{1.14}
=\int_{U_{1^n}(F)\backslash U_{1^n}(\BA)}E(f_{\Delta(\tau,i+1),s},ut(h))\psi_{n-1}^{-1}(u)\theta^\phi_{\psi^{-1}}(\beta(u)\tilde{h})du,
\end{equation}
where $\theta^\phi_{\psi^{-1}}$ is the theta series corresponding to $\phi\in \mathcal{S}(\BA^{ni})$. For $H=\Sp^{(2)}_{2n(i+1)}$, we define, for $h\in \Sp_{2ni}(\BA)$, which is the projection of $\tilde{h}\in Sp^{(2)}_{2ni}(\BA)$,
\\
\\
$\mathcal{D}^\phi_{\psi,ni}(E(f_{\Delta(\tau,i+1)\gamma_\psi,s}))(h)$
\begin{equation}\label{1.15}
=\int_{U_{1^n}(F)\backslash U_{1^n}(\BA)}E(f_{\Delta(\tau,i+1)\gamma_\psi,s},ut(\tilde{h}))\psi_{n-1}^{-1}(u)\theta^\phi_{\psi^{-1}}(\beta(u)\tilde{h})du.
\end{equation}
(In the metaplectic case, we identify $U_{1^n}$ withe $U_{1^n}\times 1$.) We regard \\ 
$\mathcal{D}^\phi_{\psi,ni}(E(f_{\Delta(\tau,i+1)\gamma^{(\epsilon)}_\psi,s}))$
as an automorphic function on $\Sp^{(2)}_{2ni}(\BA)$, or $\Sp_{2ni}(\BA)$, respectively. Recall that, for $\phi\in \mathcal{S}(\BA^{ni})$, $(x,c)\in \mathcal{H}_{2ni+1}(\BA)$, and $p((h,\bar{\mu}))\in \Sp^{(2)}_{2ni}(\BA)$,
\begin{equation}\label{1.15.1}
\theta^\phi_{\psi^{-1}}((x,c)p((h,\bar{\mu})))=\sum_{e\in F^{ni}}\omega_{\psi^{-1}}((x,c)p((h,\bar{\mu}))\phi(e),
\end{equation}
where $\omega_{\psi^{-1}}$ is the Weil representation of $\mathcal{H}_{2ni+1}(\BA)\rtimes \Sp^{(2)}_{2ni}(\BA)$. We will need the following formulas, which we copy from \cite{GRS11}, Sec. 1.2. For $(x;c)$, as above, with $c\in \BA$, write $x=(x_1,x_2)$, where $x_1,x_2\in \BA^{ni}$. Then, for $y\in \BA^{ni}$,
\begin{equation}\label{1.15.2}
\begin{aligned}
\omega_{\psi^{-1}}(0,0;c)\phi(y)&=\psi^{-1}(c)\phi(y)\\
\omega_{\psi^{-1}}(x_1,0;0)\phi(y)&=\phi(y+x_1)\\
\omega_{\psi^{-1}}(0,x_2;0)\phi(y)&=\psi^{-1}(2yw_{ni}{}^tx_2)\phi(y).
\end{aligned}
\end{equation}
For $g\in \Sp_{2ni}(\BA)$, $z\in M_{ni}(\BA)$, such that $w_{ni}z$ is symmetric, and $\epsilon=\pm 1$, using the conventions explained right after \eqref{1.1.1'},
\begin{equation}\label{1.15.3}
\begin{aligned}
\omega_{\psi^{-1}}((\begin{pmatrix}g\\&g^*\end{pmatrix},\epsilon))\phi(y)&=\epsilon \gamma_{\psi^{-1}}(\det g)|\det g|^{\frac{1}{2}}\phi(y\cdot g)\\
\omega_{\psi^{-1}}((\begin{pmatrix}I_{ni}&z\\&I_{ni}\end{pmatrix},\epsilon))\phi(y)&=\epsilon \psi^{-1}(yzw_{ni}{}^ty).
\end{aligned}
\end{equation}
 
When $H$ is even orthogonal, the fact that \eqref{1.13} is an Eisenstein series on $\SO_{2ni+1}(\BA)$, parabolically induced from $\Delta(\tau,i)|\det\cdot|^s$ is a special case of \eqref{1.10.1.2} and Theorem \ref{thm 1.1}. We simply take there $m=1$. We write this down in Sec. 5. The remaining work on our second main result is summarized in the following two theorems and appears in Sec. 6-8.

	\begin{thm}\label{thm 1.3}
		Assume that $H$ is odd orthogonal. Then $\mathcal{D}_{\psi,ni}(E(f_{\Delta(\tau,i+1),s}))$ is a sum of two Eisenstein series on  $SO_{2ni}(\BA)$. The corresponding sections\\ 
		$\Lambda^\pm(f_{\Delta(\tau,i+1),s})$ are given by explicit unipotent adelic integrations of $f_{\Delta(\tau,i+1),s}$, similar to \eqref{1.10.1.3} (without the $dg$-integration). They both define 
		smooth meromorphic sections of $\Ind_{Q_{ni}(\BA)}^{\SO_{2ni}(\BA)}\Delta(\tau,i)
		|\det\cdot|^s$.	Moreover, $\mathcal{D}_{\psi,ni}(E^*_S(f_{\Delta(\tau,i+1),s}))$ is the sum of the two normalized (outside $S$) Eisenstein series on
		$\SO_{2ni}(\BA)$, corresponding to the above two sections.
		\end{thm}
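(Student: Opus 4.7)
The plan is to mirror the strategy used for Theorem \ref{thm 1.1}, proceeding by unfolding, a double coset analysis on $H(F)$, and Fourier expansion on the Speh representation, with the novelty that for $H$ odd orthogonal two open orbits survive rather than one. For $\Re(s)$ sufficiently large, I would substitute the formal sum
$$
E(f_{\Delta(\tau,i+1),s},g)=\sum_{\gamma\in Q_{n(i+1)}(F)\backslash H(F)}f_{\Delta(\tau,i+1),s}(\gamma g)
$$
into the defining integral \eqref{1.13} and collapse the $U_{1^n}(F)$-summation against the $\gamma$-summation. This reduces matters to a double coset decomposition $Q_{n(i+1)}(F)\backslash H(F)/R(F)$, where $R$ is the semidirect product of $U_{1^n}$ with the stabilizer $\{t(h):h\in \SO_{2ni}\}$ of the character $\psi_n$ and of the Bessel vector $e_0$.

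Next I would enumerate representatives for this double coset space by considering the action of $Q_{n(i+1)}(F)\backslash H(F)$ on totally isotropic flags together with the position of the anchor vector $e_0$, and show, as in the descent arguments of \cite{GRS11}, that cuspidality of $\tau$ (inherited through the constant term structure of $\Delta(\tau,i+1)$) forces the contributions of all but finitely many double cosets to vanish: any representative whose associated unipotent period factors through a proper parabolic of $\GL_{n(i+1)}$ kills the inner integration. The crucial geometric step is to verify that in the odd orthogonal setting exactly two orbits survive, corresponding to two Weyl representatives $\delta_0^{+}$ and $\delta_0^{-}$ that differ by the square class arising from the central anisotropic line in $\SO_{2n(i+1)+1}$; in the even orthogonal, symplectic and metaplectic cases (treated in Theorem \ref{thm 1.1} via $m=1$ and in Theorem B) only one such orbit appears, so this dichotomy is the genuinely new feature.

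Having identified the two representatives, I would next conjugate the surviving inner unipotent integrals, absorb those variables that land in the unipotent radical of $Q_{n(i+1)}$ into the section, and recognize the remaining integration over a subgroup $U'$ of $U_{1^n}$ composed with a Fourier integration on the $\GL_{n(i+1)}$-side as exactly the Fourier coefficient along $V_{ni,n}$ with the character
$$
\begin{pmatrix}I_{ni}&\star\\&I_n\end{pmatrix}\longmapsto\psi(\mathrm{tr}(\star)),
$$
which by \cite{GRS11} extracts $\Delta(\tau,i)$ from $\Delta(\tau,i+1)$. This produces two explicit integrals
$$
\Lambda^{\pm}(f_{\Delta(\tau,i+1),s})(h)=\int_{U'(\BA)}f^{\psi}_{\Delta(\tau,i+1),s}(\delta_0^{\pm}ut(h))\psi_n^{-1}(u)\,du
$$
of the form \eqref{1.10.1.3} (with no $dg$-integration), and the remaining outer summation identifies as $\sum_{\gamma\in Q_{ni}(F)\backslash \SO_{2ni}(F)}\Lambda^{\pm}(f_{\Delta(\tau,i+1),s})(\gamma h)$.

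Finally I would check that each $\Lambda^{\pm}$ is a smooth meromorphic section of $\Ind_{Q_{ni}(\BA)}^{\SO_{2ni}(\BA)}\Delta(\tau,i)|\det\cdot|^s$ by verifying the transformation law under $Q_{ni}(\BA)$ from the definition (the modulus character is picked up from the change of variables in $U'$ and $u\in U_{1^n}$), smoothness from Iwasawa, and meromorphicity from the meromorphic continuation of the Speh Fourier functional. For the normalized assertion, I would compute the ratio of normalizing factors using \eqref{1.10.4}--\eqref{1.10.9} and the corresponding $d^{\SO_{2ni},S}_\tau(s)$, parallel to the analysis in Section~4. The main obstacle, and the point where the odd orthogonal case genuinely diverges from the others, is the orbit count in the second step: making precise that exactly two non-negligible double cosets survive and that the two resulting integrals have disjoint, additively combining contributions rather than a single Eisenstein series with a different inducing datum.
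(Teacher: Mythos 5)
Your overall strategy coincides with the paper's: unfold the Eisenstein series, run a double coset analysis, kill all but finitely many cosets via the vanishing of Fourier coefficients of the Speh representation (ultimately Prop.~\ref{prop 1.2}), and identify the surviving terms as Eisenstein series by checking the transformation law under $Q_{ni}(\BA)$ and obtaining meromorphic continuation from the constant term. The paper does the coset analysis in stages (first $Q_{n(i+1)}\backslash H/Q_n$ to isolate $\alpha_0$, then further factoring via $Q_{1^n}$, then $Q^{\SO_{2ni+1}}_{ni}\backslash \SO_{2ni+1}/\SO_{2ni}$), while you propose a single decomposition $Q_{n(i+1)}\backslash H/R$ with $R=U_{1^n}\rtimes t(\SO_{2ni})$; these are equivalent, and doing it in stages makes the vanishing arguments cleaner because it isolates the role of Prop.~\ref{prop 1.2} at each level.

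However, your account of why exactly two orbits survive is misleading and would need to be replaced. It is not a matter of ``two Weyl representatives $\delta_0^\pm$ that differ by the square class arising from the central anisotropic line.'' What actually happens in the paper is: (i)~only the single $\alpha_0$ in $Q_{n(i+1)}\backslash H/Q_n$ survives (as in Theorem~\ref{thm 2.1}), and then (ii)~the double coset space $Q^{\SO_{2ni+1}}_{ni}\backslash \SO_{2ni+1}/\SO_{2ni}$ has \emph{three} elements, represented by $I_{2ni+1}$, an element $\gamma_0$ with $h_{\gamma_0}=\omega_0''$, and a third element $\gamma_1$; the contribution of $\gamma_1$ vanishes because it produces a Fourier coefficient on $\Delta(\tau,i+1)$ attached to the partition $(n+1,1^{ni-1})$, which exceeds $(n^{i+1})$ and so is zero by Prop.~\ref{prop 1.2}. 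The two surviving cosets differ by the outer conjugation $\omega_0'\in \RO_{2ni}\setminus\SO_{2ni}$, which is why the second section is a section of $\rho^{\omega'_0}_{\Delta(\tau,i),s}=\Ind_{Q^{\omega'_0}_{ni}(\BA)}^{\SO_{2ni}(\BA)}\Delta(\tau,i)|\det\cdot|^s$ rather than of $\rho_{\Delta(\tau,i),s}$ directly; nothing resembling a square class obstruction enters. Your description ``exactly two orbits survive'' elides the appearance of the third orbit and its vanishing, which is the nontrivial step. Also, the character you write on $V_{ni,n}$, namely $\begin{pmatrix}I_{ni}&\star\\&I_n\end{pmatrix}\mapsto\psi(\mathrm{tr}(\star))$, does not parse for a non-square block; the Fourier coefficient that extracts $\Delta(\tau,i)$ is along $V_{ni,1^n}$ with the character $\psi_{V_{ni,1^n}}$ of \eqref{6.9} (trivial on the $ni\times n$ block, Whittaker on the $Z_n$ part), which is the $m=1$ specialization of the character in \eqref{1.10.1.3}.
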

	
		\begin{thm}\label{thm 1.4}
		Assume that $H$ is symplectic (resp. metaplectic). Then\\ $\mathcal{D}^\phi_{\psi,ni}(E(f_{\Delta(\tau,i+1)\gamma_\psi^{(\epsilon)},s}))$ is an Eisenstein series on  $Sp^{(2)}_{2ni}(\BA)$ (resp. $\Sp_{2ni}(\BA)$). The corresponding section $\Lambda(f_{\Delta(\tau,i+1)\gamma_\psi^{(\epsilon)},s},\phi)$ is given by an explicit unipotent adelic integration of $f_{\Delta(\tau,i+1)\gamma_\psi^{(\epsilon)},s}$. It defines a smooth meromorphic section of\\ $\Ind_{Q^{(2)}_{ni}(\BA)}^{\Sp^{(2)}_{2ni}(\BA)}\Delta(\tau,i)\gamma_{\psi^{-1}}
		|\det\cdot|^s$ (resp. $\Ind_{Q_{ni}(\BA)}^{\Sp_{2ni}(\BA)}\Delta(\tau,i)
		|\det\cdot|^s$ ).	Moreover,\\ 
		$\mathcal{D}_{\psi,ni}^\phi(E^*_S(f_{\Delta(\tau,i+1)\gamma_\psi^{(\epsilon)},s}))$ is the normalized (outside $S$) Eisenstein series on
		$\Sp^{(2)}_{2ni}(\BA)$ (resp. $\Sp_{2ni}(\BA)$), corresponding to the section $\Lambda(f_{\Delta(\tau,i+1)\gamma_\psi^{(\epsilon)},s},\phi)$.
	\end{thm}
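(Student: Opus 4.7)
\textbf{Proof plan for Theorem \ref{thm 1.4}.} The approach parallels the proof of Theorem \ref{thm 1.1}, with the extra work required by the Fourier--Jacobi setup and the metaplectic cocycles. For $\Re(s)$ sufficiently large, I would write the Eisenstein series $E(f_{\Delta(\tau,i+1)\gamma_\psi^{(\epsilon)},s})$ as the absolutely convergent sum over cosets in $Q_{n(i+1)}(F)\backslash H(F)$, substitute into the Fourier--Jacobi coefficient \eqref{1.14} (respectively \eqref{1.15}), and analyze the resulting double-coset space $Q_{n(i+1)}(F)\backslash H(F)/R(F)$, where $R$ is the subgroup generated by $U_{1^n}$, the stabilizer of $\psi_{n-1}$ in the relevant Levi, and the image under $t$ of the lower-rank symplectic group. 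A Bruhat analysis, combined with the wave-front description of $\Delta(\tau,i+1)$ (supported on the orbit $((i+1)^n)$, cf.\ \cite{MW87, CM93, JLZ13}) and the vanishing criterion for too-large Fourier coefficients on Speh representations, should leave a single non-negligible orbit with a well-chosen Weyl representative $\delta_0\in H(F)$.

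On the surviving orbit, a sequence of root exchanges collapses the inner integration over the unipotent radical of $Q_{n(i+1)}$ together with the ``free'' coordinates of $U_{1^n}$ into a degenerate Whittaker-type Fourier coefficient on $\Delta(\tau,i+1)$, along the standard unipotent $V_{n^{i+1}}$ of $\GL_{n(i+1)}$ and with character sending its upper-block coordinates $x_1,\ldots,x_i$ to $\psi(\mathrm{tr}(x_1+\cdots+x_i))$. This is the Whittaker-like functional whose image realizes $\Delta(\tau,i)$ inside the Jacquet module of $\Delta(\tau,i+1)$ along the parabolic with Levi $\GL_n\times\GL_{ni}$, and it provides the correct inducing data on the smaller group. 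This computation is of the same type as the one that leads from \eqref{1.10} to the section \eqref{1.10.1.3}.

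The Fourier--Jacobi twist $\theta^\phi_{\psi^{-1}}(j(u)\tilde h)$ is handled by expanding the theta series as $\sum_{\xi\in F^{ni}}\omega_{\psi^{-1}}(j(u)\tilde h)\phi(\xi)$ and performing an additional root exchange: the integration over the Heisenberg coordinates of $U_{1^n}$ then produces the Weil-representation action on $\tilde h$ together with an insertion of $\phi$ inside the resulting section. Careful bookkeeping of the Rao cocycle and Weil factors at this step is what converts the twist $\gamma_\psi^{(\epsilon)}$ on the $\GL$-part of the inducing data of $H$ into the desired twist $\gamma_{\psi^{-1}}$ (when $\epsilon=1$) or its absence (when $\epsilon=2$) on the output inducing representation, via the identity $\gamma_\psi\gamma_{\psi^{-1}}=1$. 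Collecting the pieces and meromorphically continuing in $s$ yields the claimed equality with the Eisenstein series on $\Sp^{(2)}_{2ni}(\BA)$ (resp.\ $\Sp_{2ni}(\BA)$), and the normalized version follows from an unramified computation as in Section 4, matching $d_\tau^{H,S}(s)$ on the left against $d_\tau^{\Sp^{(2)}_{2ni},S}(s)$ (resp.\ $d_\tau^{\Sp_{2ni},S}(s)$) times the $L$-factors dropped by the constant term at $Q_{ni}$.

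The main obstacle will be the systematic vanishing of the non-relevant double cosets: for each would-be Weyl representative one must exhibit a unipotent subgroup along which the induced integration computes a Fourier coefficient on $\Delta(\tau,i+1)$ associated with a partition strictly larger than, or incomparable with, $((i+1)^n)$, and then invoke the wave-front bound. Secondary but delicate is the metaplectic cocycle accounting, to ensure the descent actually lies in the correct genuine (or non-genuine) induced representation.
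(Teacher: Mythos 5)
Your overall architecture matches the paper's: unfold the Eisenstein series along the cosets of $Q_{n(i+1)}$, kill non-open cells via wave-front bounds on $\Delta(\tau,i+1)$, unwind the theta series to turn the Heisenberg integration into a Weil-representation twist $\omega_{\psi^{-1}}(j(u)\tilde h)\phi(0)$, account for $\gamma^{(\epsilon)}_\psi$ via $\gamma_\psi\gamma_{\psi^{-1}}=1$, and compute the unramified local integral to match normalizing factors. (The paper organizes the Bruhat step as $Q_{n(i+1)}\backslash H/Q_n$ followed by further factorizations modulo $Q_{1^n}$, $U_{1^n}$ and $\Sp_{2ni}$; your single-shot quotient by a larger $R$ is a cosmetic difference.)

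There is, however, a genuine gap in your description of the Fourier coefficient applied to $\Delta(\tau,i+1)$. You take the coefficient along $V_{n^{i+1}}$ with the full character $\psi(\mathrm{tr}(x_1+\cdots+x_i))$ on all superdiagonal blocks. That is the scalar-valued Whittaker--Speh--Shalika functional attached to the partition $((i+1)^n)$; it collapses $\Delta(\tau,i+1)$ to a model of dimension one as a $V_{n^{i+1}}$-representation and, in particular, does not leave a copy of $\Delta(\tau,i)$ on $\GL_{ni}$ acting on anything. If taken literally, the resulting object cannot be a section of $\Ind_{Q_{ni}}^{\Sp_{2ni}}\Delta(\tau,i)|\det\cdot|^s$ (respectively the metaplectic version): the inducing representation must be the actual $\Delta(\tau,i)$, not its one-dimensional WSS quotient. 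What the paper uses instead is the coarser coefficient $f^\psi_{\Delta(\tau,i+1),s}$ along $V_{ni,1^n}$ with character $\psi_{V_{ni,1^n}}$ that is trivial on the $ni\times n$ block and is the Whittaker character only on the lower $Z_n$ part. Equivalently: constant term along $V_{ni,n}$ (which produces $\Delta(\tau,i)\otimes\tau$ up to twists, cf.\ Lemma 4.1 of \cite{JL13}), followed by the Whittaker coefficient of the $\tau$ factor only, leaving $\Delta(\tau,i)$ intact as a representation of $\GL_{ni}(\BA)$. This is what makes $\Lambda(f,\phi)$ an honest element of the induced space. The finer $V_{i^n}$ coefficient on the $\Delta(\tau,i)$ part is applied only afterwards, in the local unramified computation, where it identifies the Jacquet integral and lets one invoke the (metaplectic) Casselman--Shalika formula. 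You would need to repair this: replace $V_{n^{i+1}}$ by $V_{ni,1^n}$ and drop the character on the $ni$-block, or equivalently invoke the $V_{ni,n}$-constant-term plus $Z_n$-Whittaker composition; otherwise the assertion that the output is a section of $\rho_{\Delta(\tau,i)\gamma_{\psi^{-1}},s}$ does not follow from the argument as written.

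One further small point: the key invariance that lets you insert the Heisenberg coordinates for free (i.e., promote the $V_{ni+1,1^{n-1}}$ coefficient to the $V_{ni,1^n}$ one) requires the analogue of Prop.\ \ref{prop 3.1}, a wave-front argument showing the extra Fourier directions vanish on $\Delta(\tau,i+1)$; you mention root exchange but not this invariance lemma, which is where Prop.\ \ref{prop 1.2} is actually used in the open cell.
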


As we remarked in the introduction, we allow the case $n=1$, where $\tau$ is a character of
$F^*\backslash \BA^*$, $\Delta(\tau, i+1)=\tau\circ
\det_{\GL_{i+1}}$. Note that when $H=\SO_{2i+2}$, \eqref{1.13} is simply the restriction from $\SO_{2i+2}(\BA)$ to $\SO_{2i+1}(\BA)$ and then $\mathcal{D}_{\psi,i}(E(f_{\tau\circ
	\det_{\GL_{i+1}},s}))$
is an Eisenstein series on $\SO_{2i+1}(\BA)$, corresponding to a section (explicit) of the parabolic induction from $\tau\circ
\det_{\GL_i}|\det\cdot|^s$. When $n=1$, Theorem \ref{thm 1.4} for $H$ symplectic is a special case of a theorem of Ikeda. See \cite{I94}.

In the sequel, we will need the following proposition. Recall again the notion of
Fourier coefficients corresponding to nilpotent orbits.
\begin{prop}\label{prop 1.2}
Let $\mathcal{O}$ be a nilpotent orbit in $\mathfrak{gl}_{nj}$,
corresponding to a partition $\underline{P}$ of $nj$. Assume
that $\Delta(\tau, j)$ admits a nontrivial Fourier
coefficient with respect to $\mathcal{O}$. Then
$$
\underline{P}\leq (n^j).
$$
Moreover, let $\mathcal{O}(\Delta(\tau,j))$ denote the set of
maximal nilpotent orbits, whose corresponding Fourier coefficients
are supported by $\Delta(\tau, j)$. Then
$$
\mathcal{O}(\Delta(\tau, j))=(n^j).
$$
\end{prop}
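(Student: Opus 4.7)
The plan is to establish the two halves of the statement separately: an upper bound $\underline{P}\le (n^j)$ for any partition $\underline{P}$ supporting a nontrivial Fourier coefficient on $\Delta(\tau,j)$, and the non-vanishing of the $(n^j)$-coefficient. Combining them identifies $\mathcal{O}(\Delta(\tau,j))$ as $\{(n^j)\}$.

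For the upper bound I would use the realization of $\Delta(\tau,j)$ as the simple residue of the Eisenstein series on $\GL_{nj}(\BA)$ associated with
$$
\Ind_{P_{n^j}(\BA)}^{\GL_{nj}(\BA)}\tau|\det|^{(j-1)/2}\otimes\tau|\det|^{(j-3)/2}\otimes\cdots\otimes\tau|\det|^{-(j-1)/2}.
$$
Any nontrivial Fourier coefficient on $\Delta(\tau,j)$ pulls back to a nontrivial Fourier coefficient on the full induced representation before taking the residue. I would then apply the standard ``gluing'' principle for unipotent orbits of parabolically induced representations: the partitions supported by the induced space are bounded above by partitions obtained by combining the orbits of the inducing constituents. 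Since $\tau$ is cuspidal on $\GL_n$ and therefore supports only the orbit $(n)$, a root-exchange argument in the spirit of \cite{GRS11} forces every supported partition $\underline{P}$ of $nj$ to satisfy $\underline{P}\le (n^j)$ in the dominance order.

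For the non-vanishing at $(n^j)$ I would unfold explicitly. The orbit $(n^j)$ corresponds to integration over the unipotent radical $V_{n^j}$ of $P_{n^j}$ against the character
$$
v\mapsto \psi\bigl(\tr(x_1+x_2+\cdots+x_{j-1})\bigr),
$$
where $x_i$ is the $(i,i+1)$-block of $v$. Substituting the residue realization of $\Delta(\tau,j)$ and unfolding the Eisenstein series through the long Weyl element conjugating $P_{n^j}$ to its opposite, the integral collapses onto a product of Whittaker functionals of $\tau$. Since $\tau$ is cuspidal and hence generic on $\GL_n$, these Whittaker functionals are nonzero, and choosing the section appropriately makes the full $(n^j)$-Fourier coefficient nonzero, placing $(n^j)$ in $\mathcal{O}(\Delta(\tau,j))$.

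The main obstacle I anticipate is the upper bound---specifically, ruling out every partition of $nj$ strictly above $(n^j)$ (such as $(nj)$ or $(nj-1,1)$). The cleanest route is a root-exchange reduction showing that any such partition would produce, as an inner integration, a Fourier coefficient of $\tau$ along an orbit of $\GL_n$ strictly above $(n)$; such a coefficient must vanish, because $\tau$ is cuspidal and $(n)$ is the maximal orbit for any generic representation of $\GL_n$. With both ingredients in hand the proposition follows immediately.
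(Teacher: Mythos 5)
The paper does not prove Proposition~1.2; it cites Proposition~5.3 of \cite{G06}, with a detailed proof in \cite{JL13}. Comparing your proposal to that argument, there are two concrete problems.

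First, your identification of the Fourier coefficient attached to the orbit $(n^j)$ is off by a transpose. In the conventions used here (compare the text after \eqref{1.3}, and the character $\psi_e$ of $V_{m^n}$ in \eqref{3.16.1}, whose $e=0$ specialization is precisely the coefficient for $(n^m)$ on $\Delta(\tau,m)$), the partition $\underline{P}$ is the Jordan type of the nilpotent defining the character. For a nilpotent of Jordan type $(n^j)$ in $\mathfrak{gl}_{nj}$, the $\mathfrak{sl}_2$-parabolic has Levi $\GL_j^n$, so the integration is over $V_{j^n}$ (with the trace character on the $j\times j$ super-diagonal blocks), not over $V_{n^j}$; your $V_{n^j}$ with trace on $n\times n$ blocks corresponds to the transpose orbit $(j^n)$. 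A sanity check: for $n=1$, $\Delta(\tau,j)=\tau\circ\det$ is one-dimensional, the maximal orbit $(1^j)$ is trivial, and the corresponding coefficient requires no integration; your version would be the Whittaker coefficient, which vanishes for $j>1$.

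Second, the upper bound cannot be obtained by the ``gluing'' step as you state it. For $\Re(s)$ large, the Eisenstein series on $\GL_{nj}$ induced from $\tau^{\otimes j}$ is generic (its Whittaker coefficient is nonzero), so Fourier coefficients of $E(f,s)$ along orbits strictly above $(n^j)$ -- for instance $(nj)$ itself -- certainly do not vanish for $\Re(s)$ large. The assertion you would actually need is that these coefficients remain \emph{holomorphic} at the Speh point, so that their residues (which are the corresponding coefficients of $\Delta(\tau,j)$) vanish; this is controlled by the pole structure of the intertwining operators and is a genuinely harder statement, not a consequence of the cuspidality of $\tau$ alone. The argument of \cite{JL13} sidesteps this: it is an induction on $j$ using the constant term of $\Delta(\tau,j)$ along $P_{n,n(j-1)}$ (the Lemma~4.1 quoted in the proof of Proposition~\ref{prop 3.2}) together with root exchange applied directly to $\Delta(\tau,j)$ as an automorphic form, not to the Eisenstein series from which it is a residue. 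The non-vanishing half of your proposal is the right intuition but should be redone with the correct unipotent radical $V_{j^n}$ and the corresponding double cosets $P_{n^j}(F)\backslash\GL_{nj}(F)/(\text{stabilizer of the character in } P_{j^n}(F))$.
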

This is Prop. 5.3 in \cite{G06}. See \cite{JL13} for a detailed
proof.

\section{Analysis of the Fourier coefficient $\mathcal{F}_\psi(E(f_{\Delta(\tau,
m+i)\gamma_\psi^{(\epsilon)},s},\cdot))(g,h)$}

In this section, we unfold the Fourier coefficient
$\mathcal{F}_\psi(E(f_{\Delta(\tau, m+i)\gamma_\psi^{(\epsilon)},s},\cdot))(g,h)$. We
will analyze the contributions to the Fourier coefficient of the
various double cosets in $Q_{n(m+i)}(F)\backslash
H(F)/D(F)U_{m^{n-1}}(F)$, and we will show that except the open
double coset all others contribute zero.

Assume that $\Re(s)$ is sufficiently large. Then, for $x\in H(\BA)$, our Eisenstein series is given by the following absolutely convergent series
$$
E(f_{\Delta(\tau, m+i)\gamma_\psi^{(\epsilon)},s},x)=\sum_{\gamma\in
Q_{n(m+i)}(F)\backslash H(F)}f_{\Delta(\tau, m+i)\gamma_\psi^{(\epsilon)},s}(\gamma x).
$$
Recall that in the metaplectic case, we identify $\Sp_{2n(m+i)}(F)$ as a subgroup of $\Sp_{2n(2m'+i)}^{(2)}(\BA)$. See \eqref{1.1.0.b}. We
factor the summation modulo $Q_{m(n-1)}(F)$ from the right.
\begin{equation}\label {2.1}
E(f_{\Delta(\tau, m+i)\gamma_\psi^{(\epsilon)},s},x)=\sum_{\alpha\in
Q_{n(m+i)}(F)\backslash H(F)/Q_{m(n-1)}(F)}\sum_\gamma f_{\Delta(\tau,
m+i)\gamma_\psi^{(\epsilon)},s}(\alpha\gamma x),
\end{equation}
where the second summation is over $\gamma\in (Q_{m(n-1)}(F)\cap
\alpha^{-1}Q_{n(m+i)}(F)\alpha)\backslash Q_{m(n-1)}(F)$. The
representatives $\alpha$ in \eqref{2.1} are described in
\cite{GRS11}, Sec. 4.2. They are parameterized by integers $0\leq
r\leq m(n-1)$,
\begin{equation}\label{2.2}
\alpha_r=\begin{pmatrix}I_r\\&\alpha'_r\\&&I_r\end{pmatrix},
\end{equation}
where
$$
\alpha'_r=\begin{pmatrix}0&I_{m+ni}&0&0\\0&0&0&I_{m(n-1)-r}\\
\delta_H
I_{m(n-1)-r}&0&0&0\\0&0&I_{m+ni}&0\end{pmatrix}\omega_H^{m(n-1)-r}.
$$
Here, $\omega_H=I$, unless $H$ is orthogonal, where we need its presence to guarrantee that the determinant of $\alpha_r$ is $1$. We will choose $\omega_H$ (when $H$ is orthogonal), as follows. When $m$ is even,
$$
\omega_H=diag (I_{(m+i)n-1},w_2,I_{(m+i)n-1}).
$$
Recall that $w_2=\begin{pmatrix}&1\\1\end{pmatrix}$. When $m$ is odd,
$$
\omega_H=diag (I_{(m+i)n-1},\begin{pmatrix}&2\\
\frac{1}{2}\end{pmatrix},I_{(m+i)n-1}).
$$
Note that both matrices $w_2$ and $\begin{pmatrix}&2\\ \frac{1}{2}\end{pmatrix}$ are of order $2$, and lie in $\RO_2(F)\smallsetminus \SO_2(F)$. Thus, an even power of $\omega_H$ is the identity, and an odd power of $\omega_H$ is $\omega_H$.

Since $H$ will always be clear from the context, we will write
$\omega_0$ instead of $\omega_H$. We will use the same notation
$\omega_0$ for the similar element in a group of the same type as  $H$,
but in a different number of variables, that is when we replace
$I_{(m+i)n-1}$ by an identity matrix of a different
size. There won't be any confusion. The elements of $Q_{m(n-1)}(F)\cap
\alpha_r^{-1}Q_{n(m+i)}(F)\alpha_r$, which we will denote, for
short, by $Q^{(r)}(F)$, have the form
\begin{equation}\label{2.3}
\begin{pmatrix}a_1&a_2&y_1&y_2&z_1&z_2\\0&a_4&0&y_4&0&z_4\\
&&c&v&y'_4&y'_2\\&&&c^*&0&y'_1\\&&&&a_4^*&a_2'\\&&&&&a_1^*\end{pmatrix}^{\omega_0^{m(n-1)-r}}\in
H(F),
\end{equation}
where $a_1\in \GL_r(F)$, $a_4\in \GL_{m(n-1)-r}(F)$, $c\in \GL_{m+ni}(F)$. As the notation suggests, $Q^{(r)}$ is an $F$- algebraic subgroup of $Q_{m(n-1)}$.
The element \eqref{2.3} is conjugated by $\alpha_r$ to the element
\begin{equation}\label{2.4}
\begin{pmatrix}a_1&y_1&z_1&a_2&y_2&z_2\\&c&y'_4&0&v&y'_2\\&&a_4^*&0&0&a_2'\\
&&&a_4&y_4&z'_1\\&&&&c^*&y'_1\\&&&&&a_1^*\end{pmatrix}.
\end{equation}
(For the definition of $c^*$, $a_4^*$, $a_1^*$, recall  \eqref{1.1''}.) Apply the Fourier coefficient \eqref{1.9}. Then by \eqref{2.1}\\
\\
$\mathcal{F}_\psi(E(f_{\Delta(\tau, m+i)\gamma_\psi^{(\epsilon)},s}))(x)=$
\begin{equation}\label{2.5}
\sum_{r=0}^{m(n-1)}\int_{U_{m^{n-1}}(F)\backslash
U_{m^{n-1}}(\BA)}\sum_{\gamma\in Q^{(r)}(F)\backslash Q_{m(n-1)}(F)}
f_{\Delta(\tau, m+i)\gamma_\psi^{(\epsilon)},s}(\alpha_r\gamma ux)\psi^{-1}_H(u)du.
\end{equation}
Note that, according to our notation (see \eqref{1.x.1}), when $H$ is metaplectic, \eqref {2.5} has the following form. \\
\\
$\mathcal{F}_\psi(E(f_{\Delta(\tau, m+i)\gamma_\psi,s}))(x)=$
\begin{equation}\label{2.5.1}
\sum_{r=0}^{2m'(n-1)}\int\sum_{\gamma\in Q^{(r)}(F)\backslash Q_{2m'(n-1)}(F)}
f_{\Delta(\tau, 2m'+i)\gamma_\psi,s}((p((\Pi'_v(\alpha_r\gamma u_v,1))x))\psi^{-1}_H(u)du,
\end{equation}
where $x\in \Sp^{(2)}_{2n(2m'+i)}(\BA)$, and the $du$-integration is over 
$U_{(2m')^{n-1}}(F)\backslash
U_{(2m')^{n-1}}(\BA)$. Note that in \eqref{2.5.1}, 
$$
p(\Pi'_v(\alpha_r,1)(\gamma,1)(u_v,1))=p(\Pi'_v(\alpha_r\gamma u_v,1)).
$$ 
Denote the summand corresponding to $r$ in \eqref{2.5} by
$\mathcal{F}_{\psi,r}(f_{\Delta(\tau, m+i)\gamma_\psi^{(\epsilon)},s})(x)$.
\begin{thm}\label{thm 2.1}
For all $0<r\leq m(n-1)$, and all smooth holomorphic
sections $f_{\Delta(\tau, m+i)\gamma_\psi^{(\epsilon)},s}$,
$$
\mathcal{F}_{\psi,r}(f_{\Delta(\tau, m+i)\gamma_\psi^{(\epsilon)},s})=0.
$$
\end{thm}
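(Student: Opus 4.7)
The plan is to show, for each $r$ with $0<r\leq m(n-1)$, that $\mathcal{F}_{\psi,r}(f_{\Delta(\tau, m+i)\gamma_\psi^{(\epsilon)},s})$ can be rewritten, after a sequence of standard manipulations, as a period involving a Fourier coefficient on $\Delta(\tau, m+i)$ attached to a nilpotent orbit of $gl_{n(m+i)}$ whose partition strictly dominates $(n^{m+i})$. Proposition \ref{prop 1.2} then forces vanishing. Throughout, we work in the linear case; the metaplectic case follows identically, since the cocycle is trivial on all unipotent subgroups we encounter and the factor $\gamma_\psi^{(\epsilon)}\circ\det$ plays no role in the argument (it only affects how the inducing data is bookkept).

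First, I would describe explicitly $Q^{(r)}\backslash Q_{m(n-1)}$. Using the block form \eqref{2.3}, the quotient is naturally parametrized by a unipotent subgroup $V^{(r)}\subset Q_{m(n-1)}$ consisting of the "off-diagonal" coordinates of \eqref{2.3} that disappear in $Q^{(r)}$; for $r>0$ this contains genuine root subgroups that conjugate under $\alpha_r$ into $V_{n(m+i)}$ (the unipotent radical of $Q_{n(m+i)}$). Swapping sum and integral and using left $V_{n(m+i)}(\BA)$-invariance of $f_{\Delta(\tau,m+i)\gamma_\psi^{(\epsilon)},s}$, the combined integrand becomes an integral of $f_{\Delta(\tau,m+i)\gamma_\psi^{(\epsilon)},s}(\alpha_r v u x)$ over a larger unipotent subgroup $\widetilde{U}_r\subset H(\BA)$, against a character obtained by extending $\psi_H$ trivially along the new directions.

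Second, I would conjugate by $\alpha_r$ and transport the integration inside $V_{n(m+i)}(\BA)$. The key combinatorial point is to track where each root of $\widetilde{U}_r$ goes under $\Ad(\alpha_r)$; those roots coming from $U_{m^{n-1}}$ on which $\psi_H$ is nontrivial produce, after conjugation, nontrivial coordinates in the $\GL_{n(m+i)}$-part of $M_{n(m+i)}$. Using the left $M_{n(m+i)}(F)$-equivariance of $f_{\Delta(\tau,m+i)\gamma_\psi^{(\epsilon)},s}$, the integral becomes, up to root exchanges of the Ginzburg-Rallis-Soudry type (\cite{GRS11}, Sec. 7), a Fourier coefficient of $\Delta(\tau,m+i)$ along a unipotent subgroup of $\GL_{n(m+i)}$, against a character whose associated partition is
\[
\underline{P}_r=(n^{m+i-r'},(n+1)^{?},\ldots),
\]
where the precise shape is determined by $r$ but in all cases $r>0$ yields $\underline{P}_r>(n^{m+i})$ in the dominance order, because the extra $r$ columns contributed by \eqref{2.2} lengthen at least one row beyond $n$.

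Finally, once the Fourier coefficient on $\Delta(\tau,m+i)$ is identified as one attached to a partition strictly dominating $(n^{m+i})$, Proposition \ref{prop 1.2} gives that this coefficient vanishes identically on $\Delta(\tau,m+i)$, whence $\mathcal{F}_{\psi,r}=0$. The main technical obstacle is the bookkeeping in step two: one has to perform several sequential root exchanges between abelian unipotent subgroups of $H$, verifying at each step that the exchanged roots are Fourier-dual in the sense that the exchange does not alter the integral (this uses the fact that the Eisenstein series is automorphic, so it is invariant under $F$-rational exchanges). The metaplectic version of each exchange is valid because all the subgroups involved split in $H^{(2)}(\BA)$, and the Weil factor $\gamma_\psi$ composed with $\det$ is unaffected by these unipotent conjugations.
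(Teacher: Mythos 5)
Your endgame is the same as the paper's: reduce $\mathcal{F}_{\psi,r}$ to a Fourier coefficient on $\Delta(\tau,m+i)$ along a unipotent subgroup of $\GL_{n(m+i)}$ whose attached partition is strictly larger than $(n^{m+i})$, then apply Proposition \ref{prop 1.2}. But the route you sketch has a genuine gap and one outright error, so as written it would not close.

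The error is the claim that $Q^{(r)}\backslash Q_{m(n-1)}$ is "naturally parametrized by a unipotent subgroup $V^{(r)}$." Looking at \eqref{2.3}, the Levi of $Q^{(r)}$ is (conjugate to) $\GL_r\times\GL_{m(n-1)-r}\times\GL_{m+ni}$, whereas the Levi of $Q_{m(n-1)}$ is $\GL_{m(n-1)}\times H_{2(m+ni)}$, so the quotient has a nontrivial flag-variety component; it is not a single unipotent orbit. The paper handles this by a two-stage double coset decomposition: first modulo $Q_{m^{n-1}}$ on the right, with representatives $\hat a_{\bar r}$ indexed by tuples $\bar r=(r_1,\dots,r_{n-1})$ with $\sum r_i=r$ and $r_i+\ell_i=m$ (equations \eqref{2.7}--\eqref{2.10}), and then modulo $U_{m^{n-1}}$, where the representatives carry elements $\bar\eta\in\prod P_{r_i,\ell_i}\backslash\GL_m$ and $\gamma_e\in Q_{ni+m}^{\omega_0^{\ast}}\backslash H_{2(ni+m)}/j(H_m\times H_{2ni+m})$. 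This combinatorial bookkeeping is precisely what your plan leaves implicit.

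The gap is in the assertion that "in all cases $r>0$ yields a partition strictly dominating $(n^{m+i})$." That is not established by the argument you outline, and in fact it is not obtained directly: the paper first proves (Prop.\ \ref{prop 2.3}) that the contribution of $(\bar r,\gamma_e)$ already vanishes, via inner integration of a nontrivial additive character, unless $r_1\leq r_2\leq\cdots\leq r_{n-1}\leq e$. Only after restricting to such monotone $\bar r$ does it identify, in Prop.\ \ref{prop 2.4}, a Fourier coefficient on $\Delta(\tau,m+i)$ whose orbit has partition of the form $((2n-1)^{r_1},\ldots)$ (not $(n+1,\ldots)$), and then argues inductively that $r_1=0$, then the partition becomes $((2n-2)^{r_2},\ldots)$, forcing $r_2=0$, and so on down to $r_{n-1}=0$. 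Without the monotonicity filter, you have no control over which partitions show up, and without the inductive descent you cannot conclude $r=0$. You should also drop the appeal to root exchange (\cite{GRS11} Sec. 7.1): it plays no role in Theorem \ref{thm 2.1}; it only enters later, in the analysis of the open-orbit term (Prop.\ \ref{prop 3.2}).
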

The proof will occupy the rest of this section. It is similar in
nature to \cite{GJS15}, Sec. 3. We continue the unfolding process of \eqref{2.5}, assuming that
$\Re(s)$ is sufficiently large. We will factorize the summation in
$\gamma$ in \eqref{2.5} in several steps. The first one is modulo
$Q_{m^{n-1}}(F)$ from the right. The summation in $\gamma$ in
\eqref{2.5} becomes, for fixed $r$ and fixed $u$,
\begin{equation}\label{2.6}
\sum_{\gamma\in Q^{(r)}(F)\backslash
Q_{m(n-1)}(F)/Q_{m^{n-1}}(F)}\sum_\eta  f_{\Delta(\tau,
m+i)\gamma_\psi^{(\epsilon)},s}(\alpha_r\gamma\eta ux).
\end{equation}
The summation in $\eta$ is over $(Q_{m^{n-1}}(F)\cap
\gamma^{-1}Q^{(r)}(F)\gamma)\backslash Q_{m^{n-1}}(F)$. Looking at \eqref{2.3}, we may take the elements $\gamma=\hat{a}$ as
representatives for\\ 
$Q^{(r)}(F)\backslash Q_{m(n-1)}(F)/Q_{m^{n-1}}(F)$,
where $a$ is a representative of \\
$P_{r,m(n-1)-r}(F)\backslash
\GL_{m(n-1)}(F)/P_{m^{n-1}}(F)$. (See \eqref{1.1'} for the definition of $\hat{a}$.) We will choose these representatives
as follows. See (3.19) in \cite{GJS15}. Let $r_1,...,r_{n-1}$ and
$\ell_1,...,\ell_{n-1}$ be non-negative integers, such that
\begin{equation}\label{2.7}
\sum_{i=1}^{n-1}r_i=r,\ \sum_{i=1}^{n-1}\ell_i=m(n-1)-r; \
r_i+\ell_i=m,\ i=1,...,n-1.
\end{equation}
Put $\bar{r}=(r_1,...,r_{n-1})$. Let $a_{\bar{r}}$ be the
following permutation matrix in $\GL_{m(n-1)}(F)$. It has the form
\begin{equation}\label{2.8}
a_{\bar{r}}=\begin{pmatrix}w'_1&w'_2&\cdots&w'_{n-1}\\w''_1&w''_2&\cdots
&w''_{n-1}\end{pmatrix},
\end{equation}
where, for $1\leq i\leq n-1$, each matrix $w'_i$ is of size $r\times
m$, and each matrix $w''_i$ is of size $(m(n-1)-r)\times m$.
The matrices $w'_i$ have the form
\begin{equation}\label{2.9}
w'_i=(w'_{i,1},0_{r\times \ell_i}),\
w'_{i,1}=\begin{pmatrix}0_{(r_1+\cdots +r_{i-1})\times
r_i}\\I_{r_i}\\0_{(r_{i+1}+\cdots +r_{n-1})\times
r_i}\end{pmatrix},\ 1\leq i\leq n-1.
\end{equation}
Similarly,
\begin{equation}\label{2.10}
w''_i=(0_{(m(n-1)-r)\times r_i}, w''_{i,2}),\
w''_{i,2}=\begin{pmatrix}0_{(\ell_1+\cdots +\ell_{i-1})\times
\ell_i}\\I_{\ell_i}\\0_{(\ell_{i+1}+\cdots +\ell_{n-1})\times
\ell_i}\end{pmatrix},\ 1\leq i\leq n-1.
\end{equation}
For example, for $n=4$,
$$
a_{\bar{r}}=\begin{pmatrix}I_{r_1}&0&0&0&0&0\\0&0&I_{r_2}&0&0&0&\\
0&0&0&0&I_{r_3}&0\\0&I_{\ell_1}&0&0&0&0\\0&0&0&I_{\ell_2}&0&0&\\0&0&0&0&0&I_{\ell_3}\end{pmatrix}.
$$
We will take $\gamma=\hat{a}_{\bar{r}}$ in \eqref{2.6}. Note again, that since we deal with rational elements, we don't need to worry about the cocycle in the metaplectic case. We now
describe the subgroup $Q_{m^{n-1}}(F)\cap
\hat{a}_{\bar{r}}^{-1}Q^{(r)}(F)\hat{a}_{\bar{r}}$. Its
elements have the form
\begin{equation}\label{2.11}
\theta=\begin{pmatrix}A&B&C\\&h&B'\\&&A^*\end{pmatrix}\in H \ , A\in
\GL_{m(n-1)}(F),
\end{equation}
and we specify the forms of each block. First, $h\in
Q_{ni+m}(F)^{\omega_0^{m(n-1)-r}}$, that is $h$ lies in the
$\omega_0^{m(n-1)-r}$-conjugate of the $F$- points of the standard parabolic subgroup
of $H_{2(ni+m)}$, whose Levi part is isomorphic to $\GL_{ni+m}$.
Next,
$$
A=\begin{pmatrix}g_1&x_{1,2}&\cdots
&&x_{1,n-1}\\&g_2&\cdots&&x_{2,n-1}\\&&\ddots\\&&&g_{n-2}&x_{n-2,n-1}\\&&&&g_{n-1}\end{pmatrix},
$$
where each block is of size $m\times m$, such that, for $1\leq
i,j\leq n-1$, $g_i\in P_{r_i,\ell_i}(F)$, and
\begin{equation}\label {2.12}
x_{i,j}=\begin{pmatrix}x_{i,j}^{(1)}&x_{i,j}^{(2)}\\0_{\ell_i\times
r_j}&x_{i,j}^{(4)}\end{pmatrix}.
\end{equation}
Note that $x_{i,j}^{(1)}$ is of size $r_i\times r_j$,
$x_{i,j}^{(2)}$ is of size $r_i\times \ell_j$, and $x_{i,j}^{(4)}$
is of size $\ell_i\times \ell_j$.
$$
B=\begin{pmatrix}x_1\\x_2\\ \vdots\\ x_{n-1}\end{pmatrix},
$$
where each block is of size $m\times 2(ni+m)$, such that, for
$1\leq i\leq n-1$,
\begin{equation}\label{2.13}
x_i=\begin{pmatrix}x_i^{(1)}&x_i^{(2)}\\0_{\ell_i\times
(ni+m)}&x_i^{(4)}\end{pmatrix}\omega_0^{m(n-1)-r}.
\end{equation}
Note that $x_i^{(1)}, x_i^{(2)}$ are of size $r_i\times (ni+m)$,
and $x_i^{(4)}$ is of size $\ell_i\times (ni+m)$. Finally, write
$C$ as a $m(n-1)\times m(n-1)$ block matrix, with all blocks
$c_{i,j}$ of size $m\times m$, $1\leq i,j\leq n-1$. Then
\begin{equation}\label {2.14}
c_{i,j}=\begin{pmatrix}c_{i,j}^{(1)}&c_{i,j}^{(2)}\\0_{\ell_i\times
\ell_{n-j}}&c_{i,j}^{(4)}\end{pmatrix}.
\end{equation}

For the element \eqref{2.11}, $\theta\in Q_{m^{n-1}}(F)\cap
\hat{a}_{\bar{r}}^{-1}Q^{(r)}(F)\hat{a}_{\bar{r}}$, let us
describe $\hat{a}_{\bar{r}}\theta \hat{a}_{\bar{r}}^{-1}$.
For this, write, for $1\leq i\leq n-1$,
$$
g_i=\begin{pmatrix}g_i^{(1)}&g_i^{(2)}\\0_{\ell_i\times
r_i}&g_i^{(4)}\end{pmatrix}.
$$
Write $\hat{a}_{\bar{r}}\theta \hat{a}_{\bar{r}}^{-1}$ in
the form \eqref{2.3}, with the same notation. Then, for $t=1,2,4$,
\begin{equation}\label{2.14.1}
a_t=\begin{pmatrix}g^{(t)}_1&x^{(t)}_{1,2}&\cdots
&&x^{(t)}_{1,n-1}\\&g^{(t)}_2&\cdots&&x^{(t)}_{2,n-1}\\&&\ddots\\&&&g^{(t)}_{n-2}&x^{(t)}_{n-2,n-1}\\&&&&g^{(t)}_{n-1}\end{pmatrix};
\end{equation}
\begin{equation}\label{2.14.2}
y_t=\begin{pmatrix}x_1^{(t)}\\x_2^{(t)}\\
\cdots\\x_{n-1}^{(t)}\end{pmatrix};
\end{equation}
\begin{equation}\label{2.14.3}
z_t=\begin{pmatrix}c_{1,1}^{(t)}&\cdots&c_{1,n-1}^{(t)}\\&\cdots\\c_{n-1,1}^{(t)}&\cdots&c_{n-1,n-1}^{(t)}\end{pmatrix}.
\end{equation}
Note the form of \eqref{2.4} when applied to
$\hat{a}_{\bar{r}}\theta \hat{a}_{\bar{r}}^{-1}$. We will use this repeatedly in the proof of Prop. \ref{prop 2.3} to produce inner integrals to 
(suumands of ) $\mathcal{F}_{\psi,r}(f_{\Delta(\tau, m+i)\gamma_\psi^{(\epsilon)},s})$, which are equal to zero, due to the fact that these
are integrals of nontrivial characters along compact groups.

Let us factor the summation in $\eta$ in \eqref{2.6} modulo
$U_{m^{n-1}}(F)$ from the right. From the
description \eqref{2.12}-\eqref{2.14} of $Q_{m^{n-1}}(F)\cap
\hat{a}_{\bar{r}}^{-1}Q^{(r)}(F)\hat{a}_{\bar{r}}$, we may
take representatives of $(Q_{m^{n-1}}(F)\cap
\hat{a}_{\bar{r}}^{-1}Q^{(r)}(F)\hat{a}_{\bar{r}})\backslash Q_{m^{n-1}}(F)/U_{m^{n-1}}(F)$ of
the form
\begin{equation}\label{2.15}
h_{\bar{\eta};\gamma}=diag(\eta_1,...,\eta_{n-1},\gamma,\eta^*_{n-1},...,\eta^*_1),
\end{equation}
where $\eta_i$ is any coset representative in $ P_{r_i,\ell_i}(F)\backslash \GL_{m}(F)$, for $1\leq
i\leq n-1$, and $\gamma$ is any coset representative in
$Q_{ni+m}(F)^{\omega_0^{m(n-1)-r}}\backslash
H_{2(ni+m)}(F)$.\\
Denote
\begin{equation}\label{2.15.1}
\bar{\eta}=(\eta_1,...,\eta_{n-1}).
\end{equation}
Put
$Q^{(\bar{r};\bar{\eta};\gamma)}(F)=h_{\bar{\eta};\gamma}^{-1}(Q_{m^{n-1}}(F)\cap
\hat{a}_{\bar{r}}^{-1}Q^{(r)}(F)\hat{a}_{\bar{r}})h_{\bar{\eta};\gamma}$.
The summation \eqref{2.6} becomes
\begin{equation}\label{2.16}
\sum_{\bar{r}}\sum_\gamma \sum_{\bar{\eta} }\sum_{u'\in
(Q^{(\bar{r};\bar{\eta};\gamma)}(F)\cap U_{m^{n-1}}(F))\backslash
U_{m^{n-1}}(F)} f_{\Delta(\tau,
m+i)\gamma_\psi^{(\epsilon)},s}(\alpha_r\hat{a}_{\bar{r}}h_{\bar{\eta};\gamma}u'
ux).
\end{equation}
Substitute \eqref{2.16} in \eqref{2.5} to get, for $0\leq r\leq
m(n-1)$,\\
\\
$\mathcal{F}_{\psi,r}(f_{\Delta(\tau, m+i)\gamma_\psi^{(\epsilon)},s})(x)=$
\begin{equation}\label{2.17}
\sum_{\bar{r}}\sum_{\gamma}\sum_{\bar{\eta}}
\int_{(Q^{(\bar{r};\bar{\eta};\gamma)}(F)\cap U_{m^{n-1}}(F))\backslash
U_{m^{n-1}}(\BA)} f_{\Delta(\tau,
m+i)\gamma_\psi^{(\epsilon)},s}(\alpha_r\hat{a}_{\bar{r}}h_{\bar{\eta};\gamma}
ux)\psi^{-1}_H(u)du.
\end{equation}
Factor the summation in $\gamma$ modulo $j(H_m(F)\times
H_{2ni+m}(F))$ from the right. Thus the sum over $\gamma \in
Q_{ni+m}(F)^{\omega_0^{m(n-1)-r}}\backslash
H_{2(ni+m)}(F)/j(H_m(F)\times H_{2ni+m}(F))$ is followed by a sum over $(g',h')$ in
\begin{equation}\label{2.17.1}
j^{-1}(\gamma^{-1}Q_{ni+m}(F)^{\omega_0^{m(n-1)-r}}\gamma \cap
j(H_m(F)\times H_{2ni+m}(F)))\backslash H_m(F)\times H_{2ni+m}(F).
\end{equation}
In
\eqref{2.17}, $h_{\bar{\eta};\gamma}$ is now replaced by
$h_{\bar{\eta};\gamma j(g',h')}$, and $Q^{(\bar{r};\bar{\eta};\gamma)}(F)$ is replaced by\\
$Q^{(\bar{r};\bar{\eta};\gamma j(g',h'))}(F)$. Now, change variable in
$\bar{\eta}$, $\eta_i\mapsto \eta_ig'$, so that $\bar{\eta}\mapsto
\bar{\eta}(g')^{\Delta_{n-1}}$. Note that
$h_{(g')^{\Delta_{n-1}},j(g',h')}=t(g',h')$. Since $t(g',h')$
normalizes $U_{m^{n-1}}(F)$, $U_{m^{n-1}}(\BA)$ and preserves
$\psi_H$ (the same in the metaplectic case), \eqref{2.17} becomes\\
\\
$\mathcal{F}_{\psi,r}(f_{\Delta(\tau, m+i)\gamma_\psi^{(\epsilon)},s})(x)=$
\begin{equation}\label{2.18}
\sum_{\bar{r}}\sum_{\gamma}\sum_{(g',h')}\sum_{\bar{\eta}}
\int f_{\Delta(\tau,
m+i)\gamma_\psi^{(\epsilon)},s}(\alpha_r\hat{a}_{\bar{r}}h_{\bar{\eta};\gamma}
ut(g',h')x)\psi^{-1}_H(u)du,
\end{equation}
where $\gamma$ is summed over
$Q_{ni+m}(F)^{\omega_0^{m(n-1)-r}}\backslash
H_{2(ni+m)}(F)/j(H_m(F)\times H_{2ni+m}(F))$, 
$\bar{\eta}=(\eta_1,...,\eta_{n-1})$ is summed over
$\prod_{i=1}^{n-1}P_{r_i,\ell_i}(F)\backslash \GL_m(F)$, $(g',h')$ is summed over \eqref{2.17.1}. Finally, $u$ is integrated along
$(Q^{(\bar{r};\bar{\eta};\gamma)}(F)\cap U_{m^{n-1}}(F))\backslash
	U_{m^{n-1}}(\BA)$.

We will now write a set of representatives of
$$
Q_{ni+m}(F)^{\omega_0^{m(n-1)-r}}\backslash
H_{2(ni+m)}(F)/j(H_m(F)\times H_{2ni+m}(F)).
$$
Note that $j(H_m(F)\times H_{2ni+m}(F))$ is invariant under conjugation by
$\omega_0$. Thus, it is enough to find a set of representatives of
$Q_{ni+m}(F)\backslash H_{2(ni+m)}(F)/j(H_m(F)\times H_{2ni+m}(F))$. Let
us realize $Q_{ni+m}(F)\backslash H_{2(ni+m)}(F)$ as the variety
$\mathcal{Z}$ of all (maximal) $ni+m$-dimensional isotropic
subspaces of the space in which $H_{2(ni+m)}(F)$ acts. We consider the 
action of $j(H_m(F)\times H_{2ni+m}(F))$ on $\mathcal{Z}$. Since the elements of 
$j(H_m(F)\times H_{2ni+m}(F))$ preserve $X_m$, it follows that for a given 
orbit $\mathcal{O}$ of this action,  $\dim(Z\cap
X_m)=d_{\mathcal{O}}$ is independent of $Z$, for any $Z\in
\mathcal{O}$. (See right after \eqref{1.7} for the definition of
$X_m$.) The following lemma is proved exactly as Lemma 3.1 in
\cite{GJS15}.
\begin{lem}\label{lem 2.2}
The number $d_{\mathcal{O}}$ is the only invariant of the orbit
$\mathcal{O}$.
\end{lem}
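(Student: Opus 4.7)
The plan is to show that two maximal isotropic subspaces $Z,Z'\in\mathcal{Z}$ with $\dim(Z\cap X_m)=\dim(Z'\cap X_m)=d$ lie in the same $j(H_m\times H_{2ni+m})$-orbit; the argument mirrors that of Lemma~3.1 in \cite{GJS15}. First I would do a dimension count. Write $V=X_m\oplus Y_{2ni+m}$ (orthogonal direct sum), set $Z_X=Z\cap X_m$ and $Z_Y=Z\cap Y_{2ni+m}$, and let $p_X,p_Y$ be the projections onto the two summands. The kernel of $p_X|_Z$ is exactly $Z_Y$, so $\dim p_X(Z)=ni+m-\dim Z_Y$; since $Z$ is isotropic and the two summands are orthogonal in $V$, $p_X(Z)$ lies in the orthogonal complement of $Z_X$ inside $X_m$, of dimension $m-d$. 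The inequality so obtained, combined with its symmetric counterpart from $p_Y$, forces $\dim Z_Y=d+ni$ and shows that both inclusions become equalities: $p_X(Z)=Z_X^{\perp}\cap X_m$ and $p_Y(Z)=Z_Y^{\perp}\cap Y_{2ni+m}$.

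Next I would fix once and for all a standard isotropic $W_X^0\subseteq X_m$ of dimension $d$ and a standard isotropic $W_Y^0\subseteq Y_{2ni+m}$ of dimension $d+ni$. By Witt's extension theorem applied separately in the nondegenerate spaces $X_m$ and $Y_{2ni+m}$, there exist $g\in H_m$ and $h\in H_{2ni+m}$ carrying $Z_X$ onto $W_X^0$ and $Z_Y$ onto $W_Y^0$. Replacing $Z$ by $j(g,h)Z$, we reduce to the case $Z_X=W_X^0$ and $Z_Y=W_Y^0$.

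It then remains to normalize $Z$ itself. The quotient $Z/(Z_X\oplus Z_Y)$ has dimension $m-2d$ and is mapped isomorphically by $p_X$ (resp.~$p_Y$) onto $W_X^{0,\perp}/W_X^0$ (resp.~$W_Y^{0,\perp}/W_Y^0$), which are nondegenerate spaces of dimension $m-2d$. Hence $Z/(Z_X\oplus Z_Y)$ is the graph of a linear isomorphism $\phi\colon W_X^{0,\perp}/W_X^0\to W_Y^{0,\perp}/W_Y^0$, and the isotropy of $Z$ translates into $\langle\phi x,\phi x'\rangle=-\langle x,x'\rangle$, i.e.\ $\phi$ is an anti-isometry. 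The Levi quotient of the parabolic $\Stab_{H_m}(W_X^0)$ surjects onto $\Isom(W_X^{0,\perp}/W_X^0)$, and similarly for $W_Y^0$; since isometry groups act transitively on anti-isometries between two isometric nondegenerate spaces, the Levis allow us to send $\phi$ to any prescribed reference anti-isometry. Finally, $Z$ is recovered from the data $(W_X^0,W_Y^0,\phi)$ only after choosing a lift of the graph from the quotient back to $V$, and two such lifts differ by a linear map into $W_X^0\oplus W_Y^0$; this freedom is absorbed precisely by the unipotent radicals of the two parabolic subgroups, which act trivially on the quotients but shift lifts by arbitrary elements of $W_X^0$ and $W_Y^0$, respectively. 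Putting the three steps together exhibits a single orbit for each value of $d$.

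The main obstacle will be the uniform bookkeeping across the three cases ($H$ symplectic with $m=2m'$, $H$ even orthogonal, and $H$ odd orthogonal with the extra middle coordinate recorded in \eqref{1.7.1}--\eqref{1.8}), where the decomposition $V=X_m\oplus Y_{2ni+m}$ takes a slightly twisted form. In each case both $X_m$ and $Y_{2ni+m}$ are nondegenerate of the same type as $V$, Witt's theorem applies without modification, and the identifications of $W_X^{0,\perp}/W_X^0$ and $W_Y^{0,\perp}/W_Y^0$ are transparent, so the conclusion is the same. The metaplectic setting requires no extra work here, since the question is about the linear orbit on the variety of isotropic subspaces.
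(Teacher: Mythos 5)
Your proof follows the standard route that the paper (deferring to Lemma~3.1 of \cite{GJS15}) has in mind: the dimension count in Step~1 is correct and shows $\dim Z_Y = d + ni$, and the overall strategy of Witt normalization, anti-isometry graph, and unipotent absorption is the right one. However, the assertion that ``Witt's theorem applies without modification'' hides a genuine gap in the even orthogonal case ($H_m=\SO_m$ with $m$ even). Witt's extension theorem produces elements of the full orthogonal group $\mathrm{O}_m$, not $\SO_m$, and when $Z\cap X_m$ is a \emph{maximal} isotropic of $X_m$ (i.e.\ $d = m/2$), the parabolic stabilizing it is already entirely contained in $\SO_m$, so one cannot correct the determinant. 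Concretely, $\SO_m\times\SO_{2ni+m}$ does \emph{not} act transitively on $\{Z\in\mathcal{Z} : \dim Z\cap X_m = m/2\}$: the $\SO_m$-family of $Z\cap X_m$ becomes a further invariant, and already for $\SO_4\times\SO_4\subset\SO_8$ one sees two distinct orbits. The same difficulty reappears in your Levi step, since $\SO_{m-2d}\times\SO_{m-2d}$ does not act transitively on anti-isometries when $m$ is even.

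The resolution is that the stabilizer $D$ of $\psi_H$ inside $H=\SO_{2n(m+i)}$ is not literally $\SO_m\times\SO_{2ni+m}$ but rather $S(\mathrm{O}_m\times\mathrm{O}_{2ni+m}) = \{(g,h):\det g\cdot\det h = 1\}$, as one can read off from \eqref{1.6}--\eqref{1.7} (since $\det t(g,h) = \det g\cdot\det h$) and from \cite{CM93}, Theorem 6.1.3 for the partition $((2n-1)^m,1^{2ni+m})$. The Witt pair $(g,h)$ you construct then automatically satisfies $\det g\cdot\det h = 1$: the condition $Z\in\mathcal{Z}$ (a single $\SO_{2(ni+m)}$-family) couples the $\SO_m$-family parity of $Z\cap X_m$ with the $\SO_{2ni+m}$-family parity of $Z\cap Y_{2ni+m}$, so either both Witt elements have determinant $+1$ or both have determinant $-1$. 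You should make this coupling explicit in Steps~2 and~3, and note that the extra determinant-$(-1,-1)$ elements of $D$ are what you are actually using; as written, the argument would not establish a single orbit when $m$ is even and $d = m/2$, which is the value that gives the representative $\gamma_{[m/2]}$ in \eqref{2.19}. The remaining steps (lift freedom absorbed by the unipotent radicals of the two parabolics) are correct once this is sorted out.
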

Using this lemma, we can list the following representatives,
parametrized by $e=d_{\mathcal{O}}$, $0\leq e\leq [\frac{m}{2}]$.
\begin{equation}\label{2.19}
\gamma_e=diag(I_e,\begin{pmatrix}I_{[\frac{m}{2}]-e}\\0&I_{[\frac{m}{2}]-e}\\0&0&I_{2(ni+(m-2[\frac{m}{2}])+e)}\\
I_{[\frac{m}{2}]-e}&0&0&I_{[\frac{m}{2}]-e}\\0&-\delta_HI_{[\frac{m}{2}]-e}&0&0&I_{[\frac{m}{2}]-e}\end{pmatrix},I_e).
\end{equation}
We may now take in \eqref{2.18}, $\gamma=\gamma_e$. Note that
$\gamma_e$ commutes with $\omega_0$.
\begin{prop}\label{prop 2.3}
Let $\bar{r}=(r_1,...,r_{n-1})$ be as in \eqref{2.7} and $e$ as
above. If the summand corresponding to $\bar{r}, \gamma_e$, in
\eqref{2.18}, is nonzero, then
$$
r_1\leq r_2\leq\cdots\leq r_{n-1}\leq e\leq [\frac{m}{2}].
$$
\end{prop}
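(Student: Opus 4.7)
The plan is to prove the statement by contradiction: assume that the ordering $r_1\le r_2\le\cdots\le r_{n-1}\le e$ fails, and show that the inner integral over $Q^{(\bar{r};\bar{\eta};\gamma_e)}\cap U_{m^{n-1}}\backslash U_{m^{n-1}}(\BA)$ in the $(\bar{r},\gamma_e)$-summand of \eqref{2.18} vanishes identically. The vanishing mechanism will be Proposition \ref{prop 1.2}: we will exhibit a unipotent subgroup of $U_{m^{n-1}}$ whose conjugate through $\alpha_r\hat{a}_{\bar{r}}h_{\bar{\eta};\gamma_e}$ lies in the unipotent radical of some parabolic subgroup of $\GL_{n(m+i)}$, and such that the restriction of $\psi_H$ to this subgroup is precisely a character defining a Fourier coefficient on $\Delta(\tau,m+i)$ attached to a nilpotent orbit strictly greater than $(n^{m+i})$, forcing the integral to be zero.

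Concretely, first I would identify the subgroup $Q^{(\bar{r};\bar{\eta};\gamma_e)}\cap U_{m^{n-1}}$ explicitly from \eqref{2.11}--\eqref{2.14}. Then I would compute $\alpha_r\hat{a}_{\bar{r}}h_{\bar{\eta};\gamma_e}\cdot u\cdot (\alpha_r\hat{a}_{\bar{r}}h_{\bar{\eta};\gamma_e})^{-1}$ for $u\in U_{m^{n-1}}$, translating the three groups of coordinates of \eqref{1.2}: the $x_i$-blocks, the $y_1,y_2$-blocks, and the $y_3$-block (the last being the interface with $H_{2ni+m}$). The permutation $a_{\bar{r}}$ shuffles the block-rows of the Levi of $Q_{m^{n-1}}$, so in the conjugated element inside $Q_{n(m+i)}$, the character $u\mapsto\psi(\tr(x_1+\cdots+x_{n-2}))\psi(\tr((y_1,y_2,y_3)A_H))$ becomes a character supported on certain off-diagonal entries of a unipotent subgroup of $\GL_{n(m+i)}$. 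The combinatorial shape of those supported entries is precisely governed by $(r_1,\ldots,r_{n-1})$ and $e$.

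The key combinatorial observation, analogous to the arguments of \cite{GJS15}, Sec. 3, is that the above character, viewed as a character on a unipotent subgroup of $\GL_{n(m+i)}$, corresponds to the nilpotent orbit (in $\mathfrak{gl}_{n(m+i)}$) parametrized by the composition built from the blocks of sizes $r_1,\ldots,r_{n-1},e$ and their complements $\ell_i=m-r_i$ and $[\tfrac{m}{2}]-e$. The requirement that the associated partition be $\le (n^{m+i})$ (which is forced by Proposition \ref{prop 1.2}) translates exactly to $r_1\le r_2\le\cdots\le r_{n-1}\le e$; if the monotonicity is violated at some step, a suitable root exchange applied to a pair of unipotent subgroups (one where the section is already invariant modulo the character, one freely integrated along Adeles modulo $F$) produces a Fourier coefficient on $\Delta(\tau,m+i)$ attached to a partition dominating $(n^{m+i})$. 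Proposition \ref{prop 1.2} then kills this inner integral.

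The main obstacle will be the bookkeeping involved in tracking the composite conjugation through $\alpha_r$, $\hat{a}_{\bar{r}}$, and $\gamma_e$, together with verifying that the interface block $y_3$ of \eqref{1.2}, paired via $A_H$ in \eqref{1.4} or \eqref{1.5}, interacts with $\gamma_e$ to precisely encode the final inequality $r_{n-1}\le e$; this is where the parity of $m$ and the type of $H$ (orthogonal, symplectic, metaplectic) matter, and each case must be handled with its own matching of matrix blocks. In the metaplectic case one must additionally check that no cocycle correction obstructs the conjugations, which is immediate since all the relevant left-translations are by $F$-rational elements embedded via $\gamma\mapsto(\gamma,1)$. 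Once the bookkeeping is done, the application of Proposition \ref{prop 1.2} is purely formal.
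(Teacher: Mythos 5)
Your proposal misidentifies the vanishing mechanism for Proposition~\ref{prop 2.3}. You plan to derive the monotonicity $r_1\le\cdots\le r_{n-1}\le e$ from Proposition~\ref{prop 1.2} by exhibiting a Fourier coefficient on $\Delta(\tau,m+i)$ attached to a partition strictly larger than $(n^{m+i})$. That is not how the ordering arises, and it cannot be: the standard‐form nilpotent matrix \eqref{2.33}, whose orbit one would compare to $(n^{m+i})$, is only well defined \emph{after} one knows $r_i\le r_{i+1}$ (the blocks $R_i$ in its construction have shape $r_{i+1}\times r_i$ with $I_{r_i}$ on top, which presupposes $r_i\le r_{i+1}$). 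Without the ordering, the character $\psi_{V_{\bar r};\bar\eta,e}$ in \eqref{2.29} depends on an as-yet-unconstrained $\bar\eta$ and is not in the shape to which the dominance argument applies. Moreover, the dominance constraint $\underline{P}\le (n^{m+i})$ from Proposition~\ref{prop 1.2} is much stronger than monotonicity of the $r_i$: when the paper finally invokes Proposition~\ref{prop 1.2} (in Proposition~\ref{prop 2.4}), it concludes $r_1=\cdots=r_{n-1}=0$, not an inequality.

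The actual proof of Proposition~\ref{prop 2.3} is more elementary and does not mention $\Delta(\tau,m+i)$ at all. One chooses specific unipotent elements $v\in Q^{(\bar r;\bar\eta;\gamma_e)}(\BA)\cap U_{m^{n-1}}(\BA)$ (first a block‐upper‐triangular choice inside the Levi, then one touching the $B,C$ blocks), checks via \eqref{2.14.1}--\eqref{2.14.3} and \eqref{2.3}--\eqref{2.4} that $\alpha_r\hat a_{\bar r}h_{\bar\eta;\gamma_e}\,v\,(\alpha_r\hat a_{\bar r}h_{\bar\eta;\gamma_e})^{-1}$ lands in $U_{(m+i)n}(\BA)$, so the section is left-invariant under $v$. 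The inner integral over $v$ therefore picks up the character $\psi_H(v)$ on a compact abelian quotient; nonvanishing forces $\psi_H(v)\equiv 1$, which by \eqref{2.22} and \eqref{2.25} means $\eta_i\eta_{i-1}^{-1}$ and $\gamma_e A_H\eta_{n-1}^{-1}$ must have the block-triangular shapes \eqref{2.23}, \eqref{2.25.1}. Since $\eta_i\eta_{i-1}^{-1}\in\GL_m(F)$, an invertible $m\times m$ matrix with a zero $\ell_i\times r_{i-1}$ lower-left block forces $r_{i-1}\le r_i$; similarly \eqref{2.26} forces $r_{n-1}\le e$. So the entire content is character-integral vanishing plus a rank count, not orbit dominance. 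Your root-exchange/partition-dominance step is the argument for the \emph{next} proposition in the paper, not this one, and inserting it here would be circular.
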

\begin{proof}
Given $\bar{r}$ and $e$, the corresponding summand in \eqref{2.18} is
$$
\sum_{(g',h')}\sum_{\bar{\eta}}
\int f_{\Delta(\tau,
m+i)\gamma_\psi^{(\epsilon)},s}(\alpha_r\hat{a}_{\bar{r}}h_{\bar{\eta};\gamma_e}
ut(g',h')x)\psi^{-1}_H(u)du.
$$
Assume that it is nonzero, for some $x$. Then there are $(g',h')$ and $\bar{\eta}$, such that
\begin{equation}\label{2.19.1}
\int_{(Q^{(\bar{r};\bar{\eta};\gamma_e)}(F)\cap U_{m^{n-1}}(F))\backslash
	U_{m^{n-1}}(\BA)} f_{\Delta(\tau,
m+i)\gamma_\psi^{(\epsilon)},s}(\alpha_r\hat{a}_{\bar{r}}h_{\bar{\eta};\gamma_e}
ut(g',h')x)\psi^{-1}_H(u)du \neq 0.
\end{equation}
Replacing the right- $t(g',h')x$ translate of $f_{\Delta(\tau,
m+i)\gamma_\psi^{(\epsilon)},s}$ by $f_{\Delta(\tau,
m+i)\gamma_\psi^{(\epsilon)},s}$, we may assume that in \eqref{2.19.1}, $t(g',h')x=I$. Denote $U'=Q^{(\bar{r};\bar{\eta};\gamma_e)}\cap U_{m^{n-1}}$. Factoring the $du$-integration in \eqref{2.19.1}, modulo
$U'(\BA)$ from the left, it follows that there is a $u\in U_{m^{n-1}}(\BA)$, such that 
\begin{equation}\label{2.19.1.1}
\int_{U'(F))\backslash
	U'(\BA)} f_{\Delta(\tau,
m+i)\gamma_\psi^{(\epsilon)},s}(\alpha_r\hat{a}_{\bar{r}}h_{\bar{\eta};\gamma_e}
u'u)\psi^{-1}_H(u')du' \neq 0.
\end{equation}
Again, we may assume that in \eqref{2.19.1.1}, $u=I$. Thus, assume that
\begin{equation}\label{2.19.1.1'}
\int_{U'(F))\backslash
	U'(\BA)} f_{\Delta(\tau,
m+i)\gamma_\psi^{(\epsilon)},s}(\alpha_r\hat{a}_{\bar{r}}h_{\bar{\eta};\gamma_e}
u')\psi^{-1}_H(u')du' \neq 0.
\end{equation}

We are going to find an $F$- subgroup $V$ of $U'=Q^{(\bar{r};\bar{\eta};\gamma_e)}\cap U_{m^{n-1}}$, such that its conjugation by
$\alpha_r\hat{a}_{\bar{r}}h_{\bar{\eta};\gamma_e}$ is contained in $U_{(m+i)n}$ (as $F$-algebraic groups). This will imply that, for all $v\in V(\BA)$ and all $f_{\Delta(\tau,m+i)\gamma_\psi^{(\epsilon)},s}$,
\begin{equation}\label{2.19.2}
f_{\Delta(\tau,
m+i)\gamma_\psi^{(\epsilon)},s}(\alpha_r\hat{a}_{\bar{r}}h_{\bar{\eta};\gamma}
v)=f_{\Delta(\tau,
m+i)\gamma_\psi^{(\epsilon)},s}(\alpha_r\hat{a}_{\bar{r}}h_{\bar{\eta};\gamma}
).
\end{equation}
This will enable us to get an inner integration, along $V(F)\backslash V(\BA)$, inside \eqref{2.19.1.1'}, such that, due to \eqref{2.19.2},
$$
\int_{V(F)\backslash V(\BA)}f_{\Delta(\tau,
m+i)\gamma_\psi^{(\epsilon)},s}(\alpha_r\hat{a}_{\bar{r}}h_{\bar{\eta};\gamma_e}
vu')\psi^{-1}_H(vu')dv=
$$
\begin{equation}\label{2.19.3}
f_{\Delta(\tau,
m+i)\gamma_\psi^{(\epsilon)},s}(\alpha_r\hat{a}_{\bar{r}}h_{\bar{\eta};\gamma_e}
u')\psi^{-1}_H(u')\int_{V(F)\backslash V(\BA)}\psi^{-1}_H(v)dv.
\end{equation}
Since, by \eqref{2.19.1}, the $dv$-integration of the character $\psi^{-1}_H$ must be nonzero, we must have that $\psi_H$ is trivial on $V(\BA)$. This will imply the conditions on $\bar{r}$ that we want. We now define the subgroup $V$. $V$ consists of all matrices  $v$ of the form \eqref{2.11}, with
$B=0$, $C=0$, $h=I_{2(ni+m)}$, $A$ with
$g_1=\cdots=g_{n-1}=I_m$, and, for $1\leq i<j\leq n-1$,
$$
x_{i,j}=\eta_i^{-1}\begin{pmatrix}0&y_{i,j}^{(2)}\\0&0\end{pmatrix}\eta_j,
$$
where the last matrix has  block division
of sizes as in \eqref{2.12}. It follows that $v\in
Q^{(\bar{r};\bar{\eta};\gamma_e)}\cap U_{m^{n-1}}$. By
\eqref{2.14.1},
\begin{equation}\label{2.20}
\hat{a}_{\bar{r}}h_{\bar{\eta};\gamma_e}vh_{\bar{\eta};\gamma_e}^{-1}\hat{a}_{\bar{r}}^{-1}=
\begin{pmatrix}I_r&y\\0&I_{m(n-1)-r}\end{pmatrix}^\wedge,
\end{equation}
(this is an element of the form \eqref{2.3}); the matrix $y$ is the
following
$$
y=\begin{pmatrix}0&y_{1,2}^{(2)}&\cdots
&y_{1,n-1}^{(2)}\\&0&&y_{2,n-1}^{(2)}\\&&\ddots\\&&&y_{n-2,n-1}^{(2)}\\&&&0\end{pmatrix}.
$$
By \eqref{2.4}, the element on the r.h.s. of \eqref{2.20} is conjugated
by $\alpha_r$ into $U_{(m+i)n}$. Thus, \eqref{2.19.2} is satisfied, for all $v\in V(\BA)$. Also, in the notation above, for $v\in V(\BA)$, 
\begin{equation}\label{2.22}
\psi_H(v)=\prod_{i=2}^{n-1}\psi(tr(\begin{pmatrix}0&y_{i-1,i}^{(2)}\\0&0\end{pmatrix}\eta_i\eta_{i-1}^{-1})).
\end{equation}
In order to use \eqref{2.19.3}, let us explain how do we get the integration along $V(F)\backslash V(\BA)$ as an inner integration of
\eqref{2.19.1.1}. for this, we note that we have a semidirect product decomposition of $F$- unipotent subgroups $U'=U'_1\rtimes U_2''$,
where $U'_1$ is the subgroup of elements of the form $h_{\bar{\eta},\gamma_e}^{-1}\theta h_{\bar{\eta},\gamma_e}$, where $\theta$ is of the form \eqref{2.11}, with $h=I_{2(ni+m)}$, $g_1=\cdots=g_{n-1}=I_m$, $B$, $C$, as in  \eqref{2.13}, \eqref{2.14}, but in \eqref{2.12}, we require that $x_{i,j}^{(1)}=0$ and $x_{i,j}^{(4)}=0$. The subgroup $U'_2$ consists of the elements form $h_{\bar{\eta},\gamma_e}^{-1}\theta h_{\bar{\eta},\gamma_e}$, where $\theta$ is of the form \eqref{2.11}, with $h=I_{2(ni+m)}$, $g_1=\cdots=g_{n-1}=I_m$, $B=0$, $C=0$, and in \eqref{2.12}, we require that $x_{i,j}^{(2)}=0$. Now, we can write the integral \eqref{2.19.1.1} (with $u=I$) as
\begin{equation}\label{2.20.1}
\int_{U_2'(F))\backslash
	U_2'(\BA)}\int_{U'_1(F)\backslash U'_1(\BA)} f_{\Delta(\tau,
m+i)\gamma_\psi^{(\epsilon)},s}(\alpha_r\hat{a}_{\bar{r}}h_{\bar{\eta};\gamma_e}
u_1'u'_2)\psi^{-1}_H(u_1'u'_2)du_1'du'_2.
\end{equation}
As in \eqref{2.19.1}, there is $u'_2\in U'_2(\BA)$, such that the inner $du'_1$-integration of \eqref{2.20.1} is nonzero, and, again, we may assume that $u'_2=I$, so that
\begin{equation}\label{2.20.2}
\int_{U'_1(F)\backslash U'_1(\BA)} f_{\Delta(\tau,
m+i)\gamma_\psi^{(\epsilon)},s}(\alpha_r\hat{a}_{\bar{r}}h_{\bar{\eta};\gamma_e}
u_1')\psi^{-1}_H(u_1')du_1' \neq 0.
\end{equation}
We have $U'_1=U''\ltimes V$, where $U''=U'_1\cap U_{m(n-1)}$. As in \eqref{2.20.1}, we can write the integral \eqref{2.20.2} as
\begin{equation}\label{2.20.3}
\int_{V(F))\backslash
	V(\BA)}\int_{U''(F)\backslash U''(\BA)} f_{\Delta(\tau,
m+i)\gamma_\psi^{(\epsilon)},s}(\alpha_r\hat{a}_{\bar{r}}h_{\bar{\eta};\gamma_e}
u''v)\psi^{-1}_H(u''v)du''dv.
\end{equation}
By \eqref{2.19.2}, the integral \eqref{2.20.3} is equal to
\begin{equation}\label{2.20.4}
\int_{V(F))\backslash
	V(\BA)}\int_{U''(F)\backslash U''(\BA)} f_{\Delta(\tau,
m+i)\gamma_\psi^{(\epsilon)},s}(\alpha_r\hat{a}_{\bar{r}}h_{\bar{\eta};\gamma_e}
v^{-1}u''v)\psi^{-1}_H(u''v)du''dv.
\end{equation}
We leave it to the reader to see that in the inner $du''$-integration of \eqref{2.20.4}, we can change variable $u''\mapsto vu''v^{-1}$.
This can be checked by direct multiplication of upper unipotent matrices. We get that
\begin{equation}\label{2.20.5}
\int_{V(F))\backslash
	V(\BA)}\int_{U''(F)\backslash U''(\BA)} f_{\Delta(\tau,
m+i)\gamma_\psi^{(\epsilon)},s}(\alpha_r\hat{a}_{\bar{r}}h_{\bar{\eta};\gamma_e}
u'')\psi^{-1}_H(vu'')du''dv \neq 0.
\end{equation}
The integral in \eqref{2.20.5} is equal to
\begin{equation}\label{2.20.6}
(\int_{U''(F)\backslash U''(\BA)} f_{\Delta(\tau,
m+i)\gamma_\psi^{(\epsilon)},s}(\alpha_r\hat{a}_{\bar{r}}h_{\bar{\eta};\gamma_e}
u'')\psi^{-1}_H(u'')du'')\cdot (\int_{V(F))\backslash V(\BA)}\psi^{-1}_H(v)dv).
\end{equation}
This explains \eqref{2.19.3}. We conclude that \eqref{2.22} is identically $1$. This forces 
$\eta_i\eta_{i-1}^{-1}$ to have the following form, for $2\leq i\leq n-1$,
\begin{equation}\label{2.23}
\eta_i\eta_{i-1}^{-1}=\begin{pmatrix}\mu_{i-1}^{(1)}&\mu_{i-1}^{(2)}\\0_{\ell_i\times
r_{i-1}}&\mu_{i-1}^{(4)}\end{pmatrix}.
\end{equation}
Note that $\mu_{i-1}^{(1)}\in M_{r_i\times r_{i-1}}(F)$. Since
$\eta_i\eta_{i-1}^{-1}\in \GL_m(F)$, the rank of the matrix on the r.h.s. of \eqref{2.23} is $m$, and hence, we must have that $r_{i-1}\leq
r_i$, for $2\leq i\leq n-1$. Note, again, that the $\eta_i$ are any coset representatives of $P_{r_i,\ell_i}(F)\backslash \GL_m(F)$. The form of the matrix on the r.h.s. of \eqref{2.23} remains the same if we replace $\eta_i$ and $\eta_{i-1}$ by other representatives.

It remains to show that $r_{n-1}\leq
e$. The proof is similar. We find an $F$-subgroup $V'\subset U'= Q^{(\bar{r};\bar{\eta};\gamma)}\cap
U_{m^{n-1}}$, such that \eqref{2.19.2} is satisfied, for all $v\in V'(\BA)$, and so that we can find the  inner integration, along $V'(F)\backslash V'(\BA)$, inside \eqref{2.19.1.1}, such that we get the analog of \eqref{2.19.3}, with $V'$ instead of $V$, and then, since the integral of $\psi^{-1}_H$ along $V'(F)\backslash V'(\BA)$ is nontrivial (by \eqref{2.19.1.1}, it follows that $\psi_H$ must be trivial on $V'(\BA)$. Since the details are similar to those related to $V$, we will only define $V'$, compute the restriction of $\psi_H$ to $V'(\BA)$, and then infer the inequality $r_{n-1}\leq e$.

The group $V'$ consists of all matrices  $v$ of 
the form \eqref{2.11}, with
$A=I_{m(n-1)}$, $h=I_{2(ni+m)}$, $B$ with
$$
x_i=\eta_i^{-1}\begin{pmatrix}0&y_i^{(2)}\\0&0\end{pmatrix}\omega_0^{m(n-1)-r}\gamma_e,\
1\leq i\leq n-1,
$$
where the last matrix has block division of sizes as in
\eqref{2.13}. The matrix $C$ is with
$$
c_{i,j}=\eta_i^{-1}\begin{pmatrix}0&\zeta_{i,j}^{(2)}\\0&0\end{pmatrix}(\eta_{n-j}^*)^{-1},
$$
where the last matrix has block division of sizes as in
\eqref{2.14}. Then $v\in Q^{(\bar{r};\bar{\eta};\gamma_e)}\cap
U_{m^{n-1}}$, and by \eqref{2.14.2}, \eqref{2.14.3},
\begin{equation}\label{2.24}
\hat{a}_{\bar{r}}h_{\bar{\eta};\gamma_e}vh_{\bar{\eta};\gamma_e}^{-1}\hat{a}_{\bar{r}}^{-1}=
\begin{pmatrix}I_r&0&0&y&0&\zeta\\&I_{m(n-1)-r}&0&0&0&0\\&&I_{ni+m}&0&0&y'\\&&&I_{ni+m}&0&0\\&&&&I_{m(n-1)-r}&0\\
&&&&&I_r\end{pmatrix}^{\omega_0^{m(n-1)-r}},
\end{equation}
(this is an element of the form \eqref{2.3}); the matrix $y$ is the
following
$$
y=\begin{pmatrix}y_1^{(2)}\\y_2^{(2)}\\\cdots\\
y_{n-1}^{(2)}\end{pmatrix},
$$
and $\zeta$ is the matrix of all blocks $\zeta_{i,j}^{(2)}$. By
\eqref{2.4}, the element on the r.h.s. of \eqref{2.24} is conjugated by
$\alpha_r$ into $U_{(m+i)n}(\BA)$. Now we note that, for $v\in V'(\BA)$, with the above notation,
(since $\omega_0\gamma_e=\gamma_e\omega_0$ and $\omega_0A_H=A_H$)
\begin{equation}\label{2.25}
\psi_H(v)=\psi(tr(\begin{pmatrix}0&y_{n-1}^{(2)}\\0&0\end{pmatrix}\gamma_e
A_H\eta_{n-1}^{-1})).
\end{equation}
Since \eqref{2.25} must be identically $1$, we conclude that
\begin{equation}\label{2.25.1}
\gamma_e
A_H\eta_{n-1}^{-1}=\begin{pmatrix}\nu^{(1)}&\nu^{(2)}\\0_{(ni+m)\times
r_{n-1}}&\nu^{(4)}\end{pmatrix}.
\end{equation}
Note that $\nu^{(1)}\in M_{(ni+m)\times r_{n-1}}(F)$. It follows that $\eta_{n-1}^{-1}$ has the following form,
that
\begin{equation}\label{2.26}
\eta_{n-1}^{-1}=\begin{pmatrix}\mu_{n-1}^{(1)}&\mu_{n-1}^{(2)}\\0_{(m-e)\times
r_{n-1}}&\mu_{n-1}^{(4)}\end{pmatrix}.
\end{equation}
Since $\eta_{n-1}^{-1}\in \GL_m(F)$, we must have $r_{n-1}\leq
e$. This proves the proposition.
\end{proof}
Carrying out the matrix multiplication in \eqref{2.25.1}, let us express the $\nu^{(i)}$ in terms of the $\mu^{(i)}_{n-1}$ in \eqref{2.26}.
We get that
\begin{equation}\label{2.26.1}
\nu^{(1)}=\begin{pmatrix}\mu_{n-1}^{(1)}\\0_{(ni+m-e)\times
r_{n-1}}\end{pmatrix}.
\end{equation}
It also follows that for $m$ even,
\begin{equation}\label{2.26.2}
\nu^{(4)}=\begin{pmatrix}0_{(ni+e)\times
\ell_{n-1}}\\\mu_{n-1}^{(4)}\end{pmatrix},
\end{equation}
and for $m$ odd, if we write
$$
\mu_{n-1}^{(4)}=\begin{pmatrix}d_1\\d_2\\d_3\end{pmatrix},\ d_1\in
M_{([\frac{m}{2}]-e)\times \ell_{n-1}}(F),\ d_2\in M_{1\times \ell_{n-1}}(F),\
d_3\in M_{[\frac{m}{2}]\times \ell_{n-1}}(F),
$$
then
\begin{equation}\label{2.26.3}
\nu^{(4)}=\begin{pmatrix}\frac{1}{2}d_2\\0_{(ni+e)\times
\ell_{n-1}}\\d_1\\d_3\end{pmatrix}.
\end{equation}

We continue the line of proof of Prop. \ref{prop 2.3}. Assume then that the integral \eqref{2.19.1.1'} is nonzero,
for one of the remaining representatives $\alpha_r\hat{a}_{\bar{r}}h_{\bar{\eta},\gamma_e}$, satisfying the inequalities of Prop. \ref{prop 2.3}, and $\bar{\eta}$ as in \eqref{2.23}-\eqref{2.26}. Our next goal is to show that this is impossible if $r_{n-1}>0$. This will complete the proof of Theorem \ref{thm 2.1}. As a preparation for the proof, we rewrite the integral \eqref{2.19.1.1'} by carrying out in its integrand the conjugation by $\alpha_r\hat{a}_{\bar{r}}h_{\bar{\eta},\gamma_e}$. Then the integral \eqref{2.19.1.1'} is equal to
\begin{equation}\label{2.27}
\int_{E_{\bar{r}}(F)\backslash E_{\bar{r}}(\BA)}f_{\Delta(\tau,
m+i)\gamma_\psi^{(\epsilon)},s}(u\alpha_r\hat{a}_{\bar{r}}h_{\bar{\eta};\gamma_e}
)\psi^{-1}_{E_{\bar{r}};\bar{\eta},e}(u)du,
\end{equation}
where $E_{\bar{r}}=\alpha_r\hat{a}_{\bar{r}}h_{\bar{\eta},\gamma_e}U'(\alpha_r\hat{a}_{\bar{r}}h_{\bar{\eta},\gamma_e})^{-1}$, and 
$\psi_{E_{\bar{r}};\bar{\eta},e}$ is the character of $E_{\bar{r}}(\BA)$ obtained by composing the restriction of $\psi_H$ to $U'(\BA)$ with the conjugation by $\alpha_r\hat{a}_{\bar{r}}h_{\bar{\eta},\gamma_e}$. The form of the matrices in $E_{\bar{r}}$ can be read from \eqref{2.3}-\eqref{2.4}, \eqref{2.11}-\eqref{2.14.3}. Since these matrices are upper unipotent, we don't need to worry in the metaplectic case, about the cocycle. See \cite{Rao93}, Cor. 5.5.

The subgroup $E_{\bar{r}}$ is a semi-direct product of two unipotent
subgroups. One is an $F$- subgroup $U^{\bar{r}}_{(m+i)n}$ of $U_{(m+i)n}$, and the other is
the following unipotent subgroup $V_{\bar{r}}^\wedge\subset
\GL_{(m+i)n}^\wedge$, which normalizes $U^{\bar{r}}_{(m+i)n}$. Its elements have the form
\begin{equation}\label{2.28}
\hat{v}=\begin{pmatrix}I_{r_1}&x_{1,2}&\cdots&x_{1,n-1}&b_1&\ast
&&\cdots&\ast\\&I_{r_2}&\cdots&x_{2,n-1}&b_2&\ast&&\cdots&\ast\\&&\ddots\\&&&I_{r_{n-1}}&b_{n-1}&\ast&&\cdots&\ast\\
&&&&I_{ni+m}&c_{n-1}&&\cdots&c_1\\&&&&&I_{\ell_{n-1}}&y_{n-2,n-1}&\cdots&y_{1,n-2}\\&&&&&&\ddots\\&&&&&&&&y_{1,2}\\
&&&&&&&&I_{\ell_1}\end{pmatrix}^\wedge.
\end{equation}
Note that
$V_{\bar{r}}$ is the standard unipotent radical $V_{r_1,...,r_{n-1},m+ni,\ell_{n-1},...,\ell_1}$ (defined right before \eqref{1.2}). 
The character $\psi_{E_{\bar{r}};\bar{\eta},e}$ is trivial on\\
$E_{\bar{r}}(\BA)\cap U_{(m+i)n}(\BA)$, and takes the following
value on $\hat{v}$ in \eqref{2.28}, with adele coordinates,
\begin{equation}\label{2.29}
\prod_{i=1}^{n-2}\psi(tr(\mu_i^{(1)}x_{i,i+1}))\psi(tr(\tilde{\mu}^{(4)}_iy_{i,i+1}))\psi(tr(\nu^{(1)}b_{n-1}))\psi(tr(\tilde{\nu}^{(4)}c_{n-1})),
\end{equation}
where $\mu_i,\nu$ are as in \eqref{2.23}, \eqref{2.25.1}, and we
define
$$
\tilde{\mu}^{(4)}_i=-w_{\ell_i}{}^t\mu_i^{(4)}w_{\ell_{i+1}},\
\tilde{\nu}^{(4)}=-w_{\ell_{n-1}}{}^t\nu^{(4)}w_{ni+m}.
$$
It follows that in \eqref{2.27}, for $u=u^{\bar{r}}\hat{v}$, with $u^{\bar{r}}\in U^{\bar{r}}_{(m+i)n}(\BA)$, $v\in V_{\bar{r}}(\BA)$,
\begin{equation}\label{2.29.1}
f_{\Delta(\tau,
m+i)\gamma_\psi^{(\epsilon)},s}(u^{\bar{r}}\hat{v}\alpha_r\hat{a}_{\bar{r}}h_{\bar{\eta};\gamma_e}
)\psi^{-1}_{E_{\bar{r}};\bar{\eta},e}(u^{\bar{r}}\hat{v})=f_{\Delta(\tau,
m+i)\gamma_\psi^{(\epsilon)},s}(\hat{v}\alpha_r\hat{a}_{\bar{r}}h_{\bar{\eta};\gamma_e}
)\psi^{-1}_{E_{\bar{r}};\bar{\eta},e}(\hat{v}).
\end{equation}
Thus, up to the measure of $U^{\bar{r}}_{(m+i)n}(F)\backslash U^{\bar{r}}_{(m+i)n}(\BA)$, the integral \eqref{2.27} is equal to
\begin{equation}\label{2.29.2}
\int_{V_{\bar{r}}(F)\backslash V_{\bar{r}}(\BA)}f_{\Delta(\tau,
m+i)\gamma_\psi^{(\epsilon)},s}(\hat{v}\alpha_r\hat{a}_{\bar{r}}h_{\bar{\eta};\gamma_e}
)\psi^{-1}_{E_{\bar{r}};\bar{\eta},e}(\hat{v})dv.
\end{equation}
Let us view \eqref{2.29} as a character of $V_{\bar{r}}(\BA)$ and
denote it by $\psi_{V_{\bar{r}};\bar{\eta},e}$. For an automorphic
form $\xi$ in the space of $\Delta(\tau,m+i)$, consider the
following Fourier coefficient
\begin{equation}\label{2.30}
\xi^{\psi,\bar{r},\bar{\eta},e}(a)=\int_{V_{\bar{r}}(F)\backslash
V_{\bar{r}}(\BA)}\xi(va)\psi^{-1}_{V_{\bar{r}};\bar{\eta},e}(v)dv.
\end{equation}
Then \eqref{2.27}, and hence, \eqref{2.29.2}, is the application of the Fourier coefficient
\eqref{2.30} to the automorphic form on $\GL_{(m+i)n}(\BA)$, given by 
$$
b\mapsto \delta^{-\frac{1}{2}}(\hat{b})|\det(b)|^{-s}(\gamma_\psi^{(\epsilon)}(\det(b)))^{-1}f_{\Delta(\tau,
m+i)\gamma_\psi^{(\epsilon)},s}(\hat{b}\alpha_r\hat{a}_{\bar{r}}h_{\bar{\eta};\gamma_e}
),
$$
evaluated at $b=I_{(m+i)n}$.  We denote this Fourier coefficient, evaluated at
$I_{(m+i)n}$, by
$f^{\psi,\bar{r},\bar{\eta},e}_{\Delta(\tau,m+i)\gamma_\psi^{(\epsilon)},s}(\alpha_r\hat{a}_{\bar{r}}h_{\bar{\eta};\gamma_e})$. 
Summarizing, we showed that, 
for any one of the representatives $\alpha_r\hat{a}_{\bar{r}}h_{\bar{\eta},\gamma_e}$, satisfying the inequalities of Prop. \ref{prop 2.3}, and $\bar{\eta}$, as in \eqref{2.23}-\eqref{2.26}, the integral \eqref{2.19.1.1'} is equal, up to the measure of $U^{\bar{r}}_{(m+i)n}(F)\backslash U^{\bar{r}}_{(m+i)n}(\BA)$, to $f^{\psi,\bar{r},\bar{\eta},e}_{\Delta(\tau,m+i)\gamma_\psi^{(\epsilon)},s}(\alpha_r\hat{a}_{\bar{r}}h_{\bar{\eta};\gamma_e})$. The next proposition says that if $r_{n-1}>0$, then $f^{\psi,\bar{r},\bar{\eta},e}_{\Delta(\tau,m+i)\gamma_\psi^{(\epsilon)},s}(\alpha_r\hat{a}_{\bar{r}}h_{\bar{\eta};\gamma_e})=0$, for all $f^{\psi,\bar{r},\bar{\eta},e}_{\Delta(\tau,m+i)\gamma_\psi^{(\epsilon)},s}$.

\begin{prop}\label{prop 2.4}
Assume that $r_{n-1}>0$. Then the Fourier coefficient \eqref{2.30}
is identically zero on $\Delta(\tau,m+i)$.
\end{prop}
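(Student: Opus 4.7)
The plan is to invoke Proposition \ref{prop 1.2}. I will show that, under the hypothesis $r_{n-1}>0$, the character $\psi_{V_{\bar{r}};\bar{\eta},e}$ attaches \eqref{2.30} to a Fourier coefficient on $\Delta(\tau,m+i)$ associated to a nilpotent orbit $\mathcal{O}$ of $\mathfrak{gl}_{n(m+i)}$ whose partition $\underline{P}$ does not satisfy $\underline{P}\leq (n^{m+i})$ in dominance order. Proposition \ref{prop 1.2} then forces the coefficient to vanish.

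Step 1 (Normalization and root exchange). Using the precise forms \eqref{2.23} and \eqref{2.25.1}--\eqref{2.26.3} of the constraint matrices, first replace each $\eta_i$ by a standard representative of $P_{r_i,\ell_i}\bks \GL_m(F)$. After absorbing left $P_{r_i,\ell_i}$-action into $\eta_i$, the matrices $\mu^{(1)}_i$ (size $r_{i+1}\times r_i$), $\tilde\mu^{(4)}_i$ ($\ell_i\times\ell_{i+1}$), $\nu^{(1)}$ ($(m+ni)\times r_{n-1}$) and $\tilde\nu^{(4)}$ ($\ell_{n-1}\times(m+ni)$) become standard maximal-rank embeddings. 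Then, arguing as in Section 4.2 of \cite{GRS11} and Section 3 of \cite{GJS15}, adjoin to $V_{\bar{r}}$ those unipotent directions on which the integrand is constant and, where needed, Fourier-expand along the conjugate abelian directions, in order to produce a Fourier coefficient on a unipotent $V'\supset V_{\bar{r}}$ of $\GL_{n(m+i)}$ with a character that is the generic (principal) character attached to a definite nilpotent orbit $\mathcal{O}$.

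Step 2 (Orbit computation and vanishing). The composition determined by $V_{\bar{r}}$ is the unimodal sequence $(r_1,\ldots,r_{n-1},m+ni,\ell_{n-1},\ldots,\ell_1)$ of $2n-1$ blocks with maximum at the middle position $n$, and the Whittaker-type character connects successive blocks by rank-maximal maps. A standard Jordan-chain analysis yields the partition $\underline{P}$: new chains start at position $j$ with multiplicity $a_j - \text{rank}(X|_{F^{a_{j-1}}}\to F^{a_j})$, and under rank-maximality each surviving chain terminates only at position $2n-1$. The longest Jordan block has length $2n-j^*$, where $j^*=\min\{j:r_j>0\}$. Since $r_{n-1}>0$ and $r_1\leq\cdots\leq r_{n-1}$, we have $j^*\leq n-1$, whence the largest part of $\underline{P}$ is at least $n+1$, so $\underline{P}\not\leq (n^{m+i})$. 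Proposition \ref{prop 1.2} then gives the required vanishing of \eqref{2.30}.

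Main obstacle. The heavy work is in Step 1: coupling the $r$-side character (involving $\mu^{(1)}_i,\nu^{(1)}$) with the $\ell$-side character (involving $\tilde\mu^{(4)}_i,\tilde\nu^{(4)}$) through the middle block of size $m+ni$, so that the enlarged Fourier coefficient matches one single definite orbit $\mathcal{O}$. In addition, the normalization in the odd-$m$ case, where $\nu^{(4)}$ takes the asymmetric form \eqref{2.26.3}, requires extra bookkeeping to confirm that the character chain retains the rank-maximal structure needed in Step 2. Once Step 1 is carried out, the orbit computation and the appeal to Proposition \ref{prop 1.2} are formal.
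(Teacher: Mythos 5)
Your proposal follows the same route as the paper: conjugate the character $\psi_{V_{\bar{r}};\bar{\eta},e}$ to a standard form, read off the associated nilpotent matrix (the paper's $A$ in \eqref{2.33}), observe that if $r_{n-1}>0$ its orbit has a part of size $\geq n+1$, and apply Proposition~\ref{prop 1.2}. The paper uses a direct diagonal conjugation rather than root exchange, and phrases the final step as an explicit induction ($r_1=0$, then $r_2=0$, $\dots$) rather than the single statement ``longest block has length $2n-j^*$,'' but these are equivalent formulations of the same computation.

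One point to correct in your Step~2: you assert that after normalization all four families of constraint matrices become maximal-rank embeddings, but that fails for $\tilde\nu^{(4)}$ (of size $\ell_{n-1}\times(ni+m)$): its normalized form $S$ in \eqref{2.33} has rank $m-e$, which equals $\ell_{n-1}=m-r_{n-1}$ only when $e=r_{n-1}$. The chain of length $2n-j^*$ nevertheless survives $S$, because at the middle block $F^{ni+m}$ it occupies the first $r_{j^*}$ coordinates, and $r_{j^*}\le r_{n-1}\le e\le[\tfrac{m}{2}]\le m-e$, while $S$ preserves exactly the first $m-e$ coordinates. So your conclusion (largest part $\ge n+1$, hence $\underline{P}\not\leq(n^{m+i})$) is correct, but the justification should appeal to this explicit compatibility between the images of the $R_i$ and the support of $S$, not to rank maximality of $S$.
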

\begin{proof}
We note that $rank(\mu_i^{(1)})=r_i$,
$rank(\mu^{(4)}_i)=\ell_{i+1}$, for $1\leq i\leq n-2$. Also,
$rank(\nu^{(1)})=rank(\mu_{n-1}^{(1)})=r_{n-1}$,
$rank(\nu^{(4)})=rank(\mu_{n-1}^{(4)})=m-e$. These follow from the
last proof. When we conjugate the character $\psi_{V_{\bar{r}};\bar{\eta},e}$, given by
\eqref{2.29}, by an element
$diag(a_1,...,a_{n-1},b,\alpha_{n-1},...,\alpha_1)\in
\GL_{(m+i)n}(F)$, where $a_i\in \GL_{r_i}(F)$, $\alpha_i\in
\GL_{\ell_i}(F)$, for $1\leq i\leq n-1$, and $b\in \GL_{ni+m}(F)$,
to a character of the same form (as \eqref{2.29}), with $\mu_i^{(1)}$, $\tilde{\mu}_i^{(4)}$ replaced by $a_{i+1}^{-1}\mu_i^{(1)}a_i$, 
$\alpha_i^{-1}\tilde{\mu}_i^{(4)}\alpha_{i+1}$, respectively, for $1\leq i\leq n-2$, and $\nu^{(1)}$, $\tilde{\nu}^{(4)}$ are replaced by 
$b^{-1}\nu^{(1)}a_{n-1}$, $\alpha_{n-1}^{-1}\tilde{\nu}^{(4)}b$, respectively.
Due to the form of $\nu^{(1)}, \nu^{(4)}$,
\eqref{2.26.1}-\eqref{2.26.3}, it is an exercise in linear algebra (bringing matrices to row echelon forms) 
to show that one can find $b$ and $a_i,\alpha_i$, as above, such that
$$
a_{i+1}^{-1}\mu_i^{(1)}a_i=R_i=\begin{pmatrix}I_{r_i}\\0_{(r_{i+1}-r_i)\times
r_i}\end{pmatrix},\  1\leq i\leq n-2;
$$
$$
b^{-1}\nu^{(1)}a_{n-1}=R_{n-1}=\begin{pmatrix}I_{r_{n-1}}\\0_{(ni+m-r_{n-1})\times
r_{n-1}}\end{pmatrix};
$$
$$
\alpha_i^{-1}\tilde{\mu}_i^{(4)}\alpha_{i+1}=L_i=\begin{pmatrix}I_{\ell_{i+1}}\\0_{(\ell_i-\ell_{i+1})\times
\ell_{i+1}}\end{pmatrix},\  1\leq i\leq n-2;
$$
$$
\alpha_{n-1}^{-1}\tilde{\nu}^{(4)}b=S=\begin{pmatrix}I_{m-e}&0_{(m-e)\times
(ni+e)}\\0_{(e-r_{n-1})\times (m-e)}&0_{(e-r_{n-1})\times
(ni+e)}\end{pmatrix}.
$$
After such a conjugation, the character  $\psi_{V_{\bar{r}};\bar{\eta},e}$
becomes the character
\begin{equation}\label{2.32}
\prod_{i=1}^{n-2}\psi(tr(R_ix_{i,i+1}))\psi(tr(L_iy_{i,i+1}))\psi(tr(R_{n-1}b_{n-1}))\psi(tr(Sc_{n-1})).
\end{equation}
The character \eqref{2.32} corresponds to the nilpotent orbit (in
$\mathfrak{gl}_{(m+i)n}(\bar{F})$) of the matrix
\begin{equation}\label{2.33}
A=\begin{pmatrix}0_{r_1}\\R_1&0_{r_2}\\&&\ddots\\&&&0_{r_{n-2}}\\&&&R_{n-2}&
0_{r_{n-1}}\\&&&&R_{n-1}&0_{ni+m}\\&&&&&S&0_{\ell_{n-1}}\\&&&&&&L_{n-1}&0_{\ell_{n-2}}\\
&&&&&&&&\ddots\\&&&&&&&&L_2&0_{\ell_1}\end{pmatrix}.
\end{equation}
Assume that the Fourier coefficient \eqref{2.30}
is nontrivial on $\Delta(\tau,m+i)$. The orbit of $A$ corresponds to a partition of the form
$((2n-1)^{r_1},...)$. By Prop. \ref{prop 1.2}, we must have $r_1=0$,
unless $n=1$. This case is impossible, since we assume that $r_{n-1}>0$, and hence $n\geq 2$. Note that when $n=1$, the sum \eqref{2.7} is empty, and also , $\Delta(\tau,m+i)=|\det_{\GL_{n(m+i)}(\BA)}|$.
Assume that $n>1$. Consider the matrix \eqref{2.33}, with $r_1=0$.
Then, similarly, its orbit corresponds to a partition of the form
$((2n-2)^{r_2},...)$. By Prop. \ref{prop 1.2}, we must have
$r_2=0$, unless $n=2$, in which case, $r_{n-1}=r_1=0$, while we assume that $r_{n-1}>0$.
If $n>i-1$, and we proved that $r_{i-1}=0$, then we get
that the orbit of $A$ corresponds to a partition of the form
$((2n-i)^{r_i},...)$. By Prop. \ref{prop 1.2}, we must have $r_i=0$,
unless $n=i$, in which case, $r_{n-1}=r_{i-1}=0$, while we assume that $r_{n-1}>0$.
By induction, we get that $r_{n-1}=0$, contradicting our assumption. This proves the proposition.
\end{proof}
The last proposition implies that if
$\mathcal{F}_{\psi,r}(f_{\Delta(\tau, m+i)\gamma_\psi^{(\epsilon)},s})$ is nonzero, then
$r_1=\cdots=r_{n-1}=0$ (due to Prop. \ref{prop 2.3}). By
\eqref{2.7}, $r=0$. This completes the proof of Theorem \ref{thm
2.1}.

\section{The contribution of the open orbit to $\mathcal{E}(f_{\Delta(\tau,
		m+i)\gamma_\psi^{(\epsilon)},s},\varphi_\sigma)$}

Theorem \ref{thm 2.1}, \eqref{2.5} and \eqref{2.18} imply that\\
\\
$\mathcal{F}_\psi(E(f_{\Delta(\tau,
m+i)\gamma_\psi^{(\epsilon)},s}))(x)=\mathcal{F}_{\psi,0}(E(f_{\Delta(\tau,
m+i)\gamma_\psi^{(\epsilon)},s}))(x)=$
\begin{equation}\label{3.1}
\sum_{e=0}^{[\frac{m}{2}]}\sum_{(g',h')}\sum_{\bar{\eta}}
\int_{U_{m^{n-1}}^{\bar{0},\bar{\eta},e}(\BA)\backslash
U_{m^{n-1}}(\BA)} f^{\psi,\bar{0},\bar{\eta},e}_{\Delta(\tau,
m+i)\gamma_\psi^{(\epsilon)},s}(\alpha_0\hat{a}_{\bar{0}}h_{\bar{\eta};\gamma_e}
ut(g',h')x)\psi^{-1}_H(u)du.
\end{equation}
Let us describe the ingredients of \eqref{3.1} in the present case
(i.e. $r=0$). Recall that the summation on
$\bar{\eta}=(\eta_1,...,\eta_{n-1})$ runs over $\prod_{i=1}^{n-1}
P_{r_i,\ell_i}(F)\backslash \GL_m(F)$. Since $r_i=0$, and $r_i+\ell_i=m$, for $1\leq
i\leq n-1$, $P_{r_i,\ell_i}$ is $\GL_m$, and we may take $\eta_i=I_m$, for all $i$, so that there
is no further summation on $\eta$ in \eqref{3.1}. Let us denote, in this case, the
element $h_{\bar{\eta},\gamma_e}$  by $h_{\gamma_e}$, and denote 
$f^{\psi,\bar{0},\bar{\eta},e}_{\Delta(\tau,
m+i)\gamma_\psi^{(\epsilon)},s}=f^{\psi,e}_{\Delta(\tau, m+i)\gamma_\psi^{(\epsilon)},s}$. Note that
$V_{\bar{0}}=V_{ni+m,m^{n-1}}$, and the character
$\psi_{V_{\bar{0}};\bar{\eta},e}$ (defined in \eqref{2.29}, and viewed as a character of $V_{\bar{0}}(\BA)$) is now the following character
$\psi'_e$ of $V_{ni+m,m^{n-1}}(\BA)$ (read from \eqref{2.26.1}-\eqref{2.26.3} and \eqref{2.29}),
\begin{equation}\label{3.3}
\psi'_e(\begin{pmatrix}I_{ni+m}&c_{n-1}&&\cdots&c_1\\&I_m&y_{n-2,n-1}&\cdots&y_{1,n-2}\\&&\ddots\\&&&&y_{1,2}\\
&&&&I_m\end{pmatrix})=\prod_{i=1}^{n-2}\psi^{-1}(tr(y_{i,i+1}))\psi^{-1}(tr(\nu
c_{n-1})),
\end{equation}
where in case $m$ is even,
\begin{equation}\label{3.3.1}
\nu=\begin{pmatrix}I_{m-e}&0_{(m-e)\times (ni+e)}\\0_{e\times
(m-e)}&0_{e\times (ni+e)}\end{pmatrix},
\end{equation}
and in case $m=2m'-1$ is odd,
\begin{equation}\label{3.3.2}
\nu=\begin{pmatrix}I_{m'-1}&0&0_{(m'-1)\times
(ni+e)}&0\\0&0&0&\frac{1}{2}\\0&I_{m'-1-e}&0&0\\0&0&0_{e\times
(ni+e)}&0\end{pmatrix}.
\end{equation}
Note also that $a_{\bar{0}}=I_{m(n-1)}$, and that
\begin{equation}\label{3.2}
\alpha_0=\begin{pmatrix}0&I_{ni+m}&0&0\\0&0&0&I_{m(n-1)}\\
\delta_H
I_{m(n-1)}&0&0&0\\0&0&I_{ni+m}&0\end{pmatrix}\omega_0^{m(n-1)}.
\end{equation}
Note that $\omega_0^{m(n-1)}=I$, when $m$ is even. When $m$ is odd, i.e. $H_m$ is odd orthogonal, $\omega_0^{m(n-1)}=\omega_0^{n-1}$. Next, the subgroup $U_{m^{n-1}}^{\bar{0},\bar{\eta},e}$, which we
denote, for short, $U_{m^{n-1}}^e$, consists of the elements
\begin{equation}\label{3.2.1}
\begin{pmatrix}v&y&0\\&I_{2(ni+m)}&y'\\&&v^*\end{pmatrix}^{\omega_0^{m(n-1)}}\in
U_{m^{n-1}},
\end{equation}
where $v\in V_{m^{n-1}}$ and $y$ has the form
$$
y=(0_{m(n-1)\times e},y_1,0_{m(n-1)\times
(ni+(m-2[\frac{m}{2}])+e)},y_2,\tilde{y}_1,y_3),
$$
where $y_1$ is of size $m(n-1)\times 2([\frac{m}{2}]-e)$, $y_2$ is of
size $m(n-1)\times (ni+(m-2[\frac{m}{2}])+e)$, $y_3$ is of size $m(n-1)\times
e$, and $ \tilde{y}_1=y_1\cdot diag(I_{[\frac{m}{2}]-e},-\delta_H
I_{[\frac{m}{2}]-e})$. Finally, the summation in $(g',h')$ in \eqref{3.1}
runs over
$j^{-1}(\gamma_e^{-1}Q_{ni+m}(F)^{\omega_0^{m(n-1)}}\gamma_e \cap
j(H_m(F)\times H_{2ni+m}(F)))\backslash H_m(F)\times H_{2ni+m}(F)$. See
\eqref{2.18}. The elements of
$\gamma_e^{-1}Q_{ni+m}^{\omega_0^{m(n-1)}}\gamma_e \cap
j(H_m\times H_{2ni+m})$ have the following form.
\begin{equation}\label{3.3.1'}
(\begin{pmatrix}a&x&y\\&{}^eb&x'\\&&a^*\end{pmatrix},\begin{pmatrix}A&X&Y\\&b&X'\\&&A^*\end{pmatrix}^{\hat{\beta}_e})\in
H_m\times H_{2ni+m},
\end{equation}
where $a\in \GL_e$, $A\in \GL_{ni+e}$, and, for $b\in H_{m-2e}$,
\begin{equation}\label{3.3.1''}
{}^eb=J^{-1}_e b J_e,
\end{equation}
and $J_e=J_{e;m,n}$ is defined as follows. If $m(n-1)$ is even (so that $\omega_0^{m(n-1)}=I$), then
\begin{equation}\label{3.3.1*} 
J_e=\begin{pmatrix}&&I_{[\frac{m}{2}]-e}\\&I_{m-2[\frac{m}{2}]}\\-\delta_H
I_{[\frac{m}{2}]-e}\end{pmatrix};
\end{equation}
and if $m(n-1)$ is odd, so that $m=2m'-1$ is odd and $n$ is even, then
\begin{equation}\label{3.3.1**}
J_e=\begin{pmatrix}&&-I_{m'-1-e}\\&1\\I_{m'-1-e}\end{pmatrix}.
\end{equation}
Also,
\begin{equation}\label{3.3.1'''}
\beta_e=\begin{pmatrix}&I_{[\frac{m}{2}]-e}\\I_{ni+e}\end{pmatrix}.
\end{equation}
Note that $b\mapsto {}^e b$ is an outer conjugation of $H_{2ni+m}$. Now \eqref{3.1} becomes\\
\\
$\mathcal{F}_\psi(E(f_{\Delta(\tau,
m+i)\gamma_\psi^{(\epsilon)},s}))(x)=\sum_{e=0}^{[\frac{m}{2}]}\sum_{h'\in
Q_{ni+e}(F)\backslash H_{2ni+m}(F)}\sum_{g'\in Q^0_e(F)\backslash H_m(F)}$
\begin{equation}\label{3.4}
\int_{U_{m^{n-1}}^e(\BA)\backslash U_{m^{n-1}}(\BA)}
f^{\psi'_e}_{\Delta(\tau, m+i)\gamma_\psi^{(\epsilon)},s}(\alpha_0h_{\gamma_e}
ut(g',\hat{\beta}_eh')x)\psi^{-1}_H(u)du.
\end{equation}
Here, $Q_{ni+e}=Q_{ni+e}^{H_{2ni+m}}$, and $Q^0_e$ is the subgroup of
$Q_e^{H_m}$ consisting of elements as the first matrix in
\eqref{3.3.1}, with $b=I_{m-2e}$. The Fourier coefficient
$f^{\psi'_e}_{\Delta(\tau, m+i)\gamma_\psi^{(\epsilon)},s}$ is obtained by 
applying the $\psi'_e$ - coefficient \eqref{3.3} to the automorphic form on $\GL_{n(m+i)}(\BA)$ (and then we evaluate at
the identity), 
$$
a\mapsto \delta^{-\frac{1}{2}}(\hat{a})|\det(a)|^{-s}(\gamma_\psi^{(\epsilon)}(\det(a)))^{-1}f_{\Delta(\tau, m+i)\gamma_\psi^{(\epsilon)},s}(\hat{a}\alpha_0h_{\gamma_e} ut(g',\hat{\beta}_eh')x)
$$  
As in \eqref{2.30}, consider the Fourier coefficient applied to an
automorphic form $\xi$ in the space of $\Delta(\tau,m+i)$,
\begin{equation}\label{3.5}
\xi^{\psi'_e}(a)=\int_{V_{ni+m,m^{n-1}}(F)\backslash
V_{ni+m,m^{n-1}}(\BA)}\xi(va)\psi'_e(v^{-1})dv,\quad a\in \GL_{n(m+i)}(\BA).
\end{equation}
It will be convenient to conjugate the character $\psi'_e$ as
follows. Define the following element $w_0\in \GL_{(m+i)n}(F)$.
Assume that $m$ is even. Then
\begin{equation}\label{3.5.1}
w_0=diag(\begin{pmatrix}&I_{ni}\\I_m\end{pmatrix},I_{m(n-1)}).
\end{equation}
Assume that $m=2m'-1$ is odd. Then
\begin{equation}\label{3.5.2}
w_0=diag(\begin{pmatrix}0&0&I_{ni}&0\\I_{m'-1}&0&0&0\\0&0&0&\frac{1}{2}\\0&I_{m'-1}&0&0\end{pmatrix},I_{(2m'-1)(n-1)}).
\end{equation}
We will use the following conjugate of $\psi'_e$:
$\psi_e(v)=\psi'_e(w_0^{-1}vw_0)$. Then
\begin{equation}\label{3.6}
\xi^{\psi_e}(w_0a)=\int_{V_{ni+m,m^{n-1}}(F)\backslash
V_{ni+m,m^{n-1}}(\BA)}\xi(vw_0a)\psi^{-1}_e(v)dv=\xi^{\psi'_e}(a).
\end{equation}
Explicitly, as can be read from \eqref{3.3}-\eqref{3.3.2}, the character $\psi_e$ is given by
\begin{equation}\label{3.7}
\psi_e(\begin{pmatrix}I_{ni}&0&y_1&&&\cdots&y_{n-1}\\&I_m&x_1&&&\cdots&\ast\\&&&\ddots\\
&&&&&I_m&x_{n-1}\\&&&&&&I_m\end{pmatrix})=\psi^{-1}(tr(a_ex_1))\prod_{i=2}^{n-1}\psi^{-1}(tr(x_i)),
\end{equation}
where $a_e=\begin{pmatrix}I_{m-e}&0\\0&0_{e\times
e}\end{pmatrix}$.
\begin{prop}\label{prop 3.1}
For any automorphic form $\xi$ in the space of
$\Delta(\tau,m+i)$, and any $y\in M_{ni\times m}(\BA)$,
$\xi^{\psi_e}$ is invariant under left multiplication by\\
$n_y=diag(\begin{pmatrix}I_{ni}&y\\&I_m\end{pmatrix},I_{m(n-1)})$.
\end{prop}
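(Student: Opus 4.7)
The strategy is a Fourier-expansion argument driven by Proposition \ref{prop 1.2}. For $y\in M_{ni\times m}(F)$ the element $n_y$ lies in $\GL_{(m+i)n}(F)$, sits in the Levi of $V_{ni+m,m^{n-1}}$, and conjugation by $n_y$ only modifies the $(1,j)$-blocks of a $v\in V_{ni+m,m^{n-1}}$ for $j\ge 2$, which are precisely the blocks on which $\psi_e$ is trivial. Combining automorphy of $\xi$ with the change of variable $v\mapsto n_y^{-1}vn_y$ in \eqref{3.5} shows $\xi^{\psi_e}(n_ya)=\xi^{\psi_e}(a)$ for every rational $y$. Consequently the map $y\mapsto \xi^{\psi_e}(n_ya)$ descends to the compact quotient $M_{ni\times m}(F)\backslash M_{ni\times m}(\BA)$ and admits a Fourier expansion
\[
\xi^{\psi_e}(n_ya)=\sum_{Y\in M_{m\times ni}(F)} c_Y(a)\,\psi(\tr(Yy)).
\]

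Next, identify the coefficients. Writing $U=\{n_y\}$, we have $V_{ni+m,m^{n-1}}\cdot U=V_{ni,m,m^{n-1}}$ (the refinement that splits the first Levi block of size $ni+m$ into blocks $ni$ and $m$). Collapsing the double integral gives
\[
c_Y(a)=\int_{V_{ni,m,m^{n-1}}(F)\backslash V_{ni,m,m^{n-1}}(\BA)} \xi(va)\,\psi_{e,Y}^{-1}(v)\,dv,
\]
where $\psi_{e,Y}$ extends $\psi_e$ by the character $\psi(\tr(Y\cdot))$ on the new $(1,2)$-block of size $ni\times m$. To finish it suffices to show $c_Y(a)=0$ for every $Y\neq 0$; the $Y=0$ term then recovers $\xi^{\psi_e}(a)$, giving the desired invariance. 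Each $c_Y(a)$ is a Fourier coefficient on $\Delta(\tau,m+i)$ corresponding to the nilpotent element with arrows $V_1\xrightarrow{Y}V_2$, $V_2\xrightarrow{a_e}V_3$, and $V_j\xrightarrow{I_m}V_{j+1}$ for $3\le j\le n$.

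When $a_eY\neq 0$ (that is, $Y$ has a nonzero entry in its first $m-e$ rows), the composition $I\cdots I\cdot a_e\cdot Y$ produces a chain $V_1\to V_2\to\cdots\to V_{n+1}$ of length $n+1$, so the associated partition has a part of size $\ge n+1$ and fails $\leq (n^{m+i})$. Proposition \ref{prop 1.2} then forces $c_Y(a)=0$. The main obstacle is the degenerate regime $Y\neq 0$ with $a_eY=0$: here the direct chain count yields a partition $((n)^{m-e},(n-1)^e,2^r,1^{\ast})\leq (n^{m+i})$, so Proposition \ref{prop 1.2} does not apply immediately. To kill this remaining case one performs an auxiliary Fourier expansion along the $(2b,3)$-sub-block of $V_{ni,m,m^{n-1}}$ (the last $e$ rows of the $(2,3)$-block, on which $\psi_{e,Y}$ is trivial): for a generic Fourier mode $Z$ there the combined character adds an arrow $V_{2b}\xrightarrow{Z}V_3$, producing a chain $V_1\xrightarrow{Y}V_{2b}\xrightarrow{Z}V_3\to\cdots\to V_{n+1}$ of length $n+1$, killed by Proposition \ref{prop 1.2}; the residual modes where $ZY=0$ are eliminated by iterating the same device, or alternatively by a root-exchange as in \cite{GJS15}, Sec.~3, that conjugates the surviving character inside $\GL_{(m+i)n}(F)$ into one already handled by the generic case. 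Once every $c_Y(a)$ with $Y\neq 0$ vanishes, $\xi^{\psi_e}(n_ya)=c_0(a)=\xi^{\psi_e}(a)$, proving the proposition.
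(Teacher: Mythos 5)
Your proposal follows the same overall route as the paper: Fourier-expand $\phi_\xi(y)=\xi^{\psi_e}(n_ya)$ over $M_{ni\times m}(F)\backslash M_{ni\times m}(\BA)$, identify a nontrivial Fourier mode $c_Y$ as a Fourier coefficient of $\xi$ along $V_{ni,m^n}$ with respect to $\psi_{Y;e}$, and appeal to Proposition \ref{prop 1.2}. What you do that the paper does not is split the nonzero modes into the two regimes $a_eY\neq 0$ and $a_eY=0$, and your analysis of the degenerate regime is correct: when $Y\neq 0$ but $a_eY=0$ the nilpotent element attached to $\psi_{Y;e}$ has Jordan type $\bigl(n^{m-e},(n-1)^e,2^{\rho},1^{ni+e-2\rho}\bigr)$ with $\rho=\operatorname{rank}Y\leq e$, which is still dominated by $(n^{m+i})$, so Proposition \ref{prop 1.2} does \emph{not} kill it by a direct chain count. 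The paper's proof simply asserts that for every nonzero $A$ the coefficient ``corresponds to a partition of the form $(n+1,\dots)$''; as stated that claim fails in the degenerate regime, so you have correctly flagged that the paper's one-line justification is eliding a nontrivial step (in all likelihood a root-exchange argument of the kind the authors spell out in their own proof of Proposition \ref{prop 3.2}).

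The difficulty is that the repair you propose for the degenerate regime does not close the gap. The $(2b,3)$ sub-block (the last $e$ rows of $x_1$) sits inside $V_{ni,m^n}$ and is already integrated against the \emph{trivial} character in the definition of $c_Y$; in other words $c_Y$ is precisely the zeroth Fourier mode along that block. Performing ``an auxiliary Fourier expansion'' there and killing the nonzero modes $Z$ by Proposition \ref{prop 1.2} only shows that the relevant function of the $(2b,3)$ variable is \emph{constant}; it says nothing about its value, which is $c_Y$ itself. Likewise, the fallback you suggest of conjugating ``inside $\GL_{(m+i)n}(F)$ into one already handled by the generic case'' cannot work directly, since $\operatorname{rank}(a_eY)=0$ is a Levi-conjugation invariant of the pair $(Y,a_e)$, so no Levi conjugation turns a degenerate character into a generic one. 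What is actually needed is a genuine root-exchange step (as in \cite{GRS11} Sec.\ 7.1 or the paper's own treatment of Proposition \ref{prop 3.2}), exchanging suitable Levi unipotents $\nu_e(z)^{\Delta}$ into the picture so that after the exchange the resulting character does support a chain of length $n+1$ and Proposition \ref{prop 1.2} applies. So: good diagnosis of the hard case and of a terseness in the paper, but the proposed resolution of that case has a real gap that still needs to be filled by an exchange-of-roots argument rather than a Fourier expansion.
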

\begin{proof}
Consider, for fixed $a$, the following function on $M_{ni\times
m}(F)\backslash M_{ni\times m}(\BA)$,
$$
\phi_\xi(y)=\xi^{\psi_e}(n_ya).
$$
Note that, for $y\in M_{ni\times m}(F)$, $n_y$ normalizes the
rational and adele points of $V_{m+ni,m^{n-1}}$, and it preserves
$\psi_e$. Consider the Fourier expansion of $\phi_\xi$ along the
compact Abelian group $M_{ni\times m}(F)\backslash M_{ni\times
m}(\BA)$. We claim that the Fourier coefficients of $\phi_\xi$
with respect to all nontrivial characters are zero. Indeed, such a
Fourier coefficient is, in fact, a Fourier coefficient of $\xi$
along the unipotent radical $V_{ni,m^n}$ with respect to a
character of the form
\begin{equation}\label{3.8}
\psi_{A;e}(\begin{pmatrix}I_{ni}&y&y_1&&&\cdots&y_{n-1}\\&I_m&x_1&&&\cdots&\ast\\&&&\ddots\\
&&&&&I_m&x_{n-1}\\&&&&&&I_m\end{pmatrix})=
\end{equation}
$$
\psi^{-1}(tr(Ay))\psi^{-1}(tr(a_ex_1))\prod_{i=2}^{n-1}\psi^{-1}(tr(x_i)),
$$
where $A\in M_{m\times ni}(F)$ is nonzero. The character \eqref{3.8} corresponds to the nilpotent orbit in $\mathfrak{gl}_{n(m+i)}(\bar{F})$
of the matrix
\begin{equation}\label{3.8.1}
\begin{pmatrix}0_{ni}\\A&0_m\\0&a_e&0_m\\&&I_m&0_m\\&&&&&\ddots\\&&&&&&I_m&0_m\end{pmatrix},
\end{equation}
and the orbit of this matrix corresponds to a partition of the form
$(n+1,...)$. By Prop. \ref{prop 1.2}, the above Fourier coefficient must be trivial. Thus,
$\phi_\xi$ is equal to its constant term along $M_{ni\times
m}(F)\backslash M_{ni\times m}(\BA)$. This proves the
proposition.
\end{proof}
Consider the character \eqref{3.8} $\psi_{0;e}$ (i.e. $A=0$). Denote
\begin{equation}\label{3.9}
\xi^{\psi_{0;e}}(a)=\int_{V_{ni,m^n}(F)\backslash
V_{ni,m^n}(\BA)}\xi(va)\psi^{-1}_{0;e}(v)dv.
\end{equation}
Then the last proposition and \eqref{3.6} show that, for any
automorphic form $\xi$ in the space of $\Delta(\tau,m+i)$,
\begin{equation}\label{3.10}
\xi^{\psi'_e}(a)=\xi^{\psi_{0;e}}(w_0a).
\end{equation}
Denote 
\begin{equation}\label{3.10.1}
\tilde{\alpha}_0=\hat{w}_0\alpha_0. 
\end{equation}
By \eqref{3.10} and
Prop.
\ref{prop 3.1}, we can rewrite \eqref{3.4} as\\
\\
$\mathcal{F}_\psi(E(f_{\Delta(\tau, m+i)\gamma_\psi^{(\epsilon)},s}))(x)=$
\begin{equation}\label{3.11}
\sum_{e=0}^{[\frac{m}{2}]}\sum_{h'}\sum_{g'}
\int_{U_{m^{n-1}}^e(\BA)\backslash U_{m^{n-1}}(\BA)}
f^{\psi_{0;e}}_{\Delta(\tau,
m+i)\gamma_\psi^{(\epsilon)},s}(\tilde{\alpha}_0h_{\gamma_e}
ut(g',\hat{\beta}_eh')x)\psi^{-1}_H(u)du.
\end{equation}
Here, $h'$ runs over $Q_{ni+e}(F)\backslash H_{2ni+m}(F)$, and $g'$ runs
over $Q^0_e(F)\backslash H_m(F)$. (The subgroup $U^e_{m^{n-1}}$ is defined in \eqref{3.2.1}.)

At this point, we go back to our
kernel integral \eqref{1.10}, and bring into our considerations, for
the first time, the cuspidal representation $\sigma$ of
$H_m(\BA)$. Using \eqref{3.11}, we have, for $Re(s)$ sufficiently large,
\begin{equation}\label{3.12}
\mathcal{E}(f_{\Delta(\tau,
m+i)\gamma_\psi^{(\epsilon)},s},\varphi_\sigma)(h)=\sum_{e=0}^{[\frac{m}{2}]}\sum_{h'\in
Q_{ni+e}(F)\backslash H_{2ni+m}(F)}\Lambda_e(f_{\Delta(\tau,
m+i)\gamma_\psi^{(\epsilon)},s},\varphi_\sigma)(h'h),
\end{equation}
where\\
\\
$\Lambda_e(f_{\Delta(\tau,
	m+i)\gamma_\psi^{(\epsilon)},s},\varphi_\sigma)(h)=$
\begin{equation}\label{3.13}
\int_{C_2^{(\epsilon)}Q^0_e(F)\backslash
	H^{(\epsilon)}_m(\BA)}\varphi_\sigma(g)\int_{U_{m^{n-1}}^e(\BA)\backslash
	U_{m^{n-1}}(\BA)} f^{\psi_{0;e}}_{\Delta(\tau,
	m+i)\gamma_\psi^{(\epsilon)},s}(\tilde{\alpha}_0h_{\gamma_e}
ut(g,\hat{\beta}_eh))\psi^{-1}_H(u)dudg.
\end{equation}
Here, we collapsed to $C_2^{(\epsilon)}Q^0_e(F)\backslash
	H^{(\epsilon)}_m(\BA)$ the $dg$-integration along $C_2^{(\epsilon)}H_m(F)\backslash H^{(\epsilon)}_m(\BA)$, which appears in the definition 
of $\mathcal{E}(f_{\Delta(\tau,
m+i)\gamma_\psi^{(\epsilon)},s},\varphi_\sigma)$, followed by the summation of $g'$ over $Q^0_e(F)\backslash H_m(F)$ in \eqref{3.11}.

Denote by $M^0_e$ the reductive part of $Q^0_e$
($M^0_e=\widehat{\GL}_e$). The unipotent radical of $Q^0_e$ is
$U_e=U^{H_m}_e$. We factor the $dg$-integration in \eqref{3.13}
modulo $U_e(\BA)$. The subgroup
$\hat{w}_0\alpha_0h_{\gamma_e}t(U_e,1)h_{\gamma_e}^{-1}\alpha_0^{-1}\hat{w}_0^{-1}$
lies inside $Q_{(m+i)n}$. The elements of the projection of this
subgroup on the Levi part of $Q_{(m+i)n}$ have the following form
\begin{equation}\label{3.14}
diag(\begin{pmatrix}I_{ni}\\&I_e&w&0\\&&I_{m-2e}&0\\&&&I_e\end{pmatrix},
\begin{pmatrix}I_e&w&c\\&I_{m-2e}&w'\\&&I_e\end{pmatrix}^{\Delta_{n-1}})^\wedge\in
\widehat{\GL}_{(m+i)n},
\end{equation}
(Recall from \eqref{1.8} our notation for a diagonally repreated matrix). Denote
$$
y=\begin{pmatrix}I_e&w&c\\&I_{m-2e}&w'\\&&I_e\end{pmatrix},\quad c(y)=\begin{pmatrix}I_e&w&0\\&I_{m-2e}&0\\&&I_e\end{pmatrix}.
$$
The matrix $y$ is a typical element  in
$U_e$. Denote the element \eqref{3.14} by $\widehat{z(y)}$. Thus,
\begin{equation}\label{3.14.1}
z(y)=diag(I_{ni},c(y),y^{\Delta_{n-1}}).
\end{equation}
Now one checks that\\
\\
$\Lambda_e(f_{\Delta(\tau, m+i)\gamma_\psi^{(\epsilon)},s},\varphi_\sigma)(h)=$
\begin{equation}\label{3.15}
\int_{C_2^{(\epsilon)}M^0_e(F)U_e(\BA)\backslash H^{(\epsilon)}_m(\BA)}\int_{U_e(F)\backslash
U_e(\BA)}\varphi_\sigma(yg)\int f^{\psi_{0;e}}_{\Delta(\tau,
m+i)\gamma_\psi^{(\epsilon)},s}(\widehat{z(y)}\tilde{\alpha}_0h_{\gamma_e}
ut(g,\hat{\beta}_eh))
\end{equation}
$$
\psi^{-1}_H(u)dudydg,
$$
where the $du$-integration is along $U_{m^{n-1}}^e(\BA)\backslash 	U_{m^{n-1}}(\BA)$. 
\begin{prop}\label{prop 3.2}
For any $y\in U_e(\BA)$,
$$
f^{\psi_{0;e}}_{\Delta(\tau,
m+i)\gamma_\psi^{(\epsilon)},s}(\widehat{z(y)} x)=f^{\psi_{0;e}}_{\Delta(\tau, m+i)\gamma_\psi^{(\epsilon)},s}(x),
$$
for all $x\in H_\BA$ and all $f_{\Delta(\tau, m+i)\gamma_\psi^{(\epsilon)},s}$.
\end{prop}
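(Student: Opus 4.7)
The plan is to reduce the statement to the orbit bound of Proposition \ref{prop 1.2}.

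First, for fixed $x\in H_{\BA}$ set $\xi_x(a):=f_{\Delta(\tau,m+i)\gamma_\psi^{(\epsilon)},s}(\hat a\,x)$, which is an automorphic form in the space of $\Delta(\tau,m+i)$. Using $z(y)^\wedge\hat a=\widehat{z(y)\,a}$ inside the Siegel Levi $\GL_{(m+i)n}^\wedge\subset H$, and the fact that every automorphic form in $\Delta(\tau,m+i)$ is realized as such an $\xi_x$ for appropriate $f$ and $x$, the proposition reduces to the assertion
\[
\xi^{\psi_{0;e}}(z(y))=\xi^{\psi_{0;e}}(I),\qquad y\in U_e(\BA),
\]
for every automorphic form $\xi$ in the space of $\Delta(\tau,m+i)$.

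Next, I verify by a direct block calculation that conjugation by $z(y)\in M_{ni,m^n}$ preserves the character $\psi_{0;e}$. Decomposing $y\in U_e$ into its three block pieces ($u$ of size $e\times(m-2e)$, $u'$ of size $(m-2e)\times e$, and $x$ of size $e\times e$), one checks that $y\mapsto z(y)$ is a group homomorphism $U_e\to M_{ni,m^n}$, and that the matrix $a_e$ of \eqref{3.7} commutes with $y_1$ when $y_1$ is supported only on the $u$-block, and satisfies $y_2 a_e=a_e$ when $y_2$ is supported on the $(u',x)$-blocks. Tracking how $z(y)$ conjugates the superdiagonal blocks of $v\in V_{ni,m^n}$ then gives $\psi_{0;e}(z(y)^{-1}vz(y))=\psi_{0;e}(v)$. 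Combined with the left $\GL_{(m+i)n}(F)$-invariance of $\xi$ and a change of variable in $V_{ni,m^n}(F)\backslash V_{ni,m^n}(\BA)$, this shows that $\phi(y):=\xi^{\psi_{0;e}}(z(y))$ is left $U_e(F)$-invariant and hence descends to a function on the compact quotient $U_e(F)\backslash U_e(\BA)$.

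To finish, I would Fourier expand $\phi$ and show that all nontrivial components vanish. Since $U_e$ is two-step nilpotent with abelian center $Z$ (the $x$-block) and abelian quotient $U_e/Z$ (the $(u,u')$-blocks), expand $\phi$ first along $Z(F)\backslash Z(\BA)$ and then along $(U_e/Z)(F)\backslash(U_e/Z)(\BA)$. For each nontrivial character $\chi$ appearing, the character preservation above makes $(v,z(y'))\mapsto\psi_{0;e}(v)\chi(y')$ a bona fide character of the semidirect product $V_{ni,m^n}\cdot z(U_e^\sharp)$ (with $U_e^\sharp=Z$ or $U_e$ at the appropriate stage), which sits inside the refined unipotent radical of $\GL_{(m+i)n}$ obtained by splitting each $\GL_m$ factor as $\GL_e\times\GL_{m-2e}\times\GL_e$. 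The corresponding Fourier component of $\phi$ then unfolds to a Fourier coefficient of $\xi$ along this enlarged unipotent subgroup with the extended character. Arguing as in the proof of Proposition \ref{prop 2.4} (compare \eqref{2.32}--\eqref{2.33}), after conjugating the character to standard generic form, the nontrivial $\chi$ contributes extra off-diagonal entries to the representative of the associated nilpotent orbit, forcing the resulting partition of $(m+i)n$ to strictly dominate $(n^{m+i})$ in the dominance order. By Proposition \ref{prop 1.2} these Fourier coefficients vanish on $\Delta(\tau,m+i)$, so only the trivial Fourier component of $\phi$ survives and $\phi$ is constant, equal to $\phi(I)=\xi^{\psi_{0;e}}(I)$, as required. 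The principal obstacle is this orbit comparison: one must carefully identify the enlarged unipotent subgroup, the conjugated character, and the associated partition in each of the two stages of the Fourier expansion, and verify strict dominance over $(n^{m+i})$ for every nontrivial $\chi$ that appears.
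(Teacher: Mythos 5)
The general shape of your argument matches the paper's: reduce to showing that $\xi^{\psi_{0;e}}$ is left-invariant under $z(U_e(\BA))$ for automorphic forms $\xi$ in $\Delta(\tau,m+i)$, verify that conjugation by $z(y)$ preserves $\psi_{0;e}$, Fourier expand along the compact quotient $U_e(F)\backslash U_e(\BA)$, and kill the nontrivial components via Proposition \ref{prop 1.2}. The reduction steps and the character-preservation check are fine. (A minor difference: the paper first passes from $\Delta(\tau,m+i)$ to $\Delta(\tau,m)$ by the constant term along $V_{ni,mn}$, using Lemma 4.1 of \cite{JL13}, and then works with $\eta^{\psi_e}$ on $\Delta(\tau,m)$ and the bound $(n^m)$ rather than $(n^{m+i})$; you work directly with $\Delta(\tau,m+i)$, which is also acceptable.)

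The genuine gap is in the vanishing step, precisely where you flag "the principal obstacle." The group $V_{ni,m^n}\cdot z(U_e^\sharp)$ carrying the character $\psi_{0;e}\cdot\chi$ is \emph{not} a standard unipotent radical of $\GL_{(m+i)n}$: the $z(U_e)$-piece involves the $\Delta_{n-1}$ diagonal embedding which ties together all $n-1$ copies of $U_e$ inside the $\GL_m^{n-1}$ factors, so it only "sits inside" the refined unipotent radical without equalling any radical. Consequently the integral you obtain is not, as written, a Fourier coefficient "with respect to a nilpotent orbit" in the sense required by Proposition \ref{prop 1.2}, and the claim that it "forces the resulting partition of $(m+i)n$ to strictly dominate $(n^{m+i})$" is exactly what needs to be proved, not merely tracked. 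The paper resolves this by the root-exchange technique of \cite{GRS11}, Sec.~7.1 (see the passage from \eqref{3.17} through \eqref{3.19.1}--\eqref{3.19.2} to \eqref{3.19}): one iteratively trades the blocks $x_{j,3}$ of $V_{m^n}$ for the diagonally repeated $\nu_e(z_j)$ factors, step by step from $j=2$ to $j=n-1$, which after $n-1$ exchanges converts the domain of integration into a genuine unipotent radical $V_{m,m-e,m^{n-2},e}$ carrying a degenerate Whittaker-type character corresponding to a partition of the form $(n+1,\ldots)$ of $mn$, to which Proposition \ref{prop 1.2} applies directly. Without invoking root exchange (or an equivalent mechanism such as a careful Whittaker-pair/quasi-Whittaker argument), the proposal's final step does not go through as stated; identifying and carrying out that exchange is the substantive content of the paper's proof.
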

\begin{proof}
This follows from the following extra-invariance property of the
Fourier coefficient \eqref{3.9}, namely let $g_1, g_2\in
\SL_m(\BA)$, such that 
\begin{equation}\label{3.15.1}
a_eg_1=g_2a_e.
\end{equation}
 (The matrix $a_e$ is defined in \eqref{3.7}.) 
Note that $diag(I_{ni},g_1,g_2^{\Delta_{n-1}})$ fixes the character $\psi_{0;e}$. 
Then, for any automorphic form $\xi$ in the space of
$\Delta(\tau,m+i)$,
\begin{equation}\label{3.16}
\xi^{\psi_{0;e}}(diag(I_{ni},g_1,g_2^{\Delta_{n-1}})a)=\xi^{\psi_{0;e}}(a).
\end{equation}
Note that \eqref{3.15.1} is satisfied for $g_2=y$, and $g_1=c(y)$, for all $y\in U_e(\BA)$.

We will sketch the proof of \eqref{3.16} for $g_1,g_2$ in certain standard unipotent
radicals. The proof works similarly for $g_1,g_2$ (satisfying \eqref{3.15.1}) in any root
subgroups (and these generate the subgroup of $\SL_m(\BA)\times \SL_m(\BA)$ consisting of the elements satisfying \eqref{3.15.1}). Note first that the Fourier coefficient
$\xi^{\psi_{0;e}}$ factors through the constant term of $\xi$ along
the unipotent radical $V_{ni,mn}$. Consider the restriction of such
constant terms to elements of the form $diag(I_{ni},b)$, $b\in
\GL_{mn}(\BA)$, as automorphic forms on $\GL_{mn}(\BA)$. These
form the elements of $|\det\cdot|^{\frac{1-ni}{2}}\Delta(\tau, m)$.
See Lemma 4.1 in \cite{JL13}. Let $\psi_{a_e}$ be the following
character of $V_{m^n}(\BA)$,
\begin{equation}\label{3.16.1}
\psi_{a_e}(\begin{pmatrix}I_m&x_1&&&\cdots&\ast\\&I_m&&&\cdots\\&&\ddots\\
&&&&I_m&x_{n-1}\\&&&&&I_m\end{pmatrix})=\psi^{-1}(tr(a_ex_1))\prod_{i=2}^{n-1}\psi^{-1}(tr(x_i).
\end{equation}
Consider, for $\eta$ in the space of $\Delta(\tau, m)$, the
Fourier coefficient $\eta^{\psi_{a_e}}$ of $\eta$ with respect to
$\psi_{a_e}$, along $V_{m^n}$, 
\begin{equation}\label{3.16.2}
\eta^{\psi_{a_e}}(a)=\int_{V_{m^n}(F)\backslash V_{m^n}(\BA)}\eta(va)\psi^{-1}_{a_e}(v)dv.
\end{equation}
Then we have to prove that, for
$g_1,g_2$, as in \eqref{3.16}, and $a\in \GL_{mn}(\BA)$,
\begin{equation}\label{3.16.2*}
\eta^{\psi_{a_e}}(diag(g_1,g_2^{\Delta_{n-1}})a)=\eta^{\psi_{a_e}}(a).
\end{equation}
The proof is similar to the proof of Prop. 3 in \cite{FG15}. The
basic idea of the proof is the same one used in the proof of Prop.
\ref{prop 3.1}, that is when we take a Fourier coefficient of
$u\mapsto \eta^{\psi_{a_e}}(ua)$, along an abelian unipotent subgroup
$X$ of the stabilizer above of $\psi_{a_e}$, with respect to a
nontrivial character, it follows that if this Fourier coefficient is
nontrivial, then we get a resulting nontrivial Fourier coefficient
on $\Delta(\tau,m)$, corresponding to a partition, which is
strictly larger than $(n^m)$. By Prop. \ref{prop 1.2}, we will get a
contradiction. Hence only the trivial character of $X_\BA$ appears
in the Fourier expansion of $\eta^{\psi_{a_e}}$ along $X$, that is
$\eta^{\psi_{a_e}}$ is left-$X_\BA$ invariant. 

Here is an example
sketching how the proof goes. Take above $g_1=I_m$ and
$g_2=\nu_e(z)=\begin{pmatrix}I_{m-e}&z\\&I_e\end{pmatrix}$.
Consider a Fourier coefficient
\begin{equation}\label{3.17}
\int_{M_{(m-e)\times e}(F)\backslash M_{(m-e)\times
e}(\BA)}\eta^{\psi_{a_e}}(diag(I_m,\nu_e(z)^{\Delta_{n-1}}))\psi^{-1}(tr(Bz))dz,
\end{equation}
where $0\neq B\in M_{e\times(m-e)}(F)$. Substitute \eqref{3.16.2},
and \eqref{3.17} becomes
\begin{equation}\label{3.18}
\int_{M_{(m-e)\times e}(F)\backslash M_{(m-e)\times
e}(\BA)}\int_{V_{m^n}(F)\backslash
V_{m^n}(\BA)}\eta(diag(I_m,\nu_e(z)^{\Delta_{n-1}})va)
\end{equation}
$$
\psi^{-1}_{a_e}(v)\psi^{-1}(tr(Bz))dvdz.
$$
Now we carry out the process of exchanging roots,(\cite{GRS11}, Sec.
7.1), as follows. Write in the matrix \eqref{3.16.1}, for $2\leq
j\leq n-1$,
$$
x_j=\begin{pmatrix}x_{j,1}&x_{j,2}\\x_{j,3}&x_{j,4}\end{pmatrix},
$$
where $x_{j,1}$ is of size $(m-e)\times (m-e)$. Now it is easy
to check that one can exchange (in the sense of Sec. 7.1 in
\cite{GRS11}) $x_{2,3}$ with
$diag(I_m,\nu_e(z_1),I_{(n-2)m})$. Denote by $V^1$ the
subgroup of $V_{m^n}$ obtained by replacing in \eqref{3.16.1} 
$x_{2,3}$ by zero. We conclude that if \eqref{3.17} is
nontrivial on $\Delta(\tau,m)$,  then the following Fourier coefficient
is not trivial on $\Delta(\tau,m)$ (Cor. 7.1 in \cite{GRS11}),
\begin{equation}\label{3.19.1}
\int_{M^2_{(m-e)\times e}(F)\backslash M^2_{(m-e)\times
e}(\BA)}\int_{V^1(F)\backslash
V^1(\BA)}\eta(diag(I_m,\nu_e(z_1),\nu^{\Delta_{n-2}}(z))v'a)
\end{equation}
$$
\psi^{-1}_{a_e}(v')\psi^{-1}(tr(Bz))dv'dz_1dz.
$$
Here, $M^2_{(m-e)\times e}=M_{(m-e)\times e}\times M_{(m-e)\times e}$. 
Next, we exchange roots in \eqref{3.19.1}, $x_{3,3}$ with $diag(I_{2m},\nu_e(z_2),I_{(n-3)m})$, and get the analogue of \eqref{3.19.1}, namely that the following Fourier coefficient is not identically zero on $\Delta(\tau,m)$,
\begin{equation}\label{3.19.2}
\int_{M^3_{(m-e)\times e}(F)\backslash M^3_{(m-e)\times
		e}(\BA)}\int_{V^2(F)\backslash
	V^2(\BA)}\eta(diag(I_{2m},\nu_e(z_1),\nu_e(z_2)\nu^{\Delta_{n-3}}(z))v'a)
\end{equation}
$$
\psi^{-1}_{a_e}(v')\psi^{-1}(tr(Bz))dv'dz_1dz_2dz,
$$
where $V^2$ is the subgroup of $V_{m^n}$ obtained by replacing in \eqref{3.16.1} 
$x_{2,3}$ and $x_{3,3}$ by zero, and $M^3_{(m-e)\times e}=M^2_{(m-e)\times e}\times M_{(m-e)\times e}$. We continue in this way, exchanging $x_{j,3}$ with
$diag(I_{jm},\nu_e(z_j),I_{(n-j-1)m})$, step by step, up to $j=n-1$, and finally we get that the following Fourier coefficient is not identically zero on $\Delta(\tau,m)$,
\begin{equation}\label{3.19}
\int_{M^{n-1}_{(m-e)\times e}(F)\backslash M^{n-1}_{(m-e)\times
		e}(\BA)}\int_{V'(F)\backslash
	V'(\BA)}\eta(diag(I_m,\nu_e(z_1),...,\nu_e(z_{n-1}))v'a)
\end{equation}
$$
\psi^{-1}_{a_e}(v')\psi^{-1}(tr(Bz_{n-1}))dv'dz_1\cdots dz_{n-1},
$$
where $V'$ is the subgroup of $V_{m^n}$ obtained by replacing in \eqref{3.16.1} 
$x_{j,3}$ by zero, for $2\leq j\leq n-1$, and $M^{n-1}_{(m-e)\times e}$ is the cartesian product of $n-1$ copies of $M_{(m-e)\times e}$. Note that the product of $V'$ and the subgroup of all\\
$diag(I_m,\nu_e(z_1),...,\nu_e(z_{n-1}))$ is the unipotent radical $V_{m,m-e,m^{n-2},e}$.
Now one checks that the Fourier coefficient \eqref{3.19} on
$\Delta(\tau,m)$ corresponds to a partition of $mn$, of the form
$(n+1,...)$, and, by Prop. \ref{prop 1.2}, \eqref{3.19} is
identically zero, for all nonzero $B$. A similar argument works for
proving the invariance of $\eta^{\psi_{a_e}}$ under left multiplication by the adele
points of elements of the form above with
\begin{equation}\label{3.19.1*}
g_1=\begin{pmatrix}I_e&a&0\\&I_{m-2e}&0\\&&I_e\end{pmatrix},\
g_2=\begin{pmatrix}I_e&a&0\\&I_{m-2e}&0\\&&I_e\end{pmatrix}.
\end{equation}
Note that each element $z(y)$ in \eqref{3.14.1} is a product of an element of the form $diag (I_m,\nu_e(z)^{\Delta_{n-1}})$
and an element $diag (g_1,g_2^{\Delta_{n-1}})$, where $g_1, g_2$ are as in \eqref{3.19.1*}. 
As we mentioned and explained before, \eqref{3.16} holds for all $g_1, g_2\in
\SL_m(\BA)$, satisfying \eqref{3.15.1}.
\end{proof}

\begin{cor}\label{cor 3.3}
For all $1\leq e\leq [\frac{m}{2}]$,
$$
\Lambda_e(f_{\Delta(\tau, m+i)\gamma_\psi^{(\epsilon)},s},\varphi_\sigma)=0.
$$
\end{cor}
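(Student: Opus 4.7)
The plan is to read off the conclusion almost immediately from Proposition \ref{prop 3.2} together with the cuspidality of $\sigma$. The key observation is that in the integral expression \eqref{3.15} for $\Lambda_e(f_{\Delta(\tau, m+i)\gamma_\psi^{(\epsilon)},s},\varphi_\sigma)(h)$, the variable $y\in U_e(F)\backslash U_e(\BA)$ appears in two places: inside $\varphi_\sigma(yg)$ and as the argument $z(y)^\wedge$ acting on the $f$-section. Proposition \ref{prop 3.2} says precisely that $z(y)$ can be removed from the section's argument at the cost of nothing, for every $y\in U_e(\BA)$ and every $f_{\Delta(\tau, m+i)\gamma_\psi^{(\epsilon)},s}$.

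Consequently, the $dy$-integration completely decouples from the remaining integrals and from the section, and \eqref{3.15} can be rewritten as
$$
\Lambda_e(f_{\Delta(\tau, m+i)\gamma_\psi^{(\epsilon)},s},\varphi_\sigma)(h)=\int_{M^0_e(F)U_e(\BA)\backslash H_m(\BA)}\varphi_\sigma^{U_e}(g)\,I(g,h)\,dg,
$$
where
$$
\varphi_\sigma^{U_e}(g)=\int_{U_e(F)\backslash U_e(\BA)}\varphi_\sigma(yg)\,dy
$$
is the constant term of $\varphi_\sigma$ along $U_e$, and $I(g,h)$ denotes the remaining $du$-integral, which no longer depends on $y$.

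Now for $1\leq e\leq [\tfrac{m}{2}]$, the parabolic subgroup $Q_e=Q_e^{H_m}$ is a proper standard parabolic subgroup of $H_m$, and $U_e$ is its unipotent radical (a nontrivial, $F$-split unipotent algebraic subgroup, lifting canonically to the metaplectic cover when $\epsilon=2$). Since $\sigma$ is cuspidal, $\varphi_\sigma^{U_e}\equiv 0$. Therefore $\Lambda_e=0$, as claimed.

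The only point that requires any care is that the $dy$-integration and the factorization of $dg$ modulo $U_e(\BA)$ are legitimate at the level at which we write the integral in \eqref{3.15}; this is the reason Proposition \ref{prop 3.2} was proved in advance. In the metaplectic case, cuspidality of $\sigma$ as a genuine representation of $H_m^{(2)}(\BA)=\Sp^{(2)}_m(\BA)$ still entails vanishing of constant terms along unipotent radicals, since $U_e$ lifts canonically to the double cover; no additional argument is needed.
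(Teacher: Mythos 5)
Your proof is correct and follows the same approach as the paper: invoke Proposition~\ref{prop 3.2} to remove the $z(y)^\wedge$ dependence from the section in \eqref{3.15}, thereby isolating the constant term $\varphi_\sigma^{U_e}$ of $\varphi_\sigma$ along the unipotent radical $U_e$, which vanishes since $\sigma$ is cuspidal. The only detail the paper states explicitly that you leave implicit is the standing assumption $m\geq 4$ in the even orthogonal case, which is exactly what guarantees $U_e$ is nontrivial for $e\geq 1$.
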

\begin{proof}
Note that when $H_m$ is even orthogonal, we assume that $m\geq 4$, so that $U_e$ is a nontrivial unipotent radical of $H_m$, when $e\geq 1$.\\ 
By Prop. \ref{prop 3.2} and \eqref{3.15},\\
\\
$\Lambda_e(f_{\Delta(\tau, m+i)\gamma_\psi^{(\epsilon)},s},\varphi_\sigma)(h)=$
$$
\int_{C_2^{(\epsilon)}M^0_e(F)U_e(\BA)\backslash
H^{(\epsilon)}_m(\BA)}\varphi_\sigma^{U_e}(g)\int f^{\psi_{0;e}}_{\Delta(\tau,
m+i)\gamma_\psi^{(\epsilon)},s}(\tilde{\alpha}_0h_{\gamma_e}
ut(g,\hat{\beta}_eh))\psi^{-1}_H(u)dudg,
$$
where the $du$-integration is along $U_{m^{n-1}}^e(\BA)\backslash
	U_{m^{n-1}}(\BA)$, and $\varphi_\sigma^{U_e}$ denotes the constant term of
$\varphi_\sigma$ along the unipotent radical $U_e=U_e^{H_m}$. Since
$\sigma$ is cuspidal, this constant term is zero identically on
$\sigma$.
\end{proof}
Now, we can rewrite \eqref{3.12} as
\begin{equation}\label{3.20}
\mathcal{E}(f_{\Delta(\tau,
m+i)\gamma_\psi^{(\epsilon)},s},\varphi_\sigma)(h)=\sum_{h'\in Q_{ni}(F)\backslash
H_{m+2ni}(F)}\Lambda(f_{\Delta(\tau, m+i)\gamma_\psi^{(\epsilon)},s},\varphi_\sigma)(h'h),
\end{equation}
where\\
\\
$\Lambda(f_{\Delta(\tau,
m+i)\gamma_\psi^{(\epsilon)},s},\varphi_\sigma)(h)=\Lambda_0(f_{\Delta(\tau,
m+i)\gamma_\psi^{(\epsilon)},s},\varphi_\sigma)(h)=$
\begin{equation}\label{3.21}
\int_{C_2^{(\epsilon)}\backslash H^{(\epsilon)}_m(\BA)}\varphi_\sigma(g)\int_{U'_{m(n-1)}(\BA)}
f^\psi_{\Delta(\tau, m+i)\gamma_\psi^{(\epsilon)},s}(\delta_0ut(g,h))\psi^{-1}_H(u)dudg.
\end{equation}
Here,
$\delta_0=\tilde{\alpha}_0h_{\gamma_0}t(I_m,\hat{\beta}_0)$,
$f^\psi_{\Delta(\tau, m+i)\gamma_\psi^{(\epsilon)},s}=f^{\psi_{0;0}}_{\Delta(\tau,
m+i)\gamma_\psi^{(\epsilon)},s}$, and $U'_{m(n-1)}$ is the following subgroup of
$U_{m(n-1)}$, realizing the quotient $U^0_{m^{n-1}}\backslash
U_{m^{n-1}}$,
\begin{equation}\label{3.21.1}
U'_{m(n-1)}=\{u_{x;y}=\begin{pmatrix}I_{m(n-1)}&x&0&y\\&I_{m+ni}&0&0\\&&I_{m+ni}&x'\\&&&I_{m(n-1)}\end{pmatrix}^{\omega_0^{m(n-1)}}\in
H\}.
\end{equation}
Note that 
\begin{equation}\label{3.21.1'}
\delta_0=\epsilon^0h_{\gamma^0}, 
\end{equation}
with
\begin{equation}\label{3.21.2}
\epsilon^0=\begin{pmatrix}0&W_0&0&0\\0&0&0&I_{m(n-1)}\\\delta_HI_{m(n-1)}&0&0&0\\0&0&W_0^*&0\end{pmatrix}\omega_0^{m(n-1)},
\end{equation}
where, for $m=2m'$ even,
$$
W_0=\begin{pmatrix}0&I_{ni}&0\\I_{m'}&0&0\\0&0&I_{m'}\end{pmatrix},
$$
and, for $m=2m'-1$ odd,
$$
W_0=\begin{pmatrix}0&I_{ni}&0&0\\I_{m'-1}&0&0&0\\0&0&0&\frac{1}{2}\\0&0&I_{m'-1}&0\end{pmatrix};
$$
\begin{equation}\label{3.21.2}
h_{\gamma^0}=diag(I_{m(n-1)},\gamma^0,I_{m(n-1)}),
\end{equation}
with
$$
\gamma^0=\begin{pmatrix}I_{[\frac{m}{2}]}\\&I_{ni}\\&&I_{[\frac{m}{2}]}\\&&&I_{2(m-2[\frac{m}{2}])}\\I_{[\frac{m}{2}]}&0&0&&I_{[\frac{m}{2}]}\\
0&0&0&&&I_{ni}\\0&0&-\delta_HI_{[\frac{m}{2}]}&&&&I_{[\frac{m}{2}]}\end{pmatrix}.
$$
Note again that $\omega_0^{m(n-1)}=I$, unless $m$ is odd and $n$ is
even, where $\omega_0^{m(n-1)}=\omega_0$, and in this case
$\omega_0$ commutes with $h_{\gamma^0}$.
\begin{prop}\label{prop 3.4}
Let $q\in Q_{ni}^{H_{m+2ni}}(\BA)$ be of the form
$$
q=\begin{pmatrix}a&x&y\\&b&x'\\&&a^*\end{pmatrix},
$$
where $a\in \GL_{ni}(\BA)$, $b\in H_m(\BA)$. Assume that $H$ is linear. Then\\
\\
$\Lambda(f_{\Delta(\tau, m+i),s},\varphi_\sigma)(qh)=$
\begin{equation}\label{3.22}
|\det(a)|^{-m(n-1)}\int_{H_m(\BA)}\varphi_\sigma(b^\iota
g)\int_{U'_{m(n-1)}(\BA)} f^\psi_{\Delta(\tau,
m+i),s}(\hat{a}\delta_0ut(g,h))\psi^{-1}_H(u)dudg,
\end{equation}
where $b^\iota=J_0^{-1}bJ_0$ (see \eqref{1.10.1.1}, \eqref{3.3.1''}; Here,
$\hat{a}=diag(a, I_{2mn},a^*)$.)
\end{prop}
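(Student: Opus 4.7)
The plan is to decompose $q$ inside $Q_{ni}^{H_{m+2ni}}$, use the homomorphism property of $t \colon H_m \times H_{2ni+m} \to H$ to factor $t(I, q)$, and then establish explicit matrix conjugation identities for the open-orbit element $\delta_0$.

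Write $q = \hat{a}_q \cdot \nu(b) \cdot n_q$, where $\hat{a}_q = \diag(a, I_m, a^*)$ is the Siegel piece, $\nu(b) = \diag(I_{ni}, b, I_{ni})$ embeds the $H_m$-Levi factor, and $n_q$ lies in the unipotent radical $U_{ni}^{H_{m+2ni}}(\BA)$. Since $t$ is a homomorphism, $t(g, qh) = t(I, \hat{a}_q)\, t(I, \nu(b))\, t(I, n_q)\, t(g, h)$. Substituting into \eqref{3.21} and moving each $t(I, \cdot)$-factor to the left past $u$ and $\delta_0$, the identity reduces to verifying three matrix identities in $H$, with $V$ a specific unipotent subgroup of $H$ absorbed into the $u$-integration:
\emph{(a)} $\delta_0\, t(I, \hat{a}_q)\, \delta_0^{-1} = \hat{a}$ in $H$;
\emph{(b)} $\delta_0\, t(I, \nu(b))\, \delta_0^{-1} \in t\bigl((b^\iota)^{-1}, I\bigr) \cdot V$; and
\emph{(c)} $\delta_0\, t(I, n_q)\, \delta_0^{-1} \in V$.

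These identities follow by direct matrix computation from the explicit forms \eqref{3.21.1}-\eqref{3.21.2} of $\delta_0 = \epsilon^0 h_{\gamma^0}$. The block-matrix $\epsilon^0$ intertwines the Siegel block of $H$ with the $\GL_{ni}$-block of $H_{m+2ni}$ (embedded in the middle of $H$ via $t(I, \cdot)$), yielding (a). The matrix $h_{\gamma^0}$, whose off-diagonal entry $-\delta_H I_{[\frac{m}{2}]}$ combined with the Weyl swap built into $W_0$ and $\hat{\beta}_0$ realizes the involution $b \mapsto b^\iota = J_0^{-1} b J_0$ of \eqref{1.10.1.1}, yields (b). Identity (c) follows because $\delta_0$ normalizes the relevant unipotent subgroups modulo $U^0_{m^{n-1}}$.

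After applying (a)-(c), the change of variable $u \mapsto t(I, q)^{-1}\, u\, t(I, q)$ keeps $u$ in $U_{m^{n-1}}(\BA)$ modulo $U^0_{m^{n-1}}(\BA)$ (since $D \subset M_{m^{n-1}}$ normalizes $U_{m^{n-1}}$), preserves the character $\psi_H$ (since $D$ stabilizes $\psi_H$), and introduces the Jacobian $|\det(a)|^{-m(n-1)}$. The exponent $m(n-1)$ counts the rows of the $m(n-1) \times ni$-subblock of $U_{m^{n-1}}$ whose entries are scaled by $a$ under conjugation by $t(I, \hat{a}_q)^{-1}$. Finally, the homomorphism relation $t\bigl((b^\iota)^{-1}, I\bigr)\, t(g, h) = t\bigl((b^\iota)^{-1} g, h\bigr)$, combined with the Haar-invariance of the unquotiented $dg$-integration over $H_m(\BA)$, allows the substitution $g \to b^\iota g$, replacing $\varphi_\sigma(g)$ with $\varphi_\sigma(b^\iota g)$ and restoring $t(g, h)$. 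The principal obstacle is the explicit matrix verification of (b), especially in the odd-$m$ case, where the $\frac{1}{2}$-scalars in $\gamma^0$ and the central vector in $X_m$, $Y_{2ni+m}$ require careful bookkeeping.
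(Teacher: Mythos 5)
Claim (b) is false, and this is where the proof breaks. By \eqref{3.3.1} (with $e=0$), $\delta_0$ conjugates into $Q_{n(m+i)}$ exactly those pairs $(b',q')\in D$ for which the middle $H_m$-block of $q'$ equals ${}^0b'=J_0b'J_0^{-1}$; the pair $(I_m,\nu(b))$ belongs to this subgroup only when $b=I_m$. One sees (b) fails directly by comparing characteristic polynomials: for regular semisimple $b$, the element $t(I,\nu(b))$ has $m$ eigenvalues coming from $b$ and $2n(m+i)-m$ equal to $1$, while $t\bigl((b^\iota)^{-1},I\bigr)$ has $m(2n-1)$ eigenvalues coming from $b^\iota$ (the same set as those of $b$) and only $2ni+m$ equal to $1$. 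For $n>1$ these multiplicities disagree, and multiplying $t\bigl((b^\iota)^{-1},I\bigr)$ by a unipotent $v$ in a parabolic normalized by it does not change the characteristic polynomial; so $\delta_0\,t(I,\nu(b))\,\delta_0^{-1}$ cannot lie in $t\bigl((b^\iota)^{-1},I\bigr)\cdot V$. There is also a second gap: even granting (b), the factor $t\bigl((b^\iota)^{-1},I\bigr)$ sits to the \emph{left} of $\delta_0 u' t(g,h)$ after you apply (a)--(c), and moving it past $\delta_0$ and $u'$ so that it can act on $g$ requires exactly the kind of $\delta_0$-conjugation control you are trying to avoid; you present this as an immediate change of variables, but it is not.

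The paper removes the $b$-dependence differently, and you have omitted the key ingredient. It first factors $(g,qh)=(b^\iota,q)\bigl((b^\iota)^{-1}g,\,h\bigr)$ in $H_m\times H_{2ni+m}$ and changes $g\mapsto b^\iota g$. The pair $(b^\iota,q)$ \emph{does} satisfy the constraint of \eqref{3.3.1}, so $\delta_0\,t(b^\iota,q)\,\delta_0^{-1}=\diag\bigl(a,(b^\iota)^{\Delta_n},((b^\iota)^*)^{\Delta_n},a^*\bigr)\,u$ with $u\in U_{n(m+i)}(\BA)$. The $u$-change of variable then gives the Jacobian $|\det a|^{-m(n-1)}$, and the residual Levi factor $\diag(I_{ni},(b^\iota)^{\Delta_n})^\wedge$ is killed by the extra-invariance \eqref{3.16} of $f^\psi$ established in Proposition \ref{prop 3.2}. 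That invariance is essential and is missing from your argument entirely; without it there is no mechanism to eliminate the $b$-dependence once it enters the argument of $f^\psi$, which is precisely what (b) was attempting, incorrectly, to do by shifting $b$ into the $g$-variable.
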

\begin{proof}
Write in \eqref{3.21}, with $qh$ instead of $h$,
$(g,qh)=(b^\iota,q)((b^\iota)^{-1}g,h)$ (see \eqref{3.3.1'}-\eqref{3.3.1'''}). Change
variable, in \eqref{3.21}, $g\mapsto b^\iota g$. We get\\
\\
$\Lambda(f_{\Delta(\tau, m+i)\gamma_\psi^{(\epsilon)},s},\varphi_\sigma)(qh)=$
$$
\int_{H_m(\BA)}\varphi_\sigma(b^\iota
g)\int_{U'_{m(n-1)}(\BA)} f^\psi_{\Delta(\tau,
m+i)\gamma_\psi^{(\epsilon)},s}(\delta_0ut(b^\iota,q)t(g,h))\psi^{-1}_H(u)dudg.
$$
Now, it is straightforward to check that the inner $du$-integral is equal to
$$
|\det(a)|^{-m(n-1)}\int_{U'_{m(n-1)}(\BA)} f^\psi_{\Delta(\tau,
m+i)\gamma_\psi^{(\epsilon)},s}((\diag(a,(b^\delta)^{\Delta_n})^\wedge\delta_0ut(g,h))\psi^{-1}_H(u)du.
$$
The factor $|\det(a)|^{-m(n-1)}$ comes from a change of variables
in $u$. The element $b^\delta$ is defined as follows. If $m=2m'$ is even, then
$$
b^\delta=\begin{pmatrix}&-I_{m'}\\I_{m'}\end{pmatrix}b\begin{pmatrix}&I_{m'}\\-I_{m'}\end{pmatrix},
$$
and if $m$ is odd, then $b^\delta=b^\iota$. The main part of the calculation is to check that
$$
\delta_0t(b^\iota,q)\delta_0^{-1}=diag(a,(b^\delta)^{\Delta_n},((b^\delta)^*)^{\Delta_n},a^*)u,
$$
where $u\in U_{(m+i)n}(\BA)$. By \eqref{3.16},
$$
f^\psi_{\Delta(\tau,
m+i)\gamma_\psi^{(\epsilon)},s}(\diag(a,(b^\delta)^{\Delta_n})^\wedge\alpha_0ut(g,h))=f^\psi_{\Delta(\tau,
m+i)\gamma_\psi^{(\epsilon)},s}(\hat{a}\alpha_0ut(g,h)).
$$
This proves the proposition.
\end{proof}
We record the following invariance property shown in the proof (with the same notations)
\begin{multline}\label{3'.22}
\int_{U'_{m(n-1)}(\BA)} f^\psi_{\Delta(\tau,
m+i)\gamma_\psi^{(\epsilon)},s}(\delta_0ut(b^\iota,q)t(g,h))\psi^{-1}_H(u)du\\
=|\det(a)|^{-m(n-1)}\int_{U'_{m(n-1)}(\BA)} f^\psi_{\Delta(\tau,
m+i)\gamma_\psi^{(\epsilon)},s}(\hat{a}\delta_0ut(g,h))\psi^{-1}_H(u)du.
\end{multline}
Let us write the version of the last proposition in the metaplectic case. Let $q$ be the matrix as in the proposition, and $h\in \Sp^{(2)}_{m+2ni}(\BA)$. Here $m=2m'$ is even. Then the analogue of \eqref{3.22} is\\
\\
$\Lambda(f_{\Delta(\tau, m+i)\gamma_\psi,s},\varphi_\sigma)(p((q,\bar{\alpha}))h)=(\det(a),x(b))|\det(a)|^{-m(n-1)}$
\begin{equation}\label{3.22.1}
\int_{C_2\backslash \Sp_m^{(2)}(\BA)}\varphi_\sigma(p((b,\bar{\alpha}))^\iota
g)\int f^\psi_{\Delta(\tau,
	m+i)\gamma_\psi,s}((\hat{a},1)\delta_0ut^{(2)}(g,h))\psi^{-1}_H(u)dudg,
\end{equation}
Here, $u$ is integrated over $U'_{m(n-1)}(\BA)$; $p((b,\bar{\alpha}))^\iota$ is given by \eqref{1.10.2.4}, ($p$ is given in \eqref{1.x.1}).
Recall that $(\hat{a},1)$ denotes the element $p(\Pi'_v(\hat{a}_v,1))$. See right after \eqref{1.1.1'}.
The main point in showing \eqref{3.22.1} is to carry out precisely in the metaplectic group the following conjugation at each place $v$,
$$
(\delta_0,1)(t(b_v^\iota,q_v),(x(b_v^\iota),\det(a_v)x(b_v)))(\delta_0,1)^{-1},
$$
and show that it is equal to
$$
(\delta_0t(b_v^\iota,q_v)\delta_0^{-1},c_v(\bar{u}_0,j(b_v^\iota,b_v))(x(b_v^\iota),x(b_v))(\det(a_v),x(b_v))).
$$ 
Let $\sigma^\iota$ be the cuspidal representation of $H^{(\epsilon)}_m(\BA)$
realized in the space of functions $b\mapsto
\varphi_\sigma(b^\iota)$, as $\varphi_\sigma$ varies in the space of
$\sigma$. 

For the next proposition, we give a formal proof. We will prove it later for $K_{H(\BA)})$-finite sections starting in Prop. \ref{prop 3.6}. We will complete the proof in the next section.
\begin{prop}\label{prop 3.5}
The function on $H^{(\epsilon)}_{m+2ni}(\BA)$, $\Lambda(f_{\Delta(\tau,
m+i)\gamma_\psi^{(\epsilon)},s},\varphi_\sigma)(h)$, defined for $\Re(s)$ sufficiently
large by the integral \eqref{3.21}, lies in the space of
\begin{equation}\label{3.22.1*}
\rho_{\Delta(\tau,i)\gamma_\psi^{(\epsilon)},\sigma^\iota,s}=\Ind_{Q^{(\epsilon)}_{ni}(\BA)}^{H^{(\epsilon)}_{m+2ni}(\BA)}\Delta(\tau,i)\gamma_\psi^{(\epsilon)}
|\det\cdot|^s \times \sigma^\iota.
\end{equation}
\end{prop}
\begin{proof}
Consider the r.h.s. of
\eqref{3.22}, or \eqref{3.22.1}, for $\Re(s)$, sufficiently large, so that the integral
converges absolutely. It is smooth in $h$ and of moderate growth. Fix $h$ in 
$K_{H^{(\epsilon)}_{m+2ni}(\BA)}$ (see \eqref{1.1*}, \eqref{1.1.a}), and then consider the integral as a function of
$(a,b)\in \GL^{(\epsilon)}_{ni}(\BA)\times H^{(\epsilon)}_m(\BA)$. It is left -
$\GL_{ni}(F)\times H_m(F)$ invariant. For any fixed $h_0\in H(\BA)$, we claim that, in the linear case (resp. the metaplectic case) the following function
on $\GL_{(m+i)n}(\BA)$ (resp. $\GL^{(2)}_{(m+i)n}(\BA)$), $A\mapsto f^\psi_{\Delta(\tau,
m+i),s}(\hat{A}h_0)$ (resp. $(A,\mu)\mapsto f^\psi_{\Delta(\tau,
m+i)\gamma_\psi,s}((\hat{A},\mu)h_0)$)
factors through the constant term along the
unipotent radical $V_{ni,mn}$ (of the parabolic subgroup
$P_{ni,mn}$ of $\GL_{(m+i)n}$). Indeed, recall that $f^\psi_{\Delta(\tau,
m+i)\gamma_\psi^{(\epsilon)},s}$ is the composition of the section $f_{\Delta(\tau,
m+i)\gamma_\psi^{(\epsilon)},s}$, with the Fourier coefficient, applied to the elements of $\Delta(\tau,m+i)$, along the unipotent radical
$V_{ni,m^n}$, with respect to the character $\psi_{0;0}$ defined in \eqref{3.8} (with $A=0$, $e=0$, $a_e=I_m$). The character $\psi_{0;0}$ is trivial on $V_{ni,mn}(\BA)$, and hence 	our claim.	Again, this constant term is
applied to $\Delta(\tau,m+i)$. Hence the following function on
$\GL_{ni}(\BA)$ (resp. $\GL_{ni}^{(2)}(\BA)$), $a\mapsto f^\psi_{\Delta(\tau, m+i),s}(\hat{a}h_0)$
(resp. $(a,\mu)\mapsto f^\psi_{\Delta(\tau, m+i)\gamma_\psi,s}((\hat{a},\mu)h_0)$) 
lies in $|\det(a)|^{s+\frac{(m+i)n-\delta_H}{2}}$ times a
function obtained as $\phi(diag(a,I_{mn}))$ (resp. $\phi((diag(a,I_{mn}),\mu))$ )
where $\phi$ lies in the constant term of $\Delta(\tau,m+i)\gamma_\psi^{(\epsilon)}$ along
$V_{ni,mn}$.  In fact, the function on $\GL^{(\epsilon)}_{ni}(\BA)\times \GL^{(\epsilon)}_{mn}(\BA)$, $\phi(diag(a_1,a_2))$
(resp.  $((a_1,\mu_1),(a_2,\mu_2))\mapsto \phi((diag(a_1,a_2),\mu_1\mu_2(\det(a_1)\det(a_2)))$) lies in $(|\det(a_1)||\det(a_2)|)^{s+\frac{(m+i)n-\delta_H}{2}}$ times
\begin{equation}\label{3.22.00}
\delta_{P_{ni,mn}}^{\frac{1}{2}}(\Delta(\tau,i)\gamma_\psi^{(\epsilon)}|\det\cdot |^{-\frac{m}{2}}\otimes \Delta(\tau,m)\gamma_\psi^{(\epsilon)}|\det\cdot|^{\frac{i}{2}}).
\end{equation}
This follows from \cite{JL13}, Lemma 4.1. Altogether, taking into account the factor
$|\det\cdot|^{-m(n-1)}$ in front of \eqref{3.22}, we get that, as a function on $\GL^{(\epsilon)}_{ni}(\BA)$, we get an element of
the space of
\begin{equation}\label{3.22*}
(\delta_{Q_{ni}^{H_{m+2ni}}}^{\frac{1}{2}})_{|\GL_{ni}(\BA)}|\det\cdot|^s\Delta(\tau,i)\gamma_\psi^{(\epsilon)}.
\end{equation}
Now, we see, formally, that the r.h.s. of \eqref{3.22} defines an element of
\begin{equation}\label{3.22**}
\delta_{Q_{ni}^{H_{m+2ni}}}^{\frac{1}{2}}\Delta(\tau,i)\gamma_\psi^{(\epsilon)}|\det\cdot|^s\otimes
\sigma^\iota. 
\end{equation}
Indeed, in the integrand of the r.h.s. of \eqref{3.22} (resp. \eqref{3.22.1}), we see the product\\
 $\varphi_\sigma(b^\iota
g) f^\psi_{\Delta(\tau,m+i),s}(\hat{a}\delta_0ut(g,h))$ (resp.\\
$\varphi_\sigma(p((b,\bar{\alpha}))^\iota
g)f^\psi_{\Delta(\tau,m+i)\gamma_\psi,s}((\hat{a},1)\delta_0ut^{(2)}(g,h))$).
The first factor, as a function of $b$ (resp. $p((b,\bar{\alpha}))$), is, for fixed $g$, a cusp form in the space of
$\sigma^\iota$. The second factor, as a function of $a$, is, as we just explained, for fixed $u,g,h$, an element of \eqref{3.22*}. Thus, formally, the integral on the r.h.s. of \eqref{3.22} (resp. \eqref{3.22.1}) is an element of \eqref{3.22**}. Finally, since the function $q\mapsto \Lambda(f_{\Delta(\tau,
m+i)\gamma_\psi^{(\epsilon)},s},\varphi_\sigma)(qh)$, for $q\in Q_{ni}^{(\epsilon),H_{m+2ni}}(\BA)$ is left
$U_{ni}^{H_{m+2ni}}(\BA)$ - invariant, we get that the function $\Lambda(f_{\Delta(\tau,
m+i)\gamma_\psi^{(\epsilon)},s},\varphi_\sigma)$ lies in the induced representation $\rho_{\Delta(\tau,i)\gamma_\psi^{(\epsilon)},\sigma^\iota,s}$.
\end{proof}
\begin{prop}\label{prop 3.6}
Assume that $H$ is linear. Assumptions and notations being as in Prop. \ref{prop 3.5}, assume further that $f_{\Delta(\tau, m+i),s}$ is $K_{H(\BA)}$ finite. Fix  $\kappa\in K_{H_{m+2ni}(\BA)}$. Consider  the following function on 
$\GL_{ni}(\BA)\times H_m(\BA)$,
\begin{equation}\label{3.23}
(a,b)\mapsto\Lambda(f_{\Delta(\tau, m+i),s},\varphi_\sigma)(\begin{pmatrix}a\\&b\\&&a^*\end{pmatrix}\kappa),
\end{equation}
Then the function above can be expressed as a finite sum of terms of the form
\begin{multline}\label{3.24}
|\det(a)|^{-m(n-1)}\xi_\sigma(b^\iota)\cdot \\
\cdot \int_{H_m(\BA)}< \sigma(g)\varphi_\sigma,\xi'_\sigma>\int_{U'_{m(n-1)}(\BA)} 
\tilde{f}^\psi_{\Delta(\tau,m+i),s}(\hat{a}\delta_0ut(g,\kappa))\psi^{-1}_H(u)dudg,
\end{multline}
where $\xi_\sigma$, $\xi'_\sigma$ lie in the space of $\sigma$ and $\tilde{f}_{\Delta(\tau,m+i),s}$ is a $K_{H(\BA)}$-finite and holomorphic section of $\rho_{\Delta(\tau,i),\sigma^\iota, s}$. Also, $< \sigma(g)\varphi_\sigma,\xi'_\sigma>$ denotes the $L^2$-inner product between the two cusp forms $ \sigma(g)\varphi_\sigma$ and $\xi'_\sigma$.
In particular, for $a$ and $h$ fixed, the function of $b$ given by \eqref{3.23}, lies in the space of $\sigma^\iota$.
\end{prop}
\begin{proof}
By \eqref{3.22}, it is enough to consider the function
\begin{multline}\label{3.25}
\Lambda'(f_{\Delta(\tau, m+i),s},\varphi_\sigma)(a,b)=\\
\int_{H_m(\BA)}\varphi_\sigma(b^\iota
g)\int_{U'_{m(n-1)}(\BA)} f^\psi_{\Delta(\tau,
m+i),s}(\hat{a}\delta_0ut(g,\kappa))\psi^{-1}_H(u)dudg.
\end{multline}
Let  $k\in K_{H_m(\BA)}$ . Change variable in \eqref{3.25}, $g\mapsto k^\iota g$, and integrate over $K_{H_m(\BA)}$. We may take the measure of $H_m(\BA)$ to be one. Then \eqref{3.25} becomes
\begin{multline}\label{3.26}
\Lambda'(f_{\Delta(\tau, m+i),s},\varphi_\sigma)(a,b)=\\
\int_{H_m(\BA)}\int_{K_{H_m(\BA)}}\varphi_\sigma(b^\iota k^\iota
g)\int_{U'_{m(n-1)}(\BA)} f^\psi_{\Delta(\tau,
m+i),s}(\hat{a}\delta_0ut(g,\tilde{k}\kappa))\psi^{-1}_H(u)dudkdg
\end{multline}
where $\tilde{k}=diag(I_{ni},k^{-1},I_{ni})$. We used \eqref{3'.22}. 
Since $f_{\Delta(\tau, m+i),s}$ is $K_{H(\BA)}$ - finite, we may write as a finite sum,
\begin{equation}\label{3.26.1}
f^\psi_{\Delta(\tau,m+i),s}(\hat{a}\delta_0ut(g,\tilde{k}\kappa))=\sum c(k)\tilde{f}^\psi_{\Delta(\tau,m+i),s}(\hat{a}\delta_0ut(g,\kappa)),
\end{equation}
where the functions $c$ are matrix coefficients of finite dimensional representations of $K_{H_m(\BA)}$. Substituting this in \eqref{3.26}, it is enough to consider
\begin{equation}\label{3.27}
\int_{H_m(\BA)}(\int_{K_{H_m(\BA)}}c(k^\iota)\varphi_\sigma(b^\iota k
g)dk)\int_{U'_{m(n-1)}(\BA)} \tilde{f}^\psi_{\Delta(\tau,
m+i),s}(\hat{a}\delta_0ut(g,\kappa))\psi^{-1}_H(u)dudg.
\end{equation}
Now, write as a finite sum,
\begin{equation}\label{3.26.2}
\int_{K_{H_m(\BA)}}c(k^\iota)\sigma(k)(\sigma(g)\varphi_\sigma)dk=\sum \beta(\sigma(g)\varphi_\sigma,\xi_\sigma,c)\xi_\sigma,
\end{equation}
where
\begin{equation}\label{3.26.3}
\beta(\sigma(g)\varphi_\sigma,\xi_\sigma,c)=\int_{K_{H_m(\BA)}}c(k^\iota)<\sigma(kg)\varphi_\sigma,\xi_\sigma>dk=
<\sigma(g)\varphi_\sigma,\xi'_\sigma>,
\end{equation}
with
\begin{equation}\label{3.26.4}
\xi'_\sigma=\int_{K_{H_m(\BA)}}\bar{c(k^\iota)}\sigma(k^{-1})\xi_\sigma dk.
\end{equation}
Thus, \eqref{3.27}, and hence \eqref{3.26} is equal to a finite sum of the form we want, that is
$$
\sum \xi_\sigma(b^\iota) \int_{H_m(\BA)} <\sigma(g)\varphi_\sigma,\xi'_\sigma>\int_{U'_{m(n-1)}(\BA)} \tilde{f}^\psi_{\Delta(\tau,
m+i),s}(\hat{a}\delta_0ut(g,\kappa))\psi^{-1}_H(u)dudg.
$$
\end{proof}
As in \eqref{3.22.1}, the analog in the metaplectic case of Prop. \ref{prop 3.6} holds and is proved in the same way. Thus, assume that $f_{\Delta(\tau, m+i)\gamma_\psi,s}$ is $K_{H(\BA)}$ - finite. Fix  $\kappa\in K_{H^{(2)}_{m+2ni}(\BA)}$. Then 
$$
\Lambda(f_{\Delta(\tau, m+i)\gamma_\psi,s},\varphi_\sigma)(p((\begin{pmatrix}a\\&b\\&&a^*\end{pmatrix},\bar{\alpha}))\kappa).
$$
is equal to a finite sum of terms of the form
\begin{multline}\label{3.24.1}
(\det(a),x(b))|\det(a)|^{-m(n-1)}\xi_\sigma(p((b,\bar{\alpha}))^\iota)\cdot \\
\cdot \int_{C_2\backslash H^{(2)}_m(\BA)}< \sigma(g)\varphi_\sigma,\xi'_\sigma>\int_{U'_{m(n-1)}(\BA)} 
\tilde{f}^\psi_{\Delta(\tau,m+i)\gamma_\psi,s}((\hat{a},1)\delta_0ut(g,\kappa))\psi^{-1}_H(u)dudg,
\end{multline}
with similar notations, as in the last proposition.

\begin{rmk}\label{rmk 3.6}
In the proof of the last proposition, we  needed only that $f_{\Delta(\tau,m+i)\gamma_\psi,s}$ be $t(I\times K_{H_{m+2ni}^{(\epsilon)}(\BA)})$-finite.
\end{rmk}
\begin{rmk}\label{rmk 3.6.1}
Note the case $i=0$ of \eqref{3.24}, \eqref{3.24.1}. Here, $\Lambda(f_{\Delta(\tau, m+i)\gamma_\psi,s},\varphi_\sigma)$ lies in the space of $\sigma^\iota$. In this case, $\Lambda(f_{\Delta(\tau, m+i)\gamma_\psi,s},\varphi_\sigma)(b)$ ($b\in H_m^{(\epsilon)}(\BA)$) is equal to a finite sum of terms of the form
\begin{multline}\label{3.24.1*}
\xi_\sigma(b^\iota)\cdot \\
\cdot \int_{C^{(\epsilon)}_2\backslash H^{(\epsilon)}_m(\BA)}< \sigma(g)\varphi_\sigma,\xi'_\sigma>\int_{U'_{m(n-1)}(\BA)} 
\tilde{f}^\psi_{\Delta(\tau,m)\gamma^{(\epsilon)}_\psi,s}(\delta_0ut(g,1))\psi^{-1}_H(u)dudg.
\end{multline}
\end{rmk}

In order to complete the proof (rigorously) the proof of Proposition \ref{prop 3.5} (for $K_{H(\BA)}$ - finite sections), it remains to show that the integrals on the r.h.s. of \eqref{3.24}, (resp. \eqref{3.24.1}), as functions of $a$ (resp. $(a,\pm 1)$), lie in the space of the representation \eqref{3.22*}. This will follow from Cor. \ref{cor 4.5} in the next section. 

\begin{prop}\label{prop 3.7}
Let $f_{\Delta(\tau,m+i)\gamma_\psi^{(\epsilon)},s}$ be a smooth holomorphic section of  $\rho_{\Delta(\tau,
m+i)\gamma_\psi^{(\epsilon)},s}$. Then the function on $H^{(\epsilon)}_{m+2ni}(\BA)$, $\Lambda(f_{\Delta(\tau,
m+i)\gamma_\psi^{(\epsilon)},s},\varphi_\sigma)(h)$, defined for $\Re(s)$ sufficiently
large by the integral \eqref{3.21}, admits an analytic continuation
to a meromorphic function of $s$ in the whole plane. 
\end{prop}
\begin{proof}
We give the proof for $H=H_{2mn}=\Sp_{2nm}$. This is just for simplicity of notations. The proof in the other cases is entirely the same, except for additional notations, taking care of double covers, or ensuring that matrices have determinant one. We saw in Prop. \ref{prop 3.4} that $\Lambda(f_{\Delta(\tau,
m+i),s},\varphi_\sigma)$ is left $U^{H_{m+2ni}}_{ni}(\BA)$-invariant, and we have the identities \eqref{3.22}, \eqref{3.22*}. Now, \eqref{3.20} exhibits, for $Re(s)$, sufficiently large, $\mathcal{E}(f_{\Delta(\tau,
m+i),s},\varphi_\sigma)(h)$ as an "Eisenstein summation", corresponding to $\Lambda(f_{\Delta(\tau,
m+i),s},\varphi_\sigma)$. Hence the usual calculation of the constant term of the series in the r.h.s. of \eqref{3.20}, along the unipotent radical $U^{H_{m+2ni}}_{ni}$, works. See \cite{MW95}, II.1.7. We now recall this briefly, and to shorten notation, we will denote in what follows, $Q_{ni}=Q^{H_{m+2ni}}_{ni}$, $U_{ni}=U^{H_{m+2ni}}_{ni}$, etc. The calculation is given by
\begin{multline}\label{3.28}
\mathcal{E}^{U_{ni}}(f_{\Delta(\tau,
m+i),s},\varphi_\sigma)(h)=\\
\sum_w\sum_{\gamma\in M^w_{ni}(F)\backslash M_{ni}(F)}\int_{U_{ni}^w(F)\backslash U_{ni}(\BA)}\Lambda(f_{\Delta(\tau,
m+i),s},\varphi_\sigma)(wu \hat{\gamma} h)du,
\end{multline}
where the outer sum is over $Q_{ni}(F)\backslash H_{m+2ni}(F)/Q_{ni}(F)$, \\
$M_{ni}^w=M_{ni}(F)\cap w^{-1}M_{ni}(F)w$, $U_{ni}^w(F)=U_{ni}(F)\cap w^{-1}Q_{ni}(F)w$. Here is a set of representatives $w$ of the double cosets $Q_{ni}(F)\backslash H_{m+2ni}(F)/Q_{ni}(F)$. It is taken from \cite{GRS11}, Sec. 4.2. They are parametrized by pairs of integers $(r,r')$, such that $0\leq r\leq r'\leq ni$. The corresponding representative is
$$
w_{r,r'}=\begin{pmatrix}I_r\\&0&0&I_{r'-r}&0&0&0&0&\\&0&0&0&0&0&-I_{ni-r'}&0\\&I_{r'-r}&0&0&0&0&0&0&\\&0&0&0&I_{m-2(r'-r}&0&0&0&\\
&0&0&0&0&0&0&I_{r'-r}\\&0&I_{ni-r'}&0&0&0&0&0&\\&0&0&0&0&I_{r'-r}&0&0&\\&&&&&&&&I_r\end{pmatrix}.
$$ 
The subgroups $M_{ni}^{w_{r,r'}}$, $U_{ni}^{w_{r,r'}}$ are described in \cite{GRS11}, Sec. 4.3. Factor the $du$-integration in \eqref{3.28} through $U_{ni}^{w_{r,r'}}(\BA)$. We check that the projection of $w_{r,r'}U_{ni}^{w_{r,r'}}w_{r,r'}$ on the Levi subgroup $M_{ni}$ is the direct product of the unipotent radicals $\widehat{V}_{r,r'-r,ni-r'}$ and $U_{r'-r}^{H_m}$ inside $\widehat{\GL}_{ni} \times H_m$. It follows that  the $du$-integration in \eqref{3.28} is equal to
$$
\int_{U_{ni}^{w_{r,r'}}(\BA)\backslash U_{ni}(\BA)}[\Lambda(f_{\Delta(\tau,
m+i),s},\varphi_\sigma)]^{V_{r,r'-r,ni-r'},U_{r'-r}^{H_m}}(w_{r',r}u \gamma h)du,
$$
where \\
\\
$[\Lambda(f_{\Delta(\tau,m+i)\gamma_\psi^{(\epsilon)},s},\varphi_\sigma)]^{V_{r,r'-r,ni-r'},U_{r'-r}^{H_m}}(h')=$
\begin{equation}\label{3.29}
\int \int\Lambda(f_{\Delta(\tau,m+i),s},\varphi_\sigma)
(\begin{pmatrix}v\\&u'\\&&v^*\end{pmatrix}h')dvdu'.
\end{equation}
Here, the $dv$-integration is over $V_{r,r'-r,ni-r'}(F)\backslash V_{r,r'-r,ni-r'}(\BA)$, and the $du'$-integration is over $U_{r'-r}^{H_m}(F)\backslash U_{r'-r}^{H_m}(\BA)$. If $U^{H_m}_{r'-r}$ is trivial, omit it from the super index in the l.h.s. of \eqref{3.29}. By \eqref{3.22}, \eqref{3.29} is equal to 
\begin{equation}\label{3.30}
\int_{H_m(\BA)}\varphi_\sigma^{(U_{r'-r}^{H_m})^\iota}(g)\int_{U'_{m(n-1)}(\BA)} f^{\psi,V_{r,r'-r,ni-r'}}_{\Delta(\tau,
m+i),s}(\delta_0ut(g,h'))\psi^{-1}_H(u)dudg,
\end{equation}
where $\varphi_\sigma^{(U_{r'-r}^{H_m})^\iota}$ is the constant term of $\varphi_\sigma$ along the unipotent radical $(U_{r'-r}^{H_m})^\iota$, and 
\begin{equation}\label{3.31}
f^{\psi,V_{r,r'-r,ni-r'}}_{\Delta(\tau,m+i),s}(x)=\int_{V_{r,r'-r,ni-r'}(F)\backslash V_{r,r'-r,ni-r'}(\BA)}f^\psi_{\Delta(\tau,m+i),s}(\hat{v}x)dv.
\end{equation}
Since $\sigma$ is cuspidal, we get that only $r'=r$ contribute to the constant term \eqref{3.28}. Thus, let $r'=r$. Next, by \eqref{3.31}, \eqref{3.22*}, $f^{\psi,V_{r,ni-r}}_{\Delta(\tau,m+i),s}$ is the composition of $f^\psi_{\Delta(\tau,m+i),s}$, and the constant term along $\Delta(\tau,i)$, along $V_{r,ni-r}$. Since $\tau$ is cuspidal, these constant terms are zero, unless $r=nj$, where $j$ is an integer, such that $0\leq j\leq i$. In this case, we know from \cite{JL13}, Lemma 4.1, that, for $x\in H_{m+2ni}(\BA)$, the function on $\GL_{ni}(\BA)$, given by $a\mapsto f^{\psi,V_{nj,n(i-j)}}_{\Delta(\tau,m+i),s}(\hat{a}x)$ lies in $|\det\cdot|^{s+\frac{1}{2}{n(m+i)+1}}$ times
\begin{equation}\label{3.32}
\Ind_{P_{nj,n(i-j)}(\BA)}^{\GL_{ni}(\BA)}|\det\cdot|^{-\frac{i-j}{2}}\Delta(\tau,j)\times |\det\cdot|^{\frac{j}{2}}\Delta(\tau,i-j).
\end{equation}
Denote $\epsilon_j=w_{nj,nj}$. Then $M_{ni}^{\epsilon_j}$ is the subgroup of matrices
$$
M_{ni}^{\epsilon_j}=\{\begin{pmatrix}a\\&b\\&&a^*\end{pmatrix}\  | \  a\in P_{nj,n(i-j)},\ b\in H_m. \},
$$
and
$$
U_{ni}^{\epsilon_j}=\{\begin{pmatrix}I_{nj}&0&y&z_1&z_2\\&I_{n(i-j)}&0&0&z'_1\\&&I_m&0&y'\\&&&I_{n(i-j)}&0\\&&&&I_{nj}\end{pmatrix}\in H_{m+2ni}\}.
$$
Note that $M_{ni}^{\epsilon_i}=M_{ni}$, and $U_{ni}^{\epsilon_i}=U_{ni}$
Define, for $Re(s)$ sufficiently large, 
\begin{equation}\label{3.33}
M_{\epsilon_j}(\Lambda(f_{\Delta(\tau,m+i),s},\varphi_\sigma))(h')=\int_{U_{ni}^{\epsilon_j}(\BA)\backslash U_{ni}(\BA)}[\Lambda(f_{\Delta(\tau,
m+i),s},\varphi_\sigma)]^{V_{nj,n(i-j)}}(\epsilon_ju h')du.
\end{equation}
In particular, 
$$
M_{\epsilon_i}(\Lambda(f_{\Delta(\tau,m+i),s},\varphi_\sigma))=\Lambda(f_{\Delta(\tau,m+i),s},\varphi_\sigma).
$$
Then \eqref{3.28} becomes
\begin{equation}\label{3.34}
\mathcal{E}^{U_{ni}}(f_{\Delta(\tau,
m+i),s},\varphi_\sigma)(h)=\\
\sum_{j=0}^i\sum_{\gamma\in M^{\epsilon_j}_{ni}(F)\backslash M_{ni}(F)}M_{\epsilon_j}(\Lambda(f_{\Delta(\tau,m+i),s},\varphi_\sigma))(\hat{\gamma}h),
\end{equation}
Let $t\in \BA^*$. Then using \eqref{3.33} and \eqref{3.32}, we have
\begin{equation}\label{3.35}
M_{\epsilon_j}(\Lambda(f_{\Delta(\tau,m+i),s},\varphi_\sigma)(\hat{tI}_{ni}h')=|t|^{(2j-i)ns+k_{i,j}}\omega^{\nu_{i,j}}_\tau(t)M_{\epsilon_j}(\Lambda(f_{\Delta(\tau,m+i),s},\varphi_\sigma))(h'),
\end{equation}
where $k_{i,j}$ are certain integers that can be easily computed, $\nu_{i,j}=0$, when $0<j<i$, $\nu_{i,0}=-1$, and $\nu_{i,i}=1$.
Denote, for $Re(s)$ sufficiently large, and $0\leq j\leq i-1$,
$$
E_j(M_{\epsilon_j}(\Lambda(f_{\Delta(\tau,m+i),s},\varphi_\sigma)),h)=\sum_{\gamma\in M^{\epsilon_j}_{ni}(F)\backslash M_{ni}(F)}M_{\epsilon_j}(\Lambda(f_{\Delta(\tau,m+i),s},\varphi_\sigma))(\hat{\gamma}h).
$$
Then, by \eqref{3.34}, \eqref{3.35}, for $Re(s)$, sufficiently large, and for $t\in \BA^*$,
\begin{multline}\label{3.36}
\mathcal{E}^{U_{ni}}(f_{\Delta(\tau,m+i),s},\varphi_\sigma)(\hat{tI}_{ni}h)=|t|^{-ins+k_{i,i}}\omega_\tau(t)\Lambda(f_{\Delta(\tau,m+i),s},\varphi_\sigma))(h)+\\
+\sum_{j=0}^{i-1}|t|^{(2j-i)ns+k_{i,j}}\omega^{\nu_{i,j}}_\tau(t)E_j(M_{\epsilon_j}(\Lambda(f_{\Delta(\tau,m+i),s},\varphi_\sigma)),h).
\end{multline}
Choose $t_0,...,t_i\in \BA^*$, such that $|t_0|,...,|t_i|$ are $i+1$ different positive numbers. Substitute $t=t_\ell$, $0\leq \ell\leq i$ in \eqref{3.36}, and view the resulting $i+1$ equalities as a linear system of $i+1$ equations in the unknowns $\Lambda(f_{\Delta(\tau,m+i),s},\varphi_\sigma))(h)$, $M_{\epsilon_j}(\Lambda(f_{\Delta(\tau,m+i),s},\varphi_\sigma)),h)$, $0\leq j\leq i-1$. Given a right half plane, we may choose an open domain in it, where the determinant of the coefficient matrix is nonzero everywhere, and there, use Cramer's rule to express each unknown, and, in particular, $\Lambda(f_{\Delta(\tau,m+i),s},\varphi_\sigma))(h)$, as a linear combination of $\mathcal{E}^{U^{H_{m+2ni}}_{ni}}(f_{\Delta(\tau,m+i),s},\varphi_\sigma)(\hat{t_\ell I}_{ni}h)$, $0\leq \ell\leq i$, with coefficients which are linear combinations of exponentials. Since  $\mathcal{E}(f_{\Delta(\tau,m+i),s},\varphi_\sigma)$ is meromorphic in the complex plane, we get the meromorphic continuation of $\Lambda(f_{\Delta(\tau,m+i),s},\varphi_\sigma))$ to the whole plane.

\end{proof}

\section{The section $\Lambda(f_{\Delta(\tau, m+i)\gamma_\psi^{(\epsilon)},s},\varphi_\sigma)$ and its relation to the doubling
integrals for $\sigma\times \tau$}

Our main goal in this section is to relate $\Lambda(f_{\Delta(\tau, m+i)\gamma_\psi^{(\epsilon)},s},\varphi_\sigma)$ as a
meromorphic function to the generalized doubling integrals for $\sigma\times \tau$ of \cite{CFGK17}. Prop. \ref{prop 3.6} tells us that 
$\Lambda(f_{\Delta(\tau, m+i)\gamma_\psi^{(\epsilon)},s},\varphi_\sigma)$, evaluated at the identity, is a linear combination of the integrals \eqref{3.24}, or \eqref{3.24.1} (with $a=I_{ni}$, and $b=I_m$). These integrals resemble the integrals of the generalized doubling method (after the unfolding process), except that the unipotent inner $du$-integration is on a larger group $U'_{m(n-1)}(\BA)$. See \cite{CFGK17}, Theorem 1. This will be made precise soon in Prop. \ref{prop 4.1}. Our first goal is to consider the inner $du$-integral just mentioned and show how to "shrink" $U'_{m(n-1)}$, so that we can express the $du$-integration as a finite linear combination of similar integrals, but along a unipotent subgroup of $U'_{m(n-1)}$ which can be identitfied with the one which appears in the integrals of the generalized doubling method for $\sigma\times \tau$. Thus, let us consider, for $Re(s)$ sufficiently large, for $a\in \GL^{(\epsilon)}_{ni}(\BA)$, and for $g\in H_m^{(\epsilon)}(\BA)$,
\begin{equation}\label{4'.0}
\ell_\psi(f_{\Delta(\tau,
m+i)\gamma_\psi^{(\epsilon)},s})(a,g)=\int_{U'_{m(n-1)}(\BA)} f^\psi_{\Delta(\tau,
m+i)\gamma_\psi^{(\epsilon)},s}(\hat{a}\delta_0ut(g,1))\psi^{-1}_H(u)du.
\end{equation}
In the metaplectic case, we use the identification of $\GL^{(2)}_{ni}(\BA)$ and $M_{ni}^{(2)}(\BA)$, detailed right after \eqref{1.1.1'}. Recall, that in this case, for $a=(a',\mu)\in \GL_{ni}^{(2)}(\BA)$, we denote $\hat{a}=(\hat{a'},\mu)$ (which is short for $p(\Pi'_v(\hat{a'}_v,\mu_v))$, where $\mu_v=1$, for almost all $v$, and $\Pi_v\mu_v=\mu$.)  We keep the presence of $a$, since we also want to complete the proof of Prop. \ref{prop 3.5}, and show that, as a function of $a$, the last integral lies in \\ 
$\delta_{Q_{ni}^{H_{m+2ni}}}^{\frac{1}{2}}(\hat{a})|\det(a)|^{s+m(n-1)}\Delta(\tau,i)\gamma_\psi^{(\epsilon)}$. By \eqref{3'.22} and \eqref{3.22.1}, \eqref{4'.0} is equal, in the linear case, to
\begin{multline}\label{4'.1}
\ell_\psi(f_{\Delta(\tau,
m+i),s})(a,g)=\\
\int_{U'_{m(n-1)}(\BA)} f^\psi_{\Delta(\tau,
m+i),s}(\hat{a}\delta_0ut(I,diag(I_{ni},\iota^{-1}(g^{-1}),I_{ni})))\psi^{-1}_H(u)du,
\end{multline}
where $\iota^{-1}(g^{-1})=J_0g^{-1}J_0^{-1}$. See \eqref{1.10.1'}. Note that when $m$ is even, $\iota^{-1}(g^{-1}) =(g^{-1})^\iota=(g^\iota)^{-1}$. In the metaplectic case, for $a\in \GL^{(2)}_{ni}(\BA)$, $(g,\bar{\alpha})\in \widetilde{\Sp}_m(\BA)$, 
\begin{multline}\label{4'.2}
\ell_\psi(f_{\Delta(\tau,
m+i)\gamma_\psi,s})(a,p((g,\bar{\alpha}))=\\
\int_{U'_{m(n-1)}(\BA)} f^\psi_{\Delta(\tau,
m+i)\gamma_\psi,s}(\hat{a}\delta_0ut(1,p(\Pi'_v(diag(I_{ni},g_v^\iota,I_{ni}),\alpha_v\mu(g_v))^{-1})\psi^{-1}_H(u)du,
\end{multline}
where $\mu(g_v)=c_v(\bar{u}_0,j(g_v,g_v^\iota))(x(g_v),x(g_v^\iota))$. See \eqref{1.x.1}, \eqref{1.10.2.1}. 

It will be convenient to consider $\ell_\psi(f_{\Delta(\tau,
m+i)\gamma_\psi^{(\epsilon)},s})$ via a product of analogous local integrals over the places of $F$. We now explain how this is done. It follows from \eqref{3.8} and  \eqref{3.22.00} that for a fixed $x\in H(\BA)$, the function on $\GL^{(\epsilon)}_{(m+i)n}(\BA)$ given by $d\mapsto f^\psi_{\Delta(\tau,
m+i)\gamma_\psi^{(\epsilon)},s}(\hat{d}x)$  lies in 
\begin{equation}\label{4'.3}
|\det\cdot|^{s+\frac{(m+i)n-\delta_H}{2}}\cdot \Ind_{P^{(\epsilon)}_{ni,m}(\BA)}^{\GL_{(m+i)n}^{(\epsilon)}(\BA)}(\Delta(\tau,i)\gamma_\psi^{(\epsilon)}|\det\cdot|^{-\frac{m}{2}}\times
W_{\psi_{V_{m^n}}}(\Delta(\tau,m))\gamma_\psi^{(\epsilon)}|\det\cdot|^{\frac{i}{2}}),
\end{equation}
where $W_{\psi_{V_{m^n}}}$ denotes the model of $\Delta(\tau,m)$ corresponding to the Fourier coefficient along $V_{m^n}$ with respect to the following character of $V_{m^n}(\BA)$,
\begin{equation}\label{4.0}
\psi^{-1}_{V_{m^n}}(\begin{pmatrix}I_m&x_1&\ast&\ast\\&\ddots&\ddots\\&&\ddots&x_{n-1}\\&&&I_m\end{pmatrix})= \psi^{-1}(tr(x_1)+\cdots+tr(x_{n-1})).
\end{equation}
The model above is in the the space of functions on $\GL_{mn}(\BA)$ given by
\begin{equation}\label{4'.4}
g\mapsto \int_{V_{m^n}(F)\backslash V_{m^n}(\BA)}\xi(ug)\psi_{V_{m^n}}(u)du,
\end{equation}
for $\xi$ in the space of $\Delta(\tau,m)$. This model is referred to in \cite{CFGK17}, Sec. 2.2, as the (global) Whittaker- Speh- Shalika model of $\Delta(\tau,m)$. This model is unique locally at all places. This means the following. Write $\Delta(\tau,m)$ as a restricted tensor product of local representations, denoting the corresponding local representation at a place $v$ by $\Delta(\tau_v,m)$, and we let it act on a vector space $E_v$. Then, up to scalars multiples, there is a unique (continuous)  linear functional $\beta_v$ on $E_v$, satisfying, for all $e\in E_v$, and all $u\in V_{m^n}(F_v)$,
\begin{equation}\label{4'.5}
\beta_v(\Delta(\tau_v,m)(u)e)=\psi_{V_{m^n},v}^{-1}(u)\beta_v(e),
\end{equation}
where $\psi_{V_{m^n},v}$ is the character of $V_{m^n}(F_v)$ analogous to \eqref{4.0}, with $\psi_v$ instead of $\psi$. See \cite{CFK18}, Theorem 3. The corresponding model of $\Delta(\tau_v,m)$  is the space of functions on $\GL_{mn}(F_v)$ given by $g\mapsto \beta_v(\Delta(\tau_v,m)(g)e)$, for $e\in E_v$. This is the (local) Whittaker, Speh, Shalika model of $\Delta(\tau_v,m)$ with respect to $\psi^{-1}_{V_{m^n},v}$. We will call it, for short, the $\psi^{-1}_{V_{m^n},v}$-model of $\Delta(\tau_v,m)$, and denote it by $W_{\psi^{-1}_{V_{m^n},v}}(\Delta(\tau_v,m))$. Thus, we could take $E_v=W_{\psi^{-1}_{V_{m^n},v}}(\Delta(\tau_v,m))$. We do the same for $\Delta(\tau,i)$. In particular, we realize $\Delta(\tau_v,i)$, at each place $v$, in $W_{\psi^{-1}_{V_{i^n},v}}(\Delta(\tau_v,i))$. In the metaplectic case, we consider the representation $\Delta(\tau_v,m)\gamma_{\psi_v}$ of $\GL^{(2)}_{nm}(F_v)$ acting by right translations in the space of functions on $\GL_{nm}(F_v)$,
\begin{equation}\label{4'.5.1}
\tilde{W}((g,\mu))=\mu \gamma_{\psi_v}(\det(g))W(g),
\end{equation}
where $W\in W_{\psi^{-1}_{V_{m^n},v}}(\Delta(\tau_v,m))$. Note the action $\rho((g_0,\mu_0))$ on $\tilde{W}$ by right translation by $(g_0,\mu_0)$,
\begin{equation}\label{4'.5.1}
\rho((g_0,\mu_0))(\tilde{W})=\mu_0\gamma_{\psi_v}(\det(g_0))\widetilde{\rho(g_0)(W)}.
\end{equation}

For each place $v$, let
\begin{equation}\label{4'.6}
\rho_{\Delta(\tau_v,i;m)\gamma_{\psi_v}^{(\epsilon)},s}=\Ind_{Q^{(\epsilon)}_{ni,mn}(F_v)}^{H(F_v)}(\Delta(\tau_v,i)\gamma_{\psi_v}^{(\epsilon)}|\det\cdot|^{s-\frac{m}{2}}\times
\Delta(\tau_v,m)\gamma_{\psi_v}^{(\epsilon)}|\det\cdot|^{s+\frac{i}{2}}).
\end{equation}
Consider a section $f_{\Delta(\tau_v,i;m)\gamma_{\psi_v}^{(\epsilon)},s}$ of $\rho_{\Delta(\tau_v,i;m)\gamma_{\psi_v}^{(\epsilon)},s}$. In the linear case, we view it as a
function on $H(F_v)\times \GL_{ni}(F_v)\times \GL_{mn}(F_v)$, such
that for a fixed element in $H(F_v)$, the function in the two other
variables lies in the tensor product of the two models above of
$\Delta(\tau_v,i)$ and $\Delta(\tau_v,m)$. It will be convenient to
simplify notation and to re-denote $f_{\Delta(\tau_v,i;m),s}(y)=f_{\Delta(\tau_v,i;m),s}(y;I_{ni},I_{mn})$. In the metaplectic case, the representation $\Delta(\tau_v,i)\gamma_{\psi_v}|\det\cdot|^{s-\frac{m}{2}}\times
\Delta(\tau_v,m)\gamma_{\psi_v}|\det\cdot|^{s+\frac{i}{2}}$ acts on the Levi part by\\
\\
$(diag(a_1,a_2)^\wedge,1)\mapsto$
\begin{equation}\label{4.2.1}
 \gamma_{\psi_v}(\det(a_1)\det(a_2))|\det(a_1)|^{s-\frac{m}{2}}|\det(a_2)|^{s+\frac{i}{2}}\Delta(\tau_v,i)(a_1)\otimes\Delta(\tau_v,m)(a_2).
\end{equation}
Here, $a_1\in \GL_{ni}(F_v)$, $a_2\in \GL_{mn}(F_v)$. We view $f_{\Delta(\tau_v,i;m)\gamma_{\psi_v},s}$ as a function on $\Sp^{(2)}_{2n(m+i)}(F_v)\times \GL^{(2)}_{ni}(F_v)\times\GL^{(2)}_{mn}(F_v)$, where we use the homomorphism
$$
\GL^{(2)}_{ni}(F_v)\times\GL^{(2)}_{mn}(F_v)\rightarrow \GL^{(2)}_{n(m+i)}(F_v)
$$
given by
$$
((a_1,\alpha),(a_2,\beta))\mapsto (\begin{pmatrix}a_1\\&a_2\end{pmatrix},\alpha\beta(\det(a_1),\det(a_2))).
$$ 

Let us  fix a finite set of places $S$, containing the Archimedean places,
outside which $\tau$, and $\psi$ are unramified, $f_{\Delta(\tau,m+i)\gamma_\psi^{(\epsilon)},s}$, and hence $f^\psi_{\Delta(\tau,m+i)\gamma_\psi^{(\epsilon)},s}$ (in \eqref{4'.2}) is right $K_{H(F_v)}$-invariant, for all $v\notin S$. For all such $v$, fix in the space of \eqref{4'.6} the spherical vector $f^0_{\Delta(\tau_v,i;m)\gamma_{\psi_v}^{(\epsilon)},s}$, normalized, such that, in the linear case, for $a\in \GL_{ni}(F_v)$, $b\in \GL_{mn}(F_v)$, 
\begin{equation}\label{4.2.1*} f^0_{\Delta(\tau_v,i;m),s}(1;a,b)=W^0_{\Delta(\tau_v,i),\psi_v}(a)W^0_{\Delta(\tau_v,m),\psi_v}(b)
\end{equation} 
where $W^0_{\Delta(\tau_v,i),\psi_v}$ is the unique unramified function in the $\psi^{-1}_{V_{i^n},v}$- model above of $\Delta(\tau_v,i)$, such that its value at $I_{ni}$ is $1$, and similarly for  $W^0_{\Delta(\tau_v,m),\psi_v}$. 
In the metaplectic case, $ f^0_{\Delta(\tau_v,i;m)\gamma_{\psi_v},s}(1;a,b)=\tilde{W}^0_{\Delta(\tau_v,i),\psi_v}(a)\tilde{W}^0_{\Delta(\tau_v,m),\psi_v}(b)$, where $a\in \GL^{(2)}_{ni}(F_v)$, $b\in \GL_{mn}(F_v)$. See \eqref{4'.5.1}.\\
Now we can interpret \eqref{4'.2} via a product over the places of $F$. Assume that $f_{\Delta(\tau,m+i)\gamma_\psi^{(\epsilon)},s}$ is decomposable. Then 
$f^\psi_{\Delta(\tau,m+i)\gamma_\psi^{(\epsilon)},s}$ is decomposable. Assume that it corresponds to the tensor product of the 
local sections $f_{\Delta(\tau_v,i; m)\gamma^{(\epsilon)}_{\psi_v},s}$ above. Thus, let us  fix an isomorphism
\begin{equation}\label{4.2.1.a}
p_{\tau,i}:\otimes_v'\Delta(\tau_v,i)\rightarrow \Delta(\tau,i).
\end{equation}
 Then, in the linear case, for each $x\in H(\BA)$, the automorphic form on 
$\GL_{ni}(\BA)$ given by 
$$
a\mapsto |\det(a)|^{-\frac{(m+i)n}{2}-s}f^\psi_{\Delta(\tau, m+i),s}(\hat{a} x)
$$
is the image under $p_{\tau,i}$ of $\otimes'_vW_v$, where $W_v$ is the function in the $\psi^{-1}_{V_{i^n},v}$-model of $\Delta(\tau_v,i)$ given by $a_v\mapsto  |\det(a_v)|^{-\frac{(m+i)n}{2}-s}f_{\Delta(\tau_v,i; m),s}(\hat{a}_vx_v)$. Here, $x_v$ is the coordinate at the place $v$ of $x$. 
In the metaplectic case, an automorphic form $\tilde{\varphi}$ in the space of $\Delta(\tau,i)\gamma_\psi$ has the form
$$
\tilde{\varphi}((a,\mu))=\mu\gamma_\psi(\det(a))\varphi(a),
$$
where $\varphi$ is an automorphic form in the space of $\Delta(\tau,i)$. Now, define
\begin{equation}\label{4.2.1.b}
\tilde{p}_{\tau,i}:\otimes'_v\Delta(\tau_v,i)\gamma_{\psi_v}\rightarrow \Delta(\tau,i)\gamma_\psi,
\end{equation}
such that if $\varphi$ above is decomposable with $\varphi=p_{\tau,i}(\otimes'_vW_v)$, where\\
 $W_v\in W_{\psi^{-1}_{V_{i^n},v}}(\Delta(\tau_v,i))$, then
$$
\tilde{p}_{\tau,i}(\otimes'_v\tilde{W}_v)=\tilde{\varphi}.
$$
Now, the automorphic form on 
$\GL_{ni}^{(2)}(\BA)$ given by\\
 $(a,\alpha)\mapsto  |\det(a)|^{-\frac{(m+i)n}{2}-s} f^\psi_{\Delta(\tau, m+i)\gamma_\psi,s}((\hat{a},\alpha) x)$ is the image under $\tilde{p}_{\tau,i}$ of $\otimes'_v\tilde{W}_v$, where $\tilde{W}_v$ is the function in the $\psi^{-1}_{V_{i^n},v}$-model of $\Delta(\tau_v,i)\gamma_{\psi_v}$ given by $(a_v,\mu)\mapsto |\det(a_v)|^{-\frac{(m+i)n}{2}-s}f_{\Delta(\tau_v,i; m)\gamma_{\psi_v},s}((\hat{a}_v,\mu)x_v)$.) Thus, \eqref{4'.2}, viewed as the automorphic form $a\mapsto \ell_\psi(f_{\Delta(\tau,m+i)\gamma_\psi^{(\epsilon)},s})(a,g)$, for each fixed $g$, is the image under $p_{\tau,i}$ of $\otimes'_v(a_v\mapsto \ell_{\psi_v}(f_{\Delta(\tau_v,i;m)\gamma_{\psi_v}^{(\epsilon)},s})(a_v,g_v))$, where, in the linear case,
\begin{multline}\label{4.3}
\ell_{\psi_v}(f_{\Delta(\tau_v,i;m),s})(a_v,g_v)=\\
\int_{U'_{m(n-1)}(F_v)} f_{\Delta(\tau_v,i;m),s}(\hat{a}_v\delta_0ut(I,diag(I_{ni},\iota^{-1}(g_v^{-1}),I_{ni})))\psi^{-1}_{H,v}(u)du.
\end{multline}
Here $\iota^{-1}(g_v^{-1})=J_0g_v^{-1}J_0^{-1}$ (see \eqref{1.10.1.1}), and $\psi_{H,v}$ is the component of $\psi_H$ at $v$. \\
In the metaplectic case, as in \eqref{4'.2}, for $a_v\in \GL_{ni}^{(2)}(F_v)$, $\alpha_v=\pm 1$,
\begin{multline}\label{4.3.1}
\ell_{\psi_v}(f_{\Delta(\tau_v,i;m)\gamma_{\psi_v},s})(a_v,(g_v,\alpha_v))=\\
\int_{U'_{m(n-1)}(F_v)} f^\psi_{\Delta(\tau_v,i;m)\gamma_{\psi_v},s}(\hat{a}_v\delta_0ut(1,(diag(I_{ni},g_v^\iota,I_{ni}),\alpha_v\mu(g_v))^{-1})\psi^{-1}_{H,v}(u)du.
\end{multline}

Denote, for $g_v\in H_m^{(\epsilon)}(F_v)$,
\begin{multline}\label{4.3.2}
\tilde{g}_v=diag(I_{ni},\iota^{-1}(g_v^{-1}),I_{ni})\in H_{2ni+m}(F_v),\ \textit{if}\  \epsilon=1;\\
\tilde{g}_v= (\diag(I_{ni}, b_v^\iota,I_{ni}),\alpha_v\mu(b_v))^{-1}\in H_{2ni+m}^{(2)}(F_v),\ \textit{if}\  \epsilon=2,\ \textit{and}\ g_v=(b_v,\alpha_v).
\end{multline}

Let us write the elements of $U'_{m(n-1)}(F_v)$ in the form $u_{x;y}$ as
in \eqref{3.21.1}, with $x=(x_1,x_2,x_3)$, where $x_1$ has $[\frac{m}{2}]$
columns, $x_2$ has $ni$ columns, and $x_3$ has $m-[\frac{m}{2}]$ columns. Let
$$
U''_{m(n-1)}(F_v)=\{u_{x;y}\in U'_{m(n-1)}(F_v) |\ x_2=0\}.
$$
\begin{prop}\label{prop 4.1}
Let $v$ be a place of $F$. There is a smooth, holomorphic section $f'_{\Delta(\tau_v,i;m)\gamma_{\psi_v}^{(\epsilon)},s}$ of $\rho_{\Delta(\tau_v,i;m)\gamma_{\psi_v}^{(\epsilon)},s}$ (see \eqref{4'.6}), which
depends on (the smoothness of) $f_{\Delta(\tau_v,i;m)\gamma_{\psi_v}^{(\epsilon)},s}$,
such that for all $a_v\in \GL^{(\epsilon)}_{ni}(F_v)$ and all $g_v\in H^{(\epsilon)}_m(F_v)$,
\begin{equation}\label{4.4}
\ell_{\psi_v}(f_{\Delta(\tau_v,i;m)\gamma_{\psi_v}^{(\epsilon)},s})(a_v,g_v)=\int_{U''_{m(n-1)}(F_v)}
f'_{\Delta(\tau_v,i;m)\gamma_{\psi_v}^{(\epsilon)},s}(\hat{a}_v\delta_0ut(1,\tilde{g}_v))\psi^{-1}_{H,v}(u)du.
\end{equation}
If $f_{\Delta(\tau_v,i;m)\gamma_{\psi_v}^{(\epsilon)},s}$ is spherical, \\
then $f'_{\Delta(\tau_v,i;m)\gamma_{\psi_v}^{(\epsilon)},s}=f_{\Delta(\tau_v,i;m)\gamma_{\psi_v}^{(\epsilon)},s}$. In general, (when  $f_{\Delta(\tau_v,i;m)\gamma_{\psi_v}^{(\epsilon)},s}$ is not spherical), $f'_{\Delta(\tau_v,i;m)\gamma_{\psi_v}^{(\epsilon)},s}$ is obtained
from $f_{\Delta(\tau_v,i;m)\gamma_{\psi_v}^{(\epsilon)},s}$ by a finite sequence of
convolutions, along certain unipotent subgroups, against certain Schwartz-Bruhat functions  (described in the
proof).
\end{prop}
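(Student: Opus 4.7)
The plan is to absorb the $x_2$-integration in \eqref{4.3} into the section itself by tracking what happens under conjugation by $\delta_0$. First, I would restrict to the subgroup $U_2 \subset U'_{m(n-1)}$ parametrized by $x_2 \mapsto u'_{0,x_2,0;0}$; from the explicit form of $A_H$ in \eqref{1.4}-\eqref{1.5}, the middle block of zeros in $A_H$ kills all $x_2$-contributions, so $\psi_H$ is trivial on $U_2$. I would then compute $\delta_0\,u'_{0,x_2,0;0}\,\delta_0^{-1}$ using the explicit formula \eqref{3.21.2} for $\epsilon^0$ together with $h_{\gamma^0}$, and verify that it lands inside the parabolic $Q_{ni,mn}(F_v)$ used to form the induction $\rho_{\Delta(\tau_v,i;m)\gamma_{\psi_v}^{(\epsilon)},s}$. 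Its decomposition along $Q_{ni,mn} = M_{ni,mn} \cdot V_{ni,mn}$ yields a factor in $V_{ni,mn}$, which $f$ absorbs on the left (any section of the induction is left-invariant under the unipotent radical), and a factor in a unipotent subgroup of the Levi $\GL_{ni}^\wedge \times \GL_{mn}^\wedge$, which acts on the Whittaker-Speh-Shalika model by right-translation.

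Next, I would reduce the $x_2$-integration to an integration over a unipotent subgroup $V$ of $\GL_{ni}$ (or, symmetrically, of $\GL_{mn}$) acting on the realization of $\Delta(\tau_v,i)$ by right-translation. The smoothness of the section is then the key input: in the non-Archimedean case, there is an open compact subgroup $K_0 \subset H(F_v)$ fixing $f$ on the right, the $V$-integration collapses to a finite sum over cosets of $V \cap K_0$ in the image, and the contribution of each coset is realized as a right-convolution of $f$ by the characteristic function of a compact open set. The composite operator is $\rho_{\Delta(\tau_v,i;m)\gamma_{\psi_v}^{(\epsilon)},s}(\phi)$ for a Schwartz function $\phi$ built from such characteristic functions, and the resulting vector is the new section $f'$. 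In the Archimedean case, the same conclusion is obtained via Dixmier-Malliavin, writing $f = \sum_j \rho(\phi_j) f_j$ with Schwartz $\phi_j$.

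For a spherical $f$, $K_0$ is the standard maximal compact subgroup, $V \cap K_0 = V(\mathcal{O}_v)$ is of volume one, and the convolution operator reduces to the identity, which yields $f' = f$ directly. The hardest step will be the explicit matrix computation in the first paragraph: decomposing $\delta_0\,u'_{0,x_2,0;0}\,\delta_0^{-1}$ inside the block structure of $H(F_v)$ relative to $Q_{ni,mn}$, while correctly handling the $\omega_0$-twist that appears when $m$ is odd. Once this combinatorial verification is complete, the convergence of the $x_2$-integral and the spherical case follow routinely from the smoothness and invariance properties of the section.
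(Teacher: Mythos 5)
Your proposal misidentifies the mechanism that produces convergence of the $x_2$-integral, and as written it has a genuine gap precisely where the paper's proof does its hard work.

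The core issue is the step where you claim the $x_2$-integration ``collapses to a finite sum over cosets of $V \cap K_0$''; for a non-compact unipotent group $V$ and compact open $K_0$, the set $V/(V\cap K_0)$ is infinite, so invariance under $V\cap K_0$ alone gives you constancy on infinitely many cosets, not convergence. What is really needed is a proof that the function $x_2\mapsto\ell'_{\psi_v}(f)(\hat a,x_2)$ (the integral over $U''_{m(n-1)}(F_v)$ with $u_{0,x_2,0;0}$ inserted) has compact support, and the paper establishes this by a character trick that your sketch does not reproduce: one inserts a right-translation by a lower-unipotent element $c_z$ small enough to fix $f$, pulls it through $t(\hat a,I)$ to get $c_{za^{-1}}$, conjugates it through the $u_{x_1,0,x_3;y}$-variables (producing a change of variables $x_1\mapsto x_1-x_2za^{-1}$), and then through $\delta_0$ into $U_{Q_{ni,mn}}(F_v)$ where it is absorbed by left-invariance. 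The change of variables in $x_1$ hits $\psi_H$ and yields the factor $\psi(\mathrm{tr}(a^{-1}x_2^{(n-1,1)}z))$, forcing $x_2^{(n-1,1)}\in a\,M_{[\frac m2]\times ni}(\mathcal P_v^{-k})$, and the argument is then bootstrapped row by row using further elements $e_z$ and the block matrices at the end of the proof. Note in particular that the resulting compact support is $r_a\Omega$ with $r_a$ depending on the torus element $a$ --- this $a$-dependence is essential for the final identification with the doubling integral, and it falls naturally out of the relation $d_ac_zd_a^{-1}=c_{za^{-1}}$; your approach, which conjugates $u_{0,x_2,0;0}$ directly through $\delta_0$ and hopes to read off compact support from the WSS model, has no mechanism to produce it, since WSS functions satisfy a left quasi-invariance under $V_{i^n}$ but have no a priori compact support under right translation.

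A secondary problem is the unverified claim that $\delta_0\,u'_{0,x_2,0;0}\,\delta_0^{-1}$ lies in $Q_{ni,mn}(F_v)$. If it landed entirely in the unipotent radical $V_{ni,mn}$, the $x_2$-integral would visibly diverge (the integrand would be constant in $x_2$); if its Levi projection were a unipotent element of $\GL_{ni}^\wedge\times\GL_{mn}^\wedge$, you would still need a separate argument for why integrating the right-translate of a WSS vector over a non-compact unipotent subgroup converges, and that argument is not supplied. In fact the paper never conjugates $u_{0,x_2,0;0}$ through $\delta_0$ at all --- $\delta_0$ only enters when the auxiliary element $c_{za^{-1}}$ is pushed to the left --- so the combinatorial matrix computation you flag as the hard step is not the one the proof actually performs. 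To repair your proposal you would need to replace the ``absorb $x_2$ into $Q_{ni,mn}$'' step with the paper's right-translation / character mechanism, keep careful track of the $a$-twist in the support, and then the reduction to convolutions against Schwartz functions (via Dixmier--Malliavin in the Archimedean case) goes through as you describe.
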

\begin{proof}
Assume that $v$ is a finite place. We assume, for simplicity, that $\psi_v$ is normalized, so that $\psi_v$ is trivial on the ring of integers $\mathcal{O}_v$ of $F_v$, and is non trivial on the inverse of the maximal ideal $\mathcal{P}_v$.  We may assume that $\psi_v$ is normalized, for all $v\notin S$. Denote, for $x_2\in
M_{m(n-1)\times ni}(F_v)$,\\
\\
$\ell'_{\psi_v}(f_{\Delta(\tau_v,i;m)\gamma_{\psi_v}^{(\epsilon)},s})(a_v,g_v;x_2)=$
\begin{equation}\label{4.5}
\int_{U''_{m(n-1)}(F_v)} f_{\Delta(\tau_v,i;m)\gamma_{\psi_v}^{(\epsilon)},s}(\hat{a}_v\delta_0u''u_{0,x_2,0;0}t(1,\tilde{g}_v))\psi^{-1}_{H,v}(u'')du''.
\end{equation}
Then
\begin{equation}\label{4.5.0}
\ell_{\psi_v}(f_{\Delta(\tau_v,i;m)\gamma_{\psi_v}^{(\epsilon)},s})(a_v,g_v)=\int_{ M_{m(n-1)\times ni}(F_v)}\ell'_{\psi_v}(f_{\Delta(\tau_v,i;m)\gamma_{\psi_v}^{(\epsilon)},s})(a_v,g_v;x_2)dx_2.
\end{equation}
We will show that \eqref{4.5} has support in $x_2$ in a compact set $\Omega_v$, independent of $a_v,g_v$. In case $v$ is outside $S$,
$\Omega_v=M_{m(n-1)\times ni}(\mathcal{O}_v)$. 
Let $z\in M_{ni\times [\frac{m}{2}]}(\mathcal{P}_v^{k})$. Take $k$ to be
so large that the right translation by the matrix
$$
c_z=\begin{pmatrix}I_{m(n-1)}\\&I_{[\frac{m}{2}]}\\&z&I_{ni}\end{pmatrix}^\wedge\in
H(F_v),
$$
fixes $f_{\Delta(\tau_v,i;m)\gamma_{\psi_v}^{(\epsilon)},s}$. (Note, that in the metaplectic case, we identify $c_z$ and $(c_z,1)$.) For $v$ outside $S$, we
take $k=0$. Note that $c_z$ commutes with $\omega_0$, $t(1,\tilde{g}_v)$, $h_{\gamma^0}$, and
$h_{\gamma^0}$ commutes with the elements $u_{x;y}$. We have
\begin{multline}\label{4.5.1}
\ell'_{\psi_v}(f_{\Delta(\tau_v,i;m)\gamma_{\psi_v}^{(\epsilon)},s})(a_v,g_v;x_2)=\ell'_{\psi_v}(\rho(c_z)f_{\Delta(\tau_v,i;m)\gamma_{\psi_v}^{(\epsilon)},s})(a_v,g_v;x_2)=\\
\int_{U''_{m(n-1)}(F_v)} f_{\Delta(\tau_v,i;m)\gamma_{\psi_v}^{(\epsilon)},s}(\hat{a}_v\delta_0u''u_{0,x_2,0;0}c_zt(1,\tilde{g}_v))\psi^{-1}_{H,v}(u'')du''.
\end{multline}
Here, we abbreviated $\rho_{\Delta(\tau_v,i;m)\gamma_{\psi_v}^{(\epsilon)},s}(c_z)=\rho(c_z)$. This is simply the right translation by $c_z$.
Conjugate inside \eqref{4.5.1}
$$
c_z^{-1}u_{x_1,0,x_3;y}u_{0,x_2,0;0}c_z=u_{x_1+x_2z,0,x_3;y}u_{0,x_2,0;0}.
$$
(Note that in the metaplectic case,
$$
(c_z,1)^{-1}(u_{x_1,0,x_3;y},1)(u_{0,x_2,0;0},1)(c_z,1)=(u_{x_1+x_2z,0,x_3;y},1)(u_{0,x_2,0;0},1).)
$$
Now, change variable $x_1\mapsto x_1-x_2z$. Note that $\psi^{-1}_{H,v}(u'')$ changes to\\
$\psi^{-1}_{H,v}(u'')\psi_v(tr(x_2^{(n-1,1)}z))$, where we write
\begin{equation}\label{4.5.1*}
x_2=\begin{pmatrix}x_2^{(1)}\\ \vdots\\x_2^{(n-1)}\end{pmatrix},\
x_2^{(j)}=\begin{pmatrix}x_2^{(j,1)}\\x_2^{(j,2)}\end{pmatrix};\
x_2^{(j,1)}\in M_{[\frac{m}{2}]\times ni}(F_v),\ x_2^{(j,2)}\in
M_{(m-[\frac{m}{2}])\times ni}(F_v).
\end{equation}
Recall from \eqref{3.21.1'} that $\delta_0=\epsilon^0h_{\gamma^0}$. Thus \eqref{4.5.1} becomes
\begin{multline}\label{4.5.2}
\ell'_{\psi_v}(f_{\Delta(\tau_v,i;m)\gamma_{\psi_v}^{(\epsilon)},s})(a_v,g_v;x_2)=\psi_v(tr(x_2^{(n-1,1)}z))\cdot\\
\cdot\int_{U''_{m(n-1)}(F_v)} f_{\Delta(\tau_v,i;m)\gamma_{\psi_v}^{(\epsilon)},s}(\hat{a}_v\epsilon^0c_zh_{\gamma^0}0u''u_{0,x_2,0;0}t(1,\tilde{g}_v))\psi^{-1}_{H,v}(u'')du''.
\end{multline}
Next, conjugation by $\epsilon^0$ takes $c_z$ into the
unipotent radical $U_{Q_{ni,mn}}(F_v)$, and hence
$$
\ell'_{\psi_v}(f_{\Delta(\tau_v,i;m)\gamma_{\psi_v}^{(\epsilon)},s})(a_v,g_v;x_2)=\psi_v(tr(x_2^{(n-1,1)}z))\ell'_{\psi_v}(f_{\Delta(\tau_v,i;m)\gamma_{\psi_v}^{(\epsilon)},s})(a_v,g_v;x_2).
$$
This implies that $x_2^{(n-1,1)}\in M_{[\frac{m}{2}]\times
ni}(\mathcal{P}_v^{-k})$, or else \eqref{4.5} is zero. In case our
section is spherical, this means that $x_2^{(n-1,1)}$ has
integral coordinates. (We assume that $\psi_v$ is normalized for $v\notin S$.) 
In general (that is when our section is not necessarily spherical), denote by $x_2(n-1,1)$ be the element obtained from
$x_2$ by replacing (in \eqref{4.5.1*}) $x_2^{(n-1,1)}$ by zero. Write $M_{[\frac{m}{2}]\times
ni}(\mathcal{P}_v^{-k})$ as a finite union of cosets modulo $M_{[\frac{m}{2}]\times
ni}(\mathcal{P}_v^k)$. Choose coset representatives $\alpha_1,...,\alpha_N$. Let $\alpha'_j=u_{0,\alpha_j^0,0;0}$, where $\alpha_j^0$ is the matrix \eqref{4.5.1*} with $x_2^{(1)}=\cdots x_2^{(n-2)}=0$, $x_2^{(n-1,2)}=0$, $x_2^{(n-1,1)}=\alpha_j$. Then 
\begin{multline}\label{4.6}
\ell'_{\psi_v}(f_{\Delta(\tau_v,i;m)\gamma_{\psi_v}^{(\epsilon)},s})(a_v,g_v;x_2)=\\
\sum_{j=1}^N
\phi^j(x_2^{(n-1,1)})\ell'_{\psi_v}(\rho(\alpha'_j)f_{\Delta(\tau_v,i;m)\gamma_{\psi_v}^{(\epsilon)},s})(a_v,g_v;x_2(n-1,1)),
\end{multline}
where $\phi^j$ is the  characteristic function of $\alpha_j+M_{[\frac{m}{2}]\times ni}(\mathcal{P}_v^k)$.
Again, when  $f_{\Delta(\tau_v,i;m)\gamma_{\psi_v}^{(\epsilon)},s}$ is spherical, $N=1$, $\phi^1$ is the characteristic function of $M_{[\frac{m}{2}]\times ni}(\mathcal{O}_v)$ and
$\alpha'_1=I$ (i.e. $\alpha_1=0$). We may now continue and examine
$\ell'_{\psi_v}(f^j_{\Delta(\tau_v,i;m)\gamma_{\psi_v}^{(\epsilon)},s})(\hat{a},x_2(n-1,1))$. Thus, we may consider \eqref{4.5} with
$x_2(n-1,1)$ instead of $x_2$.

Let $z\in M_{[\frac{m}{2}]\times ni}(\mathcal{P}^k_v)$, with $k$ large,
so that the right translation by the following element fixes 
$f_{\Delta(\tau_v,i;m)\gamma_{\psi_v}^{(\epsilon)},s}$,
$$
e_z=diag(I_{m(n-1)},\begin{pmatrix}I_{[\frac{m}{2}]}&0&0&z&0\\&I_{ni}&0&0&z'\\
&&I_{2(m-[\frac{m}{2}])}&0&0\\&&&I_{ni}&0\\&&&&I_{[\frac{m}{2}]}\end{pmatrix},I_{m(n-1)}).
$$
(Note, again, that in the metaplectic case, we identify the element $e_z$ with $(e_z,1)$.) In the spherical case, we take $k=0$. Note that
$e_z$ commutes with $\omega_0$, $t(I,\tilde{g}_v)$, and that conjugation by $\delta_0$ takes
$e_{z}$ into the unipotent radical $U_{Q_{ni,mn}}(F_v)$. Next,
$$
e_z^{-1}u_{x;y}e_z=
\begin{pmatrix}I_{m(n-1)}&x&v(x_1z,x_2z')&y\\&I_{m+ni}&0&v(x_1z,x_2z')'\\&&I_{m+ni}&x'\\&&&I_{m(n-1)}\end{pmatrix}^{\omega_0^{m(n-1)}},
$$
where $v(x_1z,x_2z')=(0_{m(n-1)\times [\frac{m}{2}]},x_1z,x_2z')$. Denote
the last matrix by\\
 $u_{x,v(x_1z,x_2z');y}$. Now we repeat the previous argument, as in \eqref{4.5.1}, \eqref{4.5.2}, with $e_z$ instead of $c_z$.
 Thus, we get that\\
\\
$\ell'_{\psi_v}(f_{\Delta(\tau_v,i;m)\gamma_{\psi_v}^{(\epsilon)},s})(a_v,g_v;x_2)=\ell'_{\psi_v}(\rho(e_z)f_{\Delta(\tau_v,i;m)\gamma_{\psi_v}^{(\epsilon)},s})(a_v,g_v;x_2)=$
$$
\int_{U''_{m(n-1)}(F_v)} f_{\Delta(\tau_v,i;m)\gamma_{\psi_v}^{(\epsilon)},s}(\hat{a}_v\delta_0u_{0,v(x_1z,x_2z');0}u''u_{0,x_2,0;0}t(I,\tilde{g}_v)\psi^{-1}_{H,v}(u'')du'',
$$
where $u''=u_{x_1,0,x_3;y}$, and $x_2=x_2(n-1,1)$. Performing the conjugation of $u_{0,v(x_1z,x_2z'}$ by $\delta_0$ in the last integral, and changing variable in $u''$ (which does not affect $\psi_{H,v}(u'')$), we get that the last integral is equal to\\
$\psi(tr(x_2^{(n-1,3)}z'))\ell'_{\psi_v}(f_{\Delta(\tau_v,i;m)\gamma_{\psi_v}^{(\epsilon)},s})(a_v,g_v;x_2)$, where
$x_2^{(n-1,3)}=x_2^{(n-1,2)}$, in case $m$ is even. In case $m$ is odd,
$x_2^{(n-1,3)}$ is obtained from $x_2^{(n-1,2)}$ by deleting its
first row. We conclude that
$$
\ell'_{\psi_v}(f_{\Delta(\tau_v,i;m)\gamma_{\psi_v}^{(\epsilon)},s})(a_v,g_v;x_2)=\psi(tr(x_2^{(n-1,3)}z'))\ell'_{\psi_v}(f_{\Delta(\tau_v,i;m)\gamma_{\psi_v}^{(\epsilon)},s})(a_v,g_v;x_2),
$$
for all $z\in M_{[\frac{m}{2}]\times ni}(\mathcal{P}^k_v)$.  This implies that if
$\ell'_{\psi_v}(f_{\Delta(\tau_v,i;m)\gamma_{\psi_v}^{(\epsilon)},s})(a_v,g_v,x_2(n-1,1))$ is nonzero, then
$x_2^{(n-1,3)}\in M_{[\frac{m}{2}]\times
ni}(\mathcal{P}_v^{-k})$. When $m=2m'-1$ is odd, we still need to show that we
have a similar compact support in the first row of $x_2^{(n-1,2)}$.
For this, we use right translations by the
$\omega_0^{m(n-1)}$-conjugate of
$$
\begin{pmatrix}I_{m(n-1)}\\&I_{ni}&0&z\\&&I_{m'-1}&0\\&&&1\end{pmatrix}^\wedge,
$$
for column vectors $z\in (\mathcal{P}^k_v)^{ni}$ and $k$ large. The
argument is similar. We showed, so far, that
$\ell'_{\psi_v}(f_{\Delta(\tau_v,i;m)\gamma_{\psi_v}^{(\epsilon)},s})(a_v,g_v;x_2)$ is
supported such that $x_2^{(n-1)}\in M_{m\times
ni}(\mathcal{P}_v^{-k})$, with $k=0$ in the spherical case, and then
(for $x_2^{(n-1)}\in M_{m\times ni}(\mathcal{O}_v)$)
$$
\ell'_{\psi_v}(f_{\Delta(\tau_v,i;m)\gamma_{\psi_v}^{(\epsilon)},s})(a_v,g_v;x_2)=\ell'_{\psi_v}(f_{\Delta(\tau_v,i;m)\gamma_{\psi_v}^{(\epsilon)},s})(a_v,g_v;x_2(n-1)),
$$
where $x_2(n-1)$ is obtained from $x_2$ by replacing $x_2^{(n-1)}$
by zero. For $v\in S$ and finite, we conclude that there is an
expression similar to \eqref{4.6} with $x_2(n-1)$ instead of
$x_2(n-1,1)$. The Schwartz -Bruhat  functions are now on $M_{m\times
ni}(F_v)$, evaluated on $x_2^{(n-1)}$. Thus, we may
continue and consider \eqref{4.5} with $x_2(n-1)$ instead of $x_2$.
The argument is similar, using right translations by
$$
\begin{pmatrix}I_{m(n-2)}\\&I_m\\&&I_{[\frac{m}{2}]}\\&z&&I_{ni}\end{pmatrix}^\wedge,
$$
for $z\in M_{ni\times m}(\mathcal{P}_v^k)$, and $k$ large so that right translations by these elements fix $f_{\Delta(\tau_v,i;m)\gamma_{\psi_v}^{(\epsilon)},s}$. We
conclude that in the support, $x_2^{(n-2)}\in 
M_{m\times ni}(\mathcal{P}_v^{-k})$. In general, let
$x_2(n-j+1)$ be the matrix obtained from $x_2$ (in \eqref{4.5.1*}) by replacing
$x_2^{(n-1)},x_2^{(n-2)},...,x_2^{(n-j+1)}$ by zero. Then we show
that \\
$\ell'_{\psi_v}(f_{\Delta(\tau_v,i;m)\gamma_{\psi_v}^{(\epsilon)},s})(a_v,g_v;x_2(n-j+1))$ is supported such that $x_2^{(n-j)}\in
 M_{m\times ni}(\mathcal{P}_v^{-k})$, for $k$
large, indepently of $a_v,g_v$. For this, we apply a similar argument by using right translations by the following elements fixing
$f_{\Delta(\tau_v,i;m)\gamma_{\psi_v}^{(\epsilon)},s}$, for all $z\in M_{ni\times
m}(\mathcal{P}_v^k)$,
$$
\begin{pmatrix}I_{m(n-j)}\\&I_{m}\\&&I_{m(j-2)}\\&&&I_{[\frac{m}{2}]}\\&z&&&I_{ni}\end{pmatrix}^\wedge.
$$
All these considerations are the same in the metaplectic case, as the cocycle plays no part in them. We do this for $j=3,4,...,n$. Altogether, we proved that the support of 
 $\ell'_{\psi_v}(f_{\Delta(\tau_v,i;m)\gamma_{\psi_v}^{(\epsilon)},s})(a_v,g_v;x_2)$ is compact, as a function of $x_2^{(n-1)}$, $x_2^{(n-2)}$,...,$x_2^{(1)}$, and hence the support, denote it by $\Omega$, is compact , as a function of $x_2$. We saw that this compact support is independent of $a_v,g_v$, and
that in the spherical case, the support is $M_{m(n-1)\times ni}(\mathcal{O}_v)$. Moreover, repeating the analog of \eqref{4.6}, at each step, we finally get an expression of the form
\begin{equation}\label{4.6.1}
\ell'_{\psi_v}(f_{\Delta(\tau_v,i;m)\gamma_{\psi_v}^{(\epsilon)},s})(a_v,g_v;x_2)=
\sum_{j=1}^{N'}
\chi^j(x_2)\ell'_{\psi_v}(\rho(\zeta'_j)f_{\Delta(\tau_v,i;m)\gamma_{\psi_v}^{(\epsilon)},s})(a_v,g_v;0),
\end{equation}
where the $\zeta'_j =u_{0,\zeta_j,0}$, with $\zeta_j\in \Omega$, and $\chi^j$ is the characteristic function of a small neighborhood of $\zeta_j$. When  $f_{\Delta(\tau_v,i;m)\gamma_{\psi_v}^{(\epsilon)},s}$ is spherical, $N'=1$, $\chi^1$ is the characteristic function of $M_{m(n-1)\times ni}(\mathcal{O}_v)$ and $\zeta'_1=I$ (i.e. $\zeta_1=0$). Now, we get our proposition (in case $v$ is non-Archimedean). We use \eqref{4.6.1} to produce the desired section $f'_{\Delta(\tau_v,i;m)\gamma_{\psi_v}^{(\epsilon)},s}$. Define
\begin{equation}\label{4.6.2}
f'_{\Delta(\tau_v,i;m)\gamma_{\psi_v}^{(\epsilon)},s}=
\int_{M_{m(n-1)\times ni}(F_v)}(\sum_{j=1}^{N'}\chi^j(x_2)\rho(\zeta'_j)f_{\Delta(\tau_v,i;m)\gamma_{\psi_v}^{(\epsilon)},s})dx_2.
\end{equation}
Then, by \eqref{4.6.1}, \eqref{4.5.0}, we get \eqref{4.4}.

Finally, when $v$ is Archimedean, the proof is the same, except that instead
of right translations by the unipotent elements above, with $z$
close to zero, we use convolutions against Schwartz functions on the
unipotent subgroups above, and apply at each step the
Dixmier-Malliavin Lemma \cite{DM78}. We show how to adapt the first step
of the proof in case $v$ is Archimedean. By the Dixmier-Malliavin
Lemma, $f_{\Delta(\tau_v,i;m)\gamma_{\psi_v}^{(\epsilon)},s}$ is a finite sum of
sections of the form $\phi'\star \varphi_{\Delta(\tau_v,i;m)\gamma_{\psi_v}^{(\epsilon)},s}$, where $\phi'(c_z)=\phi(z)$, 
$\phi\in \mathcal{S}(M_{ni\times[\frac{m}{2}]}(F_v))$, and
$$
\phi'\star \varphi_{\Delta(\tau_v,i;m)\gamma_{\psi_v}^{(\epsilon)},s}=\int_{M_{ni\times[\frac{m}{2}]}(F_v)}\phi(z)\rho(c_z)\varphi_{\Delta(\tau_v,i;m)\gamma_{\psi_v}^{(\epsilon)},s}dz.
$$
Recall that we abbreviate $\rho_{\Delta(\tau_v,i;m)\gamma_{\psi_v}^{(\epsilon)},s}(c_z)=\rho(c_z)$. Thus, we may assume that 
$f_{\Delta(\tau_v,i;m)\gamma_{\psi_v}^{(\epsilon)},s}=\phi'\star \varphi_{\Delta(\tau_v,i;m)\gamma_{\psi_v}^{(\epsilon)},s}$.
Then the calculations that we did in this case show that
\begin{equation}\label{4.6.3}
\ell'_{\psi_v}(\phi'\star \varphi_{\Delta(\tau_v,i;m)\gamma_{\psi_v}^{(\epsilon)},s})(a_v,g_v;x_2)=\hat{\phi}(x_2^{(n-1,1)})\ell'_{\psi_v}(\varphi_{\Delta(\tau_v,i;m)\gamma_{\psi_v}^{(\epsilon)},s})(a_v,g_v;x_2),
\end{equation}
where $\hat{\phi}$ is the Fourier transform
$$
\hat{\phi}(x_2^{(n-1,1)})=\int_{M_{ni\times[\frac{m}{2}]}(F_v)}\phi(z)\psi(tr(x_2^{(n-1,1)}z))dz.
$$
(The analogy with the similar step \eqref{4.5.1}, \eqref{4.5.2} in the non-Archimedean case is that there we take $\phi$ to be the characteristic function of the small neighborhood $M_{ni\times [\frac{m}{2}]}(\mathcal{P}_v^k)$, divided by the measure of the neighborhood. Its convolution against $f_{\Delta(\tau_v,i;m)\gamma_{\psi_v}^{(\epsilon)},s}$ is simply $f_{\Delta(\tau_v,i;m)\gamma_{\psi_v}^{(\epsilon)},s}$, and its Fourier transform is the characteristic function of $M_{[\frac{m}{2}]\times ni}(\mathcal{P}_v^{-k})$.)\\
Now, integrate \eqref{4.6.3} in the variable $x_2^{(n-1,1)}$. This is part of the full $dx_2$-integration in the integral defining 
$\ell_{\psi_v}(\phi'\star \varphi_{\Delta(\tau_v,i;m)\gamma_{\psi_v}^{(\epsilon)},s})(a_v,g_v)$ (see \eqref{4.5.0}). We get that
\begin{multline}\label{4.6.4}
\int_{M_{[\frac{m}{2}]\times ni}(F_v)}\ell'_{\psi_v}(f_{\Delta(\tau_v,i;m)\gamma_{\psi_v}^{(\epsilon)},s})(a_v,g_v;x_2)dx_2^{(n-1,1)}=\\
\ell'_{\psi_v}(\hat{\phi}\star\varphi_{\Delta(\tau_v,i;m)\gamma_{\psi_v}^{(\epsilon)},s})(a_v,g_v;x_2(n-1,1)),
\end{multline}
where we keep denoting by $\hat{\phi}\star\varphi_{\Delta(\tau_v,i;m)\gamma_{\psi_v}^{(\epsilon)},s}$ the convolution of $\hat{\phi}$ against \\
$\varphi_{\Delta(\tau_v,i;m)\gamma_{\psi_v}^{(\epsilon)},s}$. Thus, we may assume that 
$f_{\Delta(\tau_v,i;m)\gamma_{\psi_v}^{(\epsilon)},s}=\hat{\phi}\star \varphi_{\Delta(\tau_v,i;m)\gamma_{\psi_v}^{(\epsilon)},s}$, and consider next $\ell'_{\psi_v}(f_{\Delta(\tau_v,i;m)\gamma_{\psi_v}^{(\epsilon)},s})(a_v,g_v;x_2(n-1,1))$, that is we "got rid" of the 
$dx_2^{(n-1,1)}$ - integration by taking $\hat{\phi}\star \varphi_{\Delta(\tau_v,i;m)\gamma_{\psi_v}^{(\epsilon)},s}$ instead of 
$f_{\Delta(\tau_v,i;m)\gamma_{\psi_v}^{(\epsilon)},s}$. This completes the analog of the first step we did in the non-Archimedean case.
The proof carries over step by step, in complete analogy with steps in the non-Archimedean
case. 
\end{proof}

\begin{prop}\label{prop 4.2}
In the notation of the last proposition, for each $g_v\in H_m^{(\epsilon)}(F_v)$, the function $a_v\mapsto \ell_{\psi_v}(f_{\Delta(\tau_v,i;m)\gamma_{\psi_v}^{(\epsilon)},s})(a_v,g_v)$, on $\GL^{(\epsilon)}_{ni}(F_v)$,  lies in 
$$
\delta_{Q_{ni}^{H_{m+2ni}}}^{\frac{1}{2}}|\det\cdot|^{s+m(n-1)}(\gamma_{\psi_v}^{(\epsilon)}\circ\det)W_{\psi^{-1}_{V_{i^n}}}(\Delta(\tau_v,i)).
$$
\end{prop}
\begin{proof}
By Prop. \ref{prop 4.1}, it is enough to consider the following integral, as a function of $a_v$, (for $Re(s)$ large)
$$
\ell'_{\psi_v}(f'_{\Delta(\tau_v,i;m)\gamma_{\psi_v}^{(\epsilon)},s})(a_v,g_v)=\int_{U''_{m(n-1)}(F_v)}
f'_{\Delta(\tau_v,i;m)\gamma_{\psi_v}^{(\epsilon)},s}(\hat{a}_v\delta_0ut(1,\tilde{g}_v))\psi^{-1}_{H,v}(u)du.
$$
Similar calculations to \eqref{3'.22} and \eqref{3.22.1} show that the last integral is equal to
\begin{equation}\label{4'.7}
\int_{U''_{m(n-1)}(F_v)}
f'_{\Delta(\tau_v,i;m)\gamma_{\psi_v}^{(\epsilon)},s}(\hat{a}_v\delta_0ut(g_v,1))\psi^{-1}_{H,v}(u)du.
\end{equation}
Recall that $\delta_0=\epsilon^0h_{\gamma_0}$. Now, we carry the following conjugation inside \eqref{4'.7}.
Write $u\in U''_{m(n-1)}(F_v)$ in the form $u_{x_1,0,x_3;y}$, as we did in the proof of Prop. \ref{prop 4.1}. Put $x=(x_1,x_3)$, 
viewed as a $m(n-1)\times m$ matrix. We have, in the linear case,
\begin{equation}\label{4.8}
\epsilon^0h_{\gamma^0}ut(g,I_{m+2ni})(\epsilon^0)^{-1}=diag(I_{ni},\epsilon_0h_{\gamma'_0}u'_{x;y}t'(g,I_m)\epsilon_0^{-1},I_{ni}),
\end{equation}
where $\epsilon_0$ is obtained from $\epsilon^0$ by deleting in
$W_0$, in \eqref{3.21.2}, the row and column of $I_{ni}$, and similarly in
$W_0^*$. For example, when $m$ is even,
$$
\epsilon_0=\begin{pmatrix}0&I_m&0&0\\0&0&0&I_{m(n-1)}\\\delta_HI_{m(n-1)}&0&0&0\\0&0&I_m&0\end{pmatrix}.
$$
Also,
$$
u'_{x;y}=\begin{pmatrix}I_{m(n-1)}&x&0&y\\&I_m&0&0\\&&I_m&x'\\&&&I_{m(n-1)}\end{pmatrix}^{\omega_0^{m(n-1)}};
$$
$$
h_{\gamma'_0}=diag(I_{m(n-1)},\gamma'_0,I_{m(n-1)}),
$$
$$
\gamma'_0=\begin{pmatrix}I_{[\frac{m}{2}]}\\&I_{[\frac{m}{2}]}\\&&I_{2(m-2[\frac{m}{2}])}\\I_{[\frac{m}{2}]}&0&&I_{[\frac{m}{2}]}\\0&-\delta_HI_{[\frac{m}{2}]}&&&I_{[\frac{m}{2}]}\end{pmatrix}.
$$
Finally, we explicate $t'(g,I_m)$. Assume that $m=2m'$ is even, and
write $g=\begin{pmatrix}a&b\\c&d\end{pmatrix}$, where the blocks are
$m'\times m'$ matrices. Then
$$
t'(g,I_m)=diag(g^{\Delta_{n-1}},\begin{pmatrix}a&0&b\\0&I_m&0\\c&0&d\end{pmatrix},(g^*)^{\Delta_{n-1}});
$$
Assume that $m=2m'-1$ is odd. Write
$g=\begin{pmatrix}a_1&b_1&c_1\\a_2&b_2&c_2\\a_3&b_3&c_3\end{pmatrix}$,
where the first and third block rows (resp. columns) contain $m'-1$
rows (resp. columns).Then
$$
t'(g,I_m)=diag(g^{\Delta_{n-1}},\begin{pmatrix}a_1&0&\frac{1}{2}b_1&b_1&0&c_1\\0&I_{m'-1}&0&0&0&0\\
a_2&0&\frac{1}{2}(b_2+1)&b_2-1&0&c_2\\\frac{1}{2}a_2&0&\frac{1}{4}(b_2-1)&\frac{1}{2}(b_2+1)&0&\frac{1}{2}c_2\\
0&0&0&0&I_{m'-1}&0\\a_3&0&\frac{1}{2}b_3&b_3&0&c_3\end{pmatrix},(g^*)^{\Delta_{n-1}}).
$$
In the metaplectic case, using \cite{Rao93}, Cor. 5.6, it is possible to show that we have the following analog of \eqref{4.8},
\begin{multline}\label{4.8.1}
(\epsilon^0,1)(h_{\gamma^0},c_v(\epsilon^0,h_{\gamma^0}))(u,1)(t(g,I_{m+2ni}),1)(\epsilon^0,1)^{-1}=\\
=(diag(I_{ni},\epsilon_0h_{\gamma'_0}u'_{x;y}t'(g,I_m)\epsilon_0^{-1},I_{ni}),\alpha(u'_{x,y},g)),
\end{multline}
where $\alpha(u'_{x,y},g)=\pm 1$ is obtained through the multiplication in $\Sp^{(2)}_{2mn}(F_v)$,
\begin{multline}\label{4.8.2}
(\epsilon_0,1)(h_{\gamma'_0},c_v(\epsilon_0,h_{\gamma^0}))(u'_{x;y},1)(t'(g,I_m),1)(\epsilon_0,1)^{-1}=\\
=(\epsilon_0h_{\gamma'_0}u'_{x;y}t'(g,I_m)\epsilon_0^{-1},\alpha(u'_{x,y},g)).
\end{multline}
(We used the same notation for the local Ranga Rao cocycle $c_v$, one for $\Sp_{2n(m+i)}(F_v)$, and one for $\Sp_{2mn}(F_v)$.)

Put $\delta_0'=\epsilon_0h_{\gamma_0'}$. (In the metaplectic case, we take $(\delta'_0,1)$, and re-denote it, sometimes, as $\delta'_0$, again.) Let $U^0_{m(n-1)}(F_v)$
denote the subgroup of $H_{2mn}(F_v)$ consisting of the elements $u'_{x;y}$.
Consider the character $\psi_{H_{2nm,v}}$ of $U_{m^{n-1}}^{H_{2nm}}(F_v)$
defined similarly to $\psi_{H,v}$, only that in \eqref{1.3}, we take
the following $2m\times m$ matrix  $A_{H_{2nm}}$ instead of $A_H$. When $m=2m'$ is even,
$$
A_{H_{2nm}}=\begin{pmatrix}I_{m'}&0\\0_{m\times m'}&0_{m\times
m'}\\0&I_{m'}\end{pmatrix}.
$$
When $m=2m'-1$ is odd (and hence $H_{2nm}=\SO_{2n(2m'-1)}$),
$$
A_{H_{2nm}}=\begin{pmatrix}I_{m'-1}&0&0\\0&0&0\\0&1&0\\0&\frac{1}{2}&0\\0&0&0\\0&0&I_{m'-1}\end{pmatrix},
$$
where the second and fifth block rows of zeroes contain each $m'-1$
rows. Note that the stabilizer of $\psi_{H_{2nm,v}}$ inside
$M^{H_{2nm}}_{m^{n-1}}(F_v)$ is isomorphic to $H_m(F_v)\times
H_m(F_v)$. This is the subgroup of all elements
$$
t'(g,h)=diag(g^{\Delta_{n-1}},j'(g,h),(g^*)^{\Delta_{n-1}}),
$$
where $g,h\in H_m(F_v)\times H_m(F_v)$, $t'(g,I_m)$ is
described above, and $t'(I_m,h)$ is given as follows. When
$m=2m'$ is even,
$$
t'(I_m,h)=diag(I_{m'},h,I_{m'});
$$
In the metaplectic case, we lift $t'$ to a homomorphism from $\Sp^{(2)}_{2m'}(\BA)\times \Sp^{(2)}_{2m'}(\BA)$ to
$\Sp^{(2)}_{4nm'}(\BA)$, as in \eqref{1.9}.

When $m=2m'-1$ is odd, write
$$
h=\begin{pmatrix}A_1&B_1&C_1\\A_2&B_2&C_2\\A_3&B_3&C_3\end{pmatrix},
$$
where the corner blocks are matrices of size $(m'-1)\times (m'-1)$
Then, as in \eqref{1.8},
$$
t'(I_m,h)=\begin{pmatrix}I_{m-1}&0&0&0&0&0\\0&A_1&\frac{1}{2}B_1&-B_1&C_1&0\\
0&A_2&\frac{1}{2}(1+B_2)&1-B_2&C_2&0\\0&-\frac{1}{2}A_2&\frac{1}{4}(1-B_2)&\frac{1}{2}(1+B_2)&-\frac{1}{2}C_2&0\\
0&A_3&\frac{1}{2}B_3&-B_3&C_3&0\\0&0&0&0&0&I_{m-1}\end{pmatrix}.
$$
Let, in the linear case, 
$$
f''_{\Delta(\tau_v,i;m),s}=\rho(\diag(I_{ni},\epsilon^{-1}_0,I_{ni})\epsilon^0)f'_{\Delta(\tau_v,i;m),s},
$$
and in the metaplectic case, let
$$
f''_{\Delta(\tau_v,i;m)\gamma_{\psi_v},s}=\rho(((\diag(I_{ni},\epsilon_0,I_{ni}),1))^{-1}(\epsilon^0,1))f'_{\Delta(\tau_v,i;m)\gamma_{\psi_v},s}.
$$
Assume that $H$ is linear. Then, by \eqref{4'.7}, \eqref{4.8}, 
\begin{multline}\label{4'.9}
\ell'_{\psi_v}(f'_{\Delta(\tau_v,i;m),s})(a_v,g_v)=\\
\int_{U^0_{m(n-1)}(F_v)}f''_{\Delta(\tau_v,i;m),s}(diag(a_v,\delta'_0ut'(g_v,I_m),a^*_v))\psi^{-1}_{H_{2mn},v}(u)du.
\end{multline}
By transitivity of induction, we identify $\rho_{\Delta(\tau_v,i;m),s}$ (see \eqref{4'.6}) and\\
 $\Ind_{Q_{ni}(F_v)}^{H(F_v)}(\Delta(\tau_v,i)|\det\cdot|^{s-\frac{m}{2}}\times \rho_{\Delta(\tau_v,m),s+\frac{i}{2}})$, where
\begin{equation}\label{4'.10}
\rho_{\Delta(\tau_v,m),s+\frac{i}{2}}=\Ind_{Q_{mn}(F_v)}^{H_{2nm}(F_v)}
\Delta(\tau_v,m)|\det\cdot|^{s+\frac{i}{2}}.
\end{equation}
Thus, we can write the integrand in \eqref{4'.9} as a finite sum
\begin{equation}\label{4'.11}
|\det(a_v)|^{s+m(n-1)+\frac{m+ni+\delta_H}{2}}\sum W_v(a_v)\varphi_{\Delta(\tau_v,m),s+\frac{i}{2}}(\delta'_0ut'(g_v,I_m)),
\end{equation}
where $W_v\in W_{\psi^{-1}_{V_{i^n}}}(\Delta(\tau_v,i))$, and $\varphi_{\Delta(\tau_v,m),s+\frac{i}{2}}$ are smooth, holomorphic sections of $\rho_{\Delta(\tau_v,m),s+\frac{i}{2}}$. Substituting \eqref{4'.11} in \eqref{4'.9}, we get
\begin{multline}\label{4'.12}
\ell'_{\psi_v}(f'_{\Delta(\tau_v,i;m),s})(a_v,g_v)=)|\det(a)|^{s+m(n-1)+\frac{m+ni-\delta_H}{2}}\\
\cdot \sum W_v(a_v)\int_{U^0_{m(n-1)}(F_v)}\varphi_{\Delta(\tau_v,m),s+\frac{i}{2}}(\delta'_0ut'(g_v,I_m))\psi^{-1}_{H_{2mn},v}(u)du.
\end{multline}
Similarly, in the metaplectic case, using \eqref{4'.7}, \eqref{4.8.1}, \eqref{4.8.2}, we get an expression as a finite sum, with $a_v\in \GL_{ni}(F_v)$, $\mu_v=\pm 1$, $g_v\in \Sp^{(2)}_{2m'}(F_v)$,
\begin{multline}\label{4'.12.1*}
\ell'_{\psi_v}(f'_{\Delta(\tau_v,i;m)\gamma_{\psi_v},s})((a_v,\mu_v),g_v)=\mu_v\gamma_{\psi_v}(\det(a_v))|\det(a_v)|^{s+m(n-1)+\frac{m+ni-\delta_H}{2}}\\
\cdot \sum W_v(a_v)\int_{U^0_{m(n-1)}(F_v)}\varphi_{\Delta(\tau_v,m)\gamma_{\psi_v},s+\frac{i}{2}}(\delta'_0ut'(g_v,1))\psi^{-1}_{H_{2mn},v}(u)du.
\end{multline}
Here, $W_v\in W_{\psi^{-1}_{V_{i^n}}}(\Delta(\tau_v,i))$, and $\varphi_{\Delta(\tau_v,m)\gamma_{\psi_v},s+\frac{i}{2}}$ are smooth, holomorphic sections of $\rho_{\Delta(\tau_v,m)\gamma_{\psi_v},s+\frac{i}{2}}$.
This completes the proof of the proposition.

\end{proof}

\begin{rmk}\label{remk 4.3}
The integrals 
$$
\int_{U^0_{m(n-1)}(F_v)}\varphi_{\Delta(\tau_v,m)\gamma_{\psi_v}^{(\epsilon)},s+\frac{i}{2}}(\delta'_0ut'(g_v,I_m))\psi^{-1}_{H_{2mn},v}(u)du,
$$
which appear in \eqref{4'.12}, \eqref{4'.12.1} are the inner unipotent integrals, which appear in the local generalized doubling integrals of \cite{CFGK17} (Theorem 1 and (3.1) in loc. cit.).
\end{rmk}

In the following corollary, in the metaplectic case ($\epsilon=2$), for $a_v=(a'_v,\mu_v)\in \GL^{(2)}_{ni}(F_v)$, we denote $\gamma_{\psi_v}(\det(a_v))=\mu_v\gamma_{\psi_v}(\det(a'_v))$, $|\det(a_v)|=|\det(a'_v)|$, and $W_v^k(a_v)=W_v^k(a'_v)$.

\begin{cor}\label{cor 4.4}
In the notation of Prop. \ref{prop 4.1}, given the section $f_{\Delta(\tau_v,i;m)\gamma_{\psi_v}^{(\epsilon)},s}$, there are smooth, holomorphic sections $\varphi^k_{\Delta(\tau_v,m)\gamma_{\psi_v}^{(\epsilon)},s+\frac{i}{2}}$, and functions $W_v^k\in W_{\psi^{-1}_{V_{i^n}}}(\Delta(\tau_v,i))$,  $1\leq k\leq N$, such that, for all $a_v\in \GL^{(\epsilon)}_{ni}(F_v)$, $g_v\in H_m^{(\epsilon)}(F_v)$,
\begin{multline}\label{4'.12.1}
\ell_{\psi_v}(f_{\Delta(\tau_v,i;m)\gamma_{\psi_v}^{(\epsilon)},s})(a_v,g_v)=\gamma_{\psi_v}^{(\epsilon)}(\det(a))|\det(a)|^{s+m(n-1)+\frac{m+ni-\delta_H}{2}}\\
\cdot \sum_{k=1}^N W_v^k(a_v)\int_{U^0_{m(n-1)}(F_v)}\varphi^k_{\Delta(\tau_v,m)\gamma_{\psi_v}^{(\epsilon)},s+\frac{i}{2}}(\delta'_0ut'(g_v,I_m))\psi^{-1}_{H_{2mn},v}(u)du.
\end{multline}
Assume that $f_{\Delta(\tau_v,i;m)\gamma_{\psi_v}^{(\epsilon)},s}=f^0_{\Delta(\tau_v,i;
m)\gamma_{\psi_v}^{(\epsilon)},s}$ (the unramified normalized section defined right after \eqref{4.2.1}). Then \eqref{4'.12.1} takes the form
\begin{multline}\label{4'.16}
\ell_{\psi_v}(f^0_{\Delta(\tau_v,i;m)\gamma_{\psi_v}^{(\epsilon)},s})(a_v,g_v)=\gamma_{\psi_v}^{(\epsilon)}(\det(a))|\det(a)|^{s+m(n-1)+\frac{m+ni+\delta_H}{2}}\\
\cdot W^0_{\Delta(\tau_v,i),\psi_v}(a_v)
 \int_{U^0_{m(n-1)}(F_v)}f^0_{\Delta(\tau_v,m)\gamma_{\psi_v}^{(\epsilon)},s+\frac{i}{2}}(\delta'_0ut'(g_v,I_m))\psi^{-1}_{H_{2mn},v}(u)du,
\end{multline}
where $f^0_{\Delta(\tau_v, m)\gamma_{\psi_v}^{(\epsilon)},s+\frac{i}{2}}$ is the unramified and normalized section of $\rho_{\Delta(\tau_v,m)\gamma_{\psi_v}^{(\epsilon)},s+\frac{i}{2}}$.
\end{cor}
\begin{proof}
The expression \eqref{4'.12.1} follows from Prop. \ref{prop 4.1}, \eqref{4'.12} and\eqref{4'.12.1*}. As for the unramified case, we get, in the linear case, by Prop. \ref{prop 4.1} and \eqref{4'.9},
\begin{multline}\label{4'.13}
\ell_{\psi_v}(f^0_{\Delta(\tau_v,i;m),s})(a_v,g_v)=\\
\int_{U^0_{m(n-1)}(F_v)}f^0_{\Delta(\tau_v,i;m),s}(diag(a_v,\delta'_0ut'(g_v,I_m),a^*_v))\psi^{-1}_{H_{2mn},v}(u)du.
\end{multline}
We have
$$
f^0_{\Delta(\tau_v, m),s+\frac{i}{2}}(h)=f^0_{\Delta(\tau_v,i;
m),s}(diag(I_{ni},h,I_{ni})).
$$
In this case, \eqref{4'.11} becomes
\begin{multline}\label{4'.15}
f^0_{\Delta(\tau_v,i;m),s}(diag(a_v,\delta'_0ut'(g_v,I_m),a^*_v))=|\det(a)|^{s+m(n-1)+\frac{m+ni+\delta_H}{2}}\\
W^0_{\Delta(\tau_v,i),\psi_v}(a_v)f^0_{\Delta(\tau_v, m),s+\frac{i}{2}}(\delta'_0ut'(g_v,I_m)).
\end{multline} 
Now, substitute this in \eqref{4'.13} to get \eqref{4'.16}. The metaplectic case is similar.
\end{proof}

Let us go back to the global section $f_{\Delta(\tau,m+i)\gamma_\psi^{(\epsilon)},s}$ of $\rho_{\Delta(\tau,m+i)\gamma_\psi^{(\epsilon)},s}$ and to $\ell_\psi(f_{\Delta(\tau, m+i)\gamma_\psi^{(\epsilon)},s})(a,g)$ in \eqref{4'.0}. Recall that we fixed  a finite set of places $S$, containing the Archimedean places, outside which $\tau$ and $\psi$ are unramified, $f_{\Delta(\tau,m+i)\gamma_\psi^{(\epsilon)},s}$, and hence $f^\psi_{\Delta(\tau,m+i)\gamma_\psi^{(\epsilon)},s}$ (in \eqref{4'.2}) is right $K_{H(F_v)}$-invariant, for all $v\notin S$. (See right after \eqref{4.2.1}.) Assume that $f_{\Delta(\tau,m+i)\gamma_\psi^{(\epsilon)},s}$ is decomposable, so that $f^\psi_{\Delta(\tau,m+i)\gamma_\psi^{(\epsilon)},s}$ corresponds to the tensor product of the local sections $f_{\Delta(\tau_v,i; m)\gamma^{(\epsilon)}_{\psi_v},s}$, such that, for $v\notin S$, $f_{\Delta(\tau_v,i; m)\gamma^{(\epsilon)}_{\psi_v},s}=f^0_{\Delta(\tau_v,i; m)\gamma^{(\epsilon)}_{\psi_v},s}$. Recall, also, the isomorphisms \eqref{4.2.1.a}, \eqref{4.2.1.b} that we fixed $p_{\tau,i}:\otimes_v'\Delta(\tau_v,i)\rightarrow \Delta(\tau,i)$, $\tilde{p}_{\tau,i}:\otimes'_v\Delta(\tau_v,i)\gamma_{\psi_v}\rightarrow \Delta(\tau,i)\gamma_\psi$, and similarly $p_{\tau,m}$, $\tilde{p}_{\tau,m}$. It will be convenient to denote $p^{(\epsilon)}_{\tau,m}=p_{\tau,m}$, when $\epsilon=1$, and $\tilde{p}_{\tau,m}$, when $\epsilon=2$. Then Cor. \ref{cor 4.4} applied to all places $v\in S$ and all places $v\notin S$ implies

\begin{cor} \label{cor 4.5}
There are automorphic forms $\alpha^j$ in the space of $\Delta(\tau,i)\gamma_\psi^{(\epsilon)}$ and smooth, holomorphic sections $\varphi^j_{\Delta(\tau,m)\gamma_\psi^{(\epsilon)},s+\frac{i}{2}}$ of $\rho_{\Delta(\tau,m)\gamma_\psi^{(\epsilon)},s+\frac{i}{2}}$, $1\leq j\leq N'$, such that, for all $a\in \GL^{(\epsilon)}_{ni}(\BA)$, $g\in H_m^{(\epsilon)}(\BA)$,
\begin{multline}\label{4'.17}
\ell_\psi(f_{\Delta(\tau,m+i)\gamma_\psi^{(\epsilon)},s})(a,g)=\gamma_\psi^{(\epsilon)}(\det(a))|\det(a)|^{s+m(n-1)+\frac{m+ni-\delta_H}{2}}
\cdot \\\sum_{j=1}^{N'} \alpha^j(a)
\int_{U^0_{m(n-1)}(\BA)}\varphi^{j,\psi}_{\Delta(\tau,m)\gamma_\psi^{(\epsilon)},s+\frac{i}{2}}(\delta'_0ut'(g,1))\psi^{-1}_{H_{2mn}}(u)du,
\end{multline}
where
\begin{equation}\label{4'.17.0}
\varphi^{j,\psi}_{\Delta(\tau,m)\gamma_\psi^{(\epsilon)},s+\frac{i}{2}}(h)=\int_{V_{m^n}(F)\backslash V_{m^n}(\BA)}\varphi^j_{\Delta(\tau,m)\gamma_\psi^{(\epsilon)},s+\frac{i}{2}}(\hat{v}h)\psi_{V_{m^n}}(v)dv.
\end{equation}
In particular, for each $g\in H_m^{(\epsilon)}(\BA)$, the function $a\mapsto \ell_\psi(f_{\Delta(\tau,m+i)\gamma_\psi^{(\epsilon)},s})(a,g)$ is an automorphic form in the space of 
\begin{equation}\label{4'.17.1}
|\det\cdot|^{s+m(n-1)+\frac{m+ni-\delta_H}{2}}\Delta(\tau,i)\gamma_\psi^{(\epsilon)}.
\end{equation}
Moreover, for each place $v\in S$ and each $j\leq N'$, there are $W^j_v\in W_{\psi_{V_{i^n},v}}(\Delta(\tau_v,i))$ and smooth, holomorphic sections $\varphi^j_{\Delta(\tau_v,m)\gamma_{\psi_v}^{(\epsilon)},s+\frac{i}{2}}$ of $\rho _{\Delta(\tau_v,m)\gamma_{\psi_v}^{(\epsilon)},s+\frac{i}{2}}$, such that 
\begin{multline}\label{4'.18}
\alpha^j=p_{\tau,i}((\otimes_{v\in S}W^j_v)\otimes (\otimes_{v\notin S}W^0_{\Delta(\tau_v,i),\psi_v})), \ (\textit{linear case})\\
\alpha^j=\tilde{p}_{\tau,i}((\otimes_{v\in S}\tilde{W}^j_v)\otimes (\otimes_{v\notin S}\tilde{W}^0_{\Delta(\tau_v,i),\psi_v})), \ (\textit{metaplectic case})\\
\varphi^j_{\Delta(\tau,m)\gamma_\psi^{(\epsilon)},s+\frac{i}{2}}=p_{\tau,m}^{(\epsilon)}\circ((\otimes_{v\in S}\varphi^j_{\Delta(\tau_v,m)\gamma_{\psi_v}^{(\epsilon)},s+\frac{i}{2}})\otimes(\otimes_{v\notin S}f^0_{\Delta(\tau_v,m)\gamma_{\psi_v}^{(\epsilon)},s+\frac{i}{2}})).
\end{multline}
Finally, the automorphic form, in the space of \eqref{4'.17.1}, $a\mapsto \ell_\psi(f_{\Delta(\tau,m+i)\gamma_\psi^{(\epsilon)},s})(a,g)$ is decomposable, in the sense that
it is equal to the application of $p^{(\epsilon)}_{\tau,i}$ to $\otimes'_v (a_v\mapsto \ell_{\psi_v}(f_{\Delta(\tau_v,i;m)\gamma_{\psi_v}^{(\epsilon)},s})(a_v,g_v))$, that is
\begin{equation}\label{4'.18.1}
\ell_\psi(f_{\Delta(\tau,m+i),s})(\cdot,g)=p^{(\epsilon)}_{\tau,i}(\otimes' \ell_{\psi_v}(f_{\Delta(\tau_v,i;m)\gamma_{\psi_v}^{(\epsilon)},s})(\cdot,g_v)).
\end{equation}
\end{cor} 

Now, we are going to complete the proof of Prop. \ref{prop 3.5}, for a $K_{H(\BA)}$-finite section $f_{\Delta(\tau,m+i)\gamma_\psi^{(\epsilon)},s}$, and hence complete the proof of Theorem A. We assume that it is decomposable as detailed in the paragraph before the last corollary. Recall that we proved one part of it in Prop. \ref{prop 3.6}, and we commented after its proof, right before Prop. \ref{prop 3.7}, on what remains to be proved. We do this now, and, moreover, we will obtain a nice explicit expression of $\Lambda(f_{\Delta(\tau, m+i)\gamma_\psi^{(\epsilon)},s},\varphi_\sigma)$ on $Q_{ni}^{H_{2ni+m}^{(\epsilon)}}(\BA)$.

 Let us assume that $\sigma$ is unramified outside $S$, and fix, for each $v\notin S$, an unramified vector $\varphi_{\sigma_v}^0$ in a space, $V_{\sigma_v}$ of $\sigma_v$. For each place $v$, let $\hat{\sigma}_v$ denote the contragredient of $\sigma_v$, acting on the smooth dual $\hat{V}_{\sigma_v}$ of $V_{\sigma_v}$. Denote by $<\ ,\ >$ the natural pairing on $V_{\sigma_v}\times \hat{V}_{\sigma_v}$.
 For each $v\notin S$, choose the unramified vector $\varphi^0_{\hat{\sigma}_v}\in \hat{V}_{\sigma_v}$, such that $<\varphi^0_{\sigma_v},\varphi^0_{\hat{\sigma}_v}>=1$.
    Also, fix an isomorphism $p_\sigma: \otimes'_vV_{\sigma_v}\rightarrow V_\sigma$, where $V_\sigma$ is the space of cusp forms on $H_m^{(\epsilon)}(\BA)$ on which $\sigma$ acts.  The complex conjugate of $V_\sigma$ is isomorphic to $\otimes'_v\hat{\sigma}_v$ acting in $\otimes'_v\hat{V}_{\sigma_v}$. Choose the isomorphism $p_{\bar{\sigma}}:\otimes'_v\hat{V}_{\sigma_v}\rightarrow \bar{V}_\sigma$, such that, for all  cusp forms $\varphi_\sigma$, $\xi_\sigma$ in $V_\sigma$, with $\varphi_\sigma=p_\sigma(\otimes'_v\varphi_{\sigma_v})$, $\bar{\xi}_\sigma=p_{\bar{\sigma}}(\otimes'_v\xi_{\hat{\sigma}_v})$, we have
$$
<\varphi_\sigma,\xi_\sigma>=\int_{H_m(F)\backslash H_m(\BA)}\varphi_\sigma(g)\bar{\xi}_\sigma(g)dg=\prod_v<\varphi_{\sigma_v},\xi_{\hat{\sigma}_v}>.
$$
In the next theorem, assume that the cusp form $\varphi_\sigma$ is decomposable and corresponds under $p_\sigma$ to $\otimes'_v\varphi_{\sigma_v}$, where, for all $v\notin S$, $\varphi_{\sigma_v}=\varphi^0_{\sigma_v}$. We write the following theorem in the linear case. The metaplectic case is similar.

\begin{thm}\label{thm 4.6}
Given the $K_{H(\BA)}$-finite, holomorphic, decomposable section above, $f_{\Delta(\tau,m+i),s}$, there are
cusp forms $\xi^j_\sigma$ , $\varphi^j_\sigma$ in $V_\sigma$, automorphic forms $\alpha^j$ in the space of $\Delta(\tau,i)$, and smooth, holomorphic sections\\
 $\varphi^j_{\Delta(\tau,m),s+\frac{i}{2}}$ of $\rho_{\Delta(\tau,m),s+\frac{i}{2}}$, 
$1\leq j\leq \ell'$, such that, for $Re(s)$ sufficiently large, and for all $a\in \GL_{ni}(\BA)$, $b\in H_m(\BA)$,
\begin{multline}\label{4'.19}
\delta_{Q_{ni}^{H_{m+2ni}}}^{-\frac{1}{2}}(\hat{a})\Lambda(f_{\Delta(\tau, m+i),s},\varphi_\sigma)(\begin{pmatrix}a\\&b\\&&a^*\end{pmatrix})=|\det(a)|^s
\sum_{j=1}^\ell \alpha^j(a)\xi^j_\sigma(b^\iota)\\
\cdot \int_{H_m(\BA)}< \sigma(g)\varphi_\sigma,\varphi^j_\sigma>\int_{U^0_{m(n-1)}(\BA)} 
\varphi^{j,\psi}_{\Delta(\tau,m),s+\frac{i}{2}}(\delta'_0ut'(g,I_m))\psi^{-1}_{H_{2mn}}(u)dudg.
\end{multline}
In particular, the automorphic form on $\GL_{ni}(\BA)\times H_m(\BA)$,
$$
(a,b)\mapsto \Lambda(f_{\Delta(\tau, m+i),s},\varphi_\sigma)(\begin{pmatrix}a\\&b\\&&a^*\end{pmatrix})
$$ 
takes values in 
$\delta_{Q_{ni}^{H_{m+2ni}}}^{\frac{1}{2}}\Delta(\tau,i)|\det\cdot|^s\otimes
\sigma^\iota$. \\
Moreover, for each place $v\in S$, and each $j\leq \ell$, there are $\xi_{\sigma^j_v}\in V_{\sigma_v}$, $\varphi^j_{\hat{\sigma}_v}\in \hat{V}_{\sigma_v}$, $W^j_v\in W_{\psi_{V_{i^n},v}}(\Delta(\tau_v,i))$ and smooth, holomorphic sections $\varphi^j_{\Delta(\tau_v,m),s+\frac{i}{2}}$ of $\rho _{\Delta(\tau_v,m),s+\frac{i}{2}}$, such that we have the relations \eqref{4'.18}, and
\begin{equation}\label{4'.20}
\xi^j_\sigma=p_\sigma((\otimes_{v\in S}\xi^j_{\sigma_v})\otimes (\otimes_{v\notin S}\varphi^0_{\sigma_v})),\ 
\bar{\varphi}^j_\sigma=p_\sigma((\otimes_{v\in S}\varphi^j_{\hat{\sigma_v}})\otimes (\otimes_{v\notin S}\varphi^0_{\hat{\sigma}_v})).
\end{equation}

\end{thm}

\begin{proof}
By Prop. \ref{prop 3.6}, we can write as a finite sum
\begin{multline}\label{4'.21}
|\det(a)|^{m(n-1)}\Lambda(f_{\Delta(\tau, m+i),s},\varphi_\sigma)(\begin{pmatrix}a\\&b\\&&a^*\end{pmatrix})=\\
\sum \xi_\sigma(b^\iota) \int_{H_m(\BA)} <\sigma(g)\varphi_\sigma,\xi'_\sigma>\int_{U'_{m(n-1)}(\BA)} \tilde{f}^\psi_{\Delta(\tau,
m+i),s}(\hat{a}\delta_0ut(g,I))\psi^{-1}_H(u)dudg\\
=\sum \xi_\sigma(b^\iota)\int_{H_m(\BA)} <\sigma(g)\varphi_\sigma,\xi'_\sigma>\ell_\psi(\tilde{f}_{\Delta(\tau,m+i),s})(a,g)dg,
\end{multline}
where $\xi_\sigma$, $\xi'_\sigma$ lie in the space of $\sigma$ and $\tilde{f}_{\Delta(\tau,m+i),s}$ is a $K_{H(\BA)}$-finite and holomorphic section of $\rho_{\Delta(\tau,m+i), s}$. When we review the proof of Prop. \ref{prop 3.6}, we see that $\tilde{f}_{\Delta(\tau,m+i),s}$ remains unramified outside $S$, as well as the cusp forms $\xi_\sigma$ in \eqref{3.26.2} and the $\xi'_\sigma$ in \eqref{3.26.3} (defined in \eqref{3.26.4}) (all are unramified outside $S$). Write each of $\xi_\sigma$, $\xi'_\sigma$  as a sum of decomposable vectors, which are, respectively, unramified outside $S$, 
$$
\xi_\sigma=\sum_{j_1=1}^{\ell_1}\xi^{j_1}_\sigma,\ \ \xi'_\sigma=\sum_{j_2=1}^{\ell_2}\varphi^{j_2}_\sigma,\
$$
so that, for each of the $\xi^{j_1}_\sigma$, $\varphi^{j_2}_\sigma$ the corresponding vector for $v\notin S$ is $\varphi^0_{\sigma_v}$.
Similarly, write $\tilde{f}_{\Delta(\tau,m+i),s}$ as a sum of decomposable $K_{H(\BA)}$-finite, holomorphic sections, all unramified outside $S$,
$$
\tilde{f}_{\Delta(\tau,m+i),s}=\sum_{j_3=1}^{\ell_3}f^{j_3}_{\Delta(\tau,m+i),s},
$$
so that, for each of the $f^{j_3}_{\Delta(\tau,m+i),s}$, when we write the tensor product $\otimes'_vf^{j_3}_{\Delta(\tau_v,i;m),s}$ corresponding to $(f^{j_3})^\psi_{\Delta(\tau,m+i),s}$, then, for all $v\notin S$, $f^{j_3}_{\Delta(\tau_v,i;m),s}=f^0_{\Delta(\tau_v,i;m),s}$.
Thus
\begin{multline}\label{4'.22}
|\det(a)|^{m(n-1)}\Lambda(f_{\Delta(\tau, m+i),s},\varphi_\sigma)(\begin{pmatrix}a\\&b\\&&a^*\end{pmatrix})=\\
=\sum_{j_1,j_2,j_3} \xi^{j_1}_\sigma(b^\iota)\int_{H_m(\BA)} <\sigma(g)\varphi_\sigma,\varphi^{j_2}_\sigma>\ell_\psi(f^{j_3}_{\Delta(\tau,m+i),s})(a,g)dg.
\end{multline}
Now, we use Cor. \ref{cor 4.5} for $\ell_\psi(f^{j_3}_{\Delta(\tau,m+i),s})(a,g)$, writing it as in \eqref{4'.17}. Substitute this in \eqref{4'.22}, rename the indices, and we get an expression of the form \eqref{4'.19}, such that \eqref{4'.18}, \eqref{4'.20} are satisfied.

\end{proof}

In the metaplectic case, we have (see \eqref{3.24.1}), with analogous notations and properties,  
\begin{multline}\label{4'.23}
\Lambda(f_{\Delta(\tau, m+i),s},\varphi_\sigma)(p((\begin{pmatrix}a\\&b\\&&a^*\end{pmatrix},\bar{\alpha})))=\\ \delta_{Q_{ni}^{H_{m+2ni}}}^{\frac{1}{2}}(\hat{a})(\det(a),x(b))\gamma_\psi(\det(a))|\det(a)|^s 
\sum_{j=1}^\ell \alpha^j(a)\xi^j_\sigma(p((b,\bar{\alpha}))^\iota)
\cdot \\
\cdot \int_{C_2\backslash H^{(2)}_m(\BA)}< \sigma(g)\varphi_\sigma,\varphi^j_\sigma>\int_{U^0_{m(n-1)}(\BA)} 
\varphi^j_{\Delta(\tau,m)\gamma_\psi,s+\frac{i}{2}}(\delta'_0ut'(g,I_m))\psi^{-1}_{H_{2mn}}(u)dudg.
\end{multline}

At this point, we have completed the proof of Theorem A. Recall that we started with the kernel integral \eqref{1.10},
$$
\mathcal{E}(f_{\Delta(\tau,m+i)\gamma^{(\epsilon)}_\psi,s},\varphi_\sigma)(h)=
\int_{C^{(\epsilon)}H_m(F)\backslash
H^{(\epsilon)}_m(\BA)}\mathcal{F}_\psi(E(f_{\Delta(\tau,
m+i)\gamma^{(\epsilon)}_\psi,s}))(g,h)\varphi_\sigma(g)dg.
$$
We proved in \eqref{3.20} that, for $Re(s)$ sufficiently large, and $h\in H_{m+2ni}^{(\epsilon)}(\BA)$,
$$
\mathcal{E}(f_{\Delta(\tau,
m+i)\gamma_\psi^{(\epsilon)},s},\varphi_\sigma)(h)=\sum_{h'\in Q_{ni}(F)\backslash
H_{m+2ni}(F)}\Lambda(f_{\Delta(\tau, m+i)\gamma_\psi^{(\epsilon)},s},\varphi_\sigma)(h'h).
$$
Then in Prop. \ref{prop 3.7} and in Theorem \ref{thm 4.6}, we proved that $\Lambda(f_{\Delta(\tau, m+i)\gamma_\psi^{(\epsilon)},s},\varphi_\sigma)$ admits a meromorphic continuation to the complex plane, and when $f_{\Delta(\tau, m+i)\gamma_\psi^{(\epsilon)},s}$ is $K_{H(\BA)}$-finite, $\Lambda(f_{\Delta(\tau, m+i)\gamma_\psi^{(\epsilon)},s},\varphi_\sigma)$ defines a meromorphic ($K_{H^{(\epsilon)}_{m+2ni}(\BA)}$-finite) section of  $\rho_{\Delta(\tau,i)\gamma_\psi^{(\epsilon)},\sigma^\iota,s}$.
Thus, $\mathcal{E}(f_{\Delta(\tau,m+i)\gamma_\psi^{(\epsilon)},s},\varphi_\sigma)$ is an Eisenstein series on $H_{m+2ni}^{(\epsilon)}(\BA)$, corresponding to the section $\Lambda(f_{\Delta(\tau, m+i)\gamma_\psi^{(\epsilon)},s},\varphi_\sigma)$.

Note that the integrals \eqref{4'.19} (in the metaplectic case, \eqref{4'.23}) are the generalized doubling integrals  (\cite{CFGK17}, Theorem 1, \cite{CFGK16}, Theorem 2) obtained after the unfolding process, for $H_m^{(\epsilon)}(\BA)\times \GL_n(\BA)$, representing $L_{\epsilon,\psi}^S(\sigma\times,\tau,s+\frac{i+1}{2})$. (Recall that in the metaplectic case, the corresponding partial $L$-function depends on the choice of the additive character $\psi$, and that in this case, we denote it by $L_\psi^S(\sigma\times\tau,s+\frac{i+1}{2})$. Our notation is unified, such that, in case $\epsilon=1$, this is $L^S(\sigma\times\tau,s+\frac{i+1}{2})$, and the case $\epsilon=2$ corresponds to the metaplectic case.) In particular, we know that the integrals \eqref{4'.19} , \eqref{4'.22} have an analytic continuation to meromorphic functions in the complex plane. This gives another proof of Prop. \ref{prop 3.7}. 
For later use, denote the global generalized doubling integrals as in \eqref{4'.19}, \eqref{4'.23} by
\begin{multline}\label{4'.23*}
\mathcal{Z}(\varphi_\sigma,,\varphi^j_\sigma,\varphi^j_{\Delta(\tau,m)\gamma_\psi^{(\epsilon)},s+\frac{i}{2}})=\\
 \int_{C^{(\epsilon)}_2\backslash H^{(\epsilon)}_m(\BA)}< \sigma(g)\varphi_\sigma,\varphi^j_\sigma>\int_{U^0_{m(n-1)}(\BA)} 
\varphi^j_{\Delta(\tau,m)\gamma^{(\epsilon)}_\psi,s+\frac{i}{2}}(\delta'_0ut'(g,I_m))\psi^{-1}_{H_{2mn}}(u)dudg.
\end{multline}
Here, $Re(s)$ is sufficiently large.

\begin{thm}\label{thm 4.6.1}
Keep the assumptions and notations of Theorem \ref{thm 4.6}. Then \\
$\Lambda(f_{\Delta(\tau, m+i)\gamma_\psi^{(\epsilon)},s},\varphi_\sigma)$ is decomposable. We have 
\begin{equation}\label{4'.23.-1}
\Lambda(f_{\Delta(\tau, m+i)\gamma_\psi,s},\varphi_\sigma)=(p^{(\epsilon)}_{\tau,i}\otimes (\iota\circ p_\sigma))(\otimes'_v \Lambda_v(f_{\Delta(\tau_v,i;m)\gamma^{(\epsilon)}_{\psi_v},s},\varphi_{\sigma_v})),
\end{equation}
where, for each place $v$,  
$\Lambda_v(f_{\Delta(\tau_v,i;m)\gamma_{\psi_v}^{(\epsilon)},s},\varphi_{\sigma_v})$ is the smooth, meromorphic section of 
$$
\rho_{\Delta(\tau_v,i)\gamma_{\psi_v}^{(\epsilon)},\sigma^\iota,s}=\Ind_{(Q_{ni}^{H_{m+2ni}})^{(\epsilon)}(F_v)}^{H_{m+2ni}^{(\epsilon)}(F_v)}\Delta(\tau_v,i)\gamma_{\psi_v}^{(\epsilon)}|\det\cdot|^s\otimes \sigma_v^\iota,
$$
given, for $Re(s)$ sufficiently large, and $h_v\in H^{(\epsilon)}_{m+2ni}(F_v)$, by
\begin{multline}\label{4'.23.0}
\Lambda_v(f_{\Delta(\tau_v,i;m)\gamma_{\psi_v}^{(\epsilon)},s},\varphi_{\sigma_v})(h)=\\
|det_{\GL_{ni}}\cdot |^{-m(n-1)}\int_{C_2^{(\epsilon)}\backslash H^{(\epsilon)}_m(F_v)}(\sigma_v(g_v)(\varphi_{\sigma_v})\otimes \ell_{\psi_v}(\rho (t(1,h_v))f_{\Delta(\tau_v,i;m)\gamma_{\psi_v}^{(\epsilon)},s})(\cdot,g_v))dg_v.
\end{multline}
Let $v\notin S$ (so that $\sigma_v$, $\tau_v$ are unramified). Then
\begin{multline}\label{4'.23.01}
\Lambda_v(f^0_{\Delta(\tau_v,i;m)\gamma_{\psi_v}^{(\epsilon)},s},\varphi^0_{\sigma_v})(1)=
(\gamma_{\psi_v}^{(\epsilon)}\circ \det)|det_{\GL_{ni}}\cdot |^{s+\frac{m+ni-\delta_H}{2}}(W^0_{\Delta(\tau_v,i),\psi_v}\otimes \varphi^0_{\sigma_v})\cdot \\ \cdot \int_{C_2^{(\epsilon)}\backslash H^{(\epsilon)}_m(F_v)}<\sigma_v(g_v)(\varphi^0_{\sigma_v}),\varphi^0_{\hat{\sigma}_v}>\int_{U^0_{m(n-1)}(F_v)}f^0_{\Delta(\tau_v,m)\gamma_{\psi_v}^{(\epsilon)},s+\frac{i}{2}}(\delta'_0ut'(g_v,1))\psi^{-1}_{H_{2mn},v}dudg_v.
\end{multline}

\end{thm}
\begin{proof}
In the linear case, by \eqref{3.22}, for $Re(s)$ large enough, $a\in \GL_{ni}(\BA)$, $b\in H_m(\BA)$, $h\in H_{m+2ni}(\BA)$,
\begin{multline}\label{4'.23.1}
\Lambda(f_{\Delta(\tau, m+i),s},\varphi_\sigma)(\begin{pmatrix}a\\&b\\&&a^*\end{pmatrix}h)=\\
=|\det(a)|^{-m(n-1)}\int_{H_m(\BA)} \sigma(g)\varphi_\sigma(b^\iota)\ell_\psi(\rho(t(1,h))f_{\Delta(\tau,m+i),s})(a,g)dg.
\end{multline}
The proof of Theorem \ref{thm 4.6} shows that, for fixed $h$, the l.h.s. of \eqref{4'.23.1}, as an automorphic form on $\GL_{ni}(\BA)\times H_m(\BA)$,
lies in the space of 
$\delta_{Q_{ni}^{H_{m+2ni}}}^{\frac{1}{2}}\Delta(\tau,i)|\det\cdot|^s\otimes \sigma^\iota$.
By the last part of Cor. \ref{cor 4.5}, this automorphic form is the image under $p_{\tau,i}\otimes (\iota\circ p_\sigma)$ of 
\begin{equation}\label{4'.23.3}
\otimes'_v(|det_{\GL_{ni}(F_v)}\cdot |^{-m(n-1)}\int_{H_m(F_v)}(\sigma_v(g_v)(\varphi_{\sigma_v})\otimes \ell_{\psi_v}(\rho(t(1,h_v))f_{\Delta(\tau_v,i;m),s})(\cdot,g_v))dg_v).
\end{equation}
In the metaplectic case, for $h\in H^{(2)}_{m+2ni}(\BA)$, we consider the automorphic form on $\GL_{ni}^{(2)}(\BA)\times \Sp^{(2)}_{2m'}(\BA)$, in the space of 
$\delta_{Q_{ni}^{H_{m+2ni}}}^{\frac{1}{2}}\Delta(\tau,i)\gamma_\psi |\det\cdot|^s\otimes
\sigma$,
\begin{equation}\label{4'.23.5}
((a,\mu),p((b,\bar{\alpha})))\mapsto \mu (\det(a),x(b))\Lambda(f_{\Delta(\tau, m+i)\gamma_\psi,s},\varphi_\sigma)(p((\begin{pmatrix}a\\&b\\&&a^*\end{pmatrix},\bar{\alpha})) h).
\end{equation} 
Then it is the image under $\tilde{p}_{\tau,i}\otimes (\iota\circ p_\sigma)$ of 
\begin{equation}\label{4'.23.6}
\otimes'_v(|det_{\GL_{ni}(F_v)}\cdot |^{-m(n-1)}\int_{C_2\backslash H^{(2)}_m(F_v)}(\sigma_v(g_v)(\varphi_{\sigma_v})\otimes \ell_{\psi_v}(\rho(t(1,h_v))f_{\Delta(\tau_v,i;m)\gamma_\psi,s})(\cdot,g_v))dg_v).
\end{equation}
Denote the factor at the place $v$, in \eqref{4'.23.3}, or in \eqref{4'.23.6}, by $\Lambda_v(\rho(h_v)f_{\Delta(\tau_v,i;m)\gamma_{\psi_v}^{(\epsilon)},s},\varphi_{\sigma_v})$. This is a section of $\rho_{\Delta(\tau_v,i)\gamma_{\psi_v}^{(\epsilon)},\sigma^\iota,s}$.  Let us explain this. For simplicity of notation, we do this in the linear case. The proof in the metaplectic case is completely similar. Let $x_v\in \GL_{ni}(F_v)$, $y_v\in H_m(F_v)$, $u_v\in U_{ni}^{H_{m+2nui}}(F_v)$. We have 
\begin{multline}\label{4'.23.7}
\Lambda(f_{\Delta(\tau, m+i),s},\varphi_\sigma)(\begin{pmatrix}x_v\\&y_v\\&&x_v^*\end{pmatrix}u_vh_v)=\\
=\Lambda(\rho(t(1,\begin{pmatrix}x_v\\&y_v\\&&x^*_v\end{pmatrix})u_vh_v)f_{\Delta(\tau, m+i),s},\varphi_\sigma)(I_{m+2ni}).
\end{multline}
Apply \eqref{4'.23.1} to each side of \eqref{4'.23.7} . Then we get that, for all $r_v\in \GL_{ni}(F_v)$,
\begin{multline}\label{4'.23.8}
\int_{H_m(F_v)}\sigma_v(g_v)(\varphi_{\sigma_v}) \ell_{\psi_v}(\rho(t(1,\begin{pmatrix}x_v\\&y_v\\&&x^*_v\end{pmatrix}u_vh_v))f_{\Delta(\tau_v,i;m),s})(r_v,g_v)dg_v=\\
|\det(x_v)|^{-m(n-1)}\int_{H_m(F_v)}\sigma_v(y^\iota_v)(\sigma_v(g_v)(\varphi_{\sigma_v})) \ell_{\psi_v}(\rho(h_v)f_{\Delta(\tau_v,i;m),s})(r_vx_v,g_v)dg_v.
\end{multline}
We may assume now that $h_v$ is the identity element. As in the beginnibg of the proof of Theorem \ref{thm 4.6}, we can follow the proof of Prop. \ref{prop 3.6} at the place $v$, and get that the r.h.s. of \eqref{4'.23.8} is equal to a finite sum\\
\\
$|\det(x_v)|^{-m(n-1)}\cdot$
\begin{equation}\label{4'.23.9}
\cdot\sum \sigma_v(y_v^\iota)\xi_{\sigma_v}\int_{H_m(F_v)} <\sigma_v(g_v)\varphi_{\sigma_v},\xi'_{\hat{\sigma}_v}>\ell_{\psi_v}(\tilde{f}_{\Delta(\tau_v,i;m),s})(r_vx_v,g_v)dg_v,
\end{equation}
where $\xi_{\sigma_v}$ lies in the space of $\sigma_v$, $\xi'_{\hat{\sigma}_v}$ lise in the space of $\hat{\sigma}_v$ and $\tilde{f}_{\Delta(\tau_v,i;m),s}$ is a $K_{H(F_v)}$-finite and holomorphic section of $\rho_{\Delta(\tau_v,i;m),s}$.
By Cor. \ref{cor 4.4}, for each summand in \eqref{4'.23.9}, there are smooth, holomorphic sections $\varphi_{\Delta(\tau_v,m),s+\frac{i}{2}}$, and functions $W_v\in W_{\psi^{-1}_{V_{i^n}}}(\Delta(\tau_v,i))$, such that \eqref{4'.12.1} holds (for the term $\ell_{\psi_v}(\tilde{f}_{\Delta(\tau_v,i;m),s})$). Thus, \eqref{4'.23.9} is equal to a sum of the following form (for all $r_v, x_v\in \GL_{ni}(F_v)$, $y_v\in H_m^{(\epsilon)}(F_v)$), 
\begin{multline}\label{4'.23.10}
|\det(x_v)|^{s+\frac{m+ni-\delta_H}{2}}|\det(r_v))|^{s+m(n-1)+\frac{m+ni-\delta_H}{2}}\cdot \sum W_v(r_vx_v)\sigma_v(y_v^\iota)\xi_{\sigma_v}\\
\int<\sigma_v(g_v)\varphi_{\sigma_v},\xi'_{\hat{\sigma}_v}>\int_{U^0_{m(n-1)}(F_v)}\varphi_{\Delta(\tau_v,m),s+\frac{i}{2}}(\delta'_0ut'(g_v,I_m))\psi^{-1}_{H_{2mn},v}(u)dudg_v,
\end{multline} 
where $g_v$ is integrated over $H_m(F_v)$. Thus,
\begin{multline}\label{4'.23.11}
\Lambda_v(\rho(t(1,\begin{pmatrix}x_v\\&y_v\\&&x^*_v\end{pmatrix}u_v))f_{\Delta(\tau_v,i;m),s},\varphi_{\sigma_v})=\\
\sum\rho(x_v)(|\det\cdot|^{s+\frac{m+ni-\delta_H}{2}}W_v)\otimes \sigma_v(y_v^\iota)\xi_{\sigma_v}\\
\int<\sigma_v(g_v)\varphi_{\sigma_v},\xi'_{\hat{\sigma}_v}>\int_{U^0_{m(n-1)}(F_v)}\varphi_{\Delta(\tau_v,m),s+\frac{i}{2}}(\delta'_0ut'(g_v,I_m))\psi^{-1}_{H_{2mn},v}(u)dudg_v.
\end{multline} 
Recall that $\rho(x_v)$ denotes a right translation by $x_v$.
This shows that, for $Re(s)$ sufficiently large, $\Lambda_v(\rho(h_v)f_{\Delta(\tau_v,i;m),s},\varphi_{\sigma_v})$ defines a smooth (holomorphic in the domain of absolute convergence) section of $\rho_{\Delta(\tau_v,i),\sigma_v^\iota,s}$. 

Note that the integrals which appear in \eqref{4'.23.11} are local generalized doubling integrals. In particular, they admit a meromorphic continuation to the whole plane, and hence $\Lambda_v(\rho(h_v)f_{\Delta(\tau_v,i;m),s},\varphi_{\sigma_v})$
is meromorphic.

Finally, let $v\notin S$, and again, just for simplicity of notation, we stay with the linear case. The proof in the metaplectic case is completely similar. By \eqref{4'.23.8} (we assume that the measure of $K_{H_m(F_v)}$ is one),
\begin{multline}\label{4'.23.12}
\Lambda_v(f^0_{\Delta(\tau_v,i;m),s},\varphi^0_{\sigma_v})=
\int_{K_{H_m(F_v)}}\Lambda_v(\rho(t(1,\begin{pmatrix}I_{ni}\\&k\\&&I_{ni}\end{pmatrix}))f^0_{\Delta(\tau_v,i;m),s},\varphi^0_{\sigma_v})=\\
=\int_{H_m(F_v)}\int_{K_{H_m(F_v)}}\sigma_v(k^\iota_v)(\sigma_v(g_v)(\varphi^0_{\sigma_v})) \ell_{\psi_v}(f^0_{\Delta(\tau_v,i;m),s})(\cdot,g_v)dkdg_v= \\
(\int_{H_m(F_v)}<\sigma_v(g_v)(\varphi^0_{\sigma_v}),\varphi^0_{\hat{\sigma}_v}> \ell_{\psi_v}(f^0_{\Delta(\tau_v,i;m),s})(\cdot,g_v)dg_v)\cdot \varphi^0_{\sigma_v}.
\end{multline}
By \eqref{4'.16},
\begin{multline}\label{4'.23.13}
\Lambda_v(f^0_{\Delta(\tau_v,i;m),s},\varphi^0_{\sigma_v})=|\det\cdot|^{s+\frac{m+ni-\delta_H}{2}}
( W^0_{\Delta(\tau_v,i),\psi_v}\otimes \varphi^0_{\sigma_v})\cdot \\
\int<\sigma_v(g_v)(\varphi^0_{\sigma_v}),\varphi^0_{\hat{\sigma}_v}>\int_{U^0_{m(n-1)}(F_v)}f^0_{\Delta(\tau_v,m),s+\frac{i}{2}}(\delta'_0ut'(g_v,I_m))\psi^{-1}_{H_{2mn},v}(u)dudg_v.
\end{multline}
This completes the proof of the theorem.

\end{proof}

Now, we proceed to the proof of Theorem \ref{thm 1.1}. 
As we noted in the last proof, the integral \eqref{4'.23.01} is a typical local integral of the generalized doubling method. It continues to a meromorphic function in the whole complex plane. We denote it, as the local version of \eqref{4'.23*} at the place $v$, by $\mathcal{Z}_v(\varphi_{\sigma_v},\xi'_{\hat{\sigma}_v},\varphi_{\Delta(\tau_v,m)\gamma_{\psi_v}^{(\epsilon)},s+\frac{i}{2}})$. For $Re(s)$ sufficiently large, 
\begin{multline}\label{4'.24}
\mathcal{Z}_v(\varphi_{\sigma_v},\xi'_{\hat{\sigma}_v},\varphi_{\Delta(\tau_v,m)\gamma_{\psi_v}^{(\epsilon)},s+\frac{i}{2}})=\\
\int<\sigma_v(g)(\varphi_{\sigma_v}),\xi'_{\hat{\sigma}_v}>\int_{U^0_{m(n-1)}(F_v)} 
\varphi_{\Delta(\tau_v,m)\gamma_{\psi_v}^{(\epsilon)}
,s+\frac{i}{2}}(\delta'_0ut'(g,I_m))\psi^{-1}_{H_{2mn},v}(u)dudg,
\end{multline}
where $g$ is integrated over $C_2^{(\epsilon)}\backslash H_m(F_v)$. 
This is the local integral at the place $v$ of the integral of the form \eqref{4'.23*}. Thus, let $\varphi_\sigma$ $\xi'_\sigma$ be decomposable cusp forms in the space of $V_\sigma$ of $\sigma$, such that $\varphi_\sigma=p_\sigma(\otimes'_v \varphi_{\sigma_v})$, $\bar{\xi}'_\sigma=p_{\bar{\sigma}}(\otimes'_v\xi'_{\hat{\sigma}_v})$. See right before Theorem \ref{thm 4.6}. Let $\varphi_{\Delta(\tau,m)\gamma_\psi^{(\epsilon)},s+\frac{i}{2}}$ be a smooth, holomorphic section of 
$\rho_{\Delta(\tau,m)\gamma_\psi^{(\epsilon)},s+\frac{i}{2}}$. Then the integral \eqref{4'.23*}, defining $\mathcal{Z}(\varphi_\sigma,\xi'_\sigma,\varphi_{\Delta(\tau,m)\gamma_\psi^{(\epsilon)},s+\frac{i}{2}})$, which converges absolutely, for $Re(s)$ sufficiently large, admits an analytic continuation to a meromorphic function to the whole complex plane. It is Eulerian and we have (equality of meromorphic functions)
\begin{equation}\label{4'.25}
\mathcal{Z}(\varphi_\sigma,\xi'_\sigma,\varphi_{\Delta(\tau,m)\gamma_\psi^{(\epsilon)},s+\frac{i}{2}})=\prod_v 
\mathcal{Z}_v(\varphi_{\sigma_v},\xi'_{\hat{\sigma}_v},\varphi_{\Delta(\tau_v,m)\gamma_{\psi_v}^{(\epsilon)},s+\frac{i}{2}}).
\end{equation}
Note that by \eqref{4'.23.13} (and its analog in the metaplectic case),
\begin{multline}\label{4'.23.14}
\Lambda_v(f^0_{\Delta(\tau_v,i;m),s},\varphi^0_{\sigma_v})=(\gamma_{\psi_v}^{(\epsilon)}\circ\det\cdot)|\det\cdot|^{s+\frac{m+ni-\delta_H}{2}}
( W^0_{\Delta(\tau_v,i),\psi_v}\otimes \varphi^0_{\sigma_v})\cdot \\
\cdot \mathcal{Z}_v(\varphi^0_{\sigma_v},\varphi^0_{\hat{\sigma}_v},\varphi^0_{\Delta(\tau_v,m)\gamma_{\psi_v}^{(\epsilon)},s+\frac{i}{2}}).
\end{multline}
In  \cite{CFGK17}, Sec. 3, the unramified computation of these local integrals is carried out. (The analog in the metaplectic case is a special case of the generalized doubling integrals for covering groups. See \cite{CFGK16}. It can also be done specifically along the lines of Sec. 3 in \cite{CFGK17}.) We write the unramified computation explicitly in the next proposition, as we will use it soon.

\begin{prop}\label{prop 4.7}
Let $v$ be a place outside $S$. Then
$$
\mathcal{Z}_v(f^0_{\Delta(\tau_v,m)
\gamma_{\psi_v}^{(\epsilon)},s+\frac{i}{2}},\varphi^0_{\sigma_v},\varphi^0_{\hat{\sigma}_v})=\frac{L_{\epsilon,\psi_v}(\sigma_v\times
\tau_v,s+\frac{i+1}{2})}{D^{H,m}_{\tau_v}(s)}.
$$
When $m=2m'$, $H=\Sp_{2n(2m'+i)}$ ,
$$
D^{H,m}_{\tau_v}(s)=L(\tau_v,s+m'+\frac{i+1}{2})\prod_{k=1}^{m'}L(\tau_v,\wedge^2,2s+2k+i)L(\tau_v,sym^2,2s+2k+i-1).
$$
When $m=2m'$, $H=\Sp^{(2)}_{2n(2m'+i)}$,
$$
D^{H,m}_{\tau_v}(s)=\prod_{k=1}^{m'}L(\tau_v,\wedge^2,2s+2k+i-1)L(\tau_v,sym^2,2s+2k+i)
$$
When $m=2m'$, $H=\SO_{2n(2m'+i)}$,
$$
D^{H,m}_{\tau_v}(s)=\prod_{k=1}^{m'}L(\tau_v,\wedge^2,2s+2k+i)L(\tau_v,sym^2,2s+2k+i-1).
$$
When $m=2m'-1$, $H=\SO_{2n(2m'-1+i)}$,
$$
D^{H,m}_{\tau_v}(s)=\prod_{k=1}^{m'}L(\tau_v,\wedge^2,2s+2k+i-1)\prod_{k=1}^{m'-1}L(\tau_v,sym^2,2s+2k+i).
$$
\end{prop}

Recall the normalizing factor, outside of $S$, $d_\tau^{H,S}$, of
$E(f_{\Delta(\tau,m+i)\gamma_\psi^{(\epsilon)},s},\cdot)$ on $H(\BA)$. See
\eqref{1.10.4}-\eqref{1.10.9}. Let $d^H_{\tau_v}$ be its local
analog at $v$ outside $S$. Note that in Prop. \ref{prop 4.7},
$D^{H,m}_{\tau_v}(s)=d_{\tau_v}^{H^{(\epsilon)}_{2nm}}(s+\frac{i}{2})$. Recall, also (\eqref{1.10.9.1} - \eqref{1.10.9.8}) the normalizing factor, outside of $S$, $d_{\sigma,\tau}^{H_{2ni+m}^{(\epsilon),S}}(s)$, corresponding to an Eisenstein series attached to the global induced representation
$$
\rho_{\Delta(\tau,i)\gamma_\psi^{(\epsilon)},\sigma^\iota,s}=\Ind_{Q^{(\epsilon)}_{ni}(\BA)}^{H^{(\epsilon)}_{2ni+m}(\BA)}\Delta(\tau,i)\gamma_\psi^{(\epsilon)}|\det\cdot|^s\otimes \sigma^\iota.
$$
Similarly, let $d_{\sigma_v,\tau_v}^{H_{2ni+m}^{(\epsilon)}}(s)$ be the local analog at $v$ outside $S$.
\begin{prop}\label{prop 4.4}
In the notation above, for $v$ outside $S$,
\begin{equation}\label{4.10}
d^H_{\tau_v}(s)L_{\epsilon,\psi_v}(\sigma_v\times
\tau_v,s+\frac{i+1}{2})=d_{\tau_v}^{H^{(\epsilon)}_{2nm}}(s+\frac{i}{2})d_{\sigma_v,\tau_v}^{H_{2ni+m}^{(\epsilon)}}(s).
\end{equation}
\end{prop}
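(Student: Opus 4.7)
The plan is to verify \eqref{4.10} by a direct case-by-case comparison of the explicit formulas for the normalizing factors. Since $m+i$ and $i$ have prescribed parities in each of the four cases ($H$ symplectic, metaplectic, even orthogonal, odd orthogonal), the identity reduces to eight sub-cases, each of which is a purely combinatorial check of index shifts in products of local $L$-factors of $\tau_v$ in the symmetric and exterior square representations. First I would substitute the appropriate formula from \eqref{1.10.4}–\eqref{1.10.9} for $d^H_{\tau_v}(s)$ and from \eqref{1.10.9.1}–\eqref{1.10.9.8} for $d^{H^{(\epsilon)}_{2ni+m}}_{\sigma_v,\tau_v}(s)$, and the $\epsilon=1,2$ analogue of \eqref{1.10.5}/\eqref{1.10.7}/\eqref{1.10.9} for $d^{H^{(\epsilon)}_{2nm}}_{\tau_v}(s+\tfrac{i}{2})$ (recalling that $m$ is even in the symplectic and metaplectic cases).

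The key structural observation is that the Rankin–Selberg factor $L_{\epsilon,\psi_v}(\sigma_v\times\tau_v,s+\tfrac{i+1}{2})$ on the LHS of \eqref{4.10} is precisely the leading Rankin–Selberg factor in $d^{H^{(\epsilon)}_{2ni+m}}_{\sigma_v,\tau_v}(s)$ (with the shift $\tfrac{i+1}{2}$ translating correctly into $s+j'+1$ or $s+j'+\tfrac{1}{2}$ according as $i=2j'+1$ or $i=2j'$). After cancellation of this Rankin–Selberg factor, \eqref{4.10} reduces to an identity of the form
\begin{equation*}
L(\tau_v,s+j+c_H)\,\prod_{k=1}^{j+\varepsilon}\!L(\tau_v,\wedge^2,\ldots)\prod_{k=1}^{j+\varepsilon'}\!L(\tau_v,sym^2,\ldots)=
\bigl(\text{shift-$\tfrac{i}{2}$ of }d^{H_{2nm}}_{\tau_v}\bigr)\cdot\bigl(\text{sub-factors of }d^{H_{2ni+m}}_{\sigma,\tau}\bigr),
\end{equation*}
where $j$ is determined by $m+i$, and on the RHS the two products are disjoint ranges whose union recovers the full LHS range. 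Concretely, writing $m=2m'$ (or $2m'-1$ in the odd orthogonal case) and $i=2j'$ or $2j'+1$, the substitution $k\mapsto k+j'$ sends the product $\prod_{k=1}^{m'}$ coming from $d^{H^{(\epsilon)}_{2nm}}_{\tau_v}(s+\tfrac{i}{2})$ to $\prod_{k=j'+1}^{j'+m'}$, which glues onto $\prod_{k=1}^{j'}$ (or $\prod_{k=1}^{j'+1}$) appearing in $d^{H^{(\epsilon)}_{2ni+m}}_{\sigma_v,\tau_v}(s)$ to form $\prod_{k=1}^{j'+m'}=\prod_{k=1}^{j}$ on the LHS. The same shift applied to the argument of the leading standard $L(\tau_v,\cdot)$-factor gives the correct shift $s+j+c_H$ on the LHS.

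The arithmetic of the shifts — that the $\wedge^2$-arguments shift by even integers and the $sym^2$-arguments shift by odd integers (or vice-versa in the metaplectic/odd-orthogonal cases) — is exactly what accounts for the $\wedge^2/sym^2$ swap between \eqref{1.10.4} and \eqref{1.10.6}, between \eqref{1.10.5} and \eqref{1.10.7}, and between the orthogonal analogues. In particular, the interlacing of the two sub-ranges on the RHS is consistent because multiplying $s$ by $2$ and then shifting by $i=2j'$ or $2j'+1$ produces either an even or an odd integer shift of $2k-1$ or $2k$, which correctly lines up $\wedge^2$ with $\wedge^2$ and $sym^2$ with $sym^2$ on both sides.

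The principal obstacle is not mathematical but bookkeeping: one must carry out the above index shift for each of the eight sub-cases, and in the metaplectic case be careful that the $\psi$ used in the definition of $L_\psi^S(\sigma\times\tau,\cdot)$ is consistent across \eqref{1.10.9.3}, \eqref{1.10.9.4} and the Rankin–Selberg factor on the LHS of \eqref{4.10} — this consistency is automatic since both sides are built from the same global $\psi$. In the odd orthogonal cases one must also verify that the one-fewer-factor asymmetry in \eqref{1.10.8}–\eqref{1.10.9} and \eqref{1.10.9.7}–\eqref{1.10.9.8} matches the corresponding asymmetry in the $d^{H^{(\epsilon)}_{2nm}}_{\tau_v}$ factor when $m=2m'-1$ is odd; this comes down to the fact that $\SO_{2nm}$ with $m$ odd uses \eqref{1.10.9} rather than \eqref{1.10.8}. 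Once all these matchings are checked, the identity \eqref{4.10} holds on the nose in every case.
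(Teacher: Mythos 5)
Your proposal is correct and follows essentially the same route the paper takes: a direct case-by-case verification, substituting the explicit formulas \eqref{1.10.4}--\eqref{1.10.9} and \eqref{1.10.9.1}--\eqref{1.10.9.8} and matching ranges of $L$-factors via the index shift $k\mapsto k+j'$. The paper simply works out one case in full ($H=\Sp_{2n(m+i)}$ with $i=2i'+1$ odd) and states that the others are checked similarly; your description of the general gluing mechanism and the $\wedge^2/\mathrm{sym}^2$ bookkeeping is the same computation presented more systematically.
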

\begin{proof}
The proof is by a straight forward verification. Let us carry this out in one example. Take $H=\Sp_{2n(m+i)}$, when $i=2i'+1$ is odd. Here, $m=2m'$ is even (as always in the symplectic, or metaplectic cases). Then, as in \eqref{1.10.4},\\
\\
$d_{\tau_v}^{\Sp_{2n(m+i)}}(s)=$ 
\begin{equation}\label{4.10.1}
L(\tau_v,s+m'+i'+1)\prod_{k=1}^{m'+i'+1}L(\tau_v,\wedge^2,2s+2k-1)\prod_{k=1}^{m'+i'}L(\tau_v,sym^2,2s+2k).
\end{equation}
We have, as in \eqref{1.10.5},\\
\\
$D_{\tau_v}^{H,m}(s)=d_{\tau_v}^{\Sp_{4nm'}}(s+\frac{i}{2})=$
\begin{equation}\label{4.10.2}
L(\tau_v,s+m'+i'+1)\prod_{k=i'+2}^{m'+i'+1}L(\tau_v,\wedge^2,2s+2k-1)\prod_{k=i'+1}^{m'+i'}L(\tau_v,sym^2,2s+2k).
\end{equation}
Thus, by \eqref{4.10.1}, \eqref{4.10.2}, 
\begin{equation}\label{4.10.3}
d_{\tau_v}^{\Sp_{2n(m+i)}}(s)=d_{\tau_v}^{\Sp_{4nm'}}(s+\frac{i}{2})\prod_{k=1}^{i'+1}L(\tau_v,\wedge^2,2s+2k-1)\prod_{k=1}^{i'}L(\tau_v,sym^2,2s+2k).
\end{equation}
We note that
\begin{equation}\label{4.10.4}
d_{\sigma_v,\tau_v}^{\Sp_{2ni+m}}(s)=L(\sigma_v\times\tau_v,s+i'+1)\prod_{k=1}^{i'+1}L(\tau_v,\wedge^2,2s+2k-1)\prod_{k=1}^{i'}L(\tau_v,sym^2,2s+2k).
\end{equation}
Now, \eqref{4.10.3}, \eqref{4.10.4} imply that
$$
d_{\tau_v}^{\Sp_{2n(m+i)}}(s)L(\sigma_v\times\tau_v,s+i'+1)=d_{\tau_v}^{\Sp_{4nm'}}(s+\frac{i}{2})d_{\sigma_v,\tau_v}^{\Sp_{2ni+m}}(s),
$$
which is \eqref{4.10} in this case. All other cases are checked similarly.
\end{proof}

Denote in Prop. \ref{prop 4.7},
$$
f^{0,*}_{\Delta(\tau_v,i;m)\gamma^{(\epsilon)}_{\psi_v},s}=d^H_{\tau_v}(s)f^0_{\Delta(\tau_v,i; m)\gamma^{(\epsilon)}_{\psi_v},s}.
$$
Then \eqref{4'.23.14}, Prop. \ref{prop 4.7} and \eqref{4.10} imply
\begin{equation}\label{4.11}
\Lambda_v(f^{0,*}_{\Delta(\tau_v,i;m),s},\varphi^0_{\sigma_v})=d_{\sigma_v,\tau_v}^{H_{2ni+m}^{(\epsilon)}}(s)(\gamma_{\psi_v}^{(\epsilon)}\circ\det\cdot)\delta^{\frac{1}{2}}_{Q_{ni}^{H_{m+2ni}}}|\det\cdot |^s
( W^0_{\Delta(\tau_v,i),\psi_v}\otimes \varphi^0_{\sigma_v}).
\end{equation}
The factor $d_{\sigma_v,\tau_v}^{H_{2ni+m}^{(\epsilon)}}(s)$ is the normalizing factor, at $v$, corresponding to an Eisenstein series
associated to a section of $\rho_{\Delta(\tau,i)\gamma_\psi^{(\epsilon)},\sigma^\iota,s}$, say a
decomposable section, such that at a place $v$, outside $S$, the
local section is unramified and normalized, such that its value at the identity element is a tensor product of two unramified
vectors in the spaces of $\Delta(\tau_v,i)\gamma_{\psi_v}^{(\epsilon)}$ and $ \sigma_v$, which we fix outside
$S$, to be $(\gamma_{\psi_v}^{(\epsilon)}\circ\det )W^0_{\Delta(\tau_v,i),\psi_v}$ and $\varphi^0_{\sigma_v}$.
See \eqref{1.10.9.1} - \eqref{1.10.9.8}. Thus, by \eqref{4.11} $(d_{\sigma_v,\tau_v}^{H_{2ni+m}^{(\epsilon)}}(s))^{-1}\Lambda_v(f^{0,*}_{\Delta(\tau_v,i;m),s},\varphi^0_{\sigma_v})$ is this normalized section, for $v\notin S$. Denote
$$
\lambda_S(f_{\Delta(\tau,
m+i)\gamma^{(\epsilon)}_\psi,s},\varphi_\sigma)=\frac{d_\tau^{H,S}(s)}{d_{\sigma,\tau}^{H_{2ni+m}^{(\epsilon)},S}(s)}\Lambda(f_{\Delta(\tau,
m+i)\gamma^{(\epsilon)}_\psi,s},\varphi_\sigma).
$$
Consider the Eisenstein series on $H^{(\epsilon)}_{2ni+m}(\BA)$, corresponding
to this section. Denote it by $E(\lambda_S(f_{\Delta(\tau,
m+i)\gamma^{(\epsilon)}_\psi,s},\varphi_\sigma)$, and in normalized form,
$$
E_S^*(\lambda_S(f_{\Delta(\tau,
m+i)\gamma^{(\epsilon)}_\psi,s},\varphi_\sigma)=d_{\sigma,\tau}^{H_{2ni+m}^{(\epsilon)},S}(s)E(\lambda_S(f_{\Delta(\tau,
m+i)\gamma^{(\epsilon)}_\psi,s},\varphi_\sigma).
$$
Now we get the identity of Theorem \ref{thm 1.1},
\begin{equation}\label{4.12}
\mathcal{E}^*_S(f_{\Delta(\tau,
m+i)\gamma^{(\epsilon)}_\psi,s},\varphi_\sigma)=E^*_S(\lambda_S(f_{\Delta(\tau,
m+i)\gamma^{(\epsilon)}_\psi,s},\varphi_\sigma)),
\end{equation}
where the left hand side of \eqref{4.12} is our kernel integral
\eqref{1.10}, with the normalized Eisenstein series on $H(\BA)$,
$E^*_S(f_{\Delta(\tau,m+i)\gamma^{(\epsilon)}_\psi,s})=d_\tau^{H,S}(s)E(f_{\Delta(\tau,m+i)\gamma^{(\epsilon)}_\psi,s})$. See \eqref{1.10.9*}.

\section{Application of Bessel coefficients to $E(f_{\Delta(\tau,i+1),s})$ on $\SO_{2n(1+i)}(\BA)$: descent to $\SO_{2ni+1}(\BA)$}

Let $H=\SO_{2n(i+1)}$. The previous theorems are valid here when we take $m=1$. Thus, in this case, $H_1=\SO_1$ is the trivial group, and $H_{m+2ni}$ is $\SO_{2ni+1}$. Let us write the group $U_{m^{n-1}}$ and the character $\psi_H$ in this case. The elements of $U_{1^{n-1}}$ have the form
\begin{equation}\label{5.1}
u=\begin{pmatrix}z&x&y\\&I_{2ni+2}&x'\\&&z^*\end{pmatrix}\in H,\ z\in Z_{n-1}
\end{equation}
where $Z_k$ denotes the upper maximal unipotent subgroup of $\GL_k$. The character $\psi_H$ of $U_{1^{n-1}}(\BA)$, which we re-denote by $\psi_{n-1}$ is given on $u$, with adele coordinates, as follows. Let $\psi_{Z_{n-1}}$ be the standard Whittaker character of $Z_{n-1}(\BA)$, corresponding to $\psi$, that is
$$
\psi_{Z_{n-1}}(z)=\psi(z_{1,2}+z_{2,3}+\cdots+z_{n-2,n-1}).
$$
Then
\begin{equation}\label{5.2}
\psi_{n-1}(u)=\psi_{Z_{n-1}}(z)\psi(x_{n-1,ni+1}+\frac{1}{2}x_{n-1,ni+2}).
\end{equation}
The character $\psi_{n-1}$ is stabilized by $\SO_{2ni+1}(\BA)$ realized as the subgroup of elements
$diag(I_{n-1},h,I_{n-1})$, with $h\in \SO_{2ni+2}(\BA)$ satisfying
$$
h\begin{pmatrix}0_{ni}\\1\\ \frac{1}{2}\\0_{ni}\end{pmatrix}=\begin{pmatrix}0_{ni}\\1\\ \frac{1}{2}\\0_{ni}\end{pmatrix}.
$$
Let $j$ denote the isomorphism from $\SO_{2ni+1}$ to this stabilizer, given by \eqref{1.8}. Since $H_1$ is the trivial group, we shorten our notation and re-denote $j(1,h)=j(h)$. Denote
$$
t(h)=\begin{pmatrix}I_{n-1}\\&j(h)\\&&I_{n-1}\end{pmatrix},\ h\in \SO_{2ni+1}.
$$ 
We considered the Eisenstein series $E(f_{\Delta(\tau,i+1),s})$ on $\SO_{2n(i+1)}(\BA)$, corresponding to the section $f_{\Delta(\tau,i+1),s}$, and we took in \eqref{1.9.1} its Fourier coefficient along $U_{1^{n-1}}$, with respect to $\psi_{n-1}$. We re-denote this Fourier coefficient by $\mathcal{D}_{\psi,ni}(E(f_{\Delta(\tau,i+1),s}))$, and, as before, we view it as an automorphic function on $\SO_{2ni+1}(\BA)$, realized as above. This is the Bessel coefficient used in automorphic descent. See \cite{GRS11}, Sec. 3.1 (where we refer to these coefficients as Gelfand-Graev coefficients). Thus,
\begin{equation}\label{5.3}
\mathcal{D}_{\psi,ni}(E(f_{\Delta(\tau,i+1),s}))(h)=\int_{U_{1^{n-1}}(F)\backslash U_{1^{n-1}}(\BA)}E(f_{\Delta(\tau,i+1),s},ut(h))\psi^{-1}_{n-1}(u)du.
\end{equation}
We don't need to further integrate along $H_1$ against $\sigma$, as $H_1$ is trivial. Now \eqref{3.20}, Theorem \ref{thm 4.6} and \eqref{4.12} become in this case
\begin{thm}\label{thm 5.1}
Let $f_{\Delta(\tau,i+1),s}$ be a smooth, holomorphic section of $\rho_{\Delta(\tau,i+1),s}$. Then, for $Re(s)$ sufficiently large and $h\in \SO_{2ni+1}(\BA)$,
$$
\mathcal{D}_{\psi,ni}(E(f_{\Delta(\tau,i+1),s}))(h)=\sum_{h'\in Q_{ni}\backslash
	\SO_{2ni+1}}\Lambda(f_{\Delta(\tau, i+1),s})(h'h),
$$
where
\begin{equation}\label{5.4}
\Lambda(f_{\Delta(\tau,i+1),s})(h)=
\int_{U'_{n-1}(\BA)}
f^\psi_{\Delta(\tau, i+1),s}(\delta_0ut(h))\psi_{n-1}^{-1}(u)dudg.
\end{equation}
This function  $\Lambda(f_{\Delta(\tau,i+1),s})$, defined for $\Re(s)$ sufficiently
large by the last integral, is smooth and admits an analytic continuation
to a meromorphic function of $s$ in the whole plane, and defines a smooth
meromorphic section of
$$
\rho_{\Delta(\tau,i),s}=\Ind_{Q_{ni}(\BA)}^{\SO_{2ni+1}(\BA)}\Delta(\tau,i)
|\det\cdot|^s .
$$
Thus, $\mathcal{D}_{\psi,ni}(E(f_{\Delta(\tau,i+1),s}))(h)$ is the Eisenstein series on
$\SO_{2ni+1}(\BA)$, corresponding to the section $\Lambda(f_{\Delta(\tau,i+1),s})$ of
$\rho_{\Delta(\tau,i),s}$. Moreover, when we normalize (outside $S$, as before) $E(f_{\Delta(\tau,i+1),s})$ by
$$
E^*_S(f_{\Delta(\tau,i+1),s})=d_\tau^{\SO_{2n(i+1)},S}(s)E(f_{\Delta(\tau,i+1),s}),
$$
then $\mathcal{D}_{\psi,ni}(E^*_S(f_{\Delta(\tau,i+1),s}))$ is an Eisenstein series on
$\SO_{2ni+1}(\BA)$, corresponding to $\rho_{\Delta(\tau,i),s}$, and it is normalized outside $S$.  
\end{thm}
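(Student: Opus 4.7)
The plan is to derive Theorem 5.1 as the specialization of Theorem \ref{thm 1.1} to the case $m=1$, $H=\SO_{2n(i+1)}$. Recall that $m=1$ (which forces $H$ to be odd orthogonal in our Speh-induction setup) was explicitly allowed in Section 1, with the remark that all subsequent arguments go through. The benefit of this approach is that the hard work, namely the double coset analysis in Section 2, the identification of the open orbit contribution in Section 3, and the unramified $L$-factor computation and normalization in Section 4, has already been carried out in that more general framework.

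First, I would verify that the ingredients of Theorem \ref{thm 1.1} specialize correctly. With $m=1$, the group $H_m=\SO_1$ is trivial, so the cuspidal datum $\sigma$ reduces to the trivial representation, $\sigma^\iota$ is trivial, and the outer integration over $H_m(F)\backslash H_m(\BA)$ in \eqref{1.10} and \eqref{1.10.1.3} disappears. The stabilizer of $\psi_H$ inside $M_{1^{n-1}}(\BA)$ is $\SO_{2ni+1}(\BA)$, realized via $h\mapsto t(h)$ using the formula $j(I_1,h)$ from \eqref{1.8}. Reading \eqref{1.3} with the odd-$m$ matrix $A_H$ of \eqref{1.5} at $m=1$, one checks directly that $\psi_H$ specializes to the character $\psi_{n-1}$ of $U_{1^{n-1}}(\BA)$ in \eqref{5.2}, and that the section $\Lambda(f_{\Delta(\tau,i+1),s})$ of \eqref{5.4} is exactly \eqref{1.10.1.3} with the (now trivial) $dg$-integration dropped. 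Thus the Eisenstein series identity of Theorem 5.1 and the meromorphic continuation of $\Lambda(f_{\Delta(\tau,i+1),s})$ as a section of $\rho_{\Delta(\tau,i),s}=\Ind_{Q_{ni}(\BA)}^{\SO_{2ni+1}(\BA)}\Delta(\tau,i)|\det\cdot|^s$ follow from \eqref{1.10.1.2} and Theorem \ref{thm 3.5}, specialized to $m=1$, $\epsilon=1$, trivial $\sigma$.

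For the normalization claim, I would appeal to Proposition \ref{prop 4.4} and equation \eqref{4.11} at $m=1$ with $\sigma=\mathbf{1}$. Interpreting $L^S(\mathbf{1}\times\tau,s)$ as $L^S(\tau,s)$, the identity \eqref{4.10} specializes to a relation between the normalizing factor $d_\tau^{\SO_{2n(i+1)},S}(s)$ of \eqref{1.10.8} or \eqref{1.10.9} (depending on the parity of $i+1$) and $d_{\mathbf{1},\tau}^{\SO_{2ni+1},S}(s)$ of \eqref{1.10.9.7} or \eqref{1.10.9.8}, which can be checked by direct $L$-factor bookkeeping. Combined with \eqref{4.11}, this shows that multiplying $E(f_{\Delta(\tau,i+1),s})$ by $d_\tau^{\SO_{2n(i+1)},S}(s)$ produces on the descent side the Eisenstein series on $\SO_{2ni+1}(\BA)$ whose inducing section is normalized outside $S$ by $d_{\mathbf{1},\tau}^{\SO_{2ni+1},S}(s)$. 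The main (minor) obstacle is the routine verification that the degenerate case $m=1$, where $A_H$ of \eqref{1.5} has a $1\times 1$ nonzero middle block, does not disrupt the orbit analysis and unfolding arguments of Sections 2--3, and that the $L$-factor identities of Section 4 specialize cleanly under $\sigma=\mathbf{1}$.
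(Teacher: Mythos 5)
Your proposal is correct and takes essentially the same route as the paper: Section~5 derives Theorem~\ref{thm 5.1} precisely by specializing \eqref{3.20}, Theorem~\ref{thm 3.5}, and \eqref{4.12} to $m=1$, $H=\SO_{2n(i+1)}$, with $H_1=\SO_1$ trivial so that the $\sigma$-integration drops out, exactly as you describe. The normalization claim likewise follows from the $m=1$, $\sigma=\mathbf{1}$ specialization of Proposition~\ref{prop 4.4} and \eqref{4.11}, just as you indicate.
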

For completeness, let us specify in this case the elements which appear in \eqref{5.4}.
$U'_{n-1}$ is the subgroup 
$$
U'_{n-1}=\{u_{x;y}=\begin{pmatrix}I_{n-1}&x&0&y\\&I_{ni+1}&0&0\\&&I_{ni+1}&x'\\&&&I_{n-1}\end{pmatrix}^{\omega_0^{n-1}}\},\ \psi_{n-1}(u_{x;y})=\psi(x_{n-1,ni+1});
$$
$$
\delta_0=\begin{pmatrix}0&I_{ni}&0&0&0&0\\0&0&\frac{1}{2}&0&0&0\\0&0&0&0&0&I_{n-1}\\I_{n-1}&0&0&0&0&0\\0&0&0&2&0&0\\0&0&0&0&I_{ni}&0\end{pmatrix}\omega_0^{n-1};
$$
Finally
$$
f^\psi_{\Delta(\tau, i+1),s}(x)=\int_{V_{ni,n}(F)\backslash V_{ni,n}(\BA)}\int_{Z_n(F)\backslash Z_n(\BA)}f_{\Delta(\tau, i+1),s}(\hat{v}\begin{pmatrix}I_{ni}\\&z\end{pmatrix}^\wedge x)\psi_{Z_n}(z)dzdv.
$$

\section{Application of Bessel coefficients to $E(f_{\Delta(\tau,i+1),s})$ on $\SO_{2n(i+1)+1}(\BA)$: descent to $\SO_{2ni}(\BA)$}

In this section, we will consider a Fourier coefficient similar to the one in the previous section applied to an Eisenstein series on $\SO_{2n(i+1)+1}(\BA)$, parabolically induced from $\Delta(\tau,i+1)$. Here, we cannot use the work of the previous sections, since there the group $H$ is even orthogonal (when $H$ is orthogonal). However, the treatment is similar, and, in fact, most of the details can be derived from \cite{GRS11}, Chapter 5 (Theorems 5.1, 5.2, 5.3). 

Denote in this section $H=\SO_{2n(i+1)+1}$ (written with respect to $w_{2n(i+1)+1}$). Consider the unipotent radical $U_{1^n}$ consisting of the elements
\begin{equation}\label{6.1}
u=\begin{pmatrix}z&x&y\\&I_{2ni+1}&x'\\&&z^*\end{pmatrix}\in H,\  z\in Z_n
\end{equation}
and the character $\psi_{ni}$ of $U_{1^n}(\BA)$ given by
\begin{equation}\label{6.2}
\psi_n(u)=\psi_{Z_n}(z)\psi(x_{n,ni+1}).
\end{equation}
This character is stabilized by the adele points of $\SO_{2ni}$ realized as the subgroup of elements $diag(I_n,h,I_n)$, with $h\in \SO_{2ni+1}$, satisfying
$$
h\begin{pmatrix}0_{ni}\\1\\0_{ni}\end{pmatrix}=\begin{pmatrix}0_{ni}\\1\\0_{ni}\end{pmatrix}.
$$
The isomorphism of this stabilizer and $\SO_{2ni}$ is given by
\begin{equation}\label{6.2.1}
j(\begin{pmatrix}a&b\\c&d\end{pmatrix})=\begin{pmatrix}a&0&b\\0&1&0\\c&0&d\end{pmatrix},
\end{equation}
where $a,b,c,d$ are $ni\times ni$ matrices and $\begin{pmatrix}a&b\\c&d\end{pmatrix}\in \SO_{2ni}$.
For $h\in \SO_{2ni}$, denote
\begin{equation}\label{6.3}
t(h)=\begin{pmatrix}I_n\\&j(h)\\&&I_n\end{pmatrix}.
\end{equation} 
Denote by $Q_{n(i+1)}$ the standard parabolic subgroup of $H$, whose Levi part, $M_{n(i+1)}$, is isomorphic to $\GL_{n(i+1)}$. We will use the following notation, similar to our previous one,
\begin{equation}\label{6.3.1}
M_{n(i+1)}=\{\hat{a}=\begin{pmatrix}a\\&1\\&&a^*\end{pmatrix}\ | a\in \GL_{n(i+1)}\}.
\end{equation}
Consider the parabolic induction
\begin{equation}\label{6.4}
\rho_{\Delta(\tau,i+1),s}=\Ind_{Q_{n(i+1)}(\BA)}^{\SO_{2n(i+1)+1}(\BA)}\Delta(\tau,i+1)|\det\cdot|^s.
\end{equation}
Let $f_{\Delta(\tau,i+1),s}$ be a smooth, holomorphic section of $\rho_{\Delta(\tau,i+1),s}$ in \eqref{6.4}. Consider the corresponding Eisenstein series $E(f_{\Delta(\tau,i+1),s})$, and apply to it the Fourier coefficient along $U_{1^n}$ in \eqref{6.1}, with respect to the character \eqref{6.2}. Denote this Fourier coefficient by $\mathcal{D}_{\psi,ni}(E(f_{\Delta(\tau,i+1),s}))$, and, as before, we view it as an automorphic function on $\SO_{2ni}(\BA)$, realized via \eqref{6.3}. This is the Bessel coefficient used in automorphic descent. See \cite{GRS11}, Sec. 3.1. Thus,
\begin{equation}\label{6.5}
\mathcal{D}_{\psi,ni}(E(f_{\Delta(\tau,i+1),s}))(h)=\int_{U_{1^{n}}(F)\backslash U_{1^{n}}(\BA)}E(f_{\Delta(\tau,i+1),s},ut(h))\psi^{-1}_n(u)du.
\end{equation}
The analog of \eqref{3.20} in this case is a little more involved. It turns out that \eqref{6.5} is a sum of two Eisenstein series on $\SO_{2ni}(\BA)$, with the second term obtained from the first by an outer conjugation. In order to write the precise identity, we need to introduce some more notations. Let $\alpha_0\in H$ be the following element
\begin{equation}\label{6.6}
\alpha_0=\begin{pmatrix}0&I_{ni}&0&0&0\\0&0&0&0&I_n\\0&0&(-1)^n&0&0\\I_n&0&0&0&0\\0&0&0&I_{ni}&0\end{pmatrix};
\end{equation}
Denote
\begin{equation}\label{6.7}
U'_n=\{u_{x_1,x_2;y}=\begin{pmatrix}I_n&x_1&x_2&0&y\\&I_{ni}&0&0&0\\&&1&0&x'_2\\&&&I_{ni}&x'_1\\&&&&I_n\end{pmatrix}\in H\};
\end{equation} 
\begin{equation}\label{6.8}
\omega_0=diag(I_{n(i+1)},-1,I_{n(i+1)})\in \RO_{2n(i+1)+1};\ \omega_0'=diag(I_{ni-1},\begin{pmatrix}&1\\1\end{pmatrix},I_{ni-1})\in \RO_{2ni}.
\end{equation}
We extend $t$ in \eqref{6.3}  to $\RO_{2ni}$, so that $t(\omega'_0)\in \RO_{2n(i+1)+1}$.
Denote $\omega_0''=\omega_0t(\omega'_0)$. Let $\psi_{V_{ni,1^n}}$ be the following character of $V_{ni,1^n}(\BA)$, similar to \eqref{3.8}  
\begin{equation}\label{6.9}
\psi_{V_{ni,1^n}}(\begin{pmatrix}I_{ni}&y\\&z\end{pmatrix})=\psi_{Z_n}(z),\ z\in Z_n(\BA),
\end{equation}
and let, for $g\in H(\BA)$,
\begin{equation}\label{6.10}
f^\psi_{\Delta(\tau,i+1),s}(g)=\int_{V_{ni,1^n}(F)\backslash V_{ni,1^n}(\BA)}f_{\Delta(\tau,i+1),s}(\hat{v}g)\psi_{V_{ni,1^n}}(v)dv.
\end{equation}
\begin{thm}\label{thm 6.1}
For $Re(s)$ sufficiently large, $h\in \SO_{2ni}(\BA)$,\\
\\
$\mathcal{D}_{\psi,ni}(E(f_{\Delta(\tau,i+1),s}))(h)=$
\begin{equation}\label{6.11}
\sum_{\gamma\in Q_{ni}(F)\backslash \SO_{2ni}(F)}\Lambda^+(f_{\Delta(\tau,i+1),s})(\gamma h)+\sum_{\gamma\in Q_{ni}(F)\backslash \SO_{2ni}(F)}\Lambda^-(f_{\Delta(\tau,i+1),s})(\gamma h^{\omega'_0}),
\end{equation}
where
$$
\Lambda^+(f_{\Delta(\tau,i+1),s})(h)=\int_{U'_n(\BA)}f^\psi_{\Delta(\tau,i+1),s}(\alpha_0ut(h))\psi_n^{-1}(u)du;
$$
$$
\Lambda^-(f_{\Delta(\tau,i+1),s})(h)=\int_{U'_n(\BA)}f^\psi_{\Delta(\tau,i+1),s}(\alpha_0ut(h)\omega_0'')\psi_n(u)du;
$$
In the sum \eqref{6.11}, $Q_{ni}=Q_{ni}^{\SO_{2ni}}$.
\end{thm}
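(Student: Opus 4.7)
The plan is to mimic closely the unfolding and vanishing analysis carried out in Sections 2--3 for the even-orthogonal/symplectic case with general $m$, now specialized to the odd-orthogonal $H=\SO_{2n(i+1)+1}$. The essential new feature is that the stabilizer $\SO_{2ni}$ of $\psi_n$ has two conjugacy classes of maximal isotropic subspaces, which is what produces the two summands $\Lambda^\pm$ rather than just one.

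First, assuming $\Re(s)$ is sufficiently large, I would expand the Eisenstein series as a sum over $Q_{n(i+1)}\backslash H/Q_{1^n}$, indexed as in \eqref{2.2} by $0\le r\le n$ with appropriate representatives $\alpha_r$ in the present group. Next, I would prove the analog of Theorem \ref{thm 2.1}: for $0<r\le n$, the corresponding summand vanishes identically. The argument is structurally identical to Propositions \ref{prop 2.3}--\ref{prop 2.4}: one further factors the $Q_{1^n}$-summation through $Q_{1^n}\cap\alpha_r^{-1}Q_{n(i+1)}\alpha_r$, identifies unipotent root subgroups on which the $u$-integration forces linear conditions on the coset parameters, and then shows that the resulting inner integral is a Fourier coefficient on $\Delta(\tau,i+1)$ corresponding to a partition of $n(i+1)$ strictly larger than $(n^{i+1})$. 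By Proposition \ref{prop 1.2} this Fourier coefficient vanishes.

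Only $r=0$ survives, and there I would further factor the remaining coset-sum through the action of $j(\SO_{2ni})$ on the variety of $(ni+1)$-dimensional isotropic subspaces of $F^{2(ni+1)}$, by analogy with Lemma \ref{lem 2.2}. For odd orthogonal $H$, the stabilizer $\SO_{2ni}$ has \emph{two} open orbits on this flag variety, interchanged by the outer conjugation $\omega_0'$; all non-open sub-orbits are killed by a repetition of the argument in Propositions \ref{prop 2.3}--\ref{prop 2.4} applied to the characters produced by the inner integrations, again invoking Proposition \ref{prop 1.2}. Writing out the two surviving orbits then produces precisely the two summations in \eqref{6.11}: the first open orbit contributes the $\Lambda^+$-term with character $\psi_n^{-1}$, and conjugating by $\omega_0''$ to bring the second open orbit into a comparable form flips the relevant character to $\psi_n$ and translates $h\mapsto h^{\omega_0'}$, giving the $\Lambda^-$-term. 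Along the way, the analog of Proposition \ref{prop 3.1} collapses the inner $\psi_n$-coefficient to the $V_{ni,1^n}$-Whittaker-type Fourier coefficient $f^\psi_{\Delta(\tau,i+1),s}$ defined in \eqref{6.10}, using once more that the top unipotent orbit of $\Delta(\tau,i+1)$ is $(n^{i+1})$.

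Finally, to verify that $\Lambda^\pm$ are smooth meromorphic sections of $\rho_{\Delta(\tau,i),s}$ and to obtain the normalized version, I would repeat verbatim the arguments of Theorem \ref{thm 3.5} and Section 4: by computing the constant term of each Eisenstein summation along the unipotent radical of $Q_{ni}^{\SO_{2ni}}$ and inverting a $2\times 2$ system \`a la Cramer's rule in two values of a split torus element (as in \eqref{3.23}--\eqref{3.24}), one recovers each $\Lambda^\pm$ as a meromorphic combination of $Q_{ni}$-constant terms of the Eisenstein series on the right-hand side of \eqref{6.11}, hence meromorphic in $s$. The normalization statement follows from the local unramified computation as in Theorem \ref{thm 4.2}, except that here one reads off $L^S(\tau,\text{Sym}^2,\cdot)$ and $L^S(\tau,\wedge^2,\cdot)$ factors appropriate to odd orthogonal $H$. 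The main obstacle is the combinatorial bookkeeping in Step~3: rigorously establishing that the two open orbits for $\SO_{2ni}$ are distinguished exactly by $\omega_0'$, and carefully matching the characters and twists so that the outer conjugation absorbs cleanly into the $h^{\omega_0'}$ translation, producing the stated formulas for $\Lambda^+$ and $\Lambda^-$ without sign or normalization discrepancies.
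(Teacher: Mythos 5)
Your overall strategy --- unfold the Eisenstein series, kill all but the relevant double cosets using Proposition \ref{prop 1.2}, and account for the residual sum via a stabilizer-orbit decomposition --- is the same as the paper's, and the first half of the proposal (the vanishing of all summands with $r>0$) is essentially correct. However, the geometric picture you describe for the crucial final step is backwards, and carrying it out as written would fail. After the $r=0$ cell survives, the paper factors the remaining sum through $Q^{\SO_{2ni+1}}_{ni}\backslash\SO_{2ni+1}/\SO_{2ni}$, which by \cite{GRS11}, Prop.~4.4 consists of \emph{three} double cosets, representatives $I_{2ni+1}$, $\gamma_0$ (with $h_{\gamma_0}=\omega_0''$), and $\gamma_1$. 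In terms of the action of $\SO_{2ni}$ on the flag variety of $ni$-dimensional isotropic subspaces $L$ of the ambient $(2ni+1)$-dimensional space $V$ containing the $2ni$-dimensional $W$, the orbit with $\dim(L\cap W)=ni-1$ is the \emph{unique open} orbit (the variety is irreducible, so there can be only one), and the two orbits with $L\subset W$ are the smaller ones interchanged by $\omega'_0$. The paper's computation (see \eqref{6.19}--\eqref{6.22}) shows that the \emph{open} orbit $\gamma_1$ is the one killed: it produces a Fourier coefficient on $\Delta(\tau,i+1)$ along $V_{ni,1^n}$ with the character $\tilde\psi_{V_{ni,1^n}}$, which corresponds to the partition $(n+1,1^{ni-1})\not\leq(n^{i+1})$, hence vanishes by Prop.~\ref{prop 1.2}. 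The two surviving terms $\Lambda^\pm$ come from the two \emph{non-open} orbits (identity and $\gamma_0$). Your claim that ``$\SO_{2ni}$ has two open orbits \ldots; all non-open sub-orbits are killed'' reverses this, and if you tried to apply Proposition \ref{prop 1.2} to the small orbits you would not get the vanishing you need, while the large orbit you intend to keep would actually disappear. (Also, the relevant flag variety is that of $ni$-dimensional maximal isotropic subspaces of the $(2ni+1)$-dimensional odd orthogonal space, not of ``$(ni+1)$-dimensional isotropic subspaces of $F^{2(ni+1)}$.'')

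One secondary remark: the final paragraph of your proposal (meromorphic continuation of $\Lambda^\pm$ via Cramer's rule and the normalization via unramified computations) addresses content that the paper isolates in Theorems \ref{thm 6.2}, \ref{thm 6.4} and \ref{thm 6.5}, not Theorem \ref{thm 6.1}; the statement at hand is only the finite-$\Re(s)$ identity \eqref{6.11}. That part of the plan is fine but belongs to the subsequent theorems.
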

\begin{proof}
The proof is similar to that of Theorem \ref{thm 1.1}. Many details appear in \cite{GRS11}, Chapter 5. We sketch it here for the convenience of the reader.
As in \eqref{2.1}, we factorize (for $Re(s)$ large)
\begin{equation}\label {6.12}
E(f_{\Delta(\tau, i+1),s},x)=\sum_{\alpha\in
	Q_{n(i+1)}(F)\backslash H(F)/Q_n(F)}\sum_\gamma f_{\Delta(\tau,
	i+1),s}(\alpha\gamma x),
\end{equation}
where, as before, $Q_k=Q_k^H$ is the standard parabolic subgroup of $H=\SO_{2n(i+1)+1}$, with Levi part isomorphic to $\GL_k\times \SO_{2(n(i+1)-k)+1}$ ($1\leq k\leq n(i+1)$); the second summation in \eqref{6.12} is over $\gamma\in Q_n(F)\cap
\alpha^{-1}Q_{n(i+1)}(F)\alpha\backslash Q_n(F)$. The
representatives $\alpha$ in \eqref{6.12} are described in
\cite{GRS11}, Sec. 4.2. They are parameterized by integers $0\leq
r\leq n$,
\begin{equation}\label{6.13}
\alpha_r=\begin{pmatrix}I_r\\&\alpha'_r\\&&I_r\end{pmatrix},
\end{equation}
where
$$
\alpha'_r=\begin{pmatrix}0&I_{ni}&0&0&0\\0&0&0&0&I_{n-r}\\0&0&(-1)^{n-r}&0&0\\
I_{n-r}&0&0&0&0\\0&0&0&I_{ni}&0\end{pmatrix}.
$$
Denote $Q^{(r)}=Q_n\cap
\alpha_r^{-1}Q_{n(i+1)}\alpha_r$. This is the ($F$ - algebraic) subgroup of $Q_n$, consisting of the matrices
\begin{equation}\label{6.14}
\begin{pmatrix}a&x&y_1&y_2&y_3&z_1&z_2\\0&b&0&0&y_4&0&z'_1\\&&c&u&v&y'_4&y'_3\\&&&1&u'&0&y'_2\\&&&&c^*&0&y'_1\\&&&&&b^*&x'\\&&&&&&a^*\end{pmatrix}\in H,
\end{equation}
where $a\in \GL_r$, $b\in \GL_{n-r}$, $c\in \GL_{ni}$. The element \eqref{6.14} is conjugated by $\alpha_r$ to
$$
\begin{pmatrix}a&y_1&z_1&y_2&x&y_3&z_2\\&c&y'_4&u&0&v&y'_3\\&&b^*&0&0&0&x'_2\\&&&1&0&u'&y'_2\\&&&&b&y_4&z'_1\\&&&&&c^*&y'_1\\&&&&&&a^*\end{pmatrix}^{\omega_0^{n-r}}.
$$
Thus, for $h\in \SO_{2ni}(\BA)$,\\
\\
$\mathcal{D}_{\psi,ni}(E(f_{\Delta(\tau, i+1),s}))(h)=$
\begin{equation}\label{6.15}
\sum_{r=0}^n\int_{U_{1^n}(F)\backslash
	U_{1^n}(\BA)}\sum_{\gamma\in Q^{(r)}(F)\backslash Q_n(F)}
f_{\Delta(\tau, i+1),s}(\alpha_r\gamma ut(h))\psi^{-1}_n(u)du.
\end{equation}
The analogue of Theorem \ref{thm 2.1} is valid here, namely for all $1\leq r\leq n$, the summand in \eqref{6.15}, corresponding to $r$ is (identically) zero. The proof follows similar steps as that of Theorem \ref{thm 2.1}, and it is easily read from the proof of Prop. 5.1 in \cite{GRS11}. One only needs to carry the easy translation from the language of twisted Jacquet modules to that of Fourier coefficients. Thus, \eqref{6.15} becomes\\
\\
$\mathcal{D}_{\psi,ni}(E(f_{\Delta(\tau, i+1),s}))(h)=$
\begin{equation}\label{6.16}
\int_{U_{1^n}(F)\backslash
	U_{1^n}(\BA)}\sum_{\gamma\in Q^{(0)}(F)\backslash Q_n(F)}
f_{\Delta(\tau, i+1),s}(\alpha_0\gamma ut(h))\psi^{-1}_n(u)du.
\end{equation} 	
Factor the sum in \eqref{6.16} modulo $Q_{1^n}(F)$ from the right. As before, $Q_{1^n}=Q_{1^n}^H$ denotes the standard parabolic subgroup of  of $H=\SO_{2n(i+1)+1}$, with Levi part isomorphic to $\GL_1^n\times \SO_{2ni+1}$. Note that $Q_n=Q^{(0)}Q_{1^n}$. Hence \eqref{6.16} becomes	\\
\\
$\mathcal{D}_{\psi,ni}(E(f_{\Delta(\tau, i+1),s}))(h)=$
\begin{equation}\label{6.17}
\int_{U_{1^n}(F)\backslash
	U_{1^n}(\BA)}\sum_{\eta\in Q^{(0)}(F)\cap Q_{1^n}(F)\backslash Q_{1^n}(F)}
f_{\Delta(\tau, i+1),s}(\alpha_0\eta ut(h))\psi^{-1}_n(u)du,
\end{equation} 
The subgroup $Q^{(0)}\cap Q_{1^n}$ consists of the matrices
$$
\begin{pmatrix}b&0&0&y&0\\&c&u&v&y'\\&&1&u'&0\\&&&c^*&0\\&&&&b^*\end{pmatrix}\in H,
$$
where $c\in \GL_{ni}$ and $b\in B_{\GL_n}$ - the standard Borel subgroup of $\GL_n$. Factor the sum in \eqref{6.17} modulo $U_{1^n}(F)$ from the right. We get\\
\\
$\mathcal{D}_{\psi,ni}(E(f_{\Delta(\tau, i+1),s}))(h)=$
\begin{equation}\label{6.18}
\sum_{\gamma\in Q^{\SO_{2ni+1}}_{ni}(F)\backslash \SO_{2ni+1}(F)}\int
f_{\Delta(\tau, i+1),s}(\alpha_0h_\gamma ut(h))\psi^{-1}_n(u)du,
\end{equation}  	
where the $du$-integration is over $U_{1^n}(F)\cap h^{-1}_\gamma(Q^{(0)}(F)\cap Q_{1^n}(F))h_\gamma\backslash
U_{1^n}(\BA)$, and for $\gamma\in \SO_{2ni+1}$, $h_\gamma=diag(I_n,\gamma,I_n)$.	
Now, factor the sum in \eqref{6.18} modulo $\SO_{2ni}(F)$ from the right (realized via the embedding $j$). The set\\
$Q^{\SO_{2ni+1}}_{ni}(F)\backslash \SO_{2ni+1(F)}/\SO_{2ni}(F)$ consists of three elements. See Prop. 4.4 in \cite{GRS11}. Here are representatives for the three double cosets: one is the identity $I_{2ni+1}$, another is $\gamma_0$, such that $h_{\gamma_0}=\omega_0''$, and the third is
$$
\gamma_1=diag(I_{ni-1},\begin{pmatrix}1\\1&1\\-\frac{1}{2}&-1&1\end{pmatrix},I_{ni-1}).
$$

Let us show that the contribution of $\gamma_1$ to \eqref{6.18} is zero. For this, we note that the subgroup $U^1_{1^n}=U_{1^n}\cap h^{-1}_{\gamma_1}(Q^{(0)}\cap Q_{1^n})h_{\gamma_1}$, viewed as an $F$-matrix group, consists of the matrices
\begin{equation}\label{6.19}
\begin{pmatrix}z&0&-\frac{y_1}{2}&-y_1&y_1&y_2&0\\&I_{ni-1}&0&0&0&0&y'_2\\&&1&0&0&0&y'_1\\&&&1&0&0&-y'_1\\&&&&1&0&-\frac{y'_1}{2}\\&&&&&I_{ni-1}&0\\&&&&&&z^*\end{pmatrix}\in H,\ z\in Z_n.
\end{equation}
When we conjugate the element \eqref{6.19} by $\alpha_0h_{\gamma_1}$, we get
\begin{equation}\label{6.20}
\begin{pmatrix}I_{ni-1}&0&y'_2\\&1&y'_1\\&&z^*\end{pmatrix}^\wedge,
\end{equation}
which is the subgroup $V_{ni,1^n}^\wedge$. From \eqref{6.19}, \eqref{6.20}, it follows that the contribution of $\gamma_1$ to \eqref{6.18} is
\begin{equation}\label{6.21}
\sum_{\gamma\in \SO_{2ni}(F)\cap \gamma_1^{-1}Q_{ni}(F)\gamma_1\backslash \SO_{2ni}(F)}\int_{U^1_{1^n}(\BA)\backslash U_{1^n}(\BA)}I_\psi(f_{\Delta(\tau, i+1),s})(\alpha_0h_{\gamma_1}ut(\gamma h))\psi_n^{-1}(u)du,
\end{equation} 
where
\begin{equation}\label{6.22}
I_\psi(f_{\Delta(\tau, i+1),s})(x)=\int_{V_{ni,1^n}(F)\backslash V_{ni,1^n}(\BA)}f_{\Delta(\tau, i+1),s}(\hat{v}x)\tilde{\psi}_{V_{ni,1^n}}(v)dv,
\end{equation}
where
$$
\tilde{\psi}_{V_{ni,1^n}}(\begin{pmatrix}I_{ni}&y\\&z\end{pmatrix})=\psi_{Z_n}(z)\psi(y_{ni,1}).
$$
The Fourier coefficient \eqref{6.22} is obtained by applying the Fourier coefficient with respect to $\tilde{\psi}_{V_{ni,1^n}}$ to the elements of $\Delta(\tau,i+1)$ obtained as $a\mapsto f_{\Delta(\tau, i+1),s}(\hat{a}x)$. This Fourier coefficient corresponds to the partition $(n+1,1^{ni-1})$, and by Prop. \ref{prop 1.2}, this Fourier coefficient is zero on $\Delta(\tau,i+1)$.

It remains to examine the contributions of $I_{2ni+1}$ and $\gamma_0$ to \eqref{6.18}, and it is straightforward to check that we get the two terms of \eqref{6.11}.
\end{proof}
Next, let us show that the two summands in \eqref{6.11} are Eisenstein series.
\begin{thm}\label{thm 6.2}
Both functions $\Lambda^\pm(f_{\Delta(\tau,i+1),s})$, defined for $Re(s)$ sufficiently large by the integrals in Theorem \ref{thm 6.1}, admit analytic continuations
to  meromorphic functions of $s$ in the whole plane. They define
smooth meromorphic sections of
$$
\rho_{\Delta(\tau,i),s}=\Ind_{Q_{ni}(\BA)}^{\SO_{2ni}(\BA)}\Delta(\tau,i)
|\det\cdot|^s .
$$
Thus, $\mathcal{D}_{\psi,ni}(E(f_{\Delta(\tau,i+1),s}))$ is the sum of two Eisenstein series on
$\SO_{2ni}(\BA)$; the first corresponds to the section $\Lambda^+(f_{\Delta(\tau,i+1),s})$ of
$\rho_{\Delta(\tau,i),s}$, and the second corresponds to the section $h\mapsto \Lambda^-(f_{\Delta(\tau,i+1),s})(h^{\omega'_0})$ of
$$
\rho^{\omega'_0}_{\Delta(\tau,i),s}=\Ind_{Q^{\omega'_0}_{ni}(\BA)}^{\SO_{2ni}(\BA)}\Delta(\tau,i)
|\det\cdot|^s .
$$
\end{thm}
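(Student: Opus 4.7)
The plan is to follow the blueprint of Theorem \ref{thm 3.5}, adapted to the present situation where two Eisenstein series appear and no cusp form $\varphi_\sigma$ is integrated out. The first step is to verify, for $\Re(s)$ sufficiently large (where all integrals converge absolutely by standard gauge estimates on the Eisenstein series and the section), that $\Lambda^+(f_{\Delta(\tau,i+1),s})$ transforms under left translation by $Q_{ni}(\BA)$ according to the inducing data of $\rho_{\Delta(\tau,i),s}$. Concretely, I would write a typical element $q \in Q_{ni}(\BA)$ as $q = \hat{a}\, u_0$ with $a \in \GL_{ni}(\BA)$ and $u_0$ in the unipotent radical, and conjugate it across $\alpha_0 \cdot t(h)$ and through the integration variable $u \in U'_n(\BA)$ (the analogue of the calculation in Prop. \ref{prop 3.4}). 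The change of variables absorbs $u_0$ and produces the expected modulus factor $\delta_{Q_{ni}}^{1/2}(\hat{a})|\det a|^s$; the appearance of $\Delta(\tau,i)(a)$ is forced by the fact that $A \mapsto f^\psi_{\Delta(\tau,i+1),s}(\hat{A} h_0)$ factors through the Whittaker--Speh--Shalika Fourier coefficient along $V_{ni,1^n}$ (cf.\ \eqref{6.10}), which identifies the restriction to $\GL_{ni}$ with an element of $|\det|^{s+\ast}\Delta(\tau,i)$. The argument for $h \mapsto \Lambda^-(f_{\Delta(\tau,i+1),s})(h^{\omega'_0})$ is parallel, using the conjugate parabolic $Q_{ni}^{\omega'_0}$; the extra factor $\omega_0''$ in $\Lambda^-$ is precisely what makes this second section live on the $\omega'_0$-conjugate parabolic.

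Granted the transformation property in a right half-plane, the two summations in \eqref{6.11} are then \emph{bona fide} Eisenstein summations attached to smooth holomorphic sections of $\rho_{\Delta(\tau,i),s}$ and $\rho_{\Delta(\tau,i),s}^{\omega'_0}$ respectively. The next step is to get meromorphic continuation. Here I would imitate the Cramer's rule trick in the proof of Theorem \ref{thm 3.5}: since $\mathcal{D}_{\psi,ni}(E(f_{\Delta(\tau,i+1),s}))(h)$ is \emph{a priori} meromorphic in $s$ (the Fourier coefficient in \eqref{6.5} is a finite integral applied to a meromorphic family), its constant term along the unipotent radical $U_{Q_{ni}}$ of $\SO_{2ni}$ is also meromorphic. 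Computing this constant term using the Bruhat decomposition $Q_{ni}\backslash \SO_{2ni}/Q_{ni}$ on the first sum gives $\Lambda^+ + M(\Lambda^+)$ with $M$ the standard intertwining operator; the contribution of the second sum to the same constant term comes from $Q_{ni}\backslash \SO_{2ni}/Q_{ni}^{\omega'_0}$ and reduces to a (meromorphic) intertwining image of $\Lambda^-$. Evaluating this identity at $\hat{a}_t h$ for $a_t = tI_{ni}$ and two distinct values of $|t|$ produces a linear system whose coefficients are distinct products of $|t|^{\pm nis}\omega_\tau^{\pm 1}(t)$; solving it expresses $\Lambda^+(h)$ as a finite meromorphic combination of constant terms, hence extends it meromorphically. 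The same maneuver applied to the constant term along $U_{Q_{ni}^{\omega'_0}}$ gives the meromorphic continuation of $\Lambda^-$.

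The main obstacle will be the bookkeeping in the constant term computation, namely keeping track of how the two parabolics $Q_{ni}$ and $Q_{ni}^{\omega'_0}$ interact under the double coset decomposition of $\SO_{2ni}$. In $\SO_{2ni}$ these two parabolics are not conjugate, and the non-trivial Weyl element of $\GL_{ni}$-type that swaps them (realized through $\omega'_0$) will produce a cross-term between $\Lambda^+$ and $\Lambda^-$ in the two constant-term identities. Provided one organizes the two equations as a $4\times 4$ block system in the unknowns $\Lambda^\pm$ and their intertwiners $M^\pm\Lambda^\pm$, the Cramer's rule argument still goes through, but the precise identification of which intertwining operators appear — and checking that the resulting coefficient determinant is a nonzero meromorphic function of $s$ — requires some care. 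Once this is in place, the conclusion that $\mathcal{D}_{\psi,ni}(E(f_{\Delta(\tau,i+1),s}))$ is the sum of the two Eisenstein series on $\SO_{2ni}(\BA)$ attached to the smooth meromorphic sections $\Lambda^\pm$ follows immediately from the identity of Theorem \ref{thm 6.1} by analytic continuation.
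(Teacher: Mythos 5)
Your proposal follows the paper's own proof step by step: first establish the $Q_{ni}(\BA)$-transformation property of $\Lambda^+$ (and, by the same computation with $\omega_0''$ inserted, of $\Lambda^-$) for $\Re(s)$ large by conjugating $q=\hat a u_0$ past $\alpha_0$ and into the parabolic $Q_{n(i+1)}$, then deduce meromorphic continuation from constant-term identities and Cramer's rule as in Theorem \ref{thm 3.5}. The one point where you go beyond what the paper writes is the cross-term concern: the paper simply says ``the proof of meromorphic continuation is as in the proof of Theorem \ref{thm 3.5}'' without addressing that $\mathcal{D}_{\psi,ni}(E(f_{\Delta(\tau,i+1),s}))$ is now a sum of two Eisenstein series attached to the two Siegel-type parabolics $Q_{ni}$ and $Q_{ni}^{\omega'_0}$ (which are $\SO_{2ni}$-conjugate only when $ni$ is even), so that the constant term along $U_{Q_{ni}}$ picks up a contribution from $\Lambda^-$ as well; your plan to take constant terms along both unipotent radicals and solve the resulting linear system (of size depending on the parity of $ni$) is a reasonable way to close that gap.
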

\begin{proof}
We will show that for $Re(s)$ sufficiently large, $\Lambda^\pm$ indeed define elements of $\rho_{\Delta(\tau,i),s}$, and then the proof of meromorphic continuation is as in the proof of Prop. \ref{prop 3.7}. We will also get a different proof of the meromorphic continuation right after \eqref{6.28.9}. Since $\Lambda^-$ is defined in almost the same way as $\Lambda^+$, it suffices to show this for $\Lambda^+(f_{\Delta(\tau,i+1),s},h)$, and we may assume that $h=I_{2ni}$.

Let $q=\begin{pmatrix}a&x\\&a^*\end{pmatrix}\in Q_{ni}(\BA)\subset \SO_{2ni}(\BA)$. Note that
$$
\alpha_0t(q)\alpha_0^{-1}=\begin{pmatrix}a&0&x\\&I_{2n+1}&0\\&&a^*\end{pmatrix}
$$
Hence, for $Re(s)$ large,
\begin{equation}\label{6.23}
\Lambda^+(f_{\Delta(\tau,i+1),s},q)=|\det(a)|^{-n}\int_{U'_n(\BA)}f^\psi_{\Delta(\tau,i+1),s}(\alpha_0t(q)u)\psi_n^{-1}(u)du
\end{equation}
$$
=|\det(a)|^{-n}\int_{U'_n(\BA)}f^\psi_{\Delta(\tau,i+1),s}(\begin{pmatrix}a\\&I_n\end{pmatrix}^\wedge\alpha_0u)\psi_n^{-1}(u)du.
$$
For any fixed $x_0\in H(\BA)$, the function on $\GL_{ni}(\BA)$, $a\mapsto f^\psi_{\Delta(\tau,i+1),s}(\begin{pmatrix}a\\&I_n\end{pmatrix}^\wedge x_0)$ lies in $|\det(a)|^{s+\frac{n(i+1)}{2}}$ times a function obtained as\\ 
$\phi(diag(a,I_n))$, where $\phi$ lies in the constant term of $\Delta(\tau,i+1)$ along $V_{ni,n}$, followed by taking a Whittaker coefficient with respect to $\psi_{Z_n}$. It follows that, as a function of $a$, $|\det(a)|^{-n}f^\psi_{\Delta(\tau,i+1),s}(\begin{pmatrix}a\\&I_n\end{pmatrix}^\wedge x_0)$ lies in the space of
\begin{equation}\label{6.23.1}
|\det\cdot|^{s+\frac{ni-1}{2}}\Delta(\tau,i)=\delta^{\frac{1}{2}}_{Q_{ni}^{\SO_{2ni}}}\cdot |\det\cdot|^s\Delta(\tau,i),
\end{equation}
and hence, formally, the r.h.s. of \eqref{6.23} lies in the space of \eqref{6.23.1}. The proof that this is indeed the case follows the line of proofs of Prop. \ref{prop 4.1} and Prop. \ref{prop 4.2}. Here are some details in brief. First, we may assume that $f_{\Delta(\tau,i+1)}$ is decomposable, and hence, so is $f^\psi_{\Delta(\tau,i+1)}$. Thus, let $f_{\Delta(\tau,i+1)}$ correspond, via $p_{\tau,i}$ in \eqref{4.2.1.a}, to $\otimes'_vf_{\Delta(\tau_v,i;1)}$, where  $f_{\Delta(\tau_v,i;1),s}$ is a section
of  
\begin{equation}\label{6.23.2}
\rho_{\Delta(\tau_v,i;1),s}=\Ind_{Q_{ni,n}(F_v)}^{\SO_{2n(i+1)+1}(F_v)}(\Delta(\tau_v,i)|\det\cdot|^{s-\frac{1}{2}}\times
\tau_v|\det\cdot|^{s+\frac{i}{2}}).
\end{equation}
As before, $Q_{ni,n}$ is the standard parabolic subgroup of $H=\SO_{2n(i+1)+1}$, whose Levi part is isomorphic to $\GL_{ni}\times \GL_n$. The representation  $\Delta(\tau_v,i)$ is realized in its model with respect to
$\psi_{V_{i^n}}^{-1}$ and $\tau_v$ is 
realized in its $\psi^{-1}_{Z_n}$-Whittaker model. As in \eqref{4'.6}, we view $f_{\Delta(\tau_v,i;m),s}$ as a
function on $H(F_v)\times \GL_{ni}(F_v)\times \GL_n(F_v)$, such
that for a fixed element in $H(F_v)$, the function in the two other
variables lies in the tensor product of the two models above of
$\Delta(\tau_v,i)$ and $\tau_v$. We simplify notation and re-denote $f_{\Delta(\tau_v,i;1),s}(y)=f_{\Delta(\tau_v,i;1),s}(y;I_{ni},I_n)$.
 
We fix a finite set of places $S$, containing the Archimedean places,
outside which $\tau$ is unramified, $\psi$ is normalized (so that for $v\notin S$, $\psi_v$ is trivial on $\mathcal{O}_v$ and nontrivial on $\mathcal{P}_v^{-1}$). We assume that, for $v\notin S$,
$f_{\Delta(\tau_v,i;1),s}=f^0_{\Delta(\tau_v,i;1),s}$ is spherical and normalized as in \eqref{4.2.1*}. Now, let us consider, for $Re(s)$ sufficiently large, the following local integral at $v$, corresponding to \eqref{6.23},
\begin{equation}\label{6.23.3}
\Lambda_v^+(f_{\Delta(\tau_v,i;1),s},a_v)=|\det(a_v)|^{s+\frac{ni-1}{2}}\int_{U'_n(F_v)}f_{\Delta(\tau_v,i;1),s}(\alpha_0u;a,I_n)\psi_{n,v}^{-1}(u)du.
\end{equation}
Here, $a\in \GL_{ni}(F_v)$. Write the elements of $U'_n(F_v)$ in the form $u_{x_1,x_2;y}$ as
in \eqref{6.7}. Let
$$
U''_n(F_v)=\{u_{x_1,x_2;y}\in U'_n(F_v) |\ x_1=0\}.
$$
Then, as in Prop. \ref{prop 4.1}, with a similar proof,
there is a section $f'_{\Delta(\tau_v,i;1),s}$, which
	depends on (the smoothness of) $f_{\Delta(\tau_v,i;1),s}$, 	such that, for all $a_v\in \GL_{ni}(F_v)$ 
	\begin{equation}\label{6.26}
	\Lambda^+_v(f_{\Delta(\tau_v,i;1),s},a_v)=|\det(a_v)|^{s+\frac{ni-1}{2}}\int_{U''_n(F_v)}
	f'_{\Delta(\tau_v,i;1),s}(\alpha_0u;a_v,I_n)\psi^{-1}_{n,v}(u)du,
	\end{equation}
and if $f_{\Delta(\tau_v,i;1),s}$ is spherical, then
	$f'_{\Delta(\tau_v,i;1),s}=f_{\Delta(\tau_v,i;1),s}$. 
	 We explain, in brief, the case where $v$ is finite. We assume, for simplicity that $\psi_v$ is normalized. Write $u_{x_1,x_2;y}=v(x_1)u_n(x_2;y)=u_n(x_2;y)v(x_1)$, where $v(x_1)=u_{x_1,0;0}$, $u_n(x_2;y)=u_{0,x_2;y}$. Note that
	\begin{equation}\label{6.27}
	\alpha_0u_n(x_2;y)\alpha_0^{-1}=diag(I_{ni},\begin{pmatrix}I_n\\x'_2&1\\y&x_2&I_n\end{pmatrix},I_{ni}).
	\end{equation} 
	Let
	\begin{equation}\label{6.28}
	\Lambda'_v(f_{\Delta(\tau_v,i;1),s},a_v)(x_1)=\int_{U''_n(F_v)}
	f_{\Delta(\tau_v,i;1),s}(\alpha_0v(x_1)u;a_v,I_n)\psi^{-1}_{n,v}(u)du.
	\end{equation} 
	We will show that the last function has compact support in $M_{n\times ni}(F_v)$, and in case of a spherical section this support is $M_{n\times ni}(\mathcal{O}_v)$.
	Let 
	$$	
	e_{b,c}=diag(I_n,\begin{pmatrix}I_{ni}&b&c\\&1&b'\\&&I_{ni}\end{pmatrix},I_n)\in H(F_v),
	$$
	where $b,c$ have coordinates sufficiently close to zero, such that right translation by $e_{b,c}$ fixes $f_{\Delta(\tau_v,i;1),s}$. If the section is spherical, then we may take $b,c$ to have coordinates in $\mathcal{O}_v$.	It is straightforward to check that
	$$
	\alpha_0u_{x_1,x_2;y}e_{b,c}=u'\alpha_0u_{x_1,x_2+x_1b;y},
	$$
	where $u'$ is in the unipotent radical of $Q_{ni,n}(F_v)$. Changing variables $x_2\mapsto x_2-x_1b$ in \eqref{6.28} gives\\
	\\
	$	\Lambda'_v(f_{\Delta(\tau_v,i;1),s},a_v)(x_1)=$
	$$
=\Lambda'_v(\rho(e_{b,c})f_{\Delta(\tau_v,i;1),s})(x_1)=  \psi_v((x_1)_nb)\Lambda'_v(f_{\Delta(\tau_v,i;1),s},a_v)(x_1).
	$$
	Recall that $\rho(e_{b,c})$ denotes right translation by $e_{b,c}$; $(x_1)_n$ denotes the last row of $x_1$. We conclude that $(x_1)_n$ is supported in a compact set, which is independent of $a_v$ (of the form $(\mathcal{P}_v^{-k})^{ni}\subset F_v^{ni}$), and in case the section is spherical, then $(x_1)_n$ is supported in $\mathcal{O}_v^{ni}$. Now, we may assume that $(x_1)_n=0$, and show that the $(n-1)$-th row of $x_1$ must lie in a compact set, which is independent of $a_v$ (in order to be in the support of $\Lambda'_v(f_{\Delta(\tau_v,i;1),s},a_v)$). Then we may assume that the last two rows of $x_1$ are zero and move on to show compact support in row $(x_1)_{n-2}$, and so on. In general, assume that $(x_1)_r=0$, for $n-\ell\leq r\leq n$, $0\leq \ell\leq n-2$. Let
	$$
	c_{\ell;z}=\begin{pmatrix}I_{n-\ell-1}\\&1\\&0&I_\ell\\&z&0&I_{ni}\end{pmatrix}^\wedge\in \SO_{2n(i+1)+1}(F_v),
	$$
	with $z\in (\mathcal{P}^k_v)^{ni}$, where $k$ is sufficiently large, such that right translation by $c_{\ell,z}$ fixes $f_{\Delta(\tau_v,i;1),s}$. As before, for $x_1$ as above, we get\\
	\\
	$\Lambda'_v(f_{\Delta(\tau_v,i;1),s},a_v)(x_1)=$
	$$
	=\Lambda'_v(\rho(c_{\ell,z})f_{\Delta(\tau_v,i;1),s},a_v)(x_1)=  \psi_v((x_1)_{n-\ell-1}z)\Lambda'_v(f_{\Delta(\tau_v,i;1),s},a_v)(x_1).
	$$	
	We conclude that $(x_1)_{n-\ell-1}$ is supported in  $(\mathcal{P}_v^{-k})^{ni}$, and in case the section is spherical, it is supported in $\mathcal{O}_v^{ni}$. This proves \eqref{6.26}. Let $\tilde{f}_{\Delta(\tau_v,i;1),s}=\rho(\alpha_0)(f'_{\Delta(\tau_v,i;1),s})$. Then \eqref{6.26} and \eqref{6.27} show that	
		\begin{multline}\label{6.28.1}
	\Lambda^+_v(f_{\Delta(\tau_v,i;1),s},a_v)=\\
	|\det(a_v)|^{-n}\int
	\tilde{f}_{\Delta(\tau_v,i;1),s}(diag(a_v,\begin{pmatrix}I_n\\x'&1\\y&x&I_n\end{pmatrix},a_v^*))\psi_v^{-1}(x_n)d(x,y),
	\end{multline}
where the integration is over the opposite of the unipotent radical $U^{\SO_{2n+1}}_n(F_v)$. Also, if $f_{\Delta(\tau_v,i;1),s}$ is unramified, $\tilde{f}_{\Delta(\tau_v,i;1),s}=f_{\Delta(\tau_v,i;1),s}$. Now, let us write the function on $\GL_{ni}(F_v)\times \SO_{2n+1}(F_v)$, 
$$
(a_v,g_v)\mapsto |\det(a_v)|^{-n}\tilde{f}_{\Delta(\tau_v,i;1),s}(diag(a_v,g_v,a_v^*))
$$ 
as a finite sum of the form 
$$
|\det(a_v)|^{s+\frac{ni-1}{2}}\sum W_v(a_v)f_{\tau_v,s+\frac{i}{2}}(g_v),
$$
where $W_v\in W_{\psi^{-1}_{V_{i^n}}}(\Delta(\tau_v,i))$, and $f_{\tau_v,s+\frac{i}{2}}$ are smooth, holomorphic sections of $\rho_{\tau_v,s+\frac{i}{2}}=\Ind_{Q_n(F_v)}^{\SO_{2n+1}(F_v)}\tau_v |\det\cdot|^{s+\frac{i}{2}}$. Denote
\begin{equation}\label{6.28.4}
\alpha_0''=\begin{pmatrix}&&I_n\\&(-1)^n\\I_n\end{pmatrix}.
\end{equation}
Write in the last sum $f_{\tau_v,s+\frac{i}{2}}=\rho(\alpha_0'')(\varphi_{\tau_v,s+\frac{i}{2}})$, in each term. Then
substituting the sum in \eqref{6.28.1}, we get
\begin{multline}\label{6.28.2}
\Lambda^+_v(f_{\Delta(\tau_v,i;1),s},a_v)=\\
|\det(a_v)|^{s+\frac{ni-1}{2}}\sum W_v(a_v)\int_{U_n^{\SO_{2n+1}}(F_v)}
\varphi_{\tau_v,s+\frac{i}{2}}(\alpha_0''u)\psi_v^{-1}((-1)^nu_{n,n+1})du.
\end{multline}
We also showed that if $f_{\Delta(\tau_v,i;1),s}$ is unramified, then in \eqref{6.28.2}, there is one summand
\begin{multline}\label{6.28.3}
\Lambda^+_v(f_{\Delta(\tau_v,i;1),s},a_v)=\\
|\det(a_v)|^{s+\frac{ni-1}{2}}W_v(a_v)\int_{U_n^{\SO_{2n+1}}(F_v)}
\varphi_{\tau_v,s+\frac{i}{2}}(\alpha_0''u)\psi_v^{-1}((-1)^nu_{n,n+1})du,
\end{multline} 
where $W_v$ and $\varphi_{\tau_v,s+\frac{i}{2}}$ are unramified. Note that the integral along $U_n^{\SO_{2n+1}}(F_v)$ in \eqref{6.28.2} is a Jacquet integral applied to $\varphi_{\tau_v,s+\frac{i}{2}}$. (See \cite{J67}, p. 245. See also, \cite{S78}, p. 188.) In particular, we know that it continues to a holomorphic function in the plane. This shows that $\Lambda^+_v(f_{\Delta(\tau_v,i;1),s},a_v)$ admits an analytic continuation to a holomorphic function in the complex plane. Note, also, that \eqref{6.28.2} shows that the following function on $H(F_v)$
\begin{equation}\label{6.28.5}
h\mapsto \Lambda_v^+(\rho(h)f_{\Delta(\tau_v,i;1),s},I_{ni})=\int_{U'_n(F_v)}f_{\Delta(\tau_v,i;1),s}(\alpha_0uh)\psi_{n,v}^{-1}(u)du,
\end{equation}
initially defined for $Re(s)$ sufficiently large, and then as an analytic function in $\BC$, defines a smooth, holomorphic section of $\Ind_{Q_{ni}(F_v)}^{\SO_{2ni}(F_v)}\Delta(\tau_v,i)$.
We are now at the same point as in Cor. \ref{cor 4.4}, and, as in Cor. \ref{cor 4.5}, we conclude that there are automorphic forms $\alpha^j$ in the space of $\Delta(\tau,i)$ and smooth, holomorphic sections $\varphi^j_{\tau,s+\frac{i}{2}}$ of $\rho_{\tau,s+\frac{i}{2}}$, $1\leq j\leq N'$, such that, for all $q=\begin{pmatrix}a&x\\&a^*\end{pmatrix}\in Q_{ni}(\BA)\subset \SO_{2ni}(\BA)$, 
\begin{multline}\label{6.28.6}
\Lambda^+(f_{\Delta(\tau,i+1),s},q)=|\det(a)|^{s+\frac{ni-1}{2}}\\
\cdot \sum_{j=1}^{N'} \alpha^j(a)\int_{U^{\SO_{2n+1}}_n(\BA)}\varphi^{j,\psi}_{\tau,s+\frac{i}{2}}(\alpha''_0u))\psi^{-1}((-1)^nu_{n,n+1})du,
\end{multline}
where, for $h\in \SO_{2n+1}(\BA)$,
$$
\varphi^{j,\psi}_{\tau,s+\frac{i}{2}}(h)=\int_{Z_n(F)\backslash Z_n(\BA)}\varphi^j_{\tau,s+\frac{i}{2}}(\hat{z}h)\psi_{Z_n}(z)dz.
$$
In particular, \eqref{6.28.6}, as a function of $a\in \GL_{ni}(\BA)$, is an automorphic form in the space of $|\det\cdot|^{s+\frac{ni-1}{2}}\Delta(\tau,i)$.
Also, for each place $v\in S$ and each $j\leq N'$, there are $W^j_v\in W_{\psi_{V_{i^n},v}}(\Delta(\tau_v,i))$ and smooth, holomorphic sections $\varphi^j_{\tau_v,s+\frac{i}{2}}$ of $\rho _{\tau_v,s+\frac{i}{2}}$, such that 
	\begin{equation}\label{6.28.7}
	\alpha^j=p_{\tau,i}((\otimes_{v\in S}W^j_v)\otimes (\otimes_{v\notin S}W^0_{\Delta(\tau_v,i),\psi_v})), 
	\end{equation}
	$$
	\varphi^j_{\tau,s+\frac{i}{2}}=p_{\tau,1}\circ((\otimes_{v\in S}\varphi^j_{\tau_v,s+\frac{i}{2}})\otimes (\otimes_{v\notin S}f^0_{\tau_v,s+\frac{i}{2}})).
	$$
Finally, \eqref{6.28.6}, as an automorphic form in $a\in \GL_{ni}(\BA)$, is decomposable, in the sense that it is equal to
$$ 
[p_{\tau,i}(\otimes' \Lambda_v^+(f_{\Delta(\tau_v,i;1),s},\cdot)](a).
$$
Note that the integrals appearing in \eqref{6.28.6},
\begin{equation}\label{6.28.8}
\int_{U^{\SO_{2n+1}}_n(\BA)}\varphi^{j,\psi}_{\tau,s+\frac{i}{2}}(\alpha''_0u))\psi^{-1}((-1)^nu_{n,n+1})du,
\end{equation}
is equal to the Whittkaker coefficient of the Eisenstein series on $\SO_{2n+1}(\BA)$, $E(\varphi^{j,\psi}_{\tau,s+\frac{i}{2}},\cdot)$, corresponding to $\varphi^{j,\psi}_{\tau,s+\frac{i}{2}}$,
\begin{multline}\label{6.28.9}
\int_{U^{\SO_{2n+1}}_n(F)\backslash U_n^{\SO_{2n+1}}(\BA)}\int_{Z_n(F)\backslash Z_n(\BA)}E(\varphi^{j,\psi}_{\tau,s+\frac{i}{2}},\begin{pmatrix}&&w_n\\&(-1)^n\\w_n\end{pmatrix}u\hat{z})\cdot \\
\cdot \psi(u_{n,n+1})\psi_{Z_n}(z)dzdu.
\end{multline}
This is the starting point of the Langlands-Shahidi theory for the symmetric square $L$-function of $\tau$. In particular, we get that \eqref{6.28.8}, which converges absolutely for $Re(s)$ sufficiently large, admits a meromorphic continuation to the whole plane, and hence $\Lambda^+(f_{\Delta(\tau,i+1),s},q)$ admits a meromorphic continuation to the whole plane. This completes the proof of the theorem.
 
\end{proof}

The last thing that we want to show in this case is the comatibility with normalization. We saw that $\Lambda^+(f_{\Delta(\tau,i+1),s})$ is related to the Langlands-Shahidi integral on $\SO_{2n+1}(\BA)$, corresponding to the symmetric-square $L$-function of $\tau$. (In the previous section, we got a relation to the exterior-square $L$-function of $\tau$, which follows from Section 4, as a special case.) Of course, the same holds for $\Lambda^-(f_{\Delta(\tau,i+1),s})$, since it has almost the same structure.

\begin{thm}\label{thm 6.4}
	Let $v$ be a place outside $S$. Then, for $a_v\in \GL_{ni}(F_v)$,
	$$
	\Lambda^+_v(f^0_{\Delta(\tau_v,i;1),s},a_v)=\frac{1}{L(\tau_v,sym^2,2s+i+1)}|\det(a_v)|^{s+\frac{ni-1}{2}}W^0_{\Delta(\tau_v,i),\psi_v}(a_v).
	$$
\end{thm}
\begin{proof}
Denote, for $h\in \SO_{2n+1}(F_v)$,
$$
f^0_{\tau_v,s+\frac{i}{2}}(h)=f^0_{\Delta(\tau_v,i;1),s}(diag(I_{ni}, h,I_{ni})).
$$
This is the spherical, normalized element of
$$
\rho_{\tau_v,s+\frac{i}{2}}=\Ind_{Q_n^{\SO_{2n+1}}(F_v)}^{\SO_{2n+1}(F_v)}\tau_v|\det\cdot|^{s+\frac{i}{2}}.
$$
By \eqref{6.28.3}, for $Re(s)$ sufficiently large,
\begin{multline}\label{6.29}
\Lambda^+_v(f^0_{\Delta(\tau_v,i;1),s},a_v)=
|\det(a_v)|^{s+\frac{ni-1}{2}}W^0_{\Delta(\tau_v,i),\psi_v}(a_v)\cdot \\
\cdot\int_{U_n^{\SO_{2n+1}}(F_v)}
f^0_{\tau_v,s+\frac{i}{2}}(\alpha_0''u)\psi_v^{-1}((-1)^nu_{n,n+1})du.
\end{multline}
This is the Jacquet integral applied to $f^0_{\tau_v,s+\frac{i}{2}}$, with respect to the standard $\psi_v$-Whittaker character. Now, we get the theorem by the Casselman-Shalika formula (See \cite{CS80}, Theorem 5.4. See also \cite{S78}, Theorem 5.2.)	
\end{proof}
Let us multiply our Eisenstein series $E(f_{\Delta(\tau,i+1),s})$ on
$\SO_{2n(i+1)+1}(\BA)$ by its normalizing factor outside $S$, $d_\tau^{\SO_{2n(i+1)+1},S}(s)$. We have
\begin{equation}\label{6.30}
d_\tau^{\SO_{2n(2j+1)+1},S}(s)=\prod_{k=1}^jL^S(\tau,\wedge^2,2s+2k)\prod_{k=1}^{j+1}L^S(\tau,sym^2,2s+2k-1);
\end{equation}
\begin{equation}\label{6.31}
d_\tau^{\SO_{4nj+1},S}(s)=\prod_{k=1}^jL^S(\tau,\wedge^2,2s+2k-1)L^S(\tau,sym^2,2s+2k).
\end{equation}
One sees immediately from \eqref{6.30}, \eqref{6.31}, that (see \eqref{1.10.8}, \eqref{1.10.9})
\begin{equation}\label{6.32}
\frac{d_\tau^{\SO_{2n(i+1)+1},S}(s)}{L^S(\tau,sym^2,2s+i+1)}=d_\tau^{\SO_{2ni},S}(s).
	\end{equation}
Let
$$
E^*_S(f_{\Delta(\tau,i+1),s})=d_\tau^{\SO_{2n(i+1)+1},S}(s)E(f_{\Delta(\tau,i+1),s}).
$$
We conclude from Theorem \ref{thm 6.2}, Theorem \ref{thm 6.4} and \eqref{6.32},
\begin{thm}\label{thm 6.5}
The descent to $\SO_{2ni}(\BA)$ of the normalized Eisenstein series\\ $E^*_S(f_{\Delta(\tau,i+1),s})$, $\mathcal{D}_{\psi,n}(E^*_S(f_{\Delta(\tau,i+1),s}))$, is the sum of two normalized (outside $S$) Eisenstein series on
$\SO_{2ni}(\BA)$; the first corresponds to the section\\ $\Lambda^+(d_\tau^{\SO_{2n(i+1)+1},S}(s)f_{\Delta(\tau,i+1),s})$ and the second corresponds to the section\\ $h\mapsto \Lambda^-(d_\tau^{\SO_{2n(i+1)+1},S}(s)f_{\Delta(\tau,i+1),s})(h^{\omega'_0})$.
\end{thm}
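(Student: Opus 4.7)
The strategy is to combine the three preceding results in this section: Theorem \ref{thm 6.1} gives the decomposition of the descent as a sum of two Eisenstein sums involving $\Lambda^\pm$; Theorem \ref{thm 6.2} provides the meromorphic continuation and identifies each sum as an honest Eisenstein series on $\SO_{2ni}(\BA)$ induced from $\Delta(\tau,i)|\det\cdot|^s$; and Theorem \ref{thm 6.4} together with the identity \eqref{6.32} will match the normalizing factors. Since $E^*_S(f_{\Delta(\tau,i+1),s})=d_\tau^{\SO_{2n(i+1)+1},S}(s)E(f_{\Delta(\tau,i+1),s})$ and the Fourier coefficient $\mathcal{D}_{\psi,n}$ is linear, applying $\mathcal{D}_{\psi,n}$ to $E^*_S(f_{\Delta(\tau,i+1),s})$ produces a sum of two Eisenstein series whose sections are $d_\tau^{\SO_{2n(i+1)+1},S}(s)\Lambda^\pm(f_{\Delta(\tau,i+1),s})$ (with the appropriate $\omega_0'$-twist for $\Lambda^-$). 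What remains is to verify that these sections are normalized outside $S$ in the sense required.

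I would first show that the section $\Lambda^+(f_{\Delta(\tau,i+1),s})$ is Eulerian on decomposable data by extracting the Whittaker–Speh–Shalika model of $\Delta(\tau,i)$, just as was done in Section 4 to arrive at \eqref{4.1}. This is exactly the construction leading to the global integral $\mathcal{L}(f_{\Delta(\tau, i+1),s})$ in \eqref{6.24}, which factors into the local integrals $\mathcal{L}_v(f_{\Delta(\tau_v,i;1),s})$ of \eqref{6.25}. Proposition \ref{prop 6.3} reduces the local integral at a spherical datum to a Jacquet-type integral on $\SO_{2n+1}(F_v)$, and Theorem \ref{thm 6.4} then computes, at every place $v\notin S$,
\[
\mathcal{L}_v(f^0_{\Delta(\tau_v,i;1),s})=\frac{1}{L(\tau_v,\mathrm{sym}^2,2s+i+1)}.
\]
The same reasoning applies to $\Lambda^-(f_{\Delta(\tau,i+1),s})$: the only change is the presence of the element $\omega_0''$, which at an unramified place lies in the maximal compact and fixes the spherical vector, so the local computation is identical.

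Combining the Euler product $\prod_{v\notin S}\mathcal{L}_v(f^0_{\Delta(\tau_v,i;1),s})=1/L^S(\tau,\mathrm{sym}^2,2s+i+1)$ with the normalizing factor $d_\tau^{\SO_{2n(i+1)+1},S}(s)$ and the key identity \eqref{6.32},
\[
\frac{d_\tau^{\SO_{2n(i+1)+1},S}(s)}{L^S(\tau,\mathrm{sym}^2,2s+i+1)}=d_\tau^{\SO_{2ni},S}(s),
\]
one sees that outside $S$ the local component of $\Lambda^+(d_\tau^{\SO_{2n(i+1)+1},S}(s)f_{\Delta(\tau,i+1),s})$ evaluated at $1$ is exactly $d_{\tau_v}^{\SO_{2ni}}(s)$ times the pre-chosen unramified vector in $\Delta(\tau_v,i)$ (with its Whittaker–Speh–Shalika normalization). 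This is precisely the normalization used for the standard normalized Eisenstein series on $\SO_{2ni}(\BA)$ induced from $\Delta(\tau,i)|\det\cdot|^s$, and the same holds for $\Lambda^-$ after the $\omega_0'$-twist.

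The main technical point to handle carefully is the passage from the global Eulerian identity for $\mathcal{L}$ to a statement about the decomposability and unramified normalization of the section $\Lambda^+$ itself as a vector in $\rho_{\Delta(\tau,i),s}$; this is the standard issue that $\Lambda^+$ takes values in the space of $\Delta(\tau,i)$, not in a single Whittaker model, and is addressed by the usual argument that at almost every place the spherical vector in $\Delta(\tau_v,i)$ is determined up to a scalar, and the Whittaker functional pins down that scalar via Theorem \ref{thm 6.4}. Once this is in place, Theorem \ref{thm 6.5} follows by assembling Theorems \ref{thm 6.1}, \ref{thm 6.2}, \ref{thm 6.4} and \eqref{6.32}.
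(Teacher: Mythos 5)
Your proposal is correct and follows essentially the same route the paper intends: the paper itself presents Theorem \ref{thm 6.5} as an immediate corollary, stating only ``We conclude from Theorem \ref{thm 6.2}, Theorem \ref{thm 6.4} and \eqref{6.32}.'' You have simply spelled out the bookkeeping (linearity of the Fourier coefficient, the Euler factorization via Prop.\ \ref{prop 6.3}, the observation that $\omega_0''$ lies in the unramified maximal compact, and the matching of $d_\tau^{\SO_{2n(i+1)+1},S}(s)/L^S(\tau,\mathrm{sym}^2,2s+i+1)=d_\tau^{\SO_{2ni},S}(s)$), all of which is exactly what the paper's one-line conclusion leaves implicit.
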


\section{Application of Fourier-Jacobi coefficients to $E(f_{\Delta(\tau,i+1),s})$ on $\Sp_{2n(1+i)}(\BA)$: descent to $\Sp^{(2)}_{2ni}(\BA)$}

In this section and the next one, we will carry out for symplectic groups and metaplectic groups the descent, via Fourier-Jacobi coefficients, analogous to the previous two sections. Denote in this section $H=\Sp_{2n(i+1 )}$. Recall from \eqref{1.12} - \eqref{1.12.4}, that we consider the unipotent radical $U_{1^{n-1}}$, consisting of the elements
\begin{equation}\label{7.1}
u=\begin{pmatrix}z&x&y\\&I_{2ni+2}&x'\\&&z^*\end{pmatrix}\in H,\  z\in Z_{n-1}
\end{equation}
and the character $\psi_{n-1}$ of $U_{1^{n-1}}(\BA)$ given by
\begin{equation}\label{7.2}
\psi_{n-1}(u)=\psi_{Z_{n-1}}(z)\psi(x_{n-1,n}).
\end{equation}
This character is stabilized by the semi-direct product of $\Sp_{2ni}(\BA)$ and $\mathcal{H}_{2ni+1}(\BA)$, where $\Sp_{2ni}$ is realized inside $H$ by 
$$
t(h)=diag(I_n,h,I_n), h\in \Sp_{2ni},
$$ 
and the group $\mathcal{H}_{2ni+1}$ is realized inside $H$ by 
\begin{equation}\label{7.3}
t((x,e))=diag(I_{n-1},\begin{pmatrix}1&x&e\\&I_{2ni}&x'\\&&1\end{pmatrix},I_{n-1})\in H.
\end{equation}
Recall the projection $\beta$ from $U_{1^n}=U_{1^{n-1}}\rtimes t(\mathcal{H}_{2ni+1})$ onto $\mathcal{H}_{2ni+1}$, and that we extend the character \eqref{7.2} to $U_{1^n}(\BA)$ by making it trivial on $t(\mathcal{H}_{2ni+1}(\BA))$. We continue to denote this extension by $\psi_{n-1}$.\\ 
Let $\omega_{\psi^{-1}}$ be the Weil representation of $\mathcal{H}_{2ni+1}(\BA)\rtimes \Sp^{(2)}_{2ni}(\BA)$, associated to $\psi^{-1}$, i.e. the elements $(0,z)$ of the center of $\mathcal{H}_{2ni+1}(\BA)$ act by multiplication by $\psi^{-1}(z)$. We let $\omega_{\psi^{-1}}$ act on the space of Schwartz functions $\mathcal{S}(\BA^{ni})$. For $\phi\in \mathcal{S}(\BA^{ni})$, we have the corresponding theta series $\theta^\phi_{\psi^{-1}}$, viewed as a function on $\mathcal{H}_{2ni+1}(\BA)\rtimes \Sp^{(2)}_{2ni}(\BA)$. See \eqref{1.15.1} - \eqref{1.15.3}.

Consider the parabolic induction
\begin{equation}\label{7.4}
\rho_{\Delta(\tau,i+1),s}=\Ind_{Q_{n(i+1)}(\BA)}^{\Sp_{2n(i+1)}(\BA)}\Delta(\tau,i+1)|\det\cdot|^s.
\end{equation}
Let $f_{\Delta(\tau,i+1),s}$ be a smooth, holomorphic section of $\rho_{\Delta(\tau,i+1),s}$  . Consider the corresponding Eisenstein series $E(f_{\Delta(\tau,i+1),s})$, and apply to it the following Fourier-Jacobi coefficient (recall our notation \eqref{1.1.d})\\
\\
$\mathcal{D}^\phi_{\psi,ni}(E(f_{\Delta(\tau,i+1),s}))((h,\bar{\mu}))$
\begin{equation}\label{7.5}
=\int_{U_{1^n}(F)\backslash U_{1^n}(\BA)}E(f_{\Delta(\tau,i+1),s},ut(h))\psi_{n-1}^{-1}(u)\theta^\phi_{\psi^{-1}}(\beta(u)(h,\bar{\mu}))du.
\end{equation}
Here, $h\in \Sp_{2ni}(\BA)$. Let us state the identity analogous to \eqref{3.20} and Theorem \ref{thm 6.1}. We need some more notation first. Let 
\begin{equation}\label{7.6}
\alpha_0=\begin{pmatrix}0&I_{ni}&0&0\\0&0&0&I_n\\-I_n&0&0&0\\0&0&I_{ni}&0\end{pmatrix}.
\end{equation}
Denote
\begin{equation}\label{7.11}
U'_n=\{u'_{x;y}=\begin{pmatrix}I_n&x&0&y\\&I_{ni}&0&0\\&&I_{ni}&x'\\&&&I_n\end{pmatrix}\in H\}.
\end{equation}
As in \eqref{6.10}, let, for $g\in H(\BA)$,
\begin{equation}\label{7.14}
f^\psi_{\Delta(\tau,i+1),s}(g)=\int_{V_{ni,1^n}(F)\backslash V_{ni,1^n}(\BA)}f_{\Delta(\tau,i+1),s}(\hat{v}g)\psi_{V_{ni,1^n}}(v)dv,
\end{equation}
where $\psi_{V_{ni,1^n}}$ is given by \eqref{6.9}.
\begin{thm}\label{thm 7.1}
	For $Re(s)$ sufficiently large, $p(h,\bar{\mu})\in \Sp^{(2)}_{2ni}(\BA)$, 
	\begin{equation}\label{7.6.1} 
\mathcal{D}^\phi_{\psi,ni}(E(f_{\Delta(\tau,i+1),s}))((h,\bar{\mu}))=\sum_{\gamma\in Q_{ni}(F)\backslash \Sp_{2ni}(F)}\Lambda(f_{\Delta(\tau,i+1),s},\phi)((\gamma,1)(h,\bar{\mu})),
\end{equation}
where
$$
\Lambda(f_{\Delta(\tau,i+1),s},\phi)((h,\bar{\mu}))=\int_{U'_n(\BA)}\omega_{\psi^{-1}}(\beta(u)(h,\bar{\mu}))\phi(0)f^\psi_{\Delta(\tau,i+1),s}(\alpha_0 ut(h))du.
$$
In the sum \eqref{7.6.1}, $Q_{ni}=Q_{ni}^{\Sp_{2ni}}$. The function $\Lambda(f_{\Delta(\tau,i+1),s},\phi)$, defined for $Re(s)$ sufficiently large, by the last integral, admits analytic continuation
to a  meromorphic function of $s$ in the whole plane. It defines a
smooth meromorphic section of
$$
\rho_{\Delta(\tau,i)\gamma_{\psi^{-1}},s}=\Ind_{Q^{(2)}_{ni}(\BA)}^{\Sp^{(2)}_{2ni}(\BA)}\Delta(\tau,i)\gamma_{\psi^{-1}}
|\det\cdot|^s .
$$
Thus, $\mathcal{D}^\phi_{\psi,ni}(E(f_{\Delta(\tau,i+1),s}))$ is the Eisenstein series on
$\Sp^{(2)}_{2ni}(\BA)$, corresponding to the section $\Lambda(f_{\Delta(\tau,i+1),s},\phi)$ of
$\rho_{\Delta(\tau,i)\gamma_{\psi^{-1}},s}$.
\end{thm}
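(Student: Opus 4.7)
The plan is to follow closely the template established by Theorem \ref{thm 3.5} (for the kernel identity) and Theorem \ref{thm 6.1} (for the odd orthogonal descent), adapted to the symplectic setting where the stabilizer of the Fourier coefficient is an extension of $\Sp_{2ni}$ by the Heisenberg group $\mathcal{H}_{2ni+1}$, and where the theta series produces the metaplectic twist $\gamma_{\psi^{-1}}$. For $\Re(s)$ sufficiently large, I would expand
$$
E(f_{\Delta(\tau,i+1),s},x)=\sum_{\alpha\in Q_{n(i+1)}\backslash H/Q_n}\sum_\gamma f_{\Delta(\tau,i+1),s}(\alpha\gamma x),
$$
with representatives $\alpha_r$ ($0\le r\le n$) parameterized exactly as in \eqref{6.13}. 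Substituting into \eqref{7.5}, the first task is to show that the contribution of every $r\ge 1$ vanishes. This proceeds as in Theorem \ref{thm 2.1}: one uses exchange-of-roots on the appropriate abelian unipotent subgroup to exhibit an inner Fourier coefficient on $\Delta(\tau,i+1)$ attached to a partition strictly greater than $(n^{i+1})$, and then invokes Proposition \ref{prop 1.2}.

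Next, having reduced to $r=0$, I would factor the remaining sum modulo $Q_{1^n}$ and then modulo $t(\Sp_{2ni})\ltimes t(\mathcal{H}_{2ni+1})$. In direct analogy with the analysis surrounding \eqref{6.18}, the relevant double-coset space $Q^{\Sp_{2ni+1}}_{ni}\backslash\Sp_{2ni+1}/\Sp_{2ni}$ has three representatives; the middle one (the analogue of $\gamma_1$ in the proof of Theorem \ref{thm 6.1}) gives rise to an inner Fourier coefficient of type $(n+1,1^{ni-1})$ on $\Delta(\tau,i+1)$, which vanishes by Proposition \ref{prop 1.2}. This leaves a single open-orbit summand indexed by $\gamma\in Q_{ni}\backslash \Sp_{2ni}$.

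The new feature, relative to Theorems \ref{thm 3.5} and \ref{thm 6.1}, is the theta series. After the Bruhat unfolding, the integration against $\theta^\phi_{\psi^{-1}}(j(u)(h,\epsilon))$ combines with the inner integration over the Heisenberg subgroup $t(\mathcal{H}_{2ni+1})(\BA)$. The standard theta unfolding (summing over the discrete Lagrangian $F^{ni}$ and integrating out one maximal isotropic subspace against the Eisenstein contribution) collapses the Heisenberg integral to the Weil-representation factor $\omega_{\psi^{-1}}(j(u)(h,\epsilon))\phi(0)$. This is exactly the step that introduces the genuine metaplectic character: for an element $\hat a$ in the Siegel Levi of $\Sp^{(2)}_{2ni}(\BA)$ with $a\in \GL_{ni}(\BA)$, the Weil representation scales $\phi(0)$ by $\gamma_{\psi^{-1}}(\det a)|\det a|^{1/2}$, which accounts precisely for the twist in $\rho_{\Delta(\tau,i)\gamma_{\psi^{-1}},s}$.

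Finally, I would verify that $\Lambda(f_{\Delta(\tau,i+1),s},\phi)$ is indeed a section of $\rho_{\Delta(\tau,i)\gamma_{\psi^{-1}},s}$: left invariance under $U_{Q^{(2)}_{ni}}(\BA)$ is direct, and the left-equivariance under the Levi follows, as in Proposition \ref{prop 3.4}, from the fact that the inner integration over $V_{ni,n}$ twisted by $\psi_{V_{ni,1^n}}$ is the Whittaker--Speh--Shalika model, whose restriction to the $\GL_{ni}$-factor yields $\Delta(\tau,i)$ with shift $|\det a|^{s+(ni-1)/2}$; adding the Weil factor $\gamma_{\psi^{-1}}(\det a)|\det a|^{1/2}$ gives the claimed inducing data. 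Meromorphic continuation is then obtained, as at the end of the proof of Theorem \ref{thm 3.5}, by computing the constant term of the formal Eisenstein sum along $U_{Q^{(2)}_{ni}}$, writing it as $\Lambda+M(\Lambda)$, and solving the $2\times 2$ linear system obtained by evaluating at $(\hat a_{t_1},1)$ and $(\hat a_{t_2},1)$ via Cramer's rule. The principal obstacle I anticipate is the bookkeeping of the Rao cocycle under the Heisenberg unfolding and the Bruhat conjugations, so as to confirm that the metaplectic twist appears as $\gamma_{\psi^{-1}}$ (rather than $\gamma_\psi$) with the correct normalization; everything else is a direct transcription of the arguments already carried out in Sections 2--4 and 6.
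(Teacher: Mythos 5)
Your overall architecture is sound — Bruhat unfolding along $Q_{n(i+1)}\backslash H/Q_n$ with representatives $\alpha_r$, killing the $r\ge 1$ contributions via Proposition \ref{prop 1.2}, collapsing the Heisenberg integral against the theta series to produce the $\omega_{\psi^{-1}}(\cdot)\phi(0)$ factor and the $\gamma_{\psi^{-1}}$ twist, and obtaining meromorphic continuation from the constant term along $U_{Q^{(2)}_{ni}}$ via Cramer's rule. These all match the paper's proof (which quotes the analogue of Prop.\ 6.2 of \cite{GRS11} for the vanishing of the non-open cosets, and then defers the section/continuation arguments to the template of Theorems \ref{thm 3.5} and \ref{thm 6.2}).

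There is, however, a genuine error in your middle step: you import the ``three-representative'' double coset analysis from the odd orthogonal case, writing it as $Q^{\Sp_{2ni+1}}_{ni}\backslash\Sp_{2ni+1}/\Sp_{2ni}$. No such group $\Sp_{2ni+1}$ exists, and no such step is needed. The reason the odd orthogonal descent (Theorem \ref{thm 6.1}) requires that extra analysis is geometric: there, the Levi of $Q_{1^n}\subset\SO_{2n(i+1)+1}$ is $\GL_1^n\times\SO_{2ni+1}$, which contains a group one size \emph{larger} than the descent target $\SO_{2ni}$; hence after the $Q_{1^n}$/$U_{1^n}$ factoring, one lands on a sum over $Q_{ni}^{\SO_{2ni+1}}\backslash\SO_{2ni+1}$ and must further decompose modulo $\SO_{2ni}$, giving the three orbits, of which $\gamma_1$ dies by Proposition \ref{prop 1.2}. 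In the symplectic case, the Levi of $Q_{1^n}\subset\Sp_{2n(i+1)}$ is $\GL_1^n\times\Sp_{2ni}$ — that is, it \emph{already} contains the descent target. Hence after factoring the sum in \eqref{7.8} modulo $U_{1^n}$ from the right, the representatives fall directly into $Q_{ni}^{\Sp_{2ni}}\backslash\Sp_{2ni}$, as in the paper's \eqref{7.10}; there is no residual double coset space to examine. The Heisenberg group $\mathcal{H}_{2ni+1}$ enters only on the unipotent side (as part of $U_{1^n}$ via the projection $j$) and is disposed of by the theta unfolding, not by a coset decomposition. This is the structural discrepancy between the two descents and is precisely why a single Eisenstein series appears on the right of \eqref{7.6.1}, rather than the sum of two as in \eqref{6.11}.
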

\begin{proof}
The proof follows the same steps as in the last sections. The details can easily be read from Chapter 6 in \cite{GRS11}. We start with unfolding the Eisenstein series in \eqref{7.5}, and factor the summation on $Q_{n(i+1)}(F)\backslash\Sp_{2n(i+1)}(F)$ modulo $Q_n(F)$ from the right. The easy analog of Prop. 6.2 in \cite{GRS11} shows that only the open double coset in $Q_{n(i+1)}(F)\backslash\Sp_{2n(i+1)}(F)/Q_n(F)$ contributes to \eqref{7.5}. The element $\alpha_0$ in \eqref{7.6} is a representative of the open double coset. (See Sec. 4.2 in \cite{GRS11} for the representatives of all double cosets.) The subgroup $Q^{(0)}=Q_n\cap \alpha_0^{-1}Q_{n(i+1)}\alpha_0$ consists of the elements
\begin{equation}\label{7.7}
\begin{pmatrix}a&0&y&0\\&b&c&y'\\&&b^*&0\\&&&a^*\end{pmatrix}\in H,
\end{equation}
where $a\in \GL_n$, $b\in \GL_{ni}$. The element \eqref{7.7} is conjugated by $\alpha_0$ to
$$
\begin{pmatrix}b&y'&0&c\\&a^*&0&0\\&&a&y\\&&&b^*\end{pmatrix}.
$$
As in \eqref{6.17}, we get (for $Re(s)$ sufficiently large)\\
\\
$\mathcal{D}^\phi_{\psi,ni}(E(f_{\Delta(\tau,i+1),s}))((h,\bar{\mu}))$
\begin{equation}\label{7.8}
=\int_{U_{1^n}(F)\backslash U_{1^n}(\BA)}\theta^\phi_{\psi^{-1}}(\beta(u)(h,\bar{\mu}))\sum_\eta f_{\Delta(\tau,i+1),s}(\alpha_0\eta ut(h))\psi_{n-1}^{-1}(u)du.
\end{equation}
Here, the summation in $\eta$ is over $(Q^{(0)}(F)\cap Q_{1^n}(F))\backslash Q_{1^n}(F)$. Let $U_{1^n}^{(0)}=U_{1^n}\cap Q^{(0)}$. The elements of $U_{1^n}^{(0)}$ have the form 
\eqref{7.7}, with $a=z\in Z_n$, $b=I_{ni}$, $c=0$. Note, that for such an element $v$ in $U_{1^n}^{(0)}$, we have 
\begin{equation}\label{7.9}
\beta(v)=((0,y_n),0),\ \alpha_0v\alpha_0^{-1}=\begin{pmatrix}I_{ni}&y'\\&z^*\end{pmatrix}^\wedge. 
\end{equation}
Factoring the summation in $\eta$ in \eqref{7.8} modulo $U_{1^n}(F)$ from the right, we get
\begin{multline}\label{7.10}
\mathcal{D}^\phi_{\psi,ni}(E(f_{\Delta(\tau,i+1),s}))((h,\bar{\mu}))
=\sum_{\gamma\in Q_{ni}(F)\backslash \Sp_{2ni}(F)}\\
\int_{U^{(0)}_{1^n}(F)\backslash U_{1^n}(\BA)}\theta^\phi_{\psi^{-1}}(\beta(u)(\gamma,1)(h,\bar{\mu}))f_{\Delta(\tau,i+1),s}(\alpha_0 ut(\gamma h))\psi_{n-1}^{-1}(u)du.
\end{multline}
Here, $Q_{ni}=Q_{ni}^{\Sp_{2ni}}$. Note that $U_{1^n}=U'_n\rtimes U_{1^n}^{(0)}$, and, for $u=u'_{x;y}\in U'_n$, written in the form \eqref{7.11}, $\beta(u)=((x_n,0),y_{n,1})$. Let us fix $\gamma$ in \eqref{7.10}. Then the corresponding summand becomes
\begin{equation}\label{7.12}
\int_{U'_n(\BA)}\int_{U^{(0)}_{1^n}(F)\backslash U^{(0)}_{1^n}(\BA)}\theta^\phi_{\psi^{-1}}(\beta(vu)(\gamma,1)(h,\bar{\mu}))f_{\Delta(\tau,i+1),s}(\alpha_0 vut(\gamma h))\psi_{n-1}^{-1}(v)dvdu.
\end{equation}
Consider in \eqref{7.12} the inner integration of the $dv$-integration, when we integrate, in the notation of \eqref{7.9}, along $Z_n$, and along $y$ with zero last row. Using \eqref{7.9}, this inner integral is  
\begin{equation}\label{7.13}
\int_{V_{ni+1,1^{n-1}}(F)\backslash V_{ni+1,1^{n-1}}(\BA)}f_{\Delta(\tau,i+1),s}(\hat{v} g)\tilde{\psi}_{V_{ni+1,1^{n-1}}}(v)dv,
\end{equation}
where $g=\hat{v}_c \alpha_0ut(\gamma h)$, 
$$
v_c=\begin{pmatrix}I_{ni}&c&0\\&1&0\\&&I_{n-1}\end{pmatrix} ,\ \tilde{\psi}_{V_{ni+1,1^{n-1}}}(\begin{pmatrix}I_{ni+1}&e\\&z\end{pmatrix})=\psi_{Z_{n-1}}(z)\psi(e_{ni+1,1}).
$$
By Prop. \ref{prop 3.1}, the integral \eqref{7.13}, as a function of $g\in H(\BA)$, is invariant under left multiplication by $\hat{v}_c$, for any $c\in \BA^{ni}$. Then \eqref{7.12} becomes
\begin{equation}\label{7.15}
\int_{U'_n(\BA)}\int_{F^{ni}\backslash \BA^{ni}}\theta^\phi_{\psi^{-1}}(((0,x),0)\beta(u)(\gamma,1)(h,\bar{\mu}))f^\psi_{\Delta(\tau,i+1),s}(\alpha_0 ut(\gamma h))dxdu.
\end{equation}
We have, for $\phi'\in\mathcal{S}(\BA^{ni})$,
$$
\theta^{\phi'}_{\psi^{-1}}(((0,x),0))=\sum_{e\in F^{ni}}\omega_{\psi^{-1}}(((0,x),0))\phi'(e)=\sum_{e\in F^{ni}}\psi^{-1}(2ew_{ni}{}^tx)\phi'(e).
$$
Carrying out the $dx$-integration in \eqref{7.15}, we get
$$\int_{U'_n(\BA)}\omega_{\psi^{-1}}(\beta(u)(\gamma,1)(h,\bar{\mu}))\phi(0)f^\psi_{\Delta(\tau,i+1),s}(\alpha_0 ut(\gamma h))du
$$
$$
=\Lambda(f_{\Delta(\tau,i+1),s},\phi)((\gamma,1)(h,\bar{\mu})).
$$
This proves \eqref{7.6.1}. The rest of the theorem is proved similarly to Theorem \ref{thm 6.2}. and we skip it. (The main steps are needed for the proof of the next proposition and are outlined there.)
\end{proof}

Now, we show compatibility with normalization. As in the last section (and the ones before), we need to consider the local integrals, analogous to \eqref{6.23.3},

\begin{equation}\label{7.16}
\Lambda_v(f_{\Delta(\tau_v,i;1),s},\phi_v)(1)=\int_{U'_n(F_v)}\phi_v(x_n)f_{\Delta(\tau_v,i;1),s}(\alpha_0u;I_{ni},I_n)\psi^{-1}_v(y_{n,1})du.
\end{equation}
Let us explain the notations. As in \eqref{6.23.3}, $f_{\Delta(\tau_v,i;1),s}$ is a section
of  
$$
\rho_{\Delta(\tau_v,i;1),s}=\Ind_{Q_{ni,n}(F_v)}^{\Sp_{2n(i+1)}(F_v)}(\Delta(\tau_v,i)|\det\cdot|^{s-\frac{1}{2}}\times
\tau_v|\det\cdot|^{s+\frac{i}{2}}).
$$
Recall that $Q_{ni,n}$ denotes the standard parabolic subgroup of $H=\Sp_{2n(i+1)}$, whose Levi part is isomorphic to $\GL_{ni}\times \GL_n$. The representation  $\Delta(\tau_v,i)$ is realized in its model with respect to
$\psi_{V_{i^n}}^{-1}$ and $\tau_v$ is 
realized in its $\psi^{-1}_{Z_n}$-Whittaker model. As before, we simplify notation and re-denote $f_{\Delta(\tau_v,i;1),s}(y)=f_{\Delta(\tau_v,i;1),s}(y;I_{ni},I_n)$. Finally, we wrote in \eqref{7.16} an element $u=u'_{x;y}\in U'_n(F_v)$ in the form \eqref{7.7}, so that $x_n$ is the last row of $x$ and $y_{n,1}$ is the $(n,1)$-coordinate of $y$.\\ 
Let $S$ be a finite set of places of $F$, containing the infinite ones, such that for a place $v$ outside $S$, $\tau_v$ is unramified, $\psi_v$ is normalized, $\phi_v=\phi^0_v$ is the characteristic function of $\mathcal{O}_v^{ni}$, and
$f_{\Delta(\tau_v,i;1),s}=f^0_{\Delta(\tau_v,i;1),s}$ is spherical and normalized.
Write the elements of $U'_n(F_v)$ in the form $u_{x_1,x_2;y}$ as
in \eqref{6.7}. Let
$$
U''_n(F_v)=\{u'_{0;y}=\begin{pmatrix}I_n&0&y\\&I_{2ni}&0\\&&I_n\end{pmatrix}\in H(F_v)\}.
$$
We have the following analog of Prop. \ref{prop 4.1}, and the proof of Theorem \ref{thm 6.2}.
\begin{prop}\label{prop 7.2}
	Let $v$ be a place of $F$. There is a section $f'_{\Delta(\tau_v,i;1),s}$, which
	depends on (the smoothness of) $f_{\Delta(\tau_v,i;1),s}$ and $\phi_v$,	such that 
	\begin{equation}\label{7.17}
	\Lambda_v(f_{\Delta(\tau_v,i;1),s},\phi_v)(1)=\int_{U''_n(F_v)}
	f'_{\Delta(\tau_v,i;1),s}(\alpha_0u)\psi^{-1}_v(u_{n,n+2ni+1})du.
	\end{equation}
	If $f_{\Delta(\tau_v,i;1),s}$ and $\phi_v$ are spherical, then
	$f'_{\Delta(\tau_v,i;1),s}=f_{\Delta(\tau_v,i;1),s}$. For $v\in S$,\\ $f'_{\Delta(\tau_v,i;1),s}$ is obtained
	from $f_{\Delta(\tau_v,i;1),s}$ by a finite sequence of
	convolutions against certain Schwartz-Bruhat functions. 
\end{prop}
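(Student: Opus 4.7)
The plan is to mimic closely the proof of Proposition \ref{prop 4.1}, and its variant Proposition \ref{prop 6.3}, adapting the argument to the symplectic group $H=\Sp_{2n(i+1)}$. The target identity \eqref{7.17} asks us to absorb the integration in the variable $x$ (where $u=u'_{x;y}\in U'_n(F_v)$) into a modified section, leaving only the $y$-integration over $U''_n(F_v)$. The Schwartz factor $\phi_v(x_n)$ already pins the last row $x_n$ of $x$ to a compact set (to $\mathcal{O}_v^{ni}$ in the spherical case), so the essential work is to show that the non-last rows $x_1,\ldots,x_{n-1}$ are likewise forced into compact sets depending only on the smoothness of $f_{\Delta(\tau_v,i;1),s}$.

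For each $1\leq r\leq n-1$, I would find an element $e_{r,z}\in H(F_v)$, depending linearly on a parameter $z\in (\mathcal{P}_v^k)^{ni}$ (or a small neighborhood of $0$ in the Archimedean case), sitting in the unipotent radical opposite to $Q_{ni,n}$, with two properties: (i) right translation by $e_{r,z}$ fixes $f_{\Delta(\tau_v,i;1),s}$ for $k$ large enough (with $k=0$ in the spherical case), and (ii) the conjugation $\alpha_0 u'_{x;y}e_{r,z}$ equals $v\cdot\alpha_0 u'_{x;y'}$ for some $v$ in the unipotent radical of $Q_{ni,n}(F_v)$ times a factor producing the character shift $\psi_v(x_r\cdot z)$. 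The existence and explicit form of $e_{r,z}$ should be modeled after the matrices $c_{\ell,z}$ used in the proof of Proposition \ref{prop 6.3}, with the appropriate symplectic modifications. Right-invariance of the integrand under $e_{r,z}$ for all $z$ then forces $x_r$ to lie in the Pontryagin dual of that lattice, a compact set (equal to $\mathcal{O}_v^{ni}$ in the spherical case).

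Iterating from $r=n-1$ downward, every row $x_r$ with $r<n$ is pinned to an explicit compact subset. A Fubini rearrangement (together with the Dixmier-Malliavin lemma \cite{DM78} in the Archimedean case, to realize the right translations as convolutions against compactly supported smooth functions on the appropriate unipotent subgroups) then expresses the $x$-integration as a finite sum of convolutions of $f_{\Delta(\tau_v,i;1),s}$ against Schwartz functions built from $\phi_v$ and the characteristic functions of the pinned supports. Packaging these convolutions into a single new section $f'_{\Delta(\tau_v,i;1),s}$ yields \eqref{7.17}. In the spherical case, each pinned support is the standard integer lattice, the spherical section is constant on these cosets under the translations used, $\phi_v=\phi_v^0$ is the characteristic function of $\mathcal{O}_v^{ni}$, and with appropriately normalized measures the entire construction collapses to $f'_{\Delta(\tau_v,i;1),s}=f_{\Delta(\tau_v,i;1),s}$, as claimed. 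The main obstacle is producing the correct $e_{r,z}$ inside the symplectic group and verifying the character shift in (ii) by explicit matrix computation, but this is a routine (if tedious) adaptation of the analogous verifications in Section 4 and Section 6.
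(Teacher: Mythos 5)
Your proposal matches the paper's own proof: use the Schwartz factor $\phi_v$ to pin $x_n$, then iteratively pin $x_{n-1},\ldots,x_1$ by right translation against small unipotent elements $c_{\ell,z}$ (your $e_{r,z}$) that fix the section and produce the character shift $\psi_v(x_{n-\ell-1}z)$ after conjugation through $\alpha_0$, with Dixmier--Malliavin handling the Archimedean places and the spherical case collapsing as you describe. The only small imprecision is locating $e_{r,z}$ in a unipotent radical opposite to $Q_{ni,n}$; in the paper the elements $c_{\ell,z}$ are unipotent elements of the Siegel Levi $\GL_{n(i+1)}^\wedge$ (modeled on those in Proposition~\ref{prop 6.3}), and it is their $\alpha_0$-conjugates that land in the unipotent radical of $Q_{ni,n}$ -- but this does not affect the argument.
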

\begin{proof}
	The proof is similar to that of Prop. \ref{prop 4.1}, Theorem \ref{thm 6.2}. We explain the case where $v$ is finite and indicate the main steps. We assume, for simplicity that $\psi_v$ is normalized. Write $u'_{x;y}=v(x)u_n(y)$, where $v(x)=u'_{x;0}$, $u_n(y)=u'_{0;y}$. Note that
	\begin{equation}\label{7.18}
	\alpha_0u_n(y)\alpha_0^{-1}=diag(I_{ni},\begin{pmatrix}I_n\\y&I_n\end{pmatrix},I_{ni}).
	\end{equation} 
	Let
	\begin{equation}\label{7.19}
	\Lambda'_v(f_{\Delta(\tau_v,i;1),s},\phi_v)(x)=\int_{U''_n(F_v)}\phi_v(x_n)f_{\Delta(\tau_v,i;1),s}(\alpha_0v(x)u)\psi^{-1}_v(u_{n,n+2ni+1})du.
	\end{equation} 
	We will show that the last function has compact support in $M_{n\times ni}(F_v)$, and in case of a spherical section this support is $M_{n\times ni}(\mathcal{O}_v)$. First, the support of $\Lambda'_v(f_{\Delta(\tau_v,i;1),s},\phi_v)(x)$ in $x_n$ (the last row of $x$) is compact, since $\phi_v$ is in $\mathcal{S}(F_v^{ni})$. Thus, \eqref{7.19} is a finite linear combination of integrals of the form (we keep denoting the section by $f_{\Delta(\tau_v,i;1),s}$)
		\begin{equation}\label{7.20}
	l_v(f_{\Delta(\tau_v,i;1),s})(x(n))=\int_{U''_n(F_v)}f_{\Delta(\tau_v,i;1),s}(\alpha_0v(x(n))u)\psi^{-1}_v(u_{n,n+2ni+1})du,
	\end{equation}  
	where $x(n)$ is obtained from $x$ by replacing $x_n$ with $0$.
	Assume that we have proved that the support of $l_v(f_{\Delta(\tau_v,i;1),s},\phi_v)(x(n))$ in $x_r$ (the $r$-th row of $x$), for $n-\ell\leq r\leq n$, $0\leq \ell\leq n-2$ is compact. Thus, we may assume that $x_r=0$, for $n-\ell\leq r\leq n$. Let
	$$
	c_{\ell;z}=\begin{pmatrix}I_{n-\ell-1}\\&1\\&0&I_\ell\\&z&0&I_{ni}\end{pmatrix}^\wedge\in \Sp_{2n(i+1)}(F_v),
	$$
	with $z\in (\mathcal{P}^k_v)^{ni}$, where $k$ is sufficiently large, such that right translation by $c_{\ell,z}$ fixes $f_{\Delta(\tau_v,i;1),s}$. As in the proof of Theorem \ref{thm 6.2}, we get, for $x$ as above,
	$$
	l_v(f_{\Delta(\tau_v,i;1),s})(x)=l_v(\rho(c_{\ell,z})f_{\Delta(\tau_v,i;1),s})(x)=  \psi_v(x_{n-\ell-1}z)l_v(f_{\Delta(\tau_v,i;1),s})(x).
	$$	
	We conclude that $x_{n-\ell-1}$ is supported in  $(\mathcal{P}_v^{-k})^{ni}$, and in case the section is spherical, it is supported in $\mathcal{O}_v^{ni}$.	
\end{proof}
Now we get the analog of Theorem \ref{thm 6.4}.
\begin{thm}\label{thm 7.3}
	Let $v$ be a place outside $S$. Then
	$$
	\Lambda_v(f^0_{\Delta(\tau_v,i;1),s},\phi^0_v)(1)=\frac{1}{L(\tau_v,s+\frac{i}{2}+1)L(\tau_v,\wedge^2,2s+i+1)}.
	$$
\end{thm}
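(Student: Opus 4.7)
The plan is to mirror the structure of the proof of Theorem \ref{thm 6.4}, replacing the Siegel parabolic of $\SO_{2n+1}$ with that of $\Sp_{2n}$.

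First, I would invoke Proposition \ref{prop 7.2} to reduce the integral at the unramified place. Since $\phi_v^0$ is the characteristic function of $\mathcal{O}_v^{ni}$, the factor $\phi_v^0(x_n)$ with $x=0$ in $U''_n$ equals $1$, so
\[
\mathcal{L}_v(f^0_{\Delta(\tau_v,i;1),s},\phi^0_v) = \int_{U''_n(F_v)} f^0_{\Delta(\tau_v,i;1),s}(\alpha_0 u)\,\psi^{-1}_v(u_{n,n+2ni+1})\,du.
\]
Writing $u=u'_{0;y}$ with $y$ an $n\times n$ matrix satisfying the symplectic condition $y=w_n {}^t y\, w_n$, the character becomes $\psi^{-1}_v(y_{n,1})$. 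By \eqref{7.18}, $\alpha_0 u \alpha_0^{-1} = \diag(I_{ni},\begin{pmatrix} I_n \\ y & I_n \end{pmatrix},I_{ni})$, and the inner block lies in the opposite Siegel unipotent of $\Sp_{2n}(F_v)$. Since $\alpha_0$ has entries in $\{0,\pm 1\}$, we have $\alpha_0 \in K_{\Sp_{2n(i+1)}}\cap \mathrm{GL}_{2n(i+1)}(\mathcal{O}_v)$, so the normalized spherical section $f^0$ is right $\alpha_0$-invariant and $f^0(\alpha_0 u) = f^0(\alpha_0 u \alpha_0^{-1})$.

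Next, I would define, for $h\in \Sp_{2n}(F_v)$,
\[
f^0_{\tau_v,s+\frac{i}{2}}(h) := f^0_{\Delta(\tau_v,i;1),s}(\diag(I_{ni},h,I_{ni})),
\]
and verify by a short root-theoretic calculation the modular-character identity
\[
\delta_{Q_{ni,n}^{\Sp_{2n(i+1)}}}(\diag(I_{ni},a,a^*,I_{ni})) = |\det a|^{n+1} = \delta_{Q_n^{\Sp_{2n}}}(\diag(a,a^*)),
\]
so that $f^0_{\tau_v,s+i/2}$ is the spherical, normalized element of
$$\rho_{\tau_v,s+\frac{i}{2}} = \Ind_{Q_n^{\Sp_{2n}}(F_v)}^{\Sp_{2n}(F_v)} \tau_v|\det\cdot|^{s+\frac{i}{2}},$$
with $\tau_v$ in its $\psi^{-1}_{Z_n}$-Whittaker model. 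The integral then becomes
\[
\mathcal{L}_v(f^0_{\Delta(\tau_v,i;1),s},\phi^0_v) = \int_{Y} f^0_{\tau_v,s+\frac{i}{2}}\Bigl(\begin{pmatrix} I_n \\ y & I_n \end{pmatrix}\Bigr)\,\psi^{-1}_v(y_{n,1})\,dy,
\]
where $Y$ is the space of $*$-symmetric matrices parametrizing the opposite Siegel unipotent of $\Sp_{2n}$.

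This is exactly the Jacquet integral of the Langlands--Shahidi construction for $\Sp_{2n}$ with Siegel Levi $\GL_n$ applied to the normalized spherical vector. Since $\widehat{\Sp_{2n}}=\SO_{2n+1}$, the dual Siegel parabolic has unipotent Lie algebra $\widehat{\mathfrak{n}} = \mathrm{std}\oplus \wedge^2$ as a representation of the dual Levi $\GL_n$, contributing the two L-factors $L(\tau_v,\mu+\tfrac{1}{2})$ and $L(\tau_v,\wedge^2,2\mu)$ at inducing parameter $\mu$. With $\mu = s+\tfrac{i}{2}$, the Casselman--Shalika / Shahidi unramified computation yields
\[
\mathcal{L}_v(f^0_{\Delta(\tau_v,i;1),s},\phi^0_v) = \frac{1}{L(\tau_v,s+\tfrac{i}{2}+1)\,L(\tau_v,\wedge^2,2s+i+1)},
\]
as asserted. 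The main obstacle is to verify that $\psi^{-1}_v(y_{n,1})$ is the appropriate generic character on $Y$ (i.e., nontrivial on the $\Sp_{2n}$-simple-root subgroup of the opposite Siegel unipotent that is adjacent to the descent), so that Shahidi's unramified local coefficient formula can be applied directly without additional torus manipulation.
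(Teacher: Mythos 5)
Your approach coincides with the paper's: reduce via Proposition~\ref{prop 7.2} and \eqref{7.18} to the Jacquet--Whittaker integral on $\Sp_{2n}(F_v)$ for the Siegel Eisenstein section $f^0_{\tau_v,s+\frac{i}{2}}$, with $\tau_v$ in its Whittaker model, then invoke Casselman--Shalika; the added remarks on the modular character identity and the $K_v$-invariance of the integer-entry Weyl element $\alpha_0$ are correct and fill in details the paper leaves implicit.

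One arithmetic slip: you state the L-factors at inducing parameter $\mu$ as $L(\tau_v,\mu+\tfrac12)$ and $L(\tau_v,\wedge^2,2\mu)$, but then conclude with $L(\tau_v,s+\tfrac i2+1)$ and $L(\tau_v,\wedge^2,2s+i+1)$, which correspond to $L(\tau_v,\mu+1)$ and $L(\tau_v,\wedge^2,2\mu+1)$; only the latter shifts are correct (and they agree with \eqref{1.10.4} at $j=0$). The concern you flag about $\psi_v^{-1}(y_{n,1})$ being the appropriate generic character on the opposite Siegel unipotent is the one substantive thing that needs to be checked; this is precisely what the paper asserts when it says the integral \eqref{7.21} is ``the Jacquet integral with respect to the standard $\psi_v$-Whittaker character,'' and it does hold for the conventions in use, so the Shahidi/Casselman--Shalika formula applies directly.
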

\begin{proof}
	Denote, for $h\in \Sp_{2n}(F_v)$,
	$$
	f^0_{\tau_v,s+\frac{i}{2}}(h)=f^0_{\Delta(\tau_v,i;1),s}(diag(I_{ni}, h,I_{ni})).
	$$
	This is the spherical, normalized element of
	$$
	\rho_{\tau_v,s+\frac{i}{2}}=\Ind_{Q_n^{\Sp_{2n}}(F_v)}^{\Sp_{2n}(F_v)}\tau_v|\det\cdot|^{s+\frac{i}{2}}.
	$$
	Prop. \ref{prop 7.2} and \eqref{7.18} show that, for $Re(s)$ sufficiently large,
	\begin{equation}\label{7.21}
	\Lambda_v(f^0_{\Delta(\tau_v,i;1),s},\phi^0_v)(1)=\int f^0_{\tau_v,s+\frac{i}{2}}(\begin{pmatrix}I_n\\y&I_n\end{pmatrix})\psi^{-1}(y_{n,1})dy,
	\end{equation}
	where the integration is over the $n\times n$ matrices $y$ over $F_v$, such that $w_ny$ is symmetric. The integral \eqref{7.21} is the Jacquet integral applied to $f^0_{\tau_v,s+\frac{i}{2}}$, with respect to the standard $\psi_v$-Whittaker character. Now, we get the theorem by the Casselman-Shalika formula.	
\end{proof}

Let us multiply our Eisenstein series $E(f_{\Delta(\tau,i+1),s})$ on
$\Sp_{2n(i+1)}(\BA)$ by its normalizing factor outside $S$, $d_\tau^{\Sp_{2n(i+1)},S}(s)$. See \eqref{1.10.4}, \eqref{1.10.5}. By \eqref{1.10.6}, \eqref{1.10.7}, we check that
\begin{equation}\label{7.22}
\frac{d_\tau^{\Sp_{2n(i+1)},S}(s)}{L^S(\tau,s+\frac{i}{2}+1)L^S(\tau,\wedge^2,2s+i+1)}=d_\tau^{\Sp^{(2)}_{2ni},S}(s).
\end{equation}
Let
$$
E^*_S(f_{\Delta(\tau,i+1),s})=d_\tau^{\Sp_{2n(i+1)},S}(s)E(f_{\Delta(\tau,i+1),s}).
$$
As in Theorem \ref{thm 6.5}, we get
\begin{thm}\label{thm 7.4}
	The descent to $\Sp^{(2)}_{2ni}(\BA)$ of the normalized Eisenstein series\\ $E^*_S(f_{\Delta(\tau,i+1),s})$, $\mathcal{D}^\phi_{\psi,ni}(E^*_S(f_{\Delta(\tau,i+1),s}))$, is the normalized (outside $S$) Eisenstein series on
	$\Sp^{(2)}_{2ni}(\BA)$ corresponding to the section $\Lambda(d_\tau^{\Sp_{2n(i+1)},S}(s)f_{\Delta(\tau,i+1),s},\phi)$.
\end{thm}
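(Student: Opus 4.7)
The plan is to assemble Theorem \ref{thm 7.1}, Theorem \ref{thm 7.3}, and the normalization identity \eqref{7.22}. First, by Theorem \ref{thm 7.1}, for any smooth holomorphic section $f_{\Delta(\tau,i+1),s}$, the descent $\mathcal{D}^\phi_{\psi,ni}(E(f_{\Delta(\tau,i+1),s}))$ is, as a meromorphic function of $s$, the Eisenstein series on $\Sp^{(2)}_{2ni}(\BA)$ corresponding to the section $\Lambda(f_{\Delta(\tau,i+1),s},\phi)$ of $\rho_{\Delta(\tau,i)\gamma_{\psi^{-1}},s}$. Since both the Fourier-Jacobi descent $\mathcal{D}^\phi_{\psi,ni}$ and the construction $\Lambda$ are linear in the automorphic input, scaling $f_{\Delta(\tau,i+1),s}$ by $d_\tau^{\Sp_{2n(i+1)},S}(s)$ simply rescales the target section by the same factor, yielding
$$
\mathcal{D}^\phi_{\psi,ni}(E^*_S(f_{\Delta(\tau,i+1),s})) = E(\Lambda(d_\tau^{\Sp_{2n(i+1)},S}(s) f_{\Delta(\tau,i+1),s},\phi)).
$$
The remaining task is to verify that the scaled section is normalized outside $S$ with the correct target normalizing factor $d_\tau^{\Sp^{(2)}_{2ni},S}(s)$.

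To test normalization at each $v \notin S$, I would pair the local unramified component of $\Lambda(f_{\Delta(\tau,i+1),s},\phi)$ against the Whittaker-Speh-Shalika functional on $\Delta(\tau_v,i)$, namely the Fourier coefficient along $V_{i^n}$ with respect to $\psi^{-1}_{V_{i^n}}$. Exactly as in Section 4, this pairing converts the global evaluation into the Eulerian integral \eqref{7.16}, which for decomposable, locally normalized data factors as a product of the local integrals $\mathcal{L}_v(f^0_{\Delta(\tau_v,i;1),s},\phi_v^0)$. By Theorem \ref{thm 7.3}, each unramified local factor equals $\bigl(L(\tau_v,s+\tfrac{i}{2}+1)L(\tau_v,\wedge^2,2s+i+1)\bigr)^{-1}$. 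Multiplying by the global scalar $d_\tau^{\Sp_{2n(i+1)},S}(s)$ and invoking \eqref{7.22}, the extraneous $L$-factors outside $S$ cancel and leave precisely $d_\tau^{\Sp^{(2)}_{2ni},S}(s)$. This is exactly the statement that the scaled section, divided by $d_\tau^{\Sp^{(2)}_{2ni},S}(s)$, is a normalized decomposable section of $\rho_{\Delta(\tau,i)\gamma_{\psi^{-1}},s}$ in the sense of Section 1, which combined with the displayed equality above yields the theorem.

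The main obstacle is the Eulerian factorization step: one must rigorously justify that applying the $\psi^{-1}_{V_{i^n}}$-Fourier coefficient to the global $\Lambda(f_{\Delta(\tau,i+1),s},\phi)$ produces the claimed Euler product whose local factor at an unramified place is $\mathcal{L}_v$. The unfolding mirrors Prop. \ref{prop 4.1} and Prop. \ref{prop 7.2}, but some care is required with the metaplectic cover: the Weil factor $\gamma_{\psi^{-1}}$ appearing in the inducing data on $\Sp^{(2)}_{2ni}(\BA)$ (cf. \eqref{4.2.1}) must be tracked correctly through each Cartan decomposition and through the convolution/Dixmier-Malliavin arguments at Archimedean places, so that no sign or cocycle discrepancy corrupts the Casselman-Shalika-type computation underlying Theorem \ref{thm 7.3}. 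Once that compatibility is in place, the theorem follows at once from the algebraic identity \eqref{7.22} and the unramified computation already done.
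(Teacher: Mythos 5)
Your proposal is correct and follows exactly the route the paper takes: Theorem \ref{thm 7.1} identifies the descent with the Eisenstein series built from $\Lambda(f_{\Delta(\tau,i+1),s},\phi)$, the Eulerian reduction (Prop.~\ref{prop 7.2}) and the unramified computation of Theorem \ref{thm 7.3} give the local factor $\bigl(L(\tau_v,s+\tfrac{i}{2}+1)L(\tau_v,\wedge^2,2s+i+1)\bigr)^{-1}$, and \eqref{7.22} matches the resulting product over $v\notin S$ with $d_\tau^{\Sp^{(2)}_{2ni},S}(s)$. One small notational slip: \eqref{7.16} is already the \emph{local} integral $\mathcal{L}_v$, not the global Eulerian product; the global analogue that factors into the $\mathcal{L}_v$'s is of the shape \eqref{6.24} (with the theta data in place), and that is what you pair against the $\psi_{V_{i^n}}^{-1}$-functional. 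Your cautionary remark about tracking the metaplectic cocycle through the Cartan decomposition and the Dixmier--Malliavin step is a fair concern, which the paper deals with only by appealing to the analogy with Sections 4 and 6; it does not introduce any new idea beyond what is already present in Prop.~\ref{prop 4.1}, Prop.~\ref{prop 7.2}, and \eqref{4.3.1}.
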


\section{Application of Fourier-Jacobi coefficients to $E(f_{\Delta(\tau,i+1)\gamma_\psi,s})$ on $\Sp^{(2)}_{2n(1+i)}(\BA)$: descent to $\Sp_{2ni}(\BA)$}

We keep the notations of the previous section. We consider the parabolic induction
\begin{equation}\label{8.1}
\rho_{\Delta(\tau,i+1)\gamma_\psi,s}=\Ind_{Q^{(2)}_{n(i+1)}(\BA)}^{\Sp^{(2)}_{2n(i+1)}(\BA)}\Delta(\tau,i+1)\gamma_\psi|\det\cdot|^s.
\end{equation}
Let $f_{\Delta(\tau,i+1)\gamma_\psi,s}$ be a smooth, holomorphic section of $\rho_{\Delta(\tau,i+1)\gamma_\psi,s}$ in \eqref{8.1}. Consider the corresponding Eisenstein series $E(f_{\Delta(\tau,i+1)\gamma_\psi,s})$, and apply to it the Fourier-Jacobi coefficient similar to \eqref{7.5}\\
\\
$\mathcal{D}^\phi_{\psi,ni}(E(f_{\Delta(\tau,i+1)\gamma_\psi,s}))(h)$
\begin{equation}\label{8.2}
=\int_{U_{1^n}(F)\backslash U_{1^n}(\BA)}E(f_{\Delta(\tau,i+1)\gamma_\psi,s},u\tilde{t}(h))\psi_{n-1}^{-1}(u)\theta^\phi_{\psi^{-1}}(\beta(u)\tilde{h})du.
\end{equation}
Here, $h\in \Sp_{2ni}(\BA)$, and $\tilde{h}$ is any element of $\Sp^{(2)}_{2ni}(\BA)$, which projects to $h$. Let $\tilde{h}=C'\Pi'_v(h_v,\mu_v)$. Then $\tilde{t}(h)=C'\Pi'_v(t(h_v),\mu_v)\in \Sp_{2n(i+1)}^{(2)}(\BA)$. Recall agai that we identify $u\in U_{1^n}(\BA)$ with $(u,1)$ (see right after \eqref{1.1.d}). Define the function on $\Sp_{2n(i+1)}^{(2)}(\BA)$, $f^\psi_{\Delta(\tau,i+1)\gamma_\psi,s}$, in a similar way to \eqref{7.14}. The proof of the following theorem is the same as that of Theorem \ref{thm 7.1}, with obvious modifications. We just remark that in the proof that the section $\Lambda(f_{\Delta(\tau,i+1)\gamma_\psi,s},\phi)$ below corresponds to the parabolic data $(Q_{ni},\Delta(\tau,i)
|\det\cdot|^s )$, we use the fact that
$$
\gamma_\psi(t)\gamma_{\psi^{-1}}(t)=\gamma_\psi(t)^2(t,-1)=(t,-t)=1,
 $$ 
where $(t,t')$ denotes the Hilbert symbol.
\begin{thm}\label{thm 8.1}
	For $Re(s)$ sufficiently large, $h\in \Sp_{2ni}(\BA)$, 
	\begin{equation}\label{8.3} 
	\mathcal{D}^\phi_{\psi,ni}(E(f_{\Delta(\tau,i+1)\gamma_\psi,s}))(h)=\sum_{\gamma\in Q_{ni}\backslash \Sp_{2ni}}\Lambda(f_{\Delta(\tau,i+1)\gamma_\psi,s},\phi)(\gamma h),
	\end{equation}
	where
	$$
	\Lambda(f_{\Delta(\tau,i+1)\gamma_\psi,s},\phi)(h)=\int_{U'_n(\BA)}\omega_{\psi^{-1}}(\beta(u)\tilde{h})\phi(0)f^\psi_{\Delta(\tau,i+1)\gamma_\psi,s}(\alpha_0 u\tilde{t}(h))du.
	$$
	The function $\Lambda(f_{\Delta(\tau,i+1)\gamma_\psi,s},\phi)$, defined, for $Re(s)$ sufficiently large, by the last integral, admits analytic continuation
	to a  meromorphic function of $s$ in the whole plane. It defines a
	smooth meromorphic section of
	$$
	\rho_{\Delta(\tau,i),s}=\Ind_{Q_{ni}(\BA)}^{\Sp_{2ni}(\BA)}\Delta(\tau,i)
	|\det\cdot|^s .
	$$
	Thus, $\mathcal{D}^\phi_{\psi,ni}(E(f_{\Delta(\tau,i+1)\gamma_\psi,s}))$ is the Eisenstein series on
	$\Sp_{2ni}(\BA)$ corresponding to the section $\Lambda(f_{\Delta(\tau,i+1)\gamma_\psi,s},\phi)$ of
	$\rho_{\Delta(\tau,i),s}$.
\end{thm}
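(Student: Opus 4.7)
The plan is to imitate the proof of Theorem 7.1, tracking the Rao cocycle and the Weil factors throughout. First, for $\Re(s)$ sufficiently large, I would unfold the Eisenstein series $E(f_{\Delta(\tau,i+1)\gamma_\psi,s})$ as an absolutely convergent sum over $Q_{n(i+1)}(F)\backslash \Sp_{2n(i+1)}(F)$, identifying each $\gamma\in \Sp_{2n(i+1)}(F)$ with $(\gamma,1)\in \Sp^{(2)}_{2n(i+1)}(\BA)$, and then factor the summation modulo $Q_n(F)$ from the right. The double coset representatives are precisely those listed in \cite{GRS11}, Sec.~4.2, parametrized by $0\le r\le n$, with $\alpha_0$ of \eqref{7.6} representing the open cell.

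Next, following the proof of Theorem \ref{thm 7.1} and the analog of Proposition 6.2 of \cite{GRS11}, I would show that every non-open double coset contributes zero. As in Section 2, the obstruction is that the inner unipotent integration along the unfolded cells yields an inner Fourier coefficient on $\Delta(\tau,i+1)$ attached to a partition strictly larger than $(n^{i+1})$, which vanishes by Proposition \ref{prop 1.2}. Once only $\alpha_0$ survives, I would factor modulo $Q_n\cap \alpha_0^{-1}Q_{n(i+1)}\alpha_0$ and then modulo $U_{1^n}$, exactly as in \eqref{7.8}-\eqref{7.10}, to obtain a sum over $\gamma\in Q_{ni}\backslash \Sp_{2ni}(F)$ of an inner integral over $U^{(0)}_{1^n}(F)\backslash U_{1^n}(\BA)$, with the theta kernel attached.

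After this, I would split $U_{1^n}=U'_n\rtimes U^{(0)}_{1^n}$, and apply Proposition \ref{prop 3.1} to the inner $U^{(0)}_{1^n}$-integration: the extra invariance of the Fourier coefficient on $\Delta(\tau,i+1)$ under $v_c^\wedge$ (realized through the conjugation by $\alpha_0$ as in \eqref{7.9}) converts this integration into a period over $F^{ni}\backslash \BA^{ni}$, which collapses the theta expansion to its zero term $\omega_{\psi^{-1}}(j(u)(h,1))\phi(0)$. This gives exactly the integrand defining $\Lambda(f_{\Delta(\tau,i+1)\gamma_\psi,s},\phi)$ and establishes \eqref{8.3}.

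The delicate step, and the one I expect to be the main obstacle, is verifying that $\Lambda(f_{\Delta(\tau,i+1)\gamma_\psi,s},\phi)$ truly defines a section of the \emph{linear} induced representation $\rho_{\Delta(\tau,i),s}$ on $\Sp_{2ni}(\BA)$ rather than a genuine one on $\Sp^{(2)}_{2ni}(\BA)$. Concretely, for $q=\hat{a}\in Q_{ni}(\BA)$ I would compute $\alpha_0\tilde{t}(qh)\alpha_0^{-1}$ in the metaplectic group, tracking the cocycle carefully; the inducing character $\Delta(\tau,i+1)\gamma_\psi|\det\cdot|^{s+\frac{i+1}{2}}$ contributes a factor $\gamma_\psi(\det(a))$, while the Weil representation $\omega_{\psi^{-1}}$ contributes $\gamma_{\psi^{-1}}(\det(a))$ upon normalizing $\phi$, and the identity $\gamma_\psi(t)\gamma_{\psi^{-1}}(t)=(t,-1)\gamma_\psi(t)^2=(t,-t)=1$ makes the Weil factors cancel, leaving $|\det(a)|^s\Delta(\tau,i)(a)$ (after absorbing the modulus character). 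The meromorphic continuation then proceeds exactly as in the proof of Theorem \ref{thm 3.5}: write the constant term of the series \eqref{8.3} along the unipotent radical of $Q_{ni}$, evaluate at two dilations $\hat{a}_{t_1},\hat{a}_{t_2}$, and invert the $2\times 2$ system via Cramer's rule to express $\Lambda(f_{\Delta(\tau,i+1)\gamma_\psi,s},\phi)$ as a linear combination of constant terms of the meromorphically continued Eisenstein series.
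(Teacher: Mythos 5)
Your proposal is correct and follows the same route as the paper: the paper explicitly states that the proof is identical to that of Theorem \ref{thm 7.1} with obvious modifications, and the single non-trivial point it singles out is precisely the Weil-factor cancellation $\gamma_\psi(t)\gamma_{\psi^{-1}}(t)=(t,-t)=1$ that you identify as the delicate step ensuring the section lands in the linear induced representation $\rho_{\Delta(\tau,i),s}$ on $\Sp_{2ni}(\BA)$ rather than a genuine one.
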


At a place $v$ of $F$, consider, as in \eqref{7.16}, with the same notation,
\begin{equation}\label{8.4}
\Lambda_v(f_{\Delta(\tau_v,i;1)\gamma_{\psi_v},s},\phi_v)(1)=\int_{U'_n(F_v)}\phi_v(x_n)f_{\Delta(\tau_v,i;1)\gamma_{\psi_v},s}(\alpha_0u;I_{ni},I_n)\psi^{-1}_v(y_{n,1})du,
\end{equation}
where $f_{\Delta(\tau_v,i;1)\gamma_{\psi_v},s}$ is a section
of  
$$
\rho_{\Delta(\tau_v,i;1)\gamma_{\psi_v},s}=\Ind_{Q^{(2)}_{ni,n}(F_v)}^{\Sp^{(2)}_{2n(i+1)}(F_v)}\Delta(\tau_v,i)\gamma_{\psi_v}|\det\cdot|^{s-\frac{1}{2}}\times
\tau_v\gamma_{\psi_v}|\det\cdot|^{s+\frac{i}{2}},
$$
See also \eqref{4.2.1}. Now, the same proof as that of Prop. \ref{prop 7.2} shows that there is a section $f'_{\Delta(\tau_v,i;1)\gamma_{\psi_v},s}$, which
depends on the smoothness of $f_{\Delta(\tau_v,i;1)\gamma_{\psi_v},s}$ and $\phi_v$,	such that 
\begin{equation}\label{8.5}
\Lambda_v(f_{\Delta(\tau_v,i;1)\gamma_{\psi_v},s},\phi_v)(1)=\int_{U''_n(F_v)}
f'_{\Delta(\tau_v,i;1)\gamma_{\psi_v},s}(\alpha_0u;I_{ni},I_n)\psi^{-1}_v(u_{n,n+2ni+1})du;
\end{equation}
and if $f_{\Delta(\tau_v,i;1)\gamma_{\psi_v},s}$ and $\phi_v$ are spherical, then
$f'_{\Delta(\tau_v,i;1)\gamma_{\psi_v},s}=f_{\Delta(\tau_v,i;1)\gamma_{\psi_v},s}$. Thus, for such a finite place $v$, where all data are spherical and normalized, define, for $(h,\epsilon)\in \Sp^{(2)}_{2n}(F_v)$,
$$
f^0_{\tau_v\gamma_{\psi_v},s+\frac{i}{2}}((h,\epsilon))=f^0_{\Delta(\tau_v,i;1)\gamma_{\psi_v},s}((diag(I_{ni}, h,I_{ni}),\epsilon)).
$$
This is the spherical, normalized element of
$$
\rho_{\tau_v\gamma_{\psi_v},s+\frac{i}{2}}=\Ind_{Q^{(2)}_n(F_v)}^{\Sp^{(2)}_{2n}(F_v)}\tau_v\gamma_{\psi_v}|\det\cdot|^{s+\frac{i}{2}}.
$$
By \eqref{8.5}, we have, for $Re(s)$ sufficiently large, 
\begin{equation}\label{8.6}
\Lambda_v(f^0_{\Delta(\tau_v,i;1)\gamma_{\psi_v},s},\phi^0_v)(1)=\int f^0_{\tau_v\gamma_{\psi_v},s+\frac{i}{2}}((\begin{pmatrix}I_n\\y&I_n\end{pmatrix},1))\psi^{-1}(y_{n,1})dy,
\end{equation}
where the domain of integration is as in \eqref{7.21}. This is the Jacquet integral applied to $f^0_{\tau_v\gamma_{\psi_v},s+\frac{i}{2}}$, with respect to the standard $\psi_v$-Whittaker character. The analog of the Casselman-Shalika formula (\cite{BFH91}, Theorem 1.2) in this case gives then the following.
\begin{thm}\label{thm 7.2}
	$$
	\Lambda_v(f^0_{\Delta(\tau_v,i;1)\gamma_{\psi_v},s},\phi^0_v)(1)=\frac{L(\tau_v,s+\frac{i+1}{2})}{L(\tau_v,sym^2,2s+i+1)}.
	$$
\end{thm}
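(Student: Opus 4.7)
The plan is to reduce the local integral to a standard Whittaker--Jacquet integral on the metaplectic group $\Sp^{(2)}_{2n}(F_v)$ and then invoke the Casselman--Shalika type formula of Bump--Friedberg--Hoffstein \cite{BFH91}. Everything up through \eqref{8.6} is already in place by the preceding analog of Proposition \ref{prop 7.2}, so the only remaining step is to evaluate the Jacquet integral
\[
\mathcal{J}(s):=\int_{U_n^{\Sp_{2n}}(F_v)} f^0_{\tau_v\gamma_{\psi_v},s+\frac{i}{2}}\!\left(\!\left(\begin{pmatrix}I_n\\y&I_n\end{pmatrix},1\right)\!\right)\psi^{-1}(y_{n,1})\,dy
\]
against the spherical, normalized vector of
\[
\rho_{\tau_v\gamma_{\psi_v},s+\frac{i}{2}}=\Ind_{Q_n^{(2)}(F_v)}^{\Sp^{(2)}_{2n}(F_v)}\tau_v\gamma_{\psi_v}|\det\cdot|^{s+\frac{i}{2}}.
\]

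First I would note that $\mathcal{J}(s)$ is precisely the unramified $\psi_v$-Whittaker coefficient of the spherical vector of $\rho_{\tau_v\gamma_{\psi_v},s+\frac{i}{2}}$ evaluated at the identity. The inducing representation $\tau_v\gamma_{\psi_v}$ is the genuine unramified principal series of the double cover $\widetilde{\GL}_n(F_v)$ obtained by twisting $\tau_v$ by the Weil factor, and the parabolic induction takes place from the Siegel parabolic of the double cover of $\Sp_{2n}$. This is exactly the setting treated in \cite{BFH91}: for an unramified genuine principal series of $\Sp^{(2)}_{2n}$ obtained from the Siegel parabolic by inducing from an unramified character (in the tame case, where the residual characteristic is odd and the conductor of $\psi_v$ is $\mathcal{O}_v$), the value at the identity of the spherical Whittaker function is given by a ratio of $L$-factors involving both the standard and the symmetric-square $L$-functions of the inducing $\GL_n$-data.

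Next, I would apply \cite{BFH91} directly. Let $t_{\tau_v}=\diag(\alpha_1,\dots,\alpha_n)$ be the Satake parameter of $\tau_v$. The formula of Bump--Friedberg--Hoffstein expresses the unramified metaplectic Whittaker function at the identity, for the Siegel-induced principal series with inducing character $|\det\cdot|^{s+\frac{i}{2}}\tau_v\gamma_{\psi_v}$, as the ratio
\[
\frac{\prod_{j=1}^n(1-\alpha_j q_v^{-s-\frac{i+1}{2}})^{-1}}{\prod_{1\leq j\leq k\leq n}(1-\alpha_j\alpha_k q_v^{-2s-i-1})^{-1}} \;=\;\frac{L(\tau_v,s+\tfrac{i+1}{2})}{L(\tau_v,\mathrm{sym}^2,2s+i+1)},
\]
with appropriate normalizations. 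Matching the measure and Weil-factor conventions of \cite{BFH91} with those used here (normalized Rao cocycle, unramified $\psi_v$, spherical $\phi^0_v$ equal to the characteristic function of $\mathcal{O}_v^{ni}$) is a bookkeeping step; once done, we obtain $\mathcal{J}(s)=L(\tau_v,s+\tfrac{i+1}{2})/L(\tau_v,\mathrm{sym}^2,2s+i+1)$, which is the asserted identity.

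The main obstacle will be the normalization check: the formula of \cite{BFH91} is stated with a specific choice of splitting of the double cover over the maximal compact subgroup, a specific measure on the maximal unipotent, and a particular genuine character of the torus cover. One must verify that the section $f^0_{\Delta(\tau_v,i;1)\gamma_{\psi_v},s}$ restricted via the embedding $h\mapsto (\diag(I_{ni},h,I_{ni}),1)$ really does land on the BFH-normalized spherical vector, and that the Whittaker character $\psi^{-1}$ on $y_{n,1}$ matches their choice. Once these normalizations are reconciled, the evaluation is immediate and yields the stated $L$-ratio, completing the proof.
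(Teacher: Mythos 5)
Your proposal is correct and follows essentially the same route as the paper: reduce via the analog of Proposition \ref{prop 7.2} to the Jacquet integral \eqref{8.6} over the unipotent radical of the Siegel parabolic, and then evaluate it via the Bump--Friedberg--Hoffstein metaplectic Casselman--Shalika formula, yielding the ratio $L(\tau_v,s+\tfrac{i+1}{2})/L(\tau_v,\mathrm{sym}^2,2s+i+1)$. The paper is terser (it simply says ``the analog of the Casselman--Shalika formula (\cite{BFH91}) in this case gives then the following''), while you spell out the numerator/denominator Euler products and flag the normalization bookkeeping; the substance is the same.
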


Finally, let us multiply our Eisenstein series $E(f_{\Delta(\tau,i+1)\gamma_\psi,s})$ on
$\Sp^{(2)}_{2n(i+1)}(\BA)$ by its normalizing factor outside $S$, $d_\tau^{\Sp^{(2)}_{2n(i+1)},S}(s)$. See \eqref{1.10.6}, \eqref{1.10.7}. By \eqref{1.10.4}, \eqref{1.10.5}, we check that
\begin{equation}\label{8.7}
\frac{d_\tau^{\Sp^{(2)}_{2n(i+1)},S}(s)L^S(\tau,s+\frac{i+1}{2})}{L^S(\tau,sym^2,2s+i+1)}=d_\tau^{\Sp_{2ni},S}(s).
\end{equation}
Let
$$
E^*_S(f_{\Delta(\tau,i+1)\gamma_\psi,s})=d_\tau^{\Sp^{(2)}_{2n(i+1)},S}(s)E(f_{\Delta(\tau,i+1)\gamma_\psi,s}).
$$
As in Theorem \ref{thm 7.4}, we get
\begin{thm}\label{thm 8.3}
	The descent to $\Sp_{2ni}(\BA)$ of the normalized Eisenstein series\\ $E^*_S(f_{\Delta(\tau,i+1)\gamma_\psi,s})$, $\mathcal{D}^\phi_{\psi,ni}(E^*_S(f_{\Delta(\tau,i+1)\gamma_\psi,s}))$, is the normalized (outside $S$) Eisenstein series on
	$\Sp_{2ni}(\BA)$ corresponding to the section $\Lambda(d_\tau^{\Sp^{(2)}_{2n(i+1)},S}(s)f_{\Delta(\tau,i+1)\gamma_\psi,s},\phi)$.
\end{thm}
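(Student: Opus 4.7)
The plan is to bootstrap from Theorem \ref{thm 8.1} by means of a local unramified computation and a bookkeeping of normalizing factors, exactly parallel to the deduction of Theorem \ref{thm 7.4} from Theorem \ref{thm 7.1}. Theorem \ref{thm 8.1} already identifies the unnormalized descent $\mathcal{D}^\phi_{\psi,ni}(E(f_{\Delta(\tau,i+1)\gamma_\psi,s}))$ with the Eisenstein series on $\Sp_{2ni}(\BA)$ attached to the section $\Lambda(f_{\Delta(\tau,i+1)\gamma_\psi,s},\phi)$ of $\rho_{\Delta(\tau,i),s}$. What remains is therefore to show that multiplying the inducing data by $d_\tau^{\Sp^{(2)}_{2n(i+1)},S}(s)$ yields, on the descended side, an Eisenstein series that is normalized outside $S$ in the paper's sense.

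The first step is to take decomposable data: $f_{\Delta(\tau,i+1)\gamma_\psi,s}$ spherical and normalized at every $v\notin S$, and $\phi=\otimes_v \phi_v$ with $\phi_v=\phi_v^0$ the characteristic function of $\mathcal{O}_v^{ni}$ outside $S$. Then the global section $\Lambda(f_{\Delta(\tau,i+1)\gamma_\psi,s},\phi)$ decomposes as a restricted tensor product of local sections $\Lambda_v$, each obtained from the unipotent integral in Theorem \ref{thm 8.1}, now interpreted locally in the Whittaker--Speh--Shalika model of $\Delta(\tau_v,i)$. Evaluating $\Lambda_v$ at the identity (at $I_{ni}$ in that model) recovers exactly the local integral $\mathcal{L}_v(f^0_{\Delta(\tau_v,i;1)\gamma_{\psi_v},s},\phi^0_v)$ of \eqref{8.4}. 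By Theorem \ref{thm 7.2} (the unramified computation carried out in this section),
$$
\mathcal{L}_v(f^0_{\Delta(\tau_v,i;1)\gamma_{\psi_v},s},\phi^0_v)
=\frac{L(\tau_v,s+\frac{i+1}{2})}{L(\tau_v,\mathrm{sym}^2,2s+i+1)}
$$
for every $v\notin S$. Multiplying Theorem \ref{thm 8.1} by $d_\tau^{\Sp^{(2)}_{2n(i+1)},S}(s)$ and invoking the arithmetic identity \eqref{8.7}, one obtains that the product $d_\tau^{\Sp^{(2)}_{2n(i+1)},S}(s)\cdot\prod_{v\notin S}\mathcal{L}_v(f^0_v,\phi^0_v)$ equals $d_\tau^{\Sp_{2ni},S}(s)$.

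This is precisely the condition that the section $\Lambda(d_\tau^{\Sp^{(2)}_{2n(i+1)},S}(s)f_{\Delta(\tau,i+1)\gamma_\psi,s},\phi)$, when divided by $d_\tau^{\Sp_{2ni},S}(s)$, has unramified component equal to the normalized spherical vector in $\Delta(\tau_v,i)$ at each $v\notin S$; in the paper's convention this is precisely what it means for the associated Eisenstein series to be $E^*_S$ on $\Sp_{2ni}(\BA)$. The identity is first obtained for $\Re(s)$ in the common domain of absolute convergence and then propagated to all $s$ by the meromorphic continuation recorded in Theorem \ref{thm 8.1}.

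The main obstacle is verifying the Eulerian factorization of $\Lambda$ cleanly in the metaplectic setting: in particular, one must show that the Weil factor $\gamma_\psi$ and the Rao cocycle break into local contributions that are absorbed in a controlled way at $v\notin S$. This is the metaplectic analogue of Proposition \ref{prop 4.3}, and ultimately relies on the splitting of the double cover over the rational points $\Sp_{2n(i+1)}(F)$ and over the hyperspecial maximal compact subgroups at places outside $S$, which trivializes all cocycle factors on the spherical data. Once the decomposition is in hand, Theorems \ref{thm 8.1}, \ref{thm 7.2}, and the identity \eqref{8.7} assemble into Theorem \ref{thm 8.3} by a formal verification.
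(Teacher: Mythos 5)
Your proposal is correct and follows the same route as the paper: Theorem~\ref{thm 8.1} supplies the unnormalized identity, the unramified computation in this section together with Proposition analogous to Prop.~\ref{prop 7.2} gives the local values $L(\tau_v,s+\frac{i+1}{2})/L(\tau_v,\mathrm{sym}^2,2s+i+1)$ outside $S$, and \eqref{8.7} converts $d_\tau^{\Sp^{(2)}_{2n(i+1)},S}(s)$ into $d_\tau^{\Sp_{2ni},S}(s)$ after absorbing those local factors. The paper indeed deduces Theorem~\ref{thm 8.3} exactly "as in Theorem~\ref{thm 7.4}," which is the bookkeeping argument you give; your remark on the Eulerian factorization in the metaplectic case is a fair reading of what is left implicit there.
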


\end{document}